\documentclass[11pt]{article}

\usepackage[utf8]{inputenc}
\usepackage{amsmath,amssymb,amsfonts,amsthm}
\usepackage{mathtools}
\usepackage[shortlabels]{enumitem}  
\usepackage[margin=1in]{geometry}
\usepackage{xcolor}
\usepackage[authoryear,round,sort&compress]{natbib}
\definecolor{linkcol}{rgb}{0,0,0.55}
\definecolor{citecol}{rgb}{0,0.45,0}
\definecolor{urlcol}{rgb}{0.55,0,0}
\usepackage[colorlinks=true,linkcolor=linkcol,citecolor=citecol,urlcolor=urlcol]{hyperref}
\usepackage{booktabs}
\usepackage{array}
\usepackage{graphicx}

\usepackage{libertinus}
\usepackage[libertine]{newtxmath}
\usepackage[T1]{fontenc}
\usepackage{microtype}

\newif\ifrevisionmarkup
\revisionmarkuptrue
\definecolor{revblue}{RGB}{0,70,180}

  {\ifrevisionmarkup\color{revblue}\fi}{}

\def\E{{\mathbb E}}

\newcommand{\db}{d_{\mathcal{B}}}

\newcommand{\norm}[1]{\left\lVert #1 \right\rVert}

\newcommand{\epsK}{\epsilon_K}

\newtheorem{theorem}{Theorem}[section]
\newtheorem{proposition}{Proposition}[section]
\newtheorem{corollary}{Corollary}[section]
\newtheorem{lemma}{Lemma}[section]
\newtheorem{definition}{Definition}[section]
\newtheorem{assumption}{Assumption}[section]
\newtheorem{remark}{Remark}[section]
\newtheorem{example}{Example}[section]

\numberwithin{equation}{section}

\title{Statistical Inference for Persistence Diagrams via\\
Landmark Embeddings: Minimax Theory and\\
Finite Approximation}

\author{Pramita Bagchi, Sushovan Majhi, Atish Mitra, \v{Z}iga Virk}

\date{}

\begin{document}

\maketitle

\begin{abstract}
Hilbert-space embeddings enable inference for populations of
persistence diagrams, but separation between individual diagrams need
not survive population averaging. We develop a framework for inference
on population mean embeddings, with particular attention to the
additive landmark representations PLACE and PALACE. Treating each
diagram as one independent observation, we apply Hilbert-space limit
theory to obtain covariance estimators, two-sample tests, and
confidence balls under suitable moment conditions, without requiring a
lower-distortion bound. For additive embeddings, we identify the
population mean as an embedding of the mean counting measure and show
that geometric separation of these measures alone cannot guarantee
uniform testing power. We then introduce a model with latent template
diagrams, missing features, and location perturbations. Under common
or feature-specific prevalence conditions, a diagram-level
lower-distortion certificate yields explicit lower bounds on
population mean separation. These margins provide finite-sample
uniform power guarantees, and an additional information-divergence
comparison gives matching sample-complexity bounds over restricted
scale ranges. Confidence sets yield lower bounds on transport
separation of population mean measures and exclusion guarantees for
specified structured alternatives. We also quantify how orthogonal
truncation changes the certified signal and the approximation
allowance needed for confidence sets targeting the full embedding,
relating sample size, retained coordinates, and template separation.
Simulations examine calibration, power, and coverage, and an analysis
of resting-state connectivity from the Autism Brain Imaging Data
Exchange illustrates the procedures.
\end{abstract}

\section{Introduction}
\label{sec:introduction}

Persistence diagrams summarize the topology of a filtered data object.
Each point records the birth and death of a topological feature as the
filtration parameter changes. Their stability under perturbations makes
them useful summaries of geometric structure \citep{Cohen-Steiner2007}.
In statistical applications, each observational unit may contribute a
diagram constructed from a point cloud, image, or network. Our motivating
application uses the Autism Brain Imaging Data Exchange (ABIDE), a
multisite collection of neuroimaging and phenotypic data
\citep{di2014autism}. We compare subject-level persistence diagrams
between participants with autism spectrum disorder and typically
developing controls.

Inference for diagram populations requires accounting for the geometry
of the observations. Under the bottleneck distance, persistence diagrams
have no natural addition or scalar multiplication. A population mean
therefore cannot be defined by ordinary averaging. Fr\'echet means
provide a metric-space alternative, although they may be nonunique and
difficult to compute \citep{Mileyko2011,Turner2014}. Another approach maps
each diagram into a linear space. Persistence landscapes, persistence
images, and kernel representations permit statistical analysis of the
resulting functions or vectors
\citep{Bubenik15,Adams2017,Reininghaus2015,Kusano2016}.

An upper stability bound controls how much a representation changes when
a diagram is perturbed. It does not guarantee that well-separated
diagrams remain separated after transformation. Landmark embeddings
with lower-distortion bounds address this loss of geometric information
\citep{Mitra2021,Mitra2024,PLACE,PALACE}. Their implications for population
inference require a further argument. A lower bound for the distance
between two embedded diagrams need not survive averaging over their
respective populations. We study the conditions under which such a bound
produces a detectable population signal.

Let $\mathcal D_n$ denote the space of persistence diagrams with at most
$n$ off-diagonal points in a bounded birth--death region. For a fixed
configuration $\nu$, consider a Borel map
$\Psi_\nu:\mathcal D_n\to\mathcal H_\nu$ into a separable Hilbert space.
Given independent diagram samples
$X_1,\ldots,X_m\sim P$ and $Y_1,\ldots,Y_{m'}\sim Q$, our inferential
target is
$$
\Delta_\nu^\Psi(P,Q)
:=
\left\|
\mathbb E_P\{\Psi_\nu(X)\}
-
\mathbb E_Q\{\Psi_\nu(Y)\}
\right\|_{\mathcal H_\nu}.
$$
Each sample consists of independent, identically distributed diagrams.
The persistence points within a diagram may be dependent. Thus $m$ and
$m'$ count independent diagrams, and the number of points within a
diagram does not increase the sample size. The procedures test equality
of population mean embeddings. Distinct diagram laws can have equal
mean embeddings, so these tests are not generally omnibus tests of
$P=Q$.

The geometric results use an additional lower-distortion certificate.
For admissible scales $t$ and diagram pairs satisfying the relevant
construction conditions, this certificate has the form
$$
d_B(D,E)\geq t
\quad\Longrightarrow\quad
\|\Psi_\nu(D)-\Psi_\nu(E)\|_{\mathcal H_\nu}
\geq \rho_-(t;\nu)>0.
$$
The admissible scales and pair restrictions depend on the embedding.
The original Mitra--Virk construction uses landmarks in diagram space.
PLACE pools point responses across prescribed scales, while PALACE uses
an adaptive landmark configuration. The latter constructions impose
coherence or non-interference conditions for their lower bounds.
Section~\ref{sec:embedding} states these restrictions and distinguishes
the statistical results that apply to each map.

For the additive PLACE and PALACE maps, the population mean has a
counting-measure interpretation. Write the embedding as
$\Phi(D;\nu)=T_\nu[\widetilde N_D]$, where $\widetilde N_D$ is the
diagram's counting measure padded to mass $n$ by an absence state.
The absence state has zero embedding response. Linearity gives
$\mathbb E_P\{\Phi(X;\nu)\}=T_\nu[\overline\mu_P]$, where
$\overline\mu_P=\mathbb E_P\widetilde N_X$ is the padded mean counting
measure. This measure records expected feature counts across the
birth--death region. If the point response is Lipschitz with constant
$L_\nu$, then
$$
\Delta_\nu(P,Q)
\leq
nL_\nu W_{\infty,0}(\overline\mu_P,\overline\mu_Q).
$$
Here $\Delta_\nu$ denotes the signal for the additive map, and
$W_{\infty,0}$ is the transport distance defined using a ground metric
that accounts for matching to the diagonal. A positive mean-embedding
signal therefore gives a lower bound on transport separation of the
mean measures. The original Mitra--Virk map is nonadditive, so its mean
embedding requires a separate population interpretation.

The reverse implication fails without restrictions on the populations.
A fixed $W_{\infty,0}$ discrepancy can be carried by an arbitrarily
small amount of expected mass. In that case, the mean-embedding signal
can tend to zero while the transport separation stays fixed.
Cancellation among point responses can also reduce the signal.
The rare-mass example in Section~\ref{sec:embedding} shows that
geometric separation alone does not yield a finite uniform sample
complexity.

We obtain sufficient conditions through a
template--thinning--perturbation model. Each population has a latent
template diagram. A template feature appears with a specified
probability, and its observed location may be perturbed. Feature
appearances and perturbations within a diagram may have arbitrary
dependence. The model allows variable cardinality and does not require
observed feature labels or matching across subjects.

Two prevalence regimes give explicit population bounds. Under common
scalar prevalence $p$, the weighted template response is $p$ times the
complete template response. Under feature-specific prevalence, a
restricted response condition controls cancellation caused by
reweighting. Writing
$s=d_B(D_P^\star,D_Q^\star)$ for template separation, the bounds take
the forms
$$
\Delta_\nu(P,Q)
\geq
p\left[\rho_-(s;\nu)-\mathcal E_{\mathrm{com}}(P,Q)\right]_+
$$
and
$$
\Delta_\nu(P,Q)
\geq
\left[
\eta_{P,Q}\rho_-(s;\nu)
-
\mathcal E_{\mathrm{het}}(P,Q)
\right]_+.
$$
The perturbation penalties depend on feature prevalence and expected
movement from the templates. The factor $\eta_{P,Q}$ measures the
separation retained under heterogeneous prevalence. Both results
require a certified template pair. They show when geometric separation
remains detectable after missing features and location variability are
accounted for. The geometric scale here concerns the latent templates;
it is distinct from transport separation of the population mean
measures.

The probability theory applies more broadly than this population model.
For a fixed Hilbert-space map, we establish the strong law and central
limit theorem under the corresponding moment conditions. We also give
trace-norm consistency of the empirical covariance operator. A
Berry--Esseen bound of order $m^{-1/2}$ holds for each fixed
finite-dimensional projection. Its constant depends on projected rank
and standardized third moments. These results support Gaussian
plug-in and multiplier calibration of mean-based tests and confidence
balls. Their derivation uses standard Hilbert-space probability
\citep{Bentkus2003}. Additivity and lower distortion
are needed for the population interpretations and geometric power
bounds.

For two-sample testing, we use the norm of the empirical mean
difference. A finite-sample bound relates its power to
$\Delta_\nu^\Psi(P,Q)$ and a covariance trace envelope $v^2$.
Uniform restrictions on prevalence, response conditioning, and
perturbation yield structured alternative classes
$\mathfrak A_r(\tau)$, indexed by a minimum template separation $\tau$.
For each regime $r$, these restrictions give an explicit margin
$g_r(\tau)$ such that
$\Delta_\nu(P,Q)\geq g_r(\tau)$ throughout the class.
For fixed Type I and Type II error targets, a sufficient effective
sample size is
$$
N_{\mathrm{eff}}
:=
\frac{mm'}{m+m'}
\gtrsim
\frac{v^2}{g_r(\tau)^2}.
$$

A matching minimax lower bound requires an information comparison
within the same diagram-law classes. We formulate this requirement
through the smallest Kullback--Leibler divergence between a structured
alternative and the mean-embedding null. When this divergence is at
most a constant multiple of $g_r(\tau)^2$, the minimax sample
complexity is of order $g_r(\tau)^{-2}$. Verifying the comparison
requires a construction-specific family of diagram distributions.
The lower-distortion certificate alone does not imply it.

Confidence balls quantify uncertainty about a population mean
embedding or a two-population mean contrast. For additive maps, the
upper Lipschitz bound converts these sets into lower confidence bounds
on transport separation of padded mean measures. An upper confidence
bound on the mean signal can also exclude a proposed structured
alternative class. These statements retain their distinct roles:
sampling uncertainty determines confidence coverage, while embedding
bounds provide the geometric interpretation. We also obtain
nearest-centroid decision certificates that account for centroid
estimation and bottleneck perturbations of a query diagram.
Regression extensions describe inference and prediction stability
when a finite representation is fixed.

Finite-coordinate implementations require a choice of inferential
target. A fixed projection defines its own mean embedding, for which
ordinary finite-dimensional inference applies. Statements about the
full embedding must also account for the omitted coordinates. Let
$\Phi_K=\Pi_K\Phi$, where $\Pi_K$ is an orthogonal projection, and set
$\epsilon_K=\sup_D\|\Phi(D;\nu)-\Phi_K(D;\nu)\|$.
Orthogonality gives the retained structured margin
$$
g_{r,K}(\tau)
=
\left[g_r(\tau)^2-4\epsilon_K^2\right]_+^{1/2}.
$$
Consequently, the sufficient sample-size bound becomes
$$
N_{\mathrm{eff}}
\gtrsim
\frac{v^2}{g_r(\tau)^2-4\epsilon_K^2},
\qquad
2\epsilon_K<g_r(\tau).
$$
This bound relates the number of observed diagrams to the
approximation error and the template separation to be resolved.
A zero lower bound is inconclusive, since the uniform approximation
bound may be conservative.

The embedding configuration is fixed independently of the inferential
sample throughout. A configuration learned from independent reference
data can be treated conditionally as fixed. Learning landmarks or
selecting coordinates from the inferential observations requires
accounting for that selection. The numerical strength of the
geometric guarantees also depends on the construction constants,
which can deteriorate with maximal diagram cardinality.
We examine the finite-sample behavior of the procedures in simulations.
The ABIDE application compares diagnostic groups using the participant
as the sampling unit.

\paragraph{Related work.}
Statistical inference using persistence landscapes and other
functional summaries predates the present framework
\citep{Bubenik15,Wasserman2018,Ali2023-ht}. Kernel methods also permit
mean-embedding comparisons and maximum mean discrepancy testing
\citep{Reininghaus2015,Kusano2016}. Characteristicness ensures that a
kernel mean identifies the underlying distribution. It does not by
itself specify a quantitative relationship between statistical signal
and bottleneck separation. Our analysis studies such a relationship
under explicit population restrictions. Inference based directly on
diagram distances and Fr\'echet summaries provides another approach
\citep{Chazal2014,Fasy2017,Mileyko2011,Turner2014}. The additive
representations studied here permit a linear analysis of population
mean counting measures.

STRAND \citep{STRAND} addresses a related two-sample problem using
survival-analysis methods for persistence values. It pools feature
lifetimes and uses a log-rank statistic. Our procedures retain the
joint birth--death response represented by the embedding and estimate
sampling variability across diagrams. They can therefore respond to
differences in feature location that are not captured by lifetimes
alone. PLACE and PALACE \citep{PLACE,PALACE} study landmark
representations and their classification properties. The present
paper develops population inference for fixed maps and identifies
conditions under which their geometric bounds imply detectable mean
differences.

\paragraph{Organization of the paper.}
Section~\ref{sec:diagrams} introduces persistence diagrams and their
counting-measure representation.
Section~\ref{sec:embedding} describes the embedding assumptions and
proves the structured population bounds.
Section~\ref{sec:probability} develops the probability theory, and
Section~\ref{sec:testing} studies two-sample testing and minimax bounds.
Section~\ref{sec:confidence-certificates} gives confidence sets and
geometric certificates.
Section~\ref{sec:finite-approximation} studies finite approximation,
and Section~\ref{sec:extensions} records regression and classification
extensions. Sections \ref{sec:numerical-illustrations} and \ref{sec:abide} present the simulations and ABIDE
analysis, respectively. Section~\ref{sec:discussion} concludes.

\section{Persistence Diagrams as Statistical Objects}
\label{sec:diagrams}

This section is intended for readers familiar with probability on metric
spaces but not necessarily with persistent homology.  We introduce the diagram space and its bottleneck metric, then define random diagrams and their mean counting measures. The construction background is limited to what is needed for the statistical analysis.

\subsection{From filtered data to persistence diagrams}
\label{sec:pd-construction}

Let $\{K_r\}_{r\geq 0}$ be a filtration, meaning that
$K_r\subseteq K_s$ whenever $r\leq s$. A filtration may be constructed from a
point cloud, weighted graph, image, or another structured observation. For a
point cloud, for example, one may form a Vietoris--Rips or \v{C}ech complex as
a distance threshold increases. For a weighted graph, edges may be added
according to their weights and a clique complex constructed at each threshold.
Persistent homology records how the topological features of $K_r$ evolve with
the filtration parameter $r$; see \cite{Cohen-Steiner2007} and
\cite{Chazal2017DTM} for background.

Fix a homological degree $q$ and a coefficient field $\mathbb F$.
To describe the construction, consider a filtration of finite
simplicial complexes with finitely many changes. The homology groups
and inclusion-induced maps form a persistence module, which decomposes
into interval components. Each interval with finite endpoints $b<d$
contributes a point $(b,d)$ to the persistence diagram. These endpoints
record the birth and death of the corresponding feature. In degree
zero, a finite death time records the merging of connected components.
In degree one, a loop may die when it becomes the boundary of a filled
region.

The persistence of a point $(b,d)$ is $d-b$. Its $\ell_\infty$
distance to the diagonal is $(d-b)/2$, which is also the cost of
deleting that point in the bottleneck metric defined below.
Near-diagonal features can therefore be removed at small bottleneck
cost. Persistence alone does not determine whether a feature is noise
or scientifically meaningful.

Figure~\ref{fig:pd-construction} illustrates the construction for a
simulated point cloud. The central loop produces a degree-one
persistence point. The statistical analysis begins with the resulting
diagram as the observed data object.

\begin{figure}[hbt]
  \centering
  \includegraphics[width=.95\textwidth]{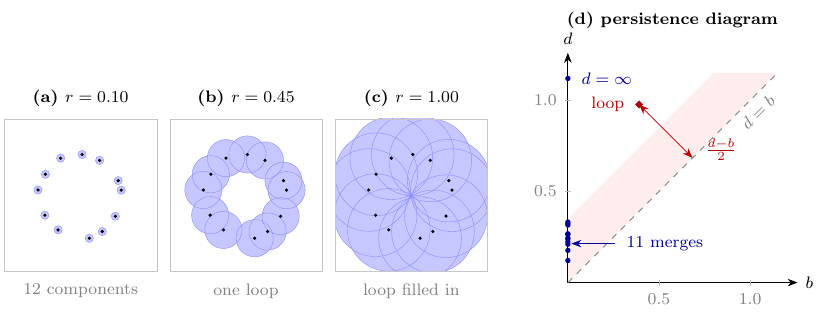}
 \caption{\textbf{From a point cloud to a persistence diagram.}
(a)--(c) Unions of balls around $12$ points sampled with noise from
a circle. At radius $r=0.10$, the balls are disjoint. By $r=0.45$,
they form one connected component enclosing a loop. By $r=1.00$,
the loop has filled in.
(d) The diagram displays degree-zero and degree-one features.
The eleven finite degree-zero classes are born at $r=0$ and die
by $r=0.33$. The loop is born at $r=0.39$ and dies at $r=0.98$.
One connected component persists indefinitely and is shown at
$d=\infty$. This essential class is excluded from the finite
diagrams used for inference. The shaded band contains points below
a fixed persistence threshold. A point $(b,d)$ can be deleted at
bottleneck cost $(d-b)/2$; membership in the band does not by itself
establish that the feature is noise.}
  \label{fig:pd-construction}
\end{figure}

Throughout the paper, persistence diagrams are restricted to a bounded
birth-death region. Fix $0 < L<\infty$ and define

$$
\mathbb{H}_L:=\bigl\{(b,d)\in[0,L]^2:b<d\bigr\},\qquad\mathsf{Diag}:=\bigl\{(t,t):0\leq t\leq L\bigr\}.
$$

The set $\mathbb{H}_L$ is the bounded off-diagonal persistence region, and
$\mathsf{Diag}$ is its geometric diagonal. We restrict attention to finite
off-diagonal features; essential classes with infinite death times are
excluded before analysis.

\begin{definition}[Persistence diagram]
\label{def:persistence-diagram}
A persistence diagram $D$ is a finite multiset of points in $\mathbb{H}_L$. Multiplicity
is retained: if distinct topological features have the same birth and death
values, the corresponding point appears more than once. For a fixed integer
$n\geq 1$, let

$$
\mathcal{D}_n := \bigl\{ D: D\text{ is a persistence diagram and }|D|\leq n
\bigr\}.
$$
\end{definition}

The bounded-cardinality restriction is substantive. The full space of
persistence diagrams with unbounded cardinality does not coarsely embed into a
Hilbert space \citep{BubenikWagner2020}. The cardinality bound enters the lower-distortion results and the
uniform bounds for additive embeddings used later. The associated
constants may deteriorate as $n$ increases. In applications, a common
bound may follow from the filtration or be imposed through
preprocessing. If only the most persistent features are retained,
the selection rule and its treatment of ties must be fixed across
observations. The inferential statements then concern the processed
diagrams. Hard selection by persistence can be discontinuous near
ties, so stability with respect to the original diagrams requires
separate control of that selection step.

\subsection{Bottleneck geometry}
\label{sec:bottleneck}

For $x=(b,d)\in \mathbb{H}_L$, define its distance to the diagonal by

$$
\delta_{\mathsf{Diag}}(x) := \inf_{z\in\mathsf{Diag}}\|x-z\|_\infty
=\frac{d-b}{2}.
$$

The bottleneck distance compares two diagrams by matching their points.
Features that are not matched across diagrams are instead matched to the
diagonal, with cost equal to one-half of their persistence.

Let $D,E\in\mathcal{D}_n$. A partial matching $M$ between $D$ and $E$ is a
collection of pairs $(x,y)\in D\times E$ such that each point, including its
multiplicity, appears in at most one pair. Write $\operatorname{dom}(M)$ and
$\operatorname{ran}(M)$ for the matched points of $D$ and $E$, respectively.

\begin{definition}[Bottleneck distance]
\label{def:bottleneck-distance}
The bottleneck distance between $D$ and $E$ is
\begin{align}
d_B(D,E)
:= \inf_M
\max\Bigg\{ \max_{(x,y)\in M}\|x-y\|_\infty, \max_{x\in D\setminus\operatorname{dom}(M)}\delta_{\mathsf{Diag}}(x),
\max_{y\in E\setminus\operatorname{ran}(M)}
\delta_{\mathsf{Diag}}(y)
\Bigg\},
\label{eq:bottleneck-partial-matching}
\end{align}
where the infimum is taken over all partial matchings $M$ between $D$ and $E$.
The maximum over an empty set is defined to be zero.
\end{definition}



Three aspects of Definition~\ref{def:bottleneck-distance} are important.
First, $d_B$ is a maximum rather than an aggregate: a single feature that
cannot be matched at low cost determines the distance even when the remaining
features agree closely, while many short-lived differences have little effect
if they can be matched near the diagonal. Second, displacements are measured
in $\ell_\infty$, so moving both coordinates of a point by at most $\epsilon$
incurs cost at most $\epsilon$. Third, matching to the diagonal allows
diagrams of different cardinalities to be compared, at cost equal to one-half
of a feature's persistence; equivalently, one may adjoin the diagonal with
infinite multiplicity and minimize, over all bijections, the largest
displacement.

This maximum-matching geometry is appropriate for the stability theory of
persistent homology \citep{Cohen-Steiner2007}, but it differs substantially
from Euclidean or aggregate discrepancies between fixed-dimensional vectors.
In particular, $d_B$ measures the largest geometric discrepancy but does not
quantify how many persistence features differ. This distinction motivates the
separate control of geometric separation and differing population mass in the
population lower-bound analysis.

We equip $\mathcal{D}_n$ with the Borel $\sigma$-field generated by the
bottleneck metric $d_B$. This provides the measurable structure needed to
treat persistence diagrams as random elements.

\begin{lemma}[Compactness]
\label{lem:Dn-polish}
The metric space $(\mathcal D_n,d_B)$ is compact. In particular,
it is complete and separable.
\end{lemma}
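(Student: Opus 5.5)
Since every compact metric space is complete and separable, it suffices to prove compactness, and we will do this by showing that $(\mathcal D_n,d_B)$ is sequentially compact. The plan is to encode diagrams as points of a compact parameter space and show that the encoding map is continuous and surjective. Let $\overline{\mathbb H}_L:=\{(b,d)\in[0,L]^2:b\le d\}$, which is compact. Consider $Z:=(\overline{\mathbb H}_L)^n$ with the metric $\max_i\|x_i-y_i\|_\infty$. Define $\pi:Z\to\mathcal D_n$ by sending $(x_1,\dots,x_n)$ to the multiset of those $x_i$ lying off the diagonal. Diagonal entries play the role of absent points, which matches the padding convention used later. Every $D\in\mathcal D_n$ is in the image: list its at most $n$ points and pad the list with diagonal points.

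The key estimate is that $\pi$ is $1$-Lipschitz:
\[
d_B\bigl(\pi(x),\pi(y)\bigr)\le \max_i\|x_i-y_i\|_\infty .
\]
To prove it, we take the index-wise matching. When $x_i$ and $y_i$ are both off-diagonal, we match them at cost $\|x_i-y_i\|_\infty$. When exactly one of them, say $x_i$, is off-diagonal, we leave it unmatched. Its cost is then $\delta_{\mathsf{Diag}}(x_i)\le\|x_i-y_i\|_\infty$, because $y_i\in\mathsf{Diag}$. Pairs with both entries diagonal contribute nothing. Taking the maximum over all contributions gives the bound in Definition~\ref{def:bottleneck-distance}.

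Since $\pi$ is a continuous surjection from a compact space, its image $\mathcal D_n$ is compact. Concretely, given any sequence $D_k$, we choose preimages $z_k\in Z$, extract a convergent subsequence $z_{k_j}\to z$, and conclude that $D_{k_j}\to\pi(z)$ in $d_B$.

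The main obstacle is a boundary issue rather than the Lipschitz estimate. The definition of $\mathbb H_L$ requires $b<d$, and a limit point $z$ may have coordinates on the diagonal. Our encoding absorbs these as deletions, so $\pi(z)$ still has all its points in $\mathbb H_L$ and cardinality at most $n$. It therefore lies in $\mathcal D_n$, and the limit does not leave the space. The same remark covers points approaching the diagonal: they converge to the empty contribution at bottleneck cost tending to zero. We should also confirm that $d_B$ is a genuine metric on $\mathcal D_n$, which the paper implicitly assumes. In particular, $d_B(D,E)=0$ forces $D=E$, because there are finitely many points, each at positive distance from the diagonal, and the infimum over matchings is attained since there are only finitely many of them.
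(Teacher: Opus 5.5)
Your proposal is correct and is essentially the paper's own proof: the paper also maps $C_L^n=\{(b,d)\in[0,L]^2:b\le d\}^n$ onto $\mathcal D_n$ by discarding diagonal coordinates, proves the map is $1$-Lipschitz using the index-wise matching, and concludes that $\mathcal D_n$ is compact as a continuous image of a compact set. Your added remarks, that diagonal limit points act as deletions and that $d_B$ separates points because the infimum over finitely many matchings is attained, are correct supplements that the paper leaves implicit.
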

\noindent
The proof is given in Appendix~\ref{app:Dn-polish}. The ``at most $n$''
convention is important because a bottleneck-Cauchy sequence may lose
off-diagonal points as they approach the diagonal. Separability follows by
approximating the remaining off-diagonal points by points with rational
coordinates.

\subsection{Random diagrams and counting measures}
\label{sec:random-diagrams}
\label{sec:padded-mean-measures}

\begin{definition}[Random persistence diagram]
\label{def:random-diagram}
Let $(\Omega,\mathcal{F},\mathbb{P})$ be a probability space. A random
persistence diagram is a measurable map

$$ X:\Omega\rightarrow\mathcal{D}_n.$$
Its distribution is the probability measure
$P=\mathbb{P}\circ X^{-1}$ on $\mathcal{D}_n$, and we write $X\sim P$.
\end{definition}


Here the diagram is the observational unit, and its persistence points are
components of a single structured observation. The law $P$ represents between-unit variation in the observed diagrams, including variation arising from the underlying structured objects,
sampling, measurement, and preprocessing. In the ABIDE application, for
example, $P$ describes the population distribution of subject-level
persistence diagrams within a diagnostic group.

A diagram $D$ may also be represented by its counting measure
$$N_D:=\sum_{x\in D}\delta_x.$$
The counting measure is a faithful representation of the off-diagonal
persistence diagram: it retains every point and its multiplicity and therefore
uniquely determines $D$. This measure is not normalized by $|D|$, because diagram cardinality may itself contain information. Normalization would remove this information; for example, duplicating every point of a diagram would leave its normalized counting measure unchanged.

For every Borel set $B\subseteq\mathbb H_L$, the map
$D\mapsto N_D(B)$ is Borel measurable and takes values in
$\{0,\ldots,n\}$. For open $B$, measurability follows from lower
semicontinuity under bottleneck convergence. A monotone-class argument
extends the conclusion to all Borel sets. For a probability distribution $P$ on
$\mathcal{D}_n$, define its mean counting measure by

$$ \Lambda_P(B) := \mathbb{E}_P\bigl\{N_X(B)\bigr\} = \mathbb{E}_P
\left\{ \sum_{x\in X}\mathbf{1}_{\{x\in B\}} \right\}, \qquad
B\in\mathcal{B}(\mathbb{H}_L). $$
The measure $\Lambda_P$ records the expected persistence-point intensity in
each measurable region of the birth-death plane. Its total mass is
$$ \Lambda_P(\mathbb{H}_L) = \mathbb{E}_P{|X|},$$
which equals the expected diagram cardinality. Unlike the counting measure of
an individual diagram, $\Lambda_P$ need not be integer-valued and need not
itself correspond to a persistence diagram. The mean counting measure does not determine the diagram law.
For example, suppose $n\geq2$ and choose distinct
$a,b\in\mathbb H_L$. Under $P$, let the diagram be $\{a\}$ or
$\{b\}$ with equal probability. Under $Q$, let it be the empty
diagram or $\{a,b\}$ with equal probability. Then
$$
\Lambda_P=\Lambda_Q=\tfrac12\delta_a+\tfrac12\delta_b,
$$
although the cardinality distributions differ. This distinction
limits what can be identified from an additive mean embedding.




Because diagrams may have different cardinalities, their counting
measures need not have the same total mass. Introduce an absence state
$\varnothing$ and write
$$
\mathbb H_L^\varnothing:=\mathbb H_L\cup\{\varnothing\},
\qquad
\widetilde N_D:=N_D+(n-|D|)\delta_\varnothing.
$$
Every padded counting measure has total mass $n$. The additive maps
introduced in Section~\ref{sec:embedding} assign zero response to
$\varnothing$, so padding does not change their values.

For $x,y\in\mathbb H_L$, define
$$
d_0(x,y):=\min\left\{\|x-y\|_\infty,\,
\max\{\delta_{\mathsf{Diag}}(x),
      \delta_{\mathsf{Diag}}(y)\}
\right\}.
$$
Also set
$$
d_0(x,\varnothing)=d_0(\varnothing,x)=\delta_{\mathsf{Diag}}(x),
\qquad
d_0(\varnothing,\varnothing)=0.
$$
This is the bottleneck metric on the space of one-point diagrams
and the empty diagram. The first term in the minimum represents
a direct match. The second represents deleting both points.
Thus an off-diagonal transport pair can encode deletion even when
neither padded measure has mass at $\varnothing$.

For finite nonnegative measures $\alpha$ and $\gamma$ with the same
positive total mass, let $\Gamma(\alpha,\gamma)$ denote their
couplings. Define
$$
W_{\infty,0}(\alpha,\gamma):=\inf_{\pi\in\Gamma(\alpha,\gamma)}
\|d_0\|_{L^\infty(\pi)}.
$$
The quantity $\|d_0\|_{L^\infty(\pi)}$ is the essential supremum
of the transport cost under $\pi$. Scaling both measures by the
same positive constant leaves this distance unchanged.

\begin{remark}[Bottleneck distance as transport distance]
\label{rem:bottleneck-transport}
For every $D,E\in\mathcal D_n$,
$$
d_B(D,E)=W_{\infty,0}(\widetilde N_D,\widetilde N_E).
$$
A coupling can represent direct matches, deletion of paired
off-diagonal points, or deletion through the absence state.
Appendix~\ref{app:padded-transport} proves the identity.
\end{remark}

The embedding maps introduced in Section~\ref{sec:embedding} are
Borel measurable. Consequently, $\Psi_\nu(X)$ is a well-defined
Hilbert-space-valued random element whenever $X$ is a random diagram.

\subsection{Statistical implications of diagram geometry}
\label{sec:direct-inference-obstructions}

The space $\mathcal D_n$ is not a vector space. Averaging point
coordinates requires a choice of correspondence across diagrams.
Fr\'echet means provide diagram-valued summaries, although they
need not be unique \citep{Mileyko2011,Turner2014}. The mean counting
measure $\Lambda_P$ is always uniquely defined and describes
population-average feature intensity.

Persistence points within a diagram may be dependent. Treating all
points as independent observations can misrepresent uncertainty and
change the weighting of subjects when cardinalities vary. We therefore
formulate sampling and inference at the diagram level. The
counting-measure representation retains multiplicities and accommodates
variable cardinality without requiring observed feature labels.

For a fixed measurable Hilbert-space representation, mean-based
inference is available under suitable moment conditions. Geometric
power guarantees require additional information. An upper stability
bound controls changes in the representation under small diagram
perturbations. It does not ensure that separated diagrams have
well-separated representations, and a diagram-level lower bound
need not survive population averaging.

Section~\ref{sec:embedding} states the assumptions for general
fixed-map inference and the additional structure used by additive
embeddings. It then introduces the template--thinning--perturbation
model and gives sufficient conditions for geometric separation
to produce a nonzero population mean signal.

\section{Landmark Embeddings as an Interface}
\label{sec:embedding}

Section~\ref{sec:diagrams} identified four features of diagram space that
block standard inference: no linear structure, matching-dependent geometry,
variable cardinality, and no stable feature labels. This section converts
each obstacle into a requirement on a map and states the theory through
those requirements rather than through a construction. Two tiers of
structure are kept separate: a minimal fixed-map level
(Assumptions~\ref{ass:fixed-hilbert-embedding}--\ref{ass:uniformly-bounded-embedding}),
which suffices for all limit theory and two-sample
procedures and covers every construction we consider, and an additive
interface (Assumptions~\ref{ass:additive-interface}--\ref{ass:finite-approximation}),
which identifies population mean
embeddings with embeddings of mean measures and supports the population
analysis. The distinction matters most where the theory is hardest: the
diagram-level distortion certificate does not by itself transfer to
populations (Example~\ref{ex:pop-obstruction}), and the structure that makes the transfer hold
is the subject of Sections~\ref{sec:probability}--\ref{sec:testing}.

\subsection{Fixed Hilbert embeddings and the additive interface}
\label{sec:additive-interface}

We begin with the minimal structure needed for inference after embedding.

\begin{assumption}[Fixed Hilbert embedding]
\label{ass:fixed-hilbert-embedding}
For a configuration $\nu$, let
$$
\Psi_\nu:\mathcal D_n\longrightarrow\mathcal H_\nu
$$
be a Borel map into a separable real Hilbert space
$\mathcal H_\nu$. The configuration $\nu$ is fixed independently of the
observations used for inference.
\end{assumption}

The configuration may be selected using an independent pilot sample. In
that case, all probability statements are understood conditionally on the
pilot sample and its realized configuration.

For probability laws $P$ and $Q$ on $\mathcal D_n$, let $X\sim P$ and $Y\sim Q$. Whenever the corresponding Bochner expectations exist, define the population mean-embedding separation by 
\begin{equation} \label{eq:general-population-signal} 
\Delta_\nu^\Psi(P,Q) := \left\| \mathbb E_P\{\Psi_\nu(X)\} - \mathbb E_Q\{\Psi_\nu(Y)\} \right\|_{\mathcal H_\nu}. 
\end{equation} 
The required integrability conditions are stated with the results that use them. Existence of the population mean requires first moments, whereas the Hilbert-space central limit theorem, covariance estimation, and quantitative Gaussian approximation results require the stronger moment conditions stated in the corresponding sections. Some of the finite-sample and quantitative approximation results require the
following stronger boundedness condition.

\begin{assumption}[Uniform boundedness of the embedding]
\label{ass:uniformly-bounded-embedding}
For the fixed configuration $\nu$,
$$B_\nu := \sup_{D\in\mathcal D_n} \|\Psi_\nu(D)\|_{\mathcal H_\nu}<\infty.$$
\end{assumption}

\noindent The original Mitra -- Virk map and the fixed PLACE and PALACE maps satisfy this formulation. The remainder of this subsection introduces the additive structure available for the pooled PLACE and PALACE point-response maps.

Recall the padded state space
$\mathbb H_L^{\varnothing}=\mathbb H_L\cup\{\varnothing\}$ and padded counting
measure
$$ \widetilde N_D=N_D+(n-|D|)\delta_{\varnothing},$$
which has total mass $n$ for every $D\in\mathcal D_n$. The absence state has zero response. Section~\ref{sec:padded-mean-measures} defines the ground metric
$d_0$ and the associated transport distance $W_{\infty,0}$.
Appendix~\ref{app:padded-transport} proves the transport
representation. In particular,
\begin{equation}
\label{eq:bottleneck-padded-transport}
d_B(D,E) = W_{\infty,0}(\widetilde N_D,\widetilde N_E).
\end{equation}

\begin{assumption}[Additive embedding interface]
\label{ass:additive-interface}
A Borel map $\Phi(\,\cdot\,;\nu):\mathcal D_n\to\ell_2$ satisfies the additive
interface if:
\begin{enumerate}
\item[(E1)] \textbf{Additive mean-measure representation.}
There is a Borel map
$\varphi_\nu^0:\mathbb H_L^{\varnothing}\to\ell_2$ with
$\varphi_\nu^0(\varnothing)=0$ such that
\begin{equation}
\label{eq:additive-embedding-representation}
\Phi(D;\nu)=T_\nu[\widetilde N_D], \qquad
T_\nu[\mu] :=
\int_{\mathbb H_L^{\varnothing}}\varphi_\nu^0(x)\,d\mu(x).
\end{equation}

\item[(E2)] \textbf{Point-level Lipschitz continuity.}
For a constant $L_\nu \in (0,\infty)$,
\begin{equation}
\label{eq:point-map-bounds}
\|\varphi_\nu^0(x)-\varphi_\nu^0(y)\|_2 \leq L_\nu d_0(x,y),
\qquad x,y\in\mathbb H_L^{\varnothing}.
\end{equation}
Since the padded state space is bounded, this also gives
$\kappa_\nu:=\sup_x\|\varphi_\nu^0(x)\|_2<\infty$.

\end{enumerate}
\end{assumption}

\begin{assumption}[Finite approximation]
\label{ass:finite-approximation}
For the fixed embedding
$\Psi_\nu:\mathcal D_n\to\mathcal H_\nu$, there exist finite-rank orthogonal
projections $\Pi_K$ such that
\begin{equation}
\label{eq:uniform-truncation-error}
\epsilon_K^\Psi
:=
\sup_{D\in\mathcal D_n}
\|\Psi_\nu(D)-\Pi_K\Psi_\nu(D)\|_{\mathcal H_\nu}
\longrightarrow0.
\end{equation}
For an additive landmark embedding whose coordinates are partitioned into
ordered finite blocks $J_1,J_2,\ldots$, the projection $\Pi_K$ may be chosen
to retain the first $K$ blocks. In that case, write
$\Phi_K=\Pi_K\Phi$ and $\epsilon_K:=\epsilon_K^\Phi$. For a
finite-dimensional embedding, the approximation
error is zero once all coordinate blocks have been retained.
\end{assumption}

\begin{assumption}[Additional diagram-level lower-distortion certificate]
\label{ass:diagram-distortion-floor}
For the fixed embedding $\Psi_\nu$, there exist a resolution threshold $R_0(\nu)\geq 0$ and a positive and non-decreasing function $\rho_-(\,\cdot\,;\nu): \bigl(R_0(\nu),\infty\bigr)\longrightarrow(0,\infty)$ such that, for every pair $(D,E)$ satisfying the construction-specific certification conditions,  
\begin{equation} \label{eq:diagram-distortion-floor} 
d_B(D,E)\geq r  >R_0(\nu), \qquad \Longrightarrow \qquad \|\Psi_\nu(D)-\Psi_\nu(E)\|_{\mathcal H_\nu} \geq \rho_-(r;\nu)>0. \end{equation} 
\end{assumption}

The resolution threshold depends on the construction, the
embedding configuration, and the target scale. If a construction theorem gives
only an actual-separation floor $\rho_{\mathrm{raw},\nu}^{\Psi}(u)$ and that
function is not known to be nondecreasing the valid
threshold floor is the positive minorant
\[
\rho_{-,\nu}^{\Psi}(t)
:=
\inf_{\substack{u\in\mathcal I_\nu^\Psi\\u\geq t}}
\rho_{\mathrm{raw},\nu}^{\Psi}(u),
\]
provided the infimum is positive on the stated domain. If the cited theorem
already gives a threshold implication or a nondecreasing floor, that function
is used directly. Failure of the resolution threshold means only that Assumption \ref{ass:diagram-distortion-floor} is inconclusive. 

Because the embedding is fixed throughout the main inferential development,
we abbreviate $\rho_{-,\nu}^{\Psi}(t)$ by $\rho_-(t;\nu)$ in the main text.

\begin{proposition}[Consequences of the additive interface]
\label{prop:interface-consequences}
Under (E1)--(E2),
\begin{equation}
\label{eq:embedding-bounds}
\|\Phi(D;\nu)\|_2\leq n\kappa_\nu,
\qquad
\|\Phi(D;\nu)-\Phi(E;\nu)\|_2
\leq nL_\nu d_B(D,E)
\end{equation}
for all $D,E\in\mathcal D_n$.
\end{proposition}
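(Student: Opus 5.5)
The first bound follows directly from (E1). Since $\widetilde N_D$ has total mass $n$ and $\varphi_\nu^0(\varnothing)=0$, we have
\[
\Phi(D;\nu)=\sum_{x\in D}\varphi_\nu^0(x)+(n-|D|)\varphi_\nu^0(\varnothing)=\sum_{x\in D}\varphi_\nu^0(x),
\]
a sum of at most $n$ vectors, each of norm at most $\kappa_\nu$. The triangle inequality then gives $\|\Phi(D;\nu)\|_2\leq |D|\kappa_\nu\leq n\kappa_\nu$. The finiteness of $\kappa_\nu$ comes from (E2) together with $\varphi_\nu^0(\varnothing)=0$: for each $x$, $\|\varphi_\nu^0(x)\|_2\leq L_\nu d_0(x,\varnothing)=L_\nu\delta_{\mathsf{Diag}}(x)\leq L_\nu L/2$.

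For the Lipschitz bound I would pass through the transport identity \eqref{eq:bottleneck-padded-transport}, but in its discrete form. Both padded measures are sums of $n$ unit atoms, $\widetilde N_D=\sum_{i=1}^n\delta_{x_i}$ and $\widetilde N_E=\sum_{i=1}^n\delta_{y_i}$, where the lists $(x_i)$ and $(y_i)$ are padded with copies of $\varnothing$. I claim that for every $\epsilon>0$ there is a bijection $\sigma$ of $\{1,\dots,n\}$ with $\max_i d_0(x_i,y_{\sigma(i)})\leq d_B(D,E)+\epsilon$. To see this, take a partial matching $M$ achieving cost at most $d_B(D,E)+\epsilon$ in Definition~\ref{def:bottleneck-distance}. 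Pair each $(x,y)\in M$ as $x_i\mapsto y_{\sigma(i)}$, which costs $d_0(x,y)\leq\|x-y\|_\infty$. Next pair unmatched points of $D$ and unmatched points of $E$ with one another for as long as both remain, at cost $d_0(x,y)\leq\max\{\delta_{\mathsf{Diag}}(x),\delta_{\mathsf{Diag}}(y)\}$. Any leftover points are paired with absence states, at cost $\delta_{\mathsf{Diag}}$, and the remaining absence states are paired with one another at cost $0$. A counting check confirms that this works: both lists have length $n$, so the numbers of leftovers and available absence slots agree. Each pairing cost is bounded by a term already appearing in the maximum defining the cost of $M$. Alternatively, one could invoke Remark~\ref{rem:bottleneck-transport} together with the fact that for uniform discrete measures with $n$ unit atoms an optimal $W_\infty$ coupling may be taken to be a permutation (Birkhoff). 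The direct construction, however, avoids that detour.

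With $\sigma$ in hand, the additive representation gives
\[
\|\Phi(D;\nu)-\Phi(E;\nu)\|_2=\Bigl\|\sum_{i=1}^n\bigl(\varphi_\nu^0(x_i)-\varphi_\nu^0(y_{\sigma(i)})\bigr)\Bigr\|_2\leq\sum_{i=1}^n L_\nu d_0(x_i,y_{\sigma(i)})\leq nL_\nu\bigl(d_B(D,E)+\epsilon\bigr).
\]
Letting $\epsilon\downarrow0$ yields the claim. The main point requiring care is the construction of the bijection $\sigma$, that is, checking that the ground metric $d_0$ with its deletion option reproduces every case in the bottleneck cost. Once that is done, the rest is triangle inequalities.
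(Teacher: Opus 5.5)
Your proof is correct and follows the paper's route: bound $\Phi(D;\nu)-\Phi(E;\nu)$ via a coupling of the padded counting measures and the point-level Lipschitz condition, then use $W_{\infty,0}(\widetilde N_D,\widetilde N_E)\le d_B(D,E)$. The only difference is that you build the permutation coupling directly from a near-optimal partial matching instead of citing Proposition~\ref{prop:padded-transport-representation}; this is exactly how the paper proves that direction of the identity (and since there are only finitely many partial matchings the infimum is attained, so your $\epsilon$ is harmless but unnecessary).
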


Thus a nonzero additive embedding difference certifies a nonzero bottleneck
difference without Assumption \ref{ass:diagram-distortion-floor}. The lower-distortion certificate supplies the converse direction only on its construction-specific certified scale set. The proof is deferred to
Appendix~\ref{app:additive-interface-proofs}.

\subsection{Landmark constructions}
\label{sec:embedding-instances}

PLACE and the raw PALACE summation map have coordinates obtained by summing
localized point responses and therefore satisfy the additive form when their
embedding-configuration conditions hold. The original Mitra -- Virk map is instead a
whole-diagram landmark map. Table~\ref{tab:landmark-instances} separates the
general fixed-Hilbert inference scope from the additive population
interpretation.

\begin{table}[!ht]
\centering
\footnotesize
\begin{tabular}{>{\raggedright\arraybackslash}p{2.0cm}>{\raggedright\arraybackslash}p{4.05cm}>{\raggedright\arraybackslash}p{7.05cm}}
\toprule
Construction & Statistical scope & Additive interpretation and lower-distortion certificate \\
\midrule
Mitra--Virk \citep{Mitra2024}
& General fixed-Hilbert inference applies to an admissible fixed map under the
result-specific moment conditions.
& The map is whole-diagram and not additive. Theorem~4.3 gives normalized
infinite-dimensional coarse and uniform maps; Theorem~5.1 gives a
finite-dimensional bounded-domain map. No PLACE/PALACE-style pair coherence is
imposed, subject to the theorem-specific domain, scale, and weight conditions. \\
PLACE \citep{PLACE}
& General inference and additive population theory apply to the fixed pooled map.
& Proposition~2.1(b) gives the step certificate for $d_B\geq3R_1$ under
$\nu$-coherence. Corollary~2.1 gives the separate affine certificate on
$R_1<d_B\leq L$ under coherence at every active scale. \\
PALACE \citep{PALACE}
& General inference and additive population theory apply to the fixed raw
summation map.
& Theorem~2.1 applies at a target $\tau$ when the embedding configuration is
$\tau$-admissible, both diagrams lie in the covered support, and the pair
satisfies Definition~2.3 non-interference. \\
\bottomrule
\end{tabular}
\caption{Scope of the three certified landmark constructions. Exact constants,
scale sets, pair classes, and normalization conventions are given in
Appendix~\ref{app:construction-verification}.}
\label{tab:landmark-instances}
\end{table}

\begin{remark}[The original Mitra -- Virk construction]
\label{rem:original-mitra-virk}
The construction of \cite{Mitra2024} is the foundational certified
whole-diagram embedding. Theorem~4.3 supplies normalized coarse and uniform
maps on $\mathcal D_n$, and Theorem~5.1 supplies a finite-dimensional
bounded-domain map. These results do not impose the additional pair-specific
coherence conditions used by PLACE and PALACE, but they remain subject to the
bounded-cardinality, domain, scale, sequence, and weight conditions stated in
the cited theorems. The product cover has multiplicity $4^n$, and
Example~5.2 has $N4^n$ coordinates, illustrating the exponential cost in the
maximal diagram cardinality. PLACE and PALACE replace this product
representation by pooled or adaptively selected point responses and impose
their own lower-distortion certification conditions. General fixed-Hilbert inference applies
to a fixed admissible Mitra--Virk map, whereas the additive padded
mean-measure identity does not. Inference based on its population mean detects
alternatives with distinct mean embeddings; it does not automatically
distinguish every pair of unequal diagram laws. Appendix~\ref{app:mv-population-transfer}
gives a separate sufficient whole-law population bound.
\end{remark}

Additive representations are not unique to landmark methods; the distinctive
ingredient here is the explicit bottleneck lower-distortion certificate available for the
certified landmark constructions. Detailed verification is deferred to
Appendix~\ref{app:construction-verification}.

\subsection{Population mean embeddings and the obstruction}
\label{sec:population-mean-transfer}

Let $X\sim P$ and $Y\sim Q$ be random diagrams. Their padded mean counting
measures are defined setwise by
\begin{equation}
\label{eq:padded-mean-measures}
\overline\mu_P(B)
:=
\mathbb E_P\{\widetilde N_X(B)\},
\qquad
\overline\mu_Q(B)
:=
\mathbb E_Q\{\widetilde N_Y(B)\},
\end{equation}
for Borel $B\subseteq\mathbb H_L^{\varnothing}$. Both have total mass $n$.
Unlike an average diagram, these finite measures are always well defined and
require no feature correspondence across observations.

\begin{proposition}[Mean embedding and population transport]
\label{prop:mean-embedding-transfer}
Under (E1)--(E2),
\begin{equation}
\label{eq:mean-measure-identity}
\mathbb E_P\{\Phi(X;\nu)\}=T_\nu[\overline\mu_P],
\qquad
\mathbb E_Q\{\Phi(Y;\nu)\}=T_\nu[\overline\mu_Q].
\end{equation}
Define the population mean-embedding separation by
\begin{equation}
\label{eq:population-mean-embedding-signal}
\Delta_\nu(P,Q)
:=
\left\|
\mathbb E_P\{\Phi(X;\nu)\}
-
\mathbb E_Q\{\Phi(Y;\nu)\}
\right\|_2.
\end{equation}
Then
\begin{equation}
\label{eq:population-upper-transfer}
\Delta_\nu(P,Q)
\leq
nL_\nu W_{\infty,0}(\overline\mu_P,\overline\mu_Q).
\end{equation}
\end{proposition}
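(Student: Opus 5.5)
The argument has two parts: first the mean-measure identity \eqref{eq:mean-measure-identity}, then the transport bound \eqref{eq:population-upper-transfer}.

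\emph{Mean-measure identity.} By (E1), $\Phi(X;\nu)=\sum_{x\in X}\varphi_\nu^0(x)+(n-|X|)\varphi_\nu^0(\varnothing)=\sum_{x\in X}\varphi_\nu^0(x)$. By (E2), $\|\Phi(X;\nu)\|_2\le n\kappa_\nu$ (Proposition~\ref{prop:interface-consequences}), so the Bochner expectation exists. To identify it, I will pair with an arbitrary $h\in\ell_2$. The scalar function $f_h(x)=\langle h,\varphi_\nu^0(x)\rangle$ is bounded and Borel on $\mathbb H_L^\varnothing$. For indicators $f=\mathbf 1_B$, the defining relation \eqref{eq:padded-mean-measures} gives $\mathbb E_P\int f\,d\widetilde N_X=\int f\,d\overline\mu_P$. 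This extends by linearity to simple functions and then, by bounded (or monotone) convergence, to bounded Borel $f$. Therefore
\[
\langle h,\mathbb E_P\Phi(X;\nu)\rangle=\mathbb E_P\int f_h\,d\widetilde N_X=\int f_h\,d\overline\mu_P=\langle h,T_\nu[\overline\mu_P]\rangle .
\]
Here $T_\nu[\overline\mu_P]$ is a Bochner integral of a bounded measurable function against a finite measure; measurability is fine because $\ell_2$ is separable. Since $h$ is arbitrary, \eqref{eq:mean-measure-identity} follows, and the same argument applies to $Q$.

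\emph{Transport bound.} Both padded mean measures have mass $n$. Take any coupling $\pi\in\Gamma(\overline\mu_P,\overline\mu_Q)$. Its marginals give
\[
T_\nu[\overline\mu_P]-T_\nu[\overline\mu_Q]=\int\bigl(\varphi_\nu^0(x)-\varphi_\nu^0(y)\bigr)\,d\pi(x,y).
\]
The norm inequality for Bochner integrals together with (E2) then yields
\[
\Delta_\nu(P,Q)\le \int L_\nu d_0(x,y)\,d\pi\le L_\nu\,\pi\bigl((\mathbb H_L^\varnothing)^2\bigr)\,\|d_0\|_{L^\infty(\pi)}=nL_\nu\|d_0\|_{L^\infty(\pi)}.
\]
Taking the infimum over $\pi$ gives \eqref{eq:population-upper-transfer}. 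If $\Gamma$ were empty the bound would be vacuous, but the product coupling scaled by $1/n$ always exists, so this case does not arise.

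The only delicate step is the measure-theoretic exchange of expectation and integration against the random counting measure. This requires measurability of $D\mapsto\int f\,d\widetilde N_D$ for bounded Borel $f$. The paper already establishes that $D\mapsto N_D(B)$ is measurable for every Borel $B$, and $\widetilde N_D(\{\varnothing\})=n-|D|$ is measurable as well, so the monotone-class extension goes through. The remaining details are mechanical: checking that $d_0$ makes $\varphi_\nu^0$ Lipschitz and hence Borel on the padded space, which (E2) assumes directly.
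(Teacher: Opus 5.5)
Your proof is correct and follows the paper's route. The paper gets Bochner integrability from $\|\Phi(X;\nu)\|_2\le n\kappa_\nu$, identifies the mean with $T_\nu[\overline\mu_P]$ by linearity and Fubini for the random counting measure, and then applies the coupling argument of Proposition~\ref{prop:interface-consequences} to the two mass-$n$ padded mean measures before taking the infimum over couplings; your pairing with $h\in\ell_2$ and extension from indicators to bounded Borel functions is simply an explicit version of the Fubini step.
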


\noindent Consequently,
\[
\Delta_\nu(P,Q)>0
\quad\Longrightarrow\quad
W_{\infty,0}(\overline\mu_P,\overline\mu_Q)
\geq
\frac{\Delta_\nu(P,Q)}{nL_\nu}>0.
\]
This interpretation uses neither Assumption \ref{ass:diagram-distortion-floor} nor any restriction on the data-generating laws $P$ and $Q$. It is a first-order statement: the additive estimand records transformed mean
intensity, not every aspect of the full diagram laws.

\begin{example}[Rare separated mass]
\label{ex:pop-obstruction}
Choose $z\in\mathbb H_L$ with
$\tau:=\delta_{\mathsf{Diag}}(z)>0$. Let $D_z$ be the one-point diagram at
$z$, let $D_\varnothing$ be the empty diagram, and set
\[
Q=\delta_{D_\varnothing},
\qquad
P_\varepsilon
=(1-\varepsilon)\delta_{D_\varnothing}
+\varepsilon\delta_{D_z},
\qquad 0<\varepsilon<1.
\]
Then
$W_{\infty,0}(\overline\mu_{P_\varepsilon},\overline\mu_Q)=\tau$ for every
$\varepsilon>0$, whereas
\[
\Delta_\nu(P_\varepsilon,Q)
=
\varepsilon
\|\Phi(D_z;\nu)-\Phi(D_\varnothing;\nu)\|_2
\longrightarrow0.
\]
\end{example}

Thus a diagram-level lower-distortion certificate cannot by itself yield a uniform
population reverse implication: separated mass may become arbitrarily rare,
and weighted responses may also cancel. The verification is in
Appendix~\ref{app:population-transfer-proofs}.

\subsection{Structured population detectability}
\label{sec:template-population-transfer}

We next identify structured population alternatives for which template
separation or weighted template-mass differences produce a nonzero population
embedding signal.

\begin{assumption}[Template--thinning--perturbation model]
\label{def:template-thinning-perturbation}
For each $r\in\{P,Q\}$, let
\begin{equation}
\label{eq:template-diagram}
D_r^\star=\{z_{r1},\ldots,z_{rm_r}\},
\qquad m_r\leq n,
\end{equation}
be an unordered template multiset. For each latent feature $j$, let
$I_{rj}\in\{0,1\}$ satisfy
$\Pr(I_{rj}=1)=p_{rj}\in(0,1]$. Let $Z_{rj}$ be defined on the ambient
probability space, arbitrarily on $\{I_{rj}=0\}$, and satisfy
\[
\mathcal L(Z_{rj}\mid I_{rj}=1)=G_{rj}.
\]
The observed unpadded counting measure is
\begin{equation}
\label{eq:latent-template-counting-measure}
N_{X_r}=\sum_{j=1}^{m_r}I_{rj}\delta_{Z_{rj}}.
\end{equation}
The collection
$\{(I_{rj},Z_{rj}):1\leq j\leq m_r\}$ may have arbitrary joint
dependence.
\end{assumption}

By linearity of expectation,
\[
\mathbb E N_{X_r}
=
\sum_{j=1}^{m_r}p_{rj}G_{rj},
\]
and therefore the padded population mean measure is
\begin{equation}
\label{eq:template-padded-mean-measure}
\overline\mu_r
=
\sum_{j=1}^{m_r}p_{rj}G_{rj}
+
\left(n-\sum_{j=1}^{m_r}p_{rj}\right)\delta_\varnothing.
\end{equation}
Consequently,
\begin{equation}
\label{eq:template-population-mean-embedding}
T_\nu[\overline\mu_r]
=
\sum_{j=1}^{m_r}p_{rj}
\int_{\mathbb H_L}\varphi_\nu^0(x)\,dG_{rj}(x).
\end{equation}
No independence among feature appearances or perturbations is required for
these identities. Repeated latent template locations are permitted; their
prevalence masses are aggregated when the distinct union support is formed
below.

\begin{assumption}[Feature perturbation tails]
\label{ass:template-perturbation-tails}
For every $r\in\{P,Q\}$ and every latent template feature,
\begin{equation}
\label{eq:template-tail-condition}
G_{rj}\{x:d_0(x,z_{rj})>t\}
\leq
A_0\exp\left(-\frac{t^2}{2\sigma_{rj}^2}\right),
\qquad t\geq0,
\end{equation}
where $A_0 > 1$.
\end{assumption}

Define
\begin{equation}
\label{eq:expected-template-displacement}
e_{rj}
:=
\int d_0(x,z_{rj})\,dG_{rj}(x).
\end{equation}
Assumption~\ref{ass:template-perturbation-tails} implies
\begin{equation}
\label{eq:template-tail-mean-bound}
e_{rj}
\leq
A_0\sigma_{rj}\sqrt{\frac{\pi}{2}}.
\end{equation}
Sub-Gaussianity is used only to obtain this explicit first-moment bound; any
condition that controls $e_{rj}$ can be substituted.

\paragraph{Shared approximation bound.}
\begin{lemma}[Approximation by the weighted template response]
\label{lem:center-approximation}
Under (E1)--(E2) and the model in
Assumption~\ref{def:template-thinning-perturbation},
\begin{align}
\Delta_\nu(P,Q)
\geq
\Bigg[
&\left\|
\sum_{j=1}^{m_P}p_{Pj}\varphi_\nu^0(z_{Pj})
-
\sum_{\ell=1}^{m_Q}p_{Q\ell}\varphi_\nu^0(z_{Q\ell})
\right\|_2\notag\\
&-
L_\nu
\left\{
\sum_{j=1}^{m_P}p_{Pj}e_{Pj}
+
\sum_{\ell=1}^{m_Q}p_{Q\ell}e_{Q\ell}
\right\}
\Bigg]_+.
\label{eq:shared-perturbation-bound-general}
\end{align}
Under Assumption~\ref{ass:template-perturbation-tails},
\begin{align}
\Delta_\nu(P,Q)
\geq
\Bigg[
&\left\|
\sum_{j=1}^{m_P}p_{Pj}\varphi_\nu^0(z_{Pj})
-
\sum_{\ell=1}^{m_Q}p_{Q\ell}\varphi_\nu^0(z_{Q\ell})
\right\|_2\notag\\
&-
A_0L_\nu\sqrt{\frac{\pi}{2}}
\left\{
\sum_{j=1}^{m_P}p_{Pj}\sigma_{Pj}
+
\sum_{\ell=1}^{m_Q}p_{Q\ell}\sigma_{Q\ell}
\right\}
\Bigg]_+.
\label{eq:shared-perturbation-bound}
\end{align}
\end{lemma}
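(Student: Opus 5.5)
The plan is to start from the exact formula for the population mean embedding in \eqref{eq:template-population-mean-embedding} and apply the triangle inequality. By Proposition~\ref{prop:mean-embedding-transfer} and \eqref{eq:template-population-mean-embedding},
\[
\mathbb E_P\{\Phi(X;\nu)\}-\mathbb E_Q\{\Phi(Y;\nu)\}
=
\sum_{j=1}^{m_P}p_{Pj}\int\varphi_\nu^0\,dG_{Pj}
-\sum_{\ell=1}^{m_Q}p_{Q\ell}\int\varphi_\nu^0\,dG_{Q\ell}.
\]
The Bochner integrals exist because $\varphi_\nu^0$ is Borel and bounded by $\kappa_\nu$, by (E2). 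Inside each integral I add and subtract $\varphi_\nu^0(z_{rj})$, using that $G_{rj}$ is a probability measure. This writes the difference as the weighted template response
\[
S:=\sum_j p_{Pj}\varphi_\nu^0(z_{Pj})-\sum_\ell p_{Q\ell}\varphi_\nu^0(z_{Q\ell})
\]
plus a remainder
\[
R:=\sum_j p_{Pj}\int\{\varphi_\nu^0(x)-\varphi_\nu^0(z_{Pj})\}\,dG_{Pj}(x)
-\sum_\ell p_{Q\ell}\int\{\varphi_\nu^0(x)-\varphi_\nu^0(z_{Q\ell})\}\,dG_{Q\ell}(x).
\]
The reverse triangle inequality then gives $\Delta_\nu(P,Q)\geq \|S\|_2-\|R\|_2$. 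Since $\Delta_\nu\geq0$, this lower bound can be replaced by its positive part.

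To bound the remainder, I use the triangle inequality for sums together with the Bochner inequality $\|\int f\,dG\|\leq\int\|f\|\,dG$. Applying (E2) pointwise gives $\|\varphi_\nu^0(x)-\varphi_\nu^0(z_{rj})\|_2\leq L_\nu d_0(x,z_{rj})$. Hence each integral has norm at most $L_\nu e_{rj}$, with $e_{rj}$ as in \eqref{eq:expected-template-displacement}. Summing with the weights $p_{rj}$ yields exactly the penalty in \eqref{eq:shared-perturbation-bound-general}.

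The second display follows by inserting the first-moment bound \eqref{eq:template-tail-mean-bound}. That bound comes from the layer-cake formula $e_{rj}=\int_0^\infty G_{rj}\{d_0(\cdot,z_{rj})>t\}\,dt$ combined with Assumption~\ref{ass:template-perturbation-tails} and $\int_0^\infty e^{-t^2/(2\sigma^2)}\,dt=\sigma\sqrt{\pi/2}$. The bound on $\|R\|_2$ only grows when $e_{rj}$ is replaced by this upper bound, so the subtracted term increases and the inequality is preserved.

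The only delicate point is measurability and integrability. The map $x\mapsto d_0(x,z)$ is continuous on $\mathbb H_L$. The values of $Z_{rj}$ on $\{I_{rj}=0\}$ never enter, because the identity \eqref{eq:template-padded-mean-measure} uses only $\mathcal L(Z_{rj}\mid I_{rj}=1)$ together with $\mathbb E[I_{rj}\delta_{Z_{rj}}]=p_{rj}G_{rj}$. I also need $G_{rj}$ to be supported in $\mathbb H_L$, which holds because observed points are off-diagonal points of $\mathbb H_L$. Because only linearity of expectation is used, the arbitrary dependence among the $(I_{rj},Z_{rj})$ causes no difficulty.
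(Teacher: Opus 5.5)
Your proposal is correct and matches the paper's proof: the paper likewise writes $T_\nu[\overline\mu_r]-S_r$ as $\sum_j p_{rj}\int\{\varphi_\nu^0(x)-\varphi_\nu^0(z_{rj})\}\,dG_{rj}(x)$, bounds its norm by $L_\nu\sum_j p_{rj}e_{rj}$ via (E2), applies the reverse triangle inequality around $S_P-S_Q$, and gets the second display from the layer-cake bound of Lemma~\ref{lem:tail-mean}. The only difference is that you bound a single combined remainder $R$ where the paper bounds the $P$ and $Q$ remainders separately, which is immaterial.
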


\begin{assumption}[Prevalence and identifiability regimes]
\label{ass:prevalence-identifiability-regimes}
Put
\begin{equation}
\label{eq:template-separation-scale}
s:=d_B(D_P^\star,D_Q^\star).
\end{equation}
We use one of the following two, not necessarily disjoint, regimes.
\begin{enumerate}[label=(\roman*)]
\item \textbf{Common scalar prevalence.}
There exists $p\in(0,1]$ such that $p_{rj}=p$ for every genuine template
feature and both $r\in\{P,Q\}$. The pair is certified at its separation
scale:
\[
s\in\mathcal I_\nu,
\qquad
(D_P^\star,D_Q^\star)\in\mathcal C_{\nu,s}.
\]
The deterministic center response then satisfies
\begin{equation}
\label{eq:common-noiseless-response}
\sum_jp\varphi_\nu^0(z_{Pj})
-
\sum_\ell p\varphi_\nu^0(z_{Q\ell})
=
p\{\Phi(D_P^\star;\nu)-\Phi(D_Q^\star;\nu)\}.
\end{equation}
The case $p=1$ is the complete-feature model: every template feature appears
in every observed diagram and only its location is perturbed.

\item \textbf{Feature-specific prevalence with geometric response
identifiability.}
The probabilities $p_{rj}\in(0,1]$ may vary across features and populations.
Let
\begin{equation}
\label{eq:union-support}
\{x_1,\ldots,x_M\}=D_P^\star\cup D_Q^\star,
\qquad M\leq2n,
\end{equation}
be the distinct template locations. Multiplicities are retained in the
following contrasts. Define the prevalence-mass contrast
\begin{equation}
\label{eq:union-masses}
c_i
:=
\sum_{j:z_{Pj}=x_i}p_{Pj}
-
\sum_{\ell:z_{Q\ell}=x_i}p_{Q\ell}
\end{equation}
and the unweighted template-count contrast
\begin{equation}
\label{eq:union-count-contrast}
q_i
:=
\#\{j:z_{Pj}=x_i\}
-
\#\{\ell:z_{Q\ell}=x_i\}.
\end{equation}
If repeated locations are excluded, then $q_i\in\{-1,0,1\}$. Define the
finite response operator
\begin{equation}
\label{eq:response-matrix}
V_{P,Q}a
:=
\sum_{i=1}^M a_i\varphi_\nu^0(x_i),
\qquad a\in\mathbb R^M.
\end{equation}
Then
\begin{align}
\label{eq:heterogeneous-noiseless-response}
\sum_jp_{Pj}\varphi_\nu^0(z_{Pj})
-
\sum_\ell p_{Q\ell}\varphi_\nu^0(z_{Q\ell})
&=V_{P,Q}c,\\
\label{eq:unweighted-template-response}
\Phi(D_P^\star;\nu)-\Phi(D_Q^\star;\nu)
&=V_{P,Q}q.
\end{align}
The second identity remains valid when the template cardinalities differ,
because padded absence slots have zero response. Assume
\[
s\in\mathcal I_\nu,
\qquad
(D_P^\star,D_Q^\star)\in\mathcal C_{\nu,s},
\qquad
c\neq0,
\qquad
q\neq0.
\]
Let
\begin{equation}
\label{eq:pair-response-subspace}
\mathcal S_{P,Q}:=\operatorname{span}\{c,q\},
\end{equation}
and define the restricted gains
\begin{align}
\label{eq:pair-restricted-min-gain}
\sigma_{\min}(V_{P,Q};\mathcal S_{P,Q})
&:=
\inf_{\substack{a\in\mathcal S_{P,Q}\\\|a\|_2=1}}
\|V_{P,Q}a\|_2,\\
\label{eq:pair-restricted-max-gain}
\sigma_{\max}(V_{P,Q};\mathcal S_{P,Q})
&:=
\sup_{\substack{a\in\mathcal S_{P,Q}\\\|a\|_2=1}}
\|V_{P,Q}a\|_2.
\end{align}
Assume
$\sigma_{\min}(V_{P,Q};\mathcal S_{P,Q})>0$, and define
\begin{align}
\label{eq:pair-response-condition-number}
\operatorname{cond}(V_{P,Q};\mathcal S_{P,Q})
&:=
\frac{\sigma_{\max}(V_{P,Q};\mathcal S_{P,Q})}
{\sigma_{\min}(V_{P,Q};\mathcal S_{P,Q})},\\
\label{eq:geometric-retention-factor}
\eta_{P,Q}
&:=
\frac{\|c\|_2}
{\operatorname{cond}(V_{P,Q};\mathcal S_{P,Q})\|q\|_2}.
\end{align}
Under the stated conditions, $\eta_{P,Q}>0$.
\end{enumerate}
\end{assumption}

The two parts are alternative mechanisms for transferring the Assumption \ref{ass:diagram-distortion-floor}-certified
unweighted template separation to the population center response. Under common
scalar prevalence, the weighted contrast is exactly $p$ times the complete
template contrast. Under feature-specific prevalence, the weighted contrast
$V_{P,Q}c$ need not align with the unweighted contrast $V_{P,Q}q$; the
restricted response conditioning on $\mathcal S_{P,Q}$ controls the retained
geometric signal. Common scalar prevalence is a special case, but part~(i) gives the
factor $p$ directly without the additional conditioning notation. The stronger
direct heterogeneous response inequality, which does not require Assumption \ref{ass:diagram-distortion-floor}, is
recorded in Appendix~\ref{app:template-transfer-proofs}.

\begin{theorem}[Assumption \ref{ass:diagram-distortion-floor}-based population lower bounds under two prevalence regimes]
\label{thm:population-lower-bound}
Suppose that Assumption~\ref{ass:additive-interface}, the model in
Assumption~\ref{def:template-thinning-perturbation}, and
Assumption~\ref{ass:template-perturbation-tails} hold.
\begin{enumerate}[label=(\roman*)]
\item Under Assumption~\ref{ass:prevalence-identifiability-regimes}(i) and the
additive instance of Assumption~\ref{ass:diagram-distortion-floor},
\begin{align}
\Delta_\nu(P,Q)
\geq
p\Bigg[
&\rho_-(s;\nu)\notag\\
&-
A_0L_\nu\sqrt{\frac{\pi}{2}}
\left\{
\sum_{j=1}^{m_P}\sigma_{Pj}
+
\sum_{\ell=1}^{m_Q}\sigma_{Q\ell}
\right\}
\Bigg]_+.
\label{eq:common-prevalence-bound}
\end{align}

\item Under Assumption~\ref{ass:prevalence-identifiability-regimes}(ii) and
the additive instance of Assumption~\ref{ass:diagram-distortion-floor},
\begin{align}
\Delta_\nu(P,Q)
\geq
\Bigg[
&\eta_{P,Q}\rho_-(s;\nu)\notag\\
&-
A_0L_\nu\sqrt{\frac{\pi}{2}}
\left\{
\sum_{j=1}^{m_P}p_{Pj}\sigma_{Pj}
+
\sum_{\ell=1}^{m_Q}p_{Q\ell}\sigma_{Q\ell}
\right\}
\Bigg]_+.
\label{eq:heterogeneous-directional-bound}
\end{align}
\end{enumerate}
\end{theorem}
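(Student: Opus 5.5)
Both parts reduce to Lemma~\ref{lem:center-approximation}, specifically its sub-Gaussian form \eqref{eq:shared-perturbation-bound}. That lemma bounds $\Delta_\nu(P,Q)$ below by two terms: the norm of the deterministic weighted center response $\sum_j p_{Pj}\varphi_\nu^0(z_{Pj})-\sum_\ell p_{Q\ell}\varphi_\nu^0(z_{Q\ell})$, minus a perturbation penalty. What remains in each regime is to bound the center response from below by the certified template separation, and then to check that the penalty matches the displayed one. The lemma itself comes from the mean-measure identity \eqref{eq:template-population-mean-embedding} together with the bound $\|\int\varphi_\nu^0\,dG_{rj}-\varphi_\nu^0(z_{rj})\|_2\le L_\nu e_{rj}$ and \eqref{eq:template-tail-mean-bound}; we take it as given.

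For part~(i), substitute $p_{rj}=p$ into the lemma. By \eqref{eq:common-noiseless-response}, the center term becomes $p\|\Phi(D_P^\star;\nu)-\Phi(D_Q^\star;\nu)\|_2$. We have $s=d_B(D_P^\star,D_Q^\star)$, and the pair is certified at scale $s\in\mathcal I_\nu$, meaning $(D_P^\star,D_Q^\star)\in\mathcal C_{\nu,s}$ and $s>R_0(\nu)$. So Assumption~\ref{ass:diagram-distortion-floor}, applied with $r=s$, gives $\|\Phi(D_P^\star)-\Phi(D_Q^\star)\|_2\ge\rho_-(s;\nu)$. The penalty is $A_0L_\nu\sqrt{\pi/2}\,p\{\sum_j\sigma_{Pj}+\sum_\ell\sigma_{Q\ell}\}$. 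Since $p>0$ and $[px]_+=p[x]_+$, the factor $p$ pulls out of both terms, which yields \eqref{eq:common-prevalence-bound}.

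For part~(ii), the center term equals $\|V_{P,Q}c\|_2$ by \eqref{eq:heterogeneous-noiseless-response}, and the certified unweighted contrast satisfies $\|V_{P,Q}q\|_2\ge\rho_-(s;\nu)$ by \eqref{eq:unweighted-template-response} and Assumption~\ref{ass:diagram-distortion-floor}. Both $c$ and $q$ lie in $\mathcal S_{P,Q}$. The restricted gains therefore give
\[
\|V_{P,Q}c\|_2\ge\sigma_{\min}\|c\|_2
\quad\text{and}\quad
\rho_-(s;\nu)\le\|V_{P,Q}q\|_2\le\sigma_{\max}\|q\|_2,
\]
with both gains taken on $\mathcal S_{P,Q}$. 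Combining the two,
\[
\|V_{P,Q}c\|_2\ge\frac{\sigma_{\min}}{\sigma_{\max}}\frac{\|c\|_2}{\|q\|_2}\,\rho_-(s;\nu)=\eta_{P,Q}\rho_-(s;\nu).
\]
This uses $q\neq0$ and $\sigma_{\max}\ge\sigma_{\min}>0$. Inserting this into \eqref{eq:shared-perturbation-bound} with the heterogeneous penalty gives \eqref{eq:heterogeneous-directional-bound}, because $x\mapsto[x-b]_+$ is nondecreasing.

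The only delicate step is the identification used in (ii) and implicitly in (i): $\Phi(D_P^\star;\nu)-\Phi(D_Q^\star;\nu)=V_{P,Q}q$, with multiplicities and padding handled correctly. This requires $\varphi_\nu^0(\varnothing)=0$ so that unequal template cardinalities cause no discrepancy, and it requires aggregating repeated locations into $x_i$ consistently with the definitions \eqref{eq:union-masses}--\eqref{eq:union-count-contrast}. Both identities are asserted in Assumption~\ref{ass:prevalence-identifiability-regimes}, so the proof can cite them directly. Beyond that, one must check that "the additive instance" of Assumption~\ref{ass:diagram-distortion-floor} really does apply to the template pair at the exact scale $r=s$, which is precisely what the hypotheses $s\in\mathcal I_\nu$ and $(D_P^\star,D_Q^\star)\in\mathcal C_{\nu,s}$ provide.
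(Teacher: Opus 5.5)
Your proposal is correct and follows the paper's proof essentially step for step. In part~(i) you use Lemma~\ref{lem:center-approximation} with $S_r=p\,\Phi(D_r^\star;\nu)$, apply the certificate at scale $s$, and factor $p$ out of both the signal and the penalty. In part~(ii) you use the chain $\|V_{P,Q}c\|_2\ge\sigma_{\min}\|c\|_2$ and $\rho_-(s;\nu)\le\|V_{P,Q}q\|_2\le\sigma_{\max}\|q\|_2$, which the paper packages as Lemma~\ref{lem:heterogeneous-e4-transfer} combined with Proposition~\ref{prop:direct-heterogeneous-response-bound}.
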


A positive lower bound follows whenever the corresponding structural signal
exceeds the perturbation penalty. If
$\sigma_{rj}\leq\overline\sigma$, each perturbation penalty is bounded by
$2A_0nL_\nu\overline\sigma\sqrt{\pi/2}$; the precise corollary is recorded in
Appendix~\ref{app:template-transfer-proofs}.

\begin{remark}[Geometric retention under heterogeneous prevalence]
The floor $\rho_-(s;\nu)$ is the unweighted diagram-level signal supplied by
Assumption \ref{ass:diagram-distortion-floor}. Under common scalar prevalence, the center weighting is aligned exactly
with the complete template contrast, so the retention factor is $p$. Under
feature-specific prevalence, $c$ need not align with the unweighted count
contrast $q$. The factor $\eta_{P,Q}$ quantifies how much of the Assumption \ref{ass:diagram-distortion-floor}-certified
separation is retained after this reweighting: poor conditioning of
$V_{P,Q}$ on $\mathcal S_{P,Q}$, or a small ratio
$\|c\|_2/\|q\|_2$, weakens the guarantee. In particular, part~(ii) certifies a
positive population signal when
\[
\eta_{P,Q}\rho_-(s;\nu)
>
A_0L_\nu\sqrt{\frac{\pi}{2}}
\left\{
\sum_jp_{Pj}\sigma_{Pj}
+
\sum_\ell p_{Q\ell}\sigma_{Q\ell}
\right\}.
\]
If $c=0$, there is no weighted center contrast. If $q=0$, the complete
template diagrams coincide as multisets and no positive-scale Assumption \ref{ass:diagram-distortion-floor} comparison
is available. If
$\sigma_{\min}(V_{P,Q};\mathcal S_{P,Q})=0$, response cancellation prevents
transfer of the Assumption \ref{ass:diagram-distortion-floor} floor to $c$. Likewise, the result is unavailable outside
the certified pair class or admissible scale set. Failure of any sufficient
condition, or a zero displayed lower bound, is inconclusive rather than proof
that $\Delta_\nu(P,Q)=0$.
\end{remark}

\paragraph{Interpretation and scope.}
Both bounds subtract a perturbation penalty from a structural signal and then
take the positive part; both now use Assumption \ref{ass:diagram-distortion-floor} for geometric interpretation. The
stronger direct heterogeneous inequality in
Proposition~\ref{prop:direct-heterogeneous-response-bound} replaces the
Assumption \ref{ass:diagram-distortion-floor}-based signal by $\|V_{P,Q}c\|_2$ and does not require Assumption \ref{ass:diagram-distortion-floor}.
Theorem~\ref{thm:population-lower-bound}(ii) is its more interpretable Assumption \ref{ass:diagram-distortion-floor}
consequence. These are pair-specific conclusions. Uniform power or minimax
theory requires class-wide lower bounds on the relevant retention, prevalence,
and response quantities and upper bounds on perturbation, introduced only in
the later testing section.

The general Hilbert-space probability and testing theory applies under
Assumption~\ref{ass:fixed-hilbert-embedding}, including to a fixed admissible
Mitra--Virk map. The mean-measure identity and the template results require the
additional additive interface, while truncation results invoke
Assumption~\ref{ass:finite-approximation}. Section~\ref{sec:probability}
develops the limit theory for the embedded observations.


\section{Probabilistic Theory for Embedded Diagrams}
\label{sec:probability}

Section~\ref{sec:embedding} identifies the population quantities represented by
an embedding and the conditions under which they admit geometric
interpretations. This section defines the population mean and covariance,
establishes one- and two-sample limit theory, studies covariance estimation,
and gives quantitative Gaussian and finite-approximation results used in the
subsequent inferential procedures.

Fix an embedding configuration $\nu$ independently of the inferential sample,
as in Assumption~\ref{ass:fixed-hilbert-embedding}. Let
$X_1,\ldots,X_m\overset{\mathrm{i.i.d.}}{\sim}P$ and write
$$
\Psi_i:=\Psi_\nu(X_i)\in\mathcal H_\nu,
\qquad
\overline\Psi_m:=\frac{1}{m}\sum_{i=1}^m\Psi_i.
$$
Once $\nu$ is fixed, the statistical problem is one of inference for a Borel
random element of a separable Hilbert space. The sample size $m$ counts
independent diagrams. The persistence points within a diagram remain
components of one observation and do not increase the effective sample size.

The law of large numbers, central limit theorem, covariance results, and
mean-based testing theory require only the fixed-Hilbert assumptions and the
stated moments. Additivity supplies the padded mean-measure interpretation from
Section~\ref{sec:embedding}. Assumption~\ref{ass:finite-approximation} is used
only when a finite-coordinate procedure is intended to approximate the full
embedding, and the lower-distortion certificate Assumption \ref{ass:diagram-distortion-floor} is not used in this
section.

\subsection{Population mean and covariance}
\label{sec:probability-mean-covariance}

Assume first that
$\mathbb E_P\|\Psi_\nu(X)\|_{\mathcal H_\nu}<\infty$. The Bochner mean
embedding is
$$
\theta_{P,\nu}^{\Psi}
:=
\mathbb E_P\{\Psi_\nu(X)\}
\in\mathcal H_\nu.
$$
For two populations, define
$$
\delta_\nu^{\Psi}(P,Q)
:=
\theta_{P,\nu}^{\Psi}-\theta_{Q,\nu}^{\Psi},
\qquad
\Delta_\nu^{\Psi}(P,Q)
:=
\|\delta_\nu^{\Psi}(P,Q)\|_{\mathcal H_\nu},
$$
consistent with the population signal defined in
Section~\ref{sec:embedding}. The mean embedding is the parameter targeted by
the mean-based procedures developed below. It is not a mean persistence
diagram. For an additive embedding $\Psi_\nu=\Phi(\cdot;\nu)$,
Proposition~\ref{prop:mean-embedding-transfer} gives the additional identity
$\theta_{P,\nu}^{\Phi}=T_\nu[\overline\mu_P]$. For the whole-diagram
Mitra--Virk map, $\theta_{P,\nu}^{\Psi}$ remains an ordinary Hilbert-space
expectation without a padded mean-measure representation.

The mean embedding does not, in general, identify the full law of a random
diagram. Distinct distributions may have the same population mean embedding,
so $P\neq Q$ does not necessarily imply
$\Delta_\nu^{\Psi}(P,Q)>0$. Accordingly, the mean-based procedures in this
paper detect alternatives with distinct population mean embeddings. Universal
consistency against every $P\neq Q$ would require an additional
characteristicness or population-identifiability condition.

If $\mathbb E_P\|\Psi_\nu(X)\|_{\mathcal H_\nu}^2<\infty$, define the
covariance operator
$$
\Sigma_{P,\nu}^{\Psi}
:=
\mathbb E_P
\left[
\{\Psi_\nu(X)-\theta_{P,\nu}^{\Psi}\}
\otimes
\{\Psi_\nu(X)-\theta_{P,\nu}^{\Psi}\}
\right],
$$
where $(u\otimes v)h=\langle v,h\rangle_{\mathcal H_\nu}u$. When the
population and embedding are fixed, we write $\theta$, $\Sigma$, and
$U:=\Psi_\nu(X)-\theta$. The operator $\Sigma$ describes between-diagram
variation after embedding. Its trace is the total variance across orthogonal
embedding directions, whereas its operator norm is the largest directional
variance.

Under a finite first moment, $\theta$ is the unique element satisfying
$\langle\theta,h\rangle=\mathbb E\langle\Psi_\nu(X),h\rangle$ for every
$h\in\mathcal H_\nu$. Under a finite second moment, $\Sigma$ is positive,
self-adjoint, and trace class, with
$$
\operatorname{tr}(\Sigma)
=
\mathbb E\|U\|_{\mathcal H_\nu}^2,
\qquad
\|\Sigma\|_{\mathrm{op}}
\leq
\operatorname{tr}(\Sigma).
$$
If $\|\Psi_\nu(D)\|_{\mathcal H_\nu}\leq B_\nu$ uniformly in $D$, then
$\operatorname{tr}(\Sigma)\leq B_\nu^2$. Trace-class covariance is the
infinite-dimensional analogue of finite total variance. It ensures that the
Gaussian limits below define Radon probability measures on $\mathcal H_\nu$
and that their coordinate variances are summable.

\subsection{Laws of large numbers and central limit theory}
\label{sec:hilbert-limit-theory}

\begin{theorem}[Strong law and Hilbert-space central limit theorem]
\label{thm:hilbert-slln-clt}
Let $\Psi_1,\Psi_2,\ldots$ be i.i.d. copies of $\Psi_\nu(X)$.
\begin{enumerate}[label=(\roman*)]
\item If $\mathbb E\|\Psi_\nu(X)\|_{\mathcal H_\nu}<\infty$, then
$$
\overline\Psi_m\longrightarrow\theta
\qquad\text{almost surely in }\mathcal H_\nu.
$$
\item If $\mathbb E\|\Psi_\nu(X)\|_{\mathcal H_\nu}^2<\infty$, then
$$
\sqrt m\,(\overline\Psi_m-\theta)
\ \rightsquigarrow\
\mathcal G,
\qquad
\mathcal G\sim\mathcal N_{\mathcal H_\nu}(0,\Sigma).
$$
\end{enumerate}
\end{theorem}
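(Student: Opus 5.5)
Both parts are classical facts about i.i.d.\ random elements of a separable Hilbert space. The plan is to reduce to them, checking only that $\Psi_\nu(X)$ is a genuine Bochner-measurable random element. By Lemma~\ref{lem:Dn-polish} the space $(\mathcal D_n,d_B)$ is compact, hence Polish. Assumption~\ref{ass:fixed-hilbert-embedding} makes $\Psi_\nu$ Borel, so $\Psi_\nu(X)$ is a Borel random element of the separable space $\mathcal H_\nu$. By Pettis' theorem it is therefore strongly measurable, and the Bochner mean $\theta$ and the covariance $\Sigma$ of Section~\ref{sec:probability-mean-covariance} are well defined under the stated moments.

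For part (i) I would invoke Mourier's strong law of large numbers for separable Banach spaces, or prove it directly by finite-rank approximation. Fix $\varepsilon>0$. By separability and $\mathbb E\|\Psi_\nu(X)\|<\infty$, choose a simple (finitely valued) measurable function $S$ with $\mathbb E\|\Psi_\nu(X)-S\|<\varepsilon$. Apply the scalar strong law coordinatewise to $S$, which takes values in a finite-dimensional subspace, to get $\frac1m\sum_i S_i\to\mathbb E S$ almost surely. Apply the real strong law to $\frac1m\sum_i\|\Psi_i-S_i\|\to\mathbb E\|\Psi_\nu(X)-S\|<\varepsilon$ almost surely. Combining these with $\|\mathbb E\Psi-\mathbb ES\|<\varepsilon$ gives $\limsup_m\|\overline\Psi_m-\theta\|\le 2\varepsilon$ almost surely. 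Letting $\varepsilon\downarrow0$ along a countable sequence completes part (i).

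For part (ii) I would use the standard route: finite-dimensional convergence plus asymptotic flatness.
\begin{enumerate}
\item[1.] Fix an orthonormal basis $(e_k)$ of $\mathcal H_\nu$ (possible by separability), and let $\Pi_K$ denote projection onto $\operatorname{span}\{e_1,\dots,e_K\}$.
\item[2.] Apply the multivariate Lindeberg--L\'evy CLT to $\Pi_K\sqrt m(\overline\Psi_m-\theta)$. It converges to $\mathcal N(0,\Pi_K\Sigma\Pi_K)$, which is the law of $\Pi_K\mathcal G$. Here $\mathcal G$ exists as a Radon Gaussian measure because $\Sigma$ is trace class.
\item[3.] Control the tail: by independence and centering,
\[
\mathbb E\bigl\|(I-\Pi_K)\sqrt m(\overline\Psi_m-\theta)\bigr\|^2=\mathbb E\|(I-\Pi_K)U\|^2=\sum_{k>K}\langle\Sigma e_k,e_k\rangle.
\]
This quantity does not depend on $m$ and tends to $0$ as $K\to\infty$, since $\operatorname{tr}\Sigma=\mathbb E\|U\|^2<\infty$.
\item[4.] Similarly, $\mathbb E\|(I-\Pi_K)\mathcal G\|^2\to0$.
\end{enumerate}

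The main obstacle is the step that passes from finite-dimensional convergence to weak convergence in $\mathcal H_\nu$. I would handle it with the approximation theorem for weak convergence (Billingsley, Theorem~3.2). Its hypotheses are that $\Pi_K Z_m\rightsquigarrow\Pi_K\mathcal G$ for each $K$, that $\Pi_K\mathcal G\to\mathcal G$, and that
\[
\lim_{K\to\infty}\limsup_{m\to\infty}\Pr\bigl(\|Z_m-\Pi_KZ_m\|>\varepsilon\bigr)=0 .
\]
The last condition follows from Chebyshev's inequality and the uniform-in-$m$ tail bound in step 3. Equivalently, the same bound gives tightness of $\{Z_m\}$ via the flat-concentration criterion, and characteristic functionals then identify the limit. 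With this, the proof of (ii) is complete; no property of diagrams beyond measurability and separability is used.
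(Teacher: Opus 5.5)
Your proposal is correct. Where the paper's proof consists of two citations, you reconstruct the underlying arguments. For (i) the paper applies the Banach-space strong law \citep[Corollary~7.10]{LedouxTalagrand2011} to $U_i=\Psi_i-\theta$. For (ii) it notes that a separable Hilbert space has type~2 and invokes the i.i.d.\ type-2 central limit theorem \citep[Theorem~10.5]{LedouxTalagrand2011}, adding only that trace-class $\Sigma$ makes the Gaussian limit Radon.

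You instead use Mourier's simple-function approximation for (i). For (ii) you combine the finite-dimensional CLT on $\operatorname{range}(\Pi_K)$ with Billingsley's approximation theorem and the tail identity $\mathbb E\|(I-\Pi_K)\sqrt m(\overline\Psi_m-\theta)\|^2=\sum_{k>K}\langle\Sigma e_k,e_k\rangle$, which holds uniformly in $m$. That identity is the Hilbert-space form of the type-2 inequality: independent centered summands are orthogonal, with constant one. So both proofs rest on the same mechanism. Your version is self-contained and shows exactly where separability and the second moment enter. It is also the same computation the paper later performs for growing truncations in Appendix~\ref{app:truncated-confidence}. The citation route is shorter and would apply unchanged to a type-2 Banach-space codomain, which your orthonormal-projection argument does not directly cover.

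One detail in (i) should be made explicit. Take $S=s(\Psi_\nu(X))$ for a finitely valued Borel map $s$ with $\|x-s(x)\|\le\|x\|$, for instance a nearest-point map onto finitely many points of a countable dense set containing $0$. Then $S_i=s(\Psi_i)$ are i.i.d.\ copies, and dominated convergence yields $\mathbb E\|\Psi_\nu(X)-S\|<\varepsilon$.
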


The strong law justifies the empirical mean embedding as an estimator of the
population mean embedding. The central limit theorem describes its stochastic
error at the usual $m^{-1/2}$ scale. The Gaussian limit may be degenerate; no
invertibility of $\Sigma$ is required. Every fixed linear functional of the
centered empirical mean is asymptotically normal. Norm-based statistics instead
converge to functionals of the Gaussian random element.

Let $\lambda_1,\lambda_2,\ldots$ denote the eigenvalues of $\Sigma$, repeated
according to multiplicity. The continuous mapping theorem and the spectral
representation of a Hilbert-space Gaussian element give
$$
m\|\overline\Psi_m-\theta\|_{\mathcal H_\nu}^2
\ \rightsquigarrow\
\|\mathcal G\|_{\mathcal H_\nu}^2
\ \stackrel{d}{=}\
\sum_{j\geq1}\lambda_j Z_j^2,
$$
where the $Z_j$ are independent standard normal variables. The series is
well defined because $\sum_j\lambda_j=\operatorname{tr}(\Sigma)<\infty$.
This weighted chi-square limit is the basic null approximation for
mean-distance statistics. Uniform boundedness of the embedding is sufficient
for the moment conditions above, but is not necessary.

For the two-sample problem, define
$$
\overline\Psi_{P,m}:=\frac{1}{m}\sum_{i=1}^m\Psi_\nu(X_i),
\qquad
\overline\Psi_{Q,m'}:=\frac{1}{m'}\sum_{j=1}^{m'}\Psi_\nu(Y_j).
$$
The following form is used in Section~\ref{sec:testing}.

\begin{corollary}[Two-sample Hilbert-space central limit theorem]
\label{cor:two-sample-hilbert-clt}
Let $X_1,\ldots,X_m\overset{\mathrm{i.i.d.}}{\sim}P$ and
$Y_1,\ldots,Y_{m'}\overset{\mathrm{i.i.d.}}{\sim}Q$ be independent, and
assume finite second moments under both laws. Put $N=m+m'$ and suppose
$m/N\to\lambda\in(0,1)$. Then
$$
\sqrt N
\left[
\{\overline\Psi_{P,m}-\overline\Psi_{Q,m'}\}
-
\delta_\nu^{\Psi}(P,Q)
\right]
\ \rightsquigarrow\
\mathcal N_{\mathcal H_\nu}(0,\Sigma_{P,Q,\lambda}),
$$
where
$$
\Sigma_{P,Q,\lambda}
:=
\lambda^{-1}\Sigma_{P,\nu}^{\Psi}
+
(1-\lambda)^{-1}\Sigma_{Q,\nu}^{\Psi}.
$$
\end{corollary}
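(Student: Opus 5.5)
The plan is to reduce the corollary to two applications of the one-sample Hilbert-space central limit theorem, Theorem~\ref{thm:hilbert-slln-clt}(ii), combined through independence and Slutsky's lemma. Write
\[
\sqrt N\left[\{\overline\Psi_{P,m}-\overline\Psi_{Q,m'}\}-\delta_\nu^\Psi(P,Q)\right]
=\sqrt{N/m}\,\sqrt m\,(\overline\Psi_{P,m}-\theta_P)
-\sqrt{N/m'}\,\sqrt{m'}\,(\overline\Psi_{Q,m'}-\theta_Q),
\]
where $\theta_P=\theta_{P,\nu}^\Psi$ and $\theta_Q=\theta_{Q,\nu}^\Psi$.

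By Theorem~\ref{thm:hilbert-slln-clt}(ii), applied separately under each law with its finite second moment, we have $A_m:=\sqrt m(\overline\Psi_{P,m}-\theta_P)\rightsquigarrow\mathcal G_P\sim\mathcal N_{\mathcal H_\nu}(0,\Sigma_{P,\nu}^\Psi)$ and $B_{m'}:=\sqrt{m'}(\overline\Psi_{Q,m'}-\theta_Q)\rightsquigarrow\mathcal G_Q\sim\mathcal N_{\mathcal H_\nu}(0,\Sigma_{Q,\nu}^\Psi)$. Here $m,m'\to\infty$ because $\lambda\in(0,1)$.

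\textbf{Joint convergence.} The samples are independent, so $(A_m,B_{m'})$ has product law on $\mathcal H_\nu\times\mathcal H_\nu$. Since $\mathcal H_\nu$ is separable, weak convergence of the marginals gives weak convergence of the product measures to $\mathcal L(\mathcal G_P)\otimes\mathcal L(\mathcal G_Q)$. One way to see this is to check convergence of characteristic functionals, $\E e^{i\langle h,A_m\rangle+i\langle k,B_{m'}\rangle}$, which factor. Tightness of the pair follows from tightness of each marginal, which holds by Prokhorov's theorem on a Polish space. Together these identify the limit. This is the only step that needs some care in infinite dimensions, and it is standard (for example, van der Vaart--Wellner, Example~1.4.6).

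\textbf{Scaling and identification.} The map $(a,b,s,t)\mapsto sa-tb$ is continuous, and $\sqrt{N/m}\to\lambda^{-1/2}$, $\sqrt{N/m'}\to(1-\lambda)^{-1/2}$ deterministically. The continuous mapping theorem, with Slutsky's lemma for the deterministic scalars, therefore gives convergence to $\lambda^{-1/2}\mathcal G_P-(1-\lambda)^{-1/2}\mathcal G_Q$ with $\mathcal G_P,\mathcal G_Q$ independent. This limit is a centered Gaussian element: each functional $\langle h,\cdot\rangle$ is a sum of independent centered normals. Its covariance is
\[
\lambda^{-1}\Sigma_{P,\nu}^\Psi+(1-\lambda)^{-1}\Sigma_{Q,\nu}^\Psi=\Sigma_{P,Q,\lambda},
\]
since the cross terms vanish by independence and the sign flip leaves the covariance of $\mathcal G_Q$ unchanged. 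A centered Gaussian measure on a separable Hilbert space is determined by its covariance operator, and $\Sigma_{P,Q,\lambda}$ is trace class as a sum of trace-class operators. This identifies the limit as $\mathcal N_{\mathcal H_\nu}(0,\Sigma_{P,Q,\lambda})$ and completes the argument. I expect no real obstacle beyond the joint-convergence step above.
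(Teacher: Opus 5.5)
Your proposal is correct and follows the paper's proof. Both apply the one-sample Hilbert-space CLT to each independent sample, obtain joint convergence to independent Gaussian limits, and combine the terms with Slutsky's theorem using $\sqrt{N/m}\to\lambda^{-1/2}$ and $\sqrt{N/m'}\to(1-\lambda)^{-1/2}$; you simply spell out the joint-convergence step (tightness plus factorized characteristic functionals) and the identification of the limiting covariance, which the paper states without detail.
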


The factors $\lambda^{-1}$ and $(1-\lambda)^{-1}$ show how sample imbalance
inflates uncertainty. When one group is substantially smaller, its covariance
contribution receives the larger weight.

\subsection{Covariance estimation}
\label{sec:covariance-estimation}

Define the empirical covariance operator
$$
\widehat\Sigma_m
:=
\frac{1}{m}
\sum_{i=1}^m
(\Psi_i-\overline\Psi_m)
\otimes
(\Psi_i-\overline\Psi_m).
$$
The normalization by $m$ is convenient for operator convergence; replacing it
by $m-1$ does not change the conclusions.

\begin{proposition}[Consistency of covariance estimation]
\label{prop:covariance-consistency}
If $\mathbb E\|\Psi_\nu(X)\|_{\mathcal H_\nu}^2<\infty$, then
$$
\|\widehat\Sigma_m-\Sigma\|_{\mathrm{tr}}
\longrightarrow0
\qquad\text{almost surely}.
$$
Consequently, convergence also holds in Hilbert--Schmidt and operator norm. If
$\mathbb E\|U\|_{\mathcal H_\nu}^4<\infty$, then
$$
\mathbb E\|\widehat\Sigma_m-\Sigma\|_{\mathrm{HS}}^2
=
O(m^{-1}),
\qquad
\mathbb E\|\widehat\Sigma_m-\Sigma\|_{\mathrm{op}}^2
=
O(m^{-1}).
$$
In particular, the rate holds for a uniformly bounded embedding.
\end{proposition}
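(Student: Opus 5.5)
The plan is to write the empirical covariance as a centered version of an i.i.d.\ average of trace-class operators, and then apply the strong law of large numbers in the separable Banach space of trace-class operators. Put $U_i:=\Psi_i-\theta$ and $\overline U_m:=\overline\Psi_m-\theta$. Expanding the tensor products gives the exact identity
\[
\widehat\Sigma_m=\frac1m\sum_{i=1}^m U_i\otimes U_i-\overline U_m\otimes\overline U_m .
\]
The operators $U_i\otimes U_i$ are i.i.d.\ random elements of the trace-class space $\mathcal S_1(\mathcal H_\nu)$. This space is a separable Banach space because $\mathcal H_\nu$ is separable. Measurability follows from continuity of $u\mapsto u\otimes u$ into $\mathcal S_1$, which is clear from
\[
\|u\otimes u-v\otimes v\|_{\mathrm{tr}}\le(\|u\|+\|v\|)\|u-v\|.
\]
Their norms satisfy $\|U_i\otimes U_i\|_{\mathrm{tr}}=\|U_i\|^2$, which is integrable by assumption. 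The Bochner mean of $U_i\otimes U_i$ is $\Sigma$, since $\langle\Sigma g,h\rangle=\mathbb E\langle U,g\rangle\langle U,h\rangle$. The Mourier (Banach-space) strong law then gives
\[
\Bigl\|\frac1m\sum_i U_i\otimes U_i-\Sigma\Bigr\|_{\mathrm{tr}}\to0\quad\text{a.s.}
\]
For the remainder term, $\|\overline U_m\otimes\overline U_m\|_{\mathrm{tr}}=\|\overline U_m\|^2\to0$ almost surely by Theorem~\ref{thm:hilbert-slln-clt}(i). The triangle inequality then gives the trace-norm claim. Convergence in Hilbert--Schmidt and operator norm follows from $\|\cdot\|_{\mathrm{op}}\le\|\cdot\|_{\mathrm{HS}}\le\|\cdot\|_{\mathrm{tr}}$.

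For the rate, I use the same decomposition and now work in the Hilbert space $\mathrm{HS}(\mathcal H_\nu)$. Let $W_i:=U_i\otimes U_i-\Sigma$. These are i.i.d., mean zero, and satisfy $\mathbb E\|W_i\|_{\mathrm{HS}}^2\le\mathbb E\|U\|^4<\infty$, because $\mathbb E\|U\otimes U\|_{\mathrm{HS}}^2-\|\Sigma\|_{\mathrm{HS}}^2\le\mathbb E\|U\|^4$. Orthogonality of independent centered terms in a Hilbert space then gives
\[
\mathbb E\Bigl\|\frac1m\sum_i W_i\Bigr\|_{\mathrm{HS}}^2=\frac{1}{m}\,\mathbb E\|W_1\|_{\mathrm{HS}}^2\le \frac{\mathbb E\|U\|^4}{m}.
\]
For the correction term, $\mathbb E\|\overline U_m\otimes\overline U_m\|_{\mathrm{HS}}^2=\mathbb E\|\overline U_m\|^4$. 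I will bound this by $O(m^{-2})$ by expanding
\[
\|\overline U_m\|^4=m^{-4}\sum_{i,j,k,l}\langle U_i,U_j\rangle\langle U_k,U_l\rangle .
\]
Only index patterns in which every index appears at least twice survive taking expectations, and there are $O(m^2)$ such patterns. Each surviving term is bounded by $\mathbb E\|U\|^4$ via Cauchy--Schwarz. A cruder alternative also suffices: Rosenthal's inequality in Hilbert space gives $\mathbb E\|\sum_i U_i\|^4\lesssim m^2(\mathbb E\|U\|^2)^2+m\,\mathbb E\|U\|^4$, hence $O(m^{-2})$.

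Combining the two terms via $(a+b)^2\le2a^2+2b^2$ gives $\mathbb E\|\widehat\Sigma_m-\Sigma\|_{\mathrm{HS}}^2=O(m^{-1})$. The operator-norm rate follows from $\|\cdot\|_{\mathrm{op}}\le\|\cdot\|_{\mathrm{HS}}$. For a uniformly bounded embedding, $\|U\|\le 2B_\nu$, so the fourth-moment condition holds automatically.

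I expect the main obstacle to be the first step, the trace-norm strong law. Trace-class operators form a Banach space that is not a Hilbert space, so I cannot simply average squared norms; I have to invoke the Banach-space SLLN. That requires checking separability of $\mathcal S_1$, Borel measurability of $U\otimes U$, and Bochner integrability with the correct mean. Everything else is elementary Hilbert-space moment algebra.
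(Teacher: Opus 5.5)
Your proposal is correct and follows the paper's proof essentially step for step. The common steps are the decomposition $\widehat\Sigma_m=\frac1m\sum_i U_i\otimes U_i-\overline U_m\otimes\overline U_m$, the Banach-space strong law in the separable trace-class space combined with $\|\overline U_m\otimes\overline U_m\|_{\mathrm{tr}}=\|\overline U_m\|^2$, and, for the rate, orthogonality of centered i.i.d.\ terms in the Hilbert--Schmidt space plus an $O(m^{-2})$ bound on $\mathbb E\|\overline U_m\|^4$. Your index-pattern count is what the paper writes explicitly as $\mathbb E\|\sum_i U_i\|^4\le m\,\mathbb E\|U\|^4+3m(m-1)(\mathbb E\|U\|^2)^2\le 4m^2\,\mathbb E\|U\|^4$, and your checks of measurability and of the mean of $U\otimes U$ supply details the paper only asserts.
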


This result supports estimation of directional variances and of the leading
covariance spectrum used in Gaussian and spectral calibration. The rate is
controlled by the number of independent diagrams and the fourth moment of
their embedded variation, not by the number of persistence points within each
diagram.

\subsection{Quantitative Gaussian approximation in finite coordinates}
\label{sec:finite-coordinate-gaussian-approximation}

The Hilbert-space central limit theorem gives weak convergence but does not,
by itself, provide a finite-sample error bound. A quantitative approximation
is available after projection to a fixed finite-dimensional subspace. Let
$\Pi$ be a finite-rank orthogonal projection, define
$$
U_\Pi:=\Pi U,
\qquad
\Sigma_\Pi:=\Pi\Sigma\Pi,
$$
and let $\mathcal R_\Pi$ be the range of $\Sigma_\Pi$, with
$r_\Pi:=\dim(\mathcal R_\Pi)$. Write $\Sigma_\Pi^{\dagger/2}$ for the
Moore--Penrose inverse square root on $\mathcal R_\Pi$ and set
$$
\beta_{3,\Pi}
:=
\mathbb E
\|\Sigma_\Pi^{\dagger/2}U_\Pi\|_{\mathcal H_\nu}^3.
$$

\begin{theorem}[Finite-coordinate Berry--Esseen bound]
\label{thm:finite-coordinate-berry-esseen}
Suppose $0<r_\Pi<\infty$ and $\beta_{3,\Pi}<\infty$. Let
$\mathcal G_\Pi\sim\mathcal N_{\mathcal R_\Pi}(0,\Sigma_\Pi)$. Then there is a
universal constant $C_{\mathrm{BE}}$ such that
$$
\sup_{A\in\mathfrak C(\mathcal R_\Pi)}
\left|
\Pr\{\sqrt m\,\Pi(\overline\Psi_m-\theta)\in A\}
-
\Pr(\mathcal G_\Pi\in A)
\right|
\leq
\frac{C_{\mathrm{BE}}r_\Pi^{1/4}\beta_{3,\Pi}}{\sqrt m},
$$
where $\mathfrak C(\mathcal R_\Pi)$ denotes the convex Borel subsets of
$\mathcal R_\Pi$. The same bound therefore holds uniformly over centered
balls in $\mathcal R_\Pi$.
\end{theorem}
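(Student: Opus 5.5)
The plan is to reduce the statement to the multivariate Berry--Esseen theorem for sums of i.i.d. isotropic vectors over convex sets, due to \citet{Bentkus2003}. That theorem says: for i.i.d. mean-zero vectors $\xi_1,\ldots,\xi_m$ in $\mathbb R^r$ with identity covariance,
\[
\sup_{A\ \mathrm{convex}}\bigl|\Pr(m^{-1/2}\textstyle\sum_i\xi_i\in A)-\Pr(Z\in A)\bigr|\le C\,r^{1/4}\,\mathbb E\|\xi_1\|^3/\sqrt m,
\]
where $Z\sim\mathcal N(0,I_r)$.

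\textbf{Step 1: reduce to the range of $\Sigma_\Pi$.} Since $\Pi$ has finite rank, $\Pi\mathcal H_\nu$ is a finite-dimensional Euclidean space. Moreover, $U_\Pi=\Pi U$ lies almost surely in $\mathcal R_\Pi$. Indeed, for $h\perp\mathcal R_\Pi$ with $h\in\Pi\mathcal H_\nu$ we have
\[
\mathbb E\langle U_\Pi,h\rangle^2=\langle\Sigma_\Pi h,h\rangle=0,
\]
so $\langle U_\Pi,h\rangle=0$ almost surely. Applying this to a finite basis of the orthogonal complement gives the claim. Consequently $\sqrt m\,\Pi(\overline\Psi_m-\theta)=m^{-1/2}\sum_i\Pi U_i$ is almost surely an element of $\mathcal R_\Pi\cong\mathbb R^{r_\Pi}$, once an orthonormal basis of $\mathcal R_\Pi$ is fixed. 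The mean exists because $\beta_{3,\Pi}<\infty$, which gives finite third, hence second, moments of $U_\Pi$ on the range.

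\textbf{Step 2: whiten.} Let $W:=\Sigma_\Pi^{\dagger/2}$ restricted to $\mathcal R_\Pi$. This is a linear bijection of $\mathcal R_\Pi$ with inverse $\Sigma_\Pi^{1/2}$. Set $\xi_i:=WU_{\Pi,i}$. These are i.i.d., mean zero, with covariance $W\Sigma_\Pi W=I$ on $\mathcal R_\Pi$, and $\mathbb E\|\xi_1\|^3=\beta_{3,\Pi}$. For a convex Borel set $A\subseteq\mathcal R_\Pi$, the image $WA$ is again convex and Borel, because $W$ is a linear homeomorphism. Hence
\[
\Pr\{\sqrt m\,\Pi(\overline\Psi_m-\theta)\in A\}=\Pr\{m^{-1/2}\textstyle\sum_i\xi_i\in WA\}.
\]
Likewise $\Pr(\mathcal G_\Pi\in A)=\Pr(Z\in WA)$, where $Z=W\mathcal G_\Pi\sim\mathcal N(0,I_{r_\Pi})$. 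Taking the supremum over $A$ is the same as taking it over convex $WA$.

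\textbf{Step 3: conclude.} Apply Bentkus's bound with $r=r_\Pi$ to obtain the displayed inequality with $C_{\mathrm{BE}}$ equal to his universal constant. Centered balls in $\mathcal R_\Pi$ are convex, so the final sentence of the statement follows immediately.

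The only delicate point is Step 1, in the degenerate case where $\Sigma_\Pi$ is singular on $\Pi\mathcal H_\nu$. There one must check that the law of the projected sum is supported on $\mathcal R_\Pi$, so that comparing against the degenerate Gaussian on $\mathcal R_\Pi$ is legitimate and no mass escapes. The variance-zero argument above handles this. Everything else is the invariance of the convex class under linear bijections together with the cited theorem.
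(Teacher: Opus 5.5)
Your proposal is correct and follows the paper's proof. Both show that $U_\Pi$ lies almost surely in $\mathcal R_\Pi$ by the zero-variance argument, whiten with $\Sigma_\Pi^{\dagger/2}$, and invoke Theorem~1.1 of \citet{Bentkus2003}; your check that the whitening bijection sends convex Borel sets to convex Borel sets spells out what the paper compresses into ``applying the inverse standardization.'' One minor correction: finite second moments are already needed to define $\Sigma_\Pi$ and $\beta_{3,\Pi}$, so they are not a consequence of $\beta_{3,\Pi}<\infty$ as your Step~1 suggests.
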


For a finite-dimensional embedding, the theorem may be applied with $\Pi$
equal to the identity. For an infinite-dimensional embedding, it applies to
each fixed finite-coordinate projection. The approximation constant depends
on the projected rank and on covariance conditioning through $\beta_{3,\Pi}$.
A uniform bound on the unstandardized embedding alone does not control this
quantity. Thus the Gaussian approximation can deteriorate when too many weakly
identified coordinates are retained, even if the embedding itself is bounded.
Because centered balls are convex, the theorem bounds the oracle Gaussian
approximation error for probabilities defined by centered norm balls in a
fixed finite-coordinate projection. It does not include error from
covariance estimation, spectral truncation, Monte Carlo approximation, or
data-adaptive selection of the projection.

\subsection{Finite-coordinate approximation}
\label{sec:finite-coordinate-approximation}

A fixed projection defines a valid finite-dimensional target without invoking
Assumption~\ref{ass:finite-approximation}. That assumption is needed only when
the projected statistic is intended to approximate the full embedding. For
$\Psi_{\nu,K}:=\Pi_K\Psi_\nu$, define
$\overline\Psi_{m,K}:=\Pi_K\overline\Psi_m$ and
$\theta_K:=\Pi_K\theta$.

\begin{proposition}[Approximation of means and mean separation]
\label{prop:finite-approximation-first-moments}
Under Assumption~\ref{ass:finite-approximation},
$$
\|\theta-\theta_K\|_{\mathcal H_\nu}
\leq\epsilon_K^\Psi,
\qquad
\|\overline\Psi_m-\overline\Psi_{m,K}\|_{\mathcal H_\nu}
\leq\epsilon_K^\Psi
\quad\text{almost surely},
$$
and
$$
\|(\overline\Psi_m-\theta)
-(\overline\Psi_{m,K}-\theta_K)\|_{\mathcal H_\nu}
\leq2\epsilon_K^\Psi.
$$
For two populations embedded by the same map, define
$\Delta_{\nu,K}^{\Psi}(P,Q)
:=\|\Pi_K\delta_\nu^{\Psi}(P,Q)\|_{\mathcal H_\nu}$. Then
$$
|\Delta_\nu^{\Psi}(P,Q)-\Delta_{\nu,K}^{\Psi}(P,Q)|
\leq2\epsilon_K^\Psi.
$$
If $(K_m)$ is a deterministic sequence satisfying
$\sqrt m\,\epsilon_{K_m}^\Psi\to0$, then the truncation error in the
centered sample mean is negligible at the central-limit scale.
\end{proposition}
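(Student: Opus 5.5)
The plan is to work pointwise at the level of individual diagrams and then pass everything through the Bochner expectation. The argument never uses additivity, only Assumption~\ref{ass:finite-approximation} and linearity and continuity of $\Pi_K$.

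\textbf{Population mean.} Under Assumption~\ref{ass:finite-approximation}, $\|\Psi_\nu(D)-\Pi_K\Psi_\nu(D)\|_{\mathcal H_\nu}\le\epsilon_K^\Psi$ for every $D\in\mathcal D_n$. Since $\Pi_K$ is a bounded linear operator, it commutes with Bochner integration, so
\[
\theta-\theta_K=\mathbb E_P\{(I-\Pi_K)\Psi_\nu(X)\}.
\]
The first moment needed for $\theta$ to exist is assumed, exactly as in the definition of $\theta$. Jensen's inequality for Bochner integrals, $\|\mathbb E V\|\le\mathbb E\|V\|$, then gives $\|\theta-\theta_K\|\le\epsilon_K^\Psi$.

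\textbf{Sample mean.} The same identity holds pathwise: $\overline\Psi_m-\overline\Psi_{m,K}=m^{-1}\sum_i(I-\Pi_K)\Psi_i$. The triangle inequality bounds its norm by $\epsilon_K^\Psi$ surely, not merely almost surely.

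\textbf{Centered mean.} Write
\[
(\overline\Psi_m-\theta)-(\overline\Psi_{m,K}-\theta_K)=(I-\Pi_K)\overline\Psi_m-(I-\Pi_K)\theta.
\]
The triangle inequality and the two previous bounds give $2\epsilon_K^\Psi$.

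\textbf{Signal comparison.} Apply the mean bound to both populations, which are embedded by the same map. Then
\[
\|\delta_\nu^\Psi-\Pi_K\delta_\nu^\Psi\|\le\|(I-\Pi_K)\theta_P\|+\|(I-\Pi_K)\theta_Q\|\le2\epsilon_K^\Psi.
\]
The reverse triangle inequality $\bigl|\|a\|-\|b\|\bigr|\le\|a-b\|$ yields the bound on $|\Delta_\nu^\Psi-\Delta_{\nu,K}^\Psi|$. One could sharpen this using orthogonality, but the stated bound does not need it.

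\textbf{Negligibility.} Multiply the centered bound by $\sqrt m$. This gives
\[
\sqrt m\|(\overline\Psi_m-\theta)-(\overline\Psi_{m,K_m}-\theta_{K_m})\|\le2\sqrt m\,\epsilon_{K_m}^\Psi\to0
\]
deterministically, so the difference is $o(1)$ surely and hence $o_P(1)$. By Slutsky's lemma in $\mathcal H_\nu$, $\sqrt m(\overline\Psi_{m,K_m}-\theta_{K_m})$ has the same weak limit $\mathcal N_{\mathcal H_\nu}(0,\Sigma)$ as in Theorem~\ref{thm:hilbert-slln-clt}(ii) whenever that limit exists.

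\textbf{Main obstacle.} The only delicate point is the interchange of $\Pi_K$ with the Bochner expectation together with the norm inequality for Bochner integrals; both are standard for bounded operators. Everything else is the triangle inequality. The deterministic sequence $(K_m)$ is what allows the conclusion without any uniformity argument.
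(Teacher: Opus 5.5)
Your proof is correct and follows the paper's argument essentially line by line. Both pass the pointwise truncation bound through the Bochner mean via $\|\mathbb E V\|\le\mathbb E\|V\|$, average the same bound for the sample mean, and use the triangle inequality for the centered and two-population bounds. Both then apply the reverse triangle inequality for $|\Delta_\nu^\Psi-\Delta_{\nu,K}^\Psi|$ and multiply by $\sqrt m$ at the end. Your extra remarks are correct refinements rather than a different route: the sample-mean bound holds surely, not just almost surely, and the Slutsky argument transfers the weak limit to the truncated centered mean.
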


The last condition separates approximation error from sampling error. The
empirical mean fluctuates on the $m^{-1/2}$ scale, so a truncation intended to
recover the full-embedding limit must have error smaller than that scale. If
this condition is not imposed, inference remains valid for the projected
target, but it should not be interpreted as inference for the full mean
embedding without accounting for the approximation bias. Section~7 develops
the additional geometric consequences of finite approximation.

\begin{remark}[Scope and provenance]
\label{rem:probability-provenance}
The strong law, Hilbert-space central limit theorem, and finite-dimensional
Berry--Esseen inequality are standard probability results applied after
verifying measurability and the required moments
\citep{Bentkus2003}. The covariance and approximation
propositions record the operator consequences needed by the later testing and
confidence-set procedures. The diagram-specific content lies in the choice of
$\Psi_\nu$, the interpretation of $\theta_{P,\nu}^{\Psi}$, and the verified
bounds that imply the required moments or finite approximation. Additivity is
used only through
$\theta_{P,\nu}^{\Phi}=T_\nu[\overline\mu_P]$. Assumption \ref{ass:diagram-distortion-floor} is not used in this
section; it enters when an embedding-space conclusion is converted into a
geometric certificate.
\end{remark}

Section~\ref{sec:testing} uses the two-sample central limit theorem and covariance consistency
to calibrate tests of equality of population mean embeddings.

\section{Two-Sample Inference for Mean Embeddings}
\label{sec:testing}

Fix an embedding configuration $\nu$ independently of the inferential sample.
Let
$X_1,\ldots,X_m\overset{\mathrm{i.i.d.}}{\sim}P$ and
$Y_1,\ldots,Y_{m'}\overset{\mathrm{i.i.d.}}{\sim}Q$ be independent diagram
samples, and write
$$
\Psi_i:=\Psi_\nu(X_i),
\qquad
\Psi_j':=\Psi_\nu(Y_j),
$$
with empirical means $\overline\Psi_{P,m}$ and
$\overline\Psi_{Q,m'}$ as defined in Section~\ref{sec:probability}. The unit
of replication is the diagram. Points within a diagram are components of one
structured observation and are not treated as independent replicates.

The procedures in this section test equality of population mean embeddings,
$$
H_0^{\mathrm{mean}}:
\theta_{P,\nu}^{\Psi}=\theta_{Q,\nu}^{\Psi}
\qquad\text{versus}\qquad
H_1^{\mathrm{mean}}:
\Delta_\nu^{\Psi}(P,Q)>0.
$$
The distributional null $P=Q$ implies $H_0^{\mathrm{mean}}$, but the converse
need not hold. The tests are therefore not omnibus tests of equality of diagram
laws. Their target is the first-order population signal represented by the
fixed embedding.

Most of the testing theory uses only Assumption~\ref{ass:fixed-hilbert-embedding}
and the moment conditions stated below. Additivity and Assumption~\ref{ass:diagram-distortion-floor} enter only when the mean-embedding alternative is related to the
structured geometric alternatives of Section~\ref{sec:template-population-transfer}.
Finite-coordinate implementations additionally use
Assumption~\ref{ass:finite-approximation} when they are intended to approximate
the full embedding.

The analysis proceeds at three levels. We first study testing in the estimable
separation $\Delta_\nu^\Psi(P,Q)$ without requiring additivity or a
geometric certificate. We then use the structured population results of
Section~\ref{sec:template-population-transfer} to identify geometric
alternatives that produce a positive mean-embedding signal. Finally, we
consider uniform and minimax statements over classes of alternatives. This
last step requires class-wide prevalence, perturbation, response-conditioning,
and information restrictions that are not needed for inference at a fixed
population pair.

\subsection{Canonical statistic and null calibration}
\label{sec:testing-statistic}

Put $N=m+m'$, $\lambda_N=m/N$, and define the effective sample size
$$
N_{\mathrm{eff}}
:=
\frac{mm'}{m+m'}.
$$
The empirical mean difference and the canonical squared-distance statistic are
$$
\widehat\delta_{m,m'}
:=
\overline\Psi_{P,m}-\overline\Psi_{Q,m'},
\qquad
T_{m,m'}
:=
N_{\mathrm{eff}}
\|\widehat\delta_{m,m'}\|_{\mathcal H_\nu}^2.
$$
The scaling by $N_{\mathrm{eff}}$ gives the usual two-sample normalization and
simplifies the covariance under balanced sampling. The squared Hilbert norm
aggregates differences over all embedding coordinates and does not require a
prespecified direction. It is therefore a natural default when the form of the
population difference is unknown. Directional procedures can be more powerful
when scientific information or an independent sample identifies a relevant
low-dimensional contrast.

\begin{theorem}[Asymptotic null distribution of the mean-distance statistic]
\label{thm:mean-distance-null}
Assume finite second moments under $P$ and $Q$, and suppose
$\lambda_N\to\lambda\in(0,1)$. Under $H_0^{\mathrm{mean}}$,
$$
T_{m,m'}
\ \rightsquigarrow\
\|\mathcal G_{P,Q,\lambda}\|_{\mathcal H_\nu}^2
\ \stackrel{d}{=}\
\sum_{j\geq1}\gamma_jZ_j^2,
$$
where
$$
\mathcal G_{P,Q,\lambda}
\sim
\mathcal N_{\mathcal H_\nu}(0,\Gamma_{P,Q,\lambda}),
\qquad
\Gamma_{P,Q,\lambda}
:=(1-\lambda)\Sigma_{P,\nu}^{\Psi}
+\lambda\Sigma_{Q,\nu}^{\Psi},
$$
and $\gamma_1,\gamma_2,\ldots$ are the eigenvalues of
$\Gamma_{P,Q,\lambda}$. Under the stronger null $P=Q$, the limiting covariance
reduces to $\Sigma_{P,\nu}^{\Psi}$.
\end{theorem}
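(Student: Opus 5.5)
Under $H_0^{\mathrm{mean}}$ we have $\delta_\nu^\Psi(P,Q)=0$, so the plan is to rescale the two-sample central limit theorem (Corollary~\ref{cor:two-sample-hilbert-clt}) from the factor $\sqrt N$ to $\sqrt{N_{\mathrm{eff}}}$ and then apply the continuous mapping theorem to the squared norm. Since $N_{\mathrm{eff}}/N=\lambda_N(1-\lambda_N)\to\lambda(1-\lambda)$, we can write
\[
\sqrt{N_{\mathrm{eff}}}\,\widehat\delta_{m,m'}
=\sqrt{\lambda_N(1-\lambda_N)}\;\sqrt N\,\widehat\delta_{m,m'}.
\]
Corollary~\ref{cor:two-sample-hilbert-clt} gives $\sqrt N\,\widehat\delta_{m,m'}\rightsquigarrow\mathcal N_{\mathcal H_\nu}(0,\Sigma_{P,Q,\lambda})$, and the scalar prefactor converges deterministically. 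Slutsky's lemma in the separable Hilbert space then gives a Gaussian limit with covariance
\[
\lambda(1-\lambda)\Sigma_{P,Q,\lambda}
=(1-\lambda)\Sigma_{P,\nu}^\Psi+\lambda\Sigma_{Q,\nu}^\Psi
=\Gamma_{P,Q,\lambda}.
\]
The map $h\mapsto\|h\|^2_{\mathcal H_\nu}$ is continuous, so $T_{m,m'}\rightsquigarrow\|\mathcal G_{P,Q,\lambda}\|^2$.

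Next we identify the law of the limit. $\Gamma_{P,Q,\lambda}$ is a nonnegative combination of two positive self-adjoint trace-class operators (Section~\ref{sec:probability-mean-covariance}), so it is itself positive, self-adjoint and trace class. The spectral theorem gives an orthonormal eigenbasis $(e_j)$ with eigenvalues $\gamma_j$ summing to $\operatorname{tr}\Gamma_{P,Q,\lambda}<\infty$, possibly after completing the basis on the kernel. The coordinates $\langle\mathcal G,e_j\rangle$ are jointly Gaussian with covariance $\gamma_j\mathbf 1\{j=k\}$, hence independent $\mathcal N(0,\gamma_j)$. Parseval then gives $\|\mathcal G\|^2=\sum_j\gamma_jZ_j^2$, and the series converges almost surely and in $L^1$ because $\sum_j\gamma_j<\infty$. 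This is the same argument already used after Theorem~\ref{thm:hilbert-slln-clt}.

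For the stronger null $P=Q$, the two covariance operators coincide, and $(1-\lambda)\Sigma+\lambda\Sigma=\Sigma_{P,\nu}^\Psi$.

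The only step needing care is the Slutsky argument in infinite dimensions. I will handle it by noting that the difference $\bigl(\sqrt{\lambda_N(1-\lambda_N)}-\sqrt{\lambda(1-\lambda)}\bigr)\sqrt N\,\widehat\delta_{m,m'}$ tends to zero in probability. This holds because $\sqrt N\,\widehat\delta_{m,m'}$ is tight, which in turn follows from its weak convergence on a Polish space. Degenerate $\Gamma$ (zero eigenvalues) causes no difficulty.
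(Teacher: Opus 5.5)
Your proposal is correct and follows the paper's proof essentially step for step. You rescale Corollary~\ref{cor:two-sample-hilbert-clt} by $\sqrt{N_{\mathrm{eff}}/N}\to\sqrt{\lambda(1-\lambda)}$ via Slutsky, identify $\lambda(1-\lambda)\Sigma_{P,Q,\lambda}=\Gamma_{P,Q,\lambda}$, apply continuous mapping to the squared norm, and invoke the spectral representation of the Gaussian limit. Your added care on the infinite-dimensional Slutsky step and on independence of the eigen-coordinates fills in details the paper treats as standard; for the Slutsky step, $\|\sqrt N\,\widehat\delta_{m,m'}\|=O_{\mathbb P}(1)$ already suffices.
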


The theorem gives an asymptotic level-$\alpha$ test by comparing
$T_{m,m'}$ with the $(1-\alpha)$ quantile of the weighted chi-square limit.
The group-specific empirical covariance operators yield the estimator
$$
\widehat\Gamma_{m,m'}
:=
\frac{m'}{N}\widehat\Sigma_{P,m}
+
\frac{m}{N}\widehat\Sigma_{Q,m'}.
$$
Let $\widehat\gamma_1,\widehat\gamma_2,\ldots$ denote the nonzero
eigenvalues of $\widehat\Gamma_{m,m'}$, and let
$\widehat q_{1-\alpha}$ be the conditional $(1-\alpha)$ quantile of
$$
\sum_{j\geq1}\widehat\gamma_j Z_j^2,
$$
where the $Z_j$ are independent standard normal variables. The plug-in
test rejects when
$$
T_{m,m'}>\widehat q_{1-\alpha}.
$$
If $\Gamma_{P,Q,\lambda}\neq0$, trace-norm consistency of
$\widehat\Gamma_{m,m'}$ implies consistency of the corresponding Gaussian
law and its $(1-\alpha)$ quantile. The resulting test therefore has
asymptotic level $\alpha$. If $\Gamma_{P,Q,\lambda}=0$, the null
distribution is degenerate and the nonrandom test is conservative.

Trace-norm consistency from Proposition~\ref{prop:covariance-consistency}
justifies spectral plug-in calibration when the leading covariance spectrum is
estimated with adequate precision. For a fixed finite-coordinate projection, an analogous convex-set
Berry--Esseen bound can be obtained by applying the multivariate result to
the weighted independent summands from the two groups. Theorem
\ref{thm:finite-coordinate-berry-esseen} gives the corresponding
one-sample statement. Covariance estimation and projection selection
introduce additional errors not included in either oracle bound.

Calibration depends on the null hypothesis. Gaussian and spectral procedures
target equality of population mean embeddings and allow the two groups to
differ in higher-order distributional features. Under the stronger
distributional null $P=Q$, permutation calibration gives finite-sample validity
because the pooled diagrams are exchangeable. The embedding configuration must
remain fixed. If a statistic uses a data-dependent projection, regularization
parameter, or coordinate selection rule, the complete fitting step must be
repeated within each permutation unless it was estimated from an independent
sample. Unmodified permutation calibration is not generally exact under the
weaker null of equal means. Equality of the two null hypotheses should therefore
not be assumed.

\begin{corollary}[Consistency against mean-embedding alternatives]
\label{cor:mean-distance-consistency}
Under the assumptions of Theorem~\ref{thm:mean-distance-null}, if
$\Delta_\nu^{\Psi}(P,Q)>0$, then
$$
\frac{T_{m,m'}}{N_{\mathrm{eff}}}
\longrightarrow
\{\Delta_\nu^{\Psi}(P,Q)\}^2
\qquad\text{almost surely},
$$
and hence $T_{m,m'}\to\infty$ almost surely. Consequently, any test that
rejects when $T_{m,m'}>c_{m,m'}$ is consistent at the fixed pair $(P,Q)$
provided that $c_{m,m'}=O_{\mathbb P}(1)$ under that pair.
\end{corollary}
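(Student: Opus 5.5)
The proof applies the strong law of Theorem~\ref{thm:hilbert-slln-clt}(i) to each sample separately and then uses continuity of the norm. Finite second moments imply finite first moments. Hence $\overline\Psi_{P,m}\to\theta_{P,\nu}^{\Psi}$ almost surely as $m\to\infty$ and $\overline\Psi_{Q,m'}\to\theta_{Q,\nu}^{\Psi}$ almost surely as $m'\to\infty$. Since $\lambda_N\to\lambda\in(0,1)$, both $m$ and $m'$ tend to infinity along $N\to\infty$.

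The two samples may be indexed by arbitrary array schemes as $N$ grows. To keep the almost sure statement clean, I would realize both samples as initial segments of two fixed independent i.i.d. sequences, which is the standard convention underlying Theorem~\ref{thm:hilbert-slln-clt}. Along any subsequence with $m,m'\to\infty$, the empirical means then converge almost surely. Consequently $\widehat\delta_{m,m'}\to\delta_\nu^{\Psi}(P,Q)$ almost surely in $\mathcal H_\nu$. The reverse triangle inequality gives $\bigl|\|\widehat\delta_{m,m'}\|-\Delta_\nu^{\Psi}(P,Q)\bigr|\le\|\widehat\delta_{m,m'}-\delta_\nu^{\Psi}(P,Q)\|$. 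Squaring, a continuous map, yields
\[
\frac{T_{m,m'}}{N_{\mathrm{eff}}}=\|\widehat\delta_{m,m'}\|_{\mathcal H_\nu}^2\to\{\Delta_\nu^{\Psi}(P,Q)\}^2
\]
almost surely.

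Next, $N_{\mathrm{eff}}=N\lambda_N(1-\lambda_N)\to\infty$, because $\lambda_N(1-\lambda_N)\to\lambda(1-\lambda)>0$. Since the limit $\{\Delta_\nu^{\Psi}(P,Q)\}^2$ is strictly positive, on the almost sure event we have $T_{m,m'}\ge \tfrac12 N_{\mathrm{eff}}\{\Delta_\nu^{\Psi}(P,Q)\}^2$ eventually. Hence $T_{m,m'}\to\infty$ almost surely.

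For the test, fix $\varepsilon>0$. Tightness gives a $C$ with $\Pr(c_{m,m'}>C)\le\varepsilon$ for all large $N$. Then
\[
\Pr(T_{m,m'}\le c_{m,m'})\le\Pr(T_{m,m'}\le C)+\varepsilon .
\]
The first term tends to zero because almost sure divergence implies convergence in probability to $+\infty$. Letting $\varepsilon\downarrow0$ shows that the rejection probability tends to one.

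The only delicate step is the bookkeeping: almost sure convergence for a two-index array, and the requirement that $c_{m,m'}$ be tight under the alternative pair rather than the null. The latter holds for the plug-in quantile $\widehat q_{1-\alpha}$. By Proposition~\ref{prop:covariance-consistency}, $\widehat\Gamma_{m,m'}$ is trace-norm consistent for $\Gamma_{P,Q,\lambda}$ whether or not the null holds, so $\widehat q_{1-\alpha}$ converges to a finite quantile.
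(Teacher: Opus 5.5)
Your proposal is correct and follows the paper's proof: you apply the strong law to each group mean, use continuity of the squared norm, and combine $N_{\mathrm{eff}}\to\infty$ with the positive limit $\{\Delta_\nu^{\Psi}(P,Q)\}^2$. Your additional bookkeeping is sound. This includes realizing the samples as initial segments of fixed i.i.d. sequences, and the explicit tightness argument for the final consistency claim, which the paper leaves implicit.
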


A fixed-direction test is useful when a scientifically specified contrast or an
independently estimated direction is available. For a unit vector
$h\in\mathcal H_\nu$, define
$$
\widehat\omega_h^2
:=
\frac{m'}{N}
\langle\widehat\Sigma_{P,m}h,h\rangle
+
\frac{m}{N}
\langle\widehat\Sigma_{Q,m'}h,h\rangle,
$$
$$
Z_{m,m'}(h)
:=
\frac{\sqrt{N_{\mathrm{eff}}}
\langle\widehat\delta_{m,m'},h\rangle}
{\widehat\omega_h}.
$$
For a fixed or independently estimated $h$, Slutsky's theorem gives a standard
normal null limit when the limiting directional variance is positive. The
statement does not cover a direction selected from the same observations and
then treated as fixed. Sample splitting or refitting the selection rule inside
a permutation procedure avoids that dependence.

\subsection{Power in mean-embedding separation}
\label{sec:testing-power}

The primitive signal for the procedures above is
$\Delta_\nu^{\Psi}(P,Q)$. The following finite-sample benchmark makes the
inverse-square signal dependence explicit without imposing Gaussianity.

\begin{proposition}[A finite-sample signal-to-noise bound]
\label{prop:finite-sample-power}
Fix $v<\infty$ and consider laws satisfying
$$
\operatorname{tr}(\Sigma_{P,\nu}^{\Psi})\leq v^2,
\qquad
\operatorname{tr}(\Sigma_{Q,\nu}^{\Psi})\leq v^2.
$$
Define
$$
\phi_{\alpha,v}
:=
\mathbf 1
\left\{
\|\widehat\delta_{m,m'}\|_{\mathcal H_\nu}
>
\frac{v}{\sqrt{\alpha N_{\mathrm{eff}}}}
\right\}.
$$
Then $\phi_{\alpha,v}$ has level at most $\alpha$ under
$H_0^{\mathrm{mean}}$. At a fixed alternative, its power is at least
$1-\beta$ whenever
$$
N_{\mathrm{eff}}
\geq
\frac{v^2}{\{\Delta_\nu^{\Psi}(P,Q)\}^2}
\left(
\alpha^{-1/2}+\beta^{-1/2}
\right)^2.
$$
\end{proposition}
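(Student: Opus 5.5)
The argument is a Chebyshev-type bound in the Hilbert space, applied to the centered empirical mean difference. Write $\widehat\delta_{m,m'}-\delta_\nu^{\Psi}(P,Q)=(\overline\Psi_{P,m}-\theta_{P,\nu}^\Psi)-(\overline\Psi_{Q,m'}-\theta_{Q,\nu}^\Psi)$. The two samples are independent, and within each sample the summands are i.i.d. Hilbert-valued with finite second moment; finite second moment follows from the trace bound, since $\operatorname{tr}(\Sigma)=\mathbb E\|U\|^2$. Expanding the squared norm, the cross terms vanish by independence and zero Bochner mean, which gives
$$
\mathbb E\|\widehat\delta_{m,m'}-\delta_\nu^\Psi(P,Q)\|_{\mathcal H_\nu}^2
=\frac{\operatorname{tr}(\Sigma_{P,\nu}^\Psi)}{m}+\frac{\operatorname{tr}(\Sigma_{Q,\nu}^\Psi)}{m'}
\leq v^2\Bigl(\frac1m+\frac1{m'}\Bigr)=\frac{v^2}{N_{\mathrm{eff}}}.
$$
This one variance identity is the only real step. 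The care needed is to justify it in infinite dimensions, either by using $\mathbb E\langle U_i,U_k\rangle=0$ for $i\neq k$ (valid because $\langle U_i,U_k\rangle$ is integrable by Cauchy--Schwarz and independence), or by computing coordinatewise in an orthonormal basis and summing via monotone convergence.

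For the level, note that under $H_0^{\mathrm{mean}}$ we have $\delta_\nu^\Psi(P,Q)=0$. Markov's inequality applied to the squared norm then gives
$$
\Pr\Bigl\{\|\widehat\delta_{m,m'}\|>v/\sqrt{\alpha N_{\mathrm{eff}}}\Bigr\}\leq \frac{v^2/N_{\mathrm{eff}}}{v^2/(\alpha N_{\mathrm{eff}})}=\alpha.
$$
This holds for every pair in the trace-bounded class with equal means, not only when $P=Q$.

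For the power, set $c=v/\sqrt{\alpha N_{\mathrm{eff}}}$ and $\Delta=\Delta_\nu^\Psi(P,Q)$. The sample-size condition is equivalent to $\Delta\geq v(\alpha^{-1/2}+\beta^{-1/2})/\sqrt{N_{\mathrm{eff}}}$, that is, $\Delta-c\geq v/\sqrt{\beta N_{\mathrm{eff}}}$. By the reverse triangle inequality, the test fails to reject only if $\|\widehat\delta_{m,m'}-\delta_\nu^\Psi\|\geq\Delta-c$. Applying Markov's inequality to the squared deviation again bounds this probability by
$$
\frac{v^2/N_{\mathrm{eff}}}{(\Delta-c)^2}\leq\beta,
$$
so the power is at least $1-\beta$.

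The step requiring the most attention is the variance identity in $\mathcal H_\nu$. Everything after it is elementary algebra, and the only other point to watch is the strict versus non-strict inequality at the threshold, which Markov's inequality handles in either form.
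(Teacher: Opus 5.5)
Your proposal is correct and follows the paper's proof essentially step for step. Both use the variance bound $\mathbb E\|\widehat\delta_{m,m'}-\delta_\nu^\Psi(P,Q)\|^2\le v^2/N_{\mathrm{eff}}$, Markov's inequality on the squared norm for the level, and the reverse triangle inequality followed by a second Markov application for the power. Your explicit justification that the cross terms vanish in infinite dimensions is a detail the paper leaves implicit.
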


The constants in Proposition~\ref{prop:finite-sample-power} are conservative;
the result uses only a second-moment bound. Gaussian plug-in calibration can yield substantially smaller critical values when the covariance is estimated accurately. Under the stronger null $P=Q$, permutation calibration can also be less conservative while retaining finite-sample validity. Ordinary permutation calibration does not
generally control level under the weaker null of equal mean embeddings. The proposition isolates the main rate and separates three components of testing difficulty. The factor
$\{\Delta_\nu^\Psi(P,Q)\}^{-2}$ is the inverse-square effect-size penalty,
$v^2$ measures between-diagram variability in the embedded space, and the
factor involving $\alpha$ and $\beta$ records the desired Type I and Type II
error control. Section~\ref{sec:template-population-transfer} enters only
through a lower bound on $\Delta_\nu(P,Q)$; it does not change this basic
signal-to-noise structure. With bounded between-diagram variance, the proposition gives a sufficient effective sample size of order $\Delta^{-2}$ for testing a fixed pair, where $\Delta=\Delta_\nu^\Psi(P,Q)$.
When $\|\Psi_\nu(D)\|_{\mathcal H_\nu}\leq B_\nu$ uniformly, one may take $v=B_\nu$ in this benchmark. Without a known deterministic envelope, the test $\phi_{\alpha,v}$ is a theoretical finite-sample benchmark rather than a directly calibrated procedure.

For a fixed direction $h$, consider a sequence of population pairs
$(P_N,Q_N)$. Write
$$
\delta_{h,N}
:=
\left\langle
\delta_\nu^\Psi(P_N,Q_N),h
\right\rangle
$$
and
$$
\omega_{h,N}^2
:=
(1-\lambda_N)
\left\langle\Sigma_{P_N,\nu}^{\Psi}h,h\right\rangle
+
\lambda_N
\left\langle\Sigma_{Q_N,\nu}^{\Psi}h,h\right\rangle.
$$
Suppose the corresponding scalar triangular-array central limit theorem
holds,
$$
\omega_{h,N}^2\longrightarrow\omega_h^2>0,
\qquad
\sqrt{N_{\mathrm{eff}}}\,\delta_{h,N}\longrightarrow d_h>0.
$$
Then the one-sided test that rejects for
$Z_{m,m'}(h)>z_{1-\alpha}$ has limiting power
$$
1-\Phi_0\left(
z_{1-\alpha}-\frac{d_h}{\omega_h}
\right),
$$
where $\Phi_0$ is the standard normal distribution function. If the direction were known and chosen as
$h=\delta_\nu^{\Psi}(P,Q)/\Delta_\nu^{\Psi}(P,Q)$, this yields the oracle
sample-size benchmark
$$
N_{\mathrm{eff}}
\asymp
\frac{\omega_h^2}
{\{\Delta_\nu^{\Psi}(P,Q)\}^2},
\qquad
\omega_h^2
\leq
\|\Gamma_{P,Q,\lambda}\|_{\mathrm{op}}.
$$
This oracle calculation explains the operator-norm signal-to-noise ratio, but
it is not an implementable test unless the direction is prespecified or learned
independently.

\subsection{Structured geometric alternatives}
\label{sec:structured-testing-alternatives}

Section~\ref{sec:template-population-transfer} supplies pair-specific lower
bounds that convert a certified template separation into a lower bound on
$\Delta_\nu(P,Q)$ for additive embeddings. Define the pair-specific
perturbation penalties
$$
\mathcal E_{\mathrm{com}}(P,Q)
:=
A_0L_\nu\sqrt{\frac{\pi}{2}}
\left\{
\sum_{j=1}^{m_P}\sigma_{Pj}
+
\sum_{\ell=1}^{m_Q}\sigma_{Q\ell}
\right\},
$$
$$
\mathcal E_{\mathrm{het}}(P,Q)
:=
A_0L_\nu\sqrt{\frac{\pi}{2}}
\left\{
\sum_{j=1}^{m_P}p_{Pj}\sigma_{Pj}
+
\sum_{\ell=1}^{m_Q}p_{Q\ell}\sigma_{Q\ell}
\right\}.
$$
Here $m_P$ and $m_Q$ denote the numbers of features in the two template
diagrams. They are unrelated to the statistical sample sizes $m$ and $m'$. The pair-specific structural margins are
$$
G_{\mathrm{com}}(P,Q)
:=
p
\left[
\rho_-(s;\nu)-\mathcal E_{\mathrm{com}}(P,Q)
\right]_+,
$$
$$
G_{\mathrm{het}}(P,Q)
:=
\left[
\eta_{P,Q}\rho_-(s;\nu)
-
\mathcal E_{\mathrm{het}}(P,Q)
\right]_+.
$$
Theorem~\ref{thm:population-lower-bound} states that
$\Delta_\nu(P,Q)\geq G_{\mathrm{com}}(P,Q)$ in the common-prevalence regime
and $\Delta_\nu(P,Q)\geq G_{\mathrm{het}}(P,Q)$ in the
feature-specific regime.

Uniform power requires uniform restrictions on these pair-specific quantities.
The next definition specifies one such parameter class. It is introduced here,
rather than in Section~3, because it concerns a collection of alternatives
rather than a fixed population pair.

\begin{definition}[Uniform structured alternative classes at scale $\tau$]
\label{ass:uniform-structured-alternatives}
Fix a configuration $\nu$ satisfying
Assumptions~\ref{ass:additive-interface}
and~\ref{ass:diagram-distortion-floor}. Let
$\tau$ belong to the admissible scale set for this configuration, and let
$0 < v<\infty$.

\begin{enumerate}[label=(\roman*)]
\item The common-prevalence class
$\mathfrak A_{\mathrm{com}}(\tau)$ consists of pairs satisfying the model and
tail assumptions of Section~\ref{sec:template-population-transfer},
Assumption~\ref{ass:prevalence-identifiability-regimes}(i), and
$$
s\geq\tau,
\qquad
p\geq\pi_{\min}>0,
\qquad
\mathcal E_{\mathrm{com}}(P,Q)
\leq
\overline e_{\mathrm{com}}(\tau),
$$
with $\operatorname{tr}(\Sigma_{P,\nu}^{\Phi})$ and
$\operatorname{tr}(\Sigma_{Q,\nu}^{\Phi})$ bounded by $v^2$.

\item The feature-specific class
$\mathfrak A_{\mathrm{het}}(\tau)$ consists of pairs satisfying the model and
tail assumptions of Section~\ref{sec:template-population-transfer},
Assumption~\ref{ass:prevalence-identifiability-regimes}(ii), and
$$
s\geq\tau,
\qquad
\eta_{P,Q}\geq\eta_{\min}>0,
\qquad
\mathcal E_{\mathrm{het}}(P,Q)
\leq
\overline e_{\mathrm{het}}(\tau).
$$
For a fixed constant $p_*\in(0,1/2)$, the prevalence probabilities also
satisfy
$$
p_*\leq p_{Pj}\leq1-p_*,
\qquad
p_*\leq p_{Q\ell}\leq1-p_*
$$
for every template feature $j$ and $\ell$. The covariance traces are bounded
by $v^2$.
\end{enumerate}

Because $\rho_-$ is the threshold floor supplied by
Assumption~\ref{ass:diagram-distortion-floor},
$\rho_-(s;\nu)\geq\rho_-(\tau;\nu)$ whenever $s\geq\tau$ and both scales
are in the applicable admissible domain. Define
$$
g_{\mathrm{com}}(\tau)
:=
\pi_{\min}
\left[
\rho_-(\tau;\nu)-\overline e_{\mathrm{com}}(\tau)
\right]_+,
$$
$$
g_{\mathrm{het}}(\tau)
:=
\left[
\eta_{\min}\rho_-(\tau;\nu)
-
\overline e_{\mathrm{het}}(\tau)
\right]_+.
$$
The uniform-power statements below are restricted to scales for which the
corresponding margin is strictly positive.
\end{definition}

These restrictions define the alternative classes used for uniform power;
they are not asserted to hold for unrestricted diagram populations. The
prevalence and response-retention bounds exclude rare-mass and cancellation
sequences, while the perturbation bounds ensure that the certified structural
signal remains positive. If the embedding is uniformly bounded by $B_\nu$,
the covariance trace condition follows with $v=B_\nu$.

For $r\in\{\mathrm{com},\mathrm{het}\}$, suppose that
$\mathfrak A_r(\tau)$ is nonempty and define its intrinsic signal by
$$
\underline\Delta_r(\tau)
:=
\inf_{(P,Q)\in\mathfrak A_r(\tau)}
\Delta_\nu(P,Q).
$$
The structured population bounds imply
$$
\underline\Delta_r(\tau)\geq g_r(\tau).
$$
Thus $g_r(\tau)$ is an explicit certified lower bound for the smallest
mean-embedding signal in the class, rather than a definition of that signal.
The resulting power bound may be conservative when
$\underline\Delta_r(\tau)$ is substantially larger than $g_r(\tau)$.

\begin{corollary}[Uniform power over structured alternatives]
\label{cor:structured-uniform-power}
Let $r\in\{\mathrm{com},\mathrm{het}\}$, and suppose that
$\mathfrak A_r(\tau)$ is nonempty. Under
Definition~\ref{ass:uniform-structured-alternatives}, suppose
$g_r(\tau)>0$. Let $\phi_{\alpha,v}$ be the test from Proposition~\ref{prop:finite-sample-power}. Then
$$
\inf_{(P,Q)\in\mathfrak A_r(\tau)}\Pr_{P,Q}\{\phi_{\alpha,v}=1\} \geq 1-\beta
$$
whenever
$$
N_{\mathrm{eff}} \geq \frac{v^2}{g_r(\tau)^2} \left(\alpha^{-1/2}+\beta^{-1/2}\right)^2.
$$
\end{corollary}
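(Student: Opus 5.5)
The plan is to reduce the corollary to Proposition~\ref{prop:finite-sample-power}, applied pairwise, together with the class-wide lower bound $\underline\Delta_r(\tau)\geq g_r(\tau)$ from the structured population theory. The test $\phi_{\alpha,v}$ does not depend on the particular pair $(P,Q)$. It uses only $v$, $\alpha$ and $N_{\mathrm{eff}}$. Hence it suffices to show that, for every fixed pair in $\mathfrak A_r(\tau)$, the sample-size hypothesis of the corollary implies the sample-size hypothesis of Proposition~\ref{prop:finite-sample-power} for that pair. Taking the infimum over pairs then finishes the argument.

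\textbf{Step 1: trace bounds.} Fix $(P,Q)\in\mathfrak A_r(\tau)$. Both regimes of Definition~\ref{ass:uniform-structured-alternatives} impose $\operatorname{tr}(\Sigma_{P,\nu}^{\Phi})\leq v^2$ and $\operatorname{tr}(\Sigma_{Q,\nu}^{\Phi})\leq v^2$. So the covariance hypothesis of Proposition~\ref{prop:finite-sample-power} holds with $\Psi_\nu=\Phi(\cdot;\nu)$. Finite second moments, needed for the Bochner means, follow from the uniform bound $\|\Phi(D;\nu)\|_2\leq n\kappa_\nu$ in Proposition~\ref{prop:interface-consequences}.

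\textbf{Step 2: pairwise signal lower bound.} I will show $\Delta_\nu(P,Q)\geq g_r(\tau)$.
\begin{itemize}
\item In the common regime, Theorem~\ref{thm:population-lower-bound}(i) gives $\Delta_\nu(P,Q)\geq p[\rho_-(s;\nu)-\mathcal E_{\mathrm{com}}(P,Q)]_+$. Monotonicity of $\rho_-$ and $s\geq\tau$ give $\rho_-(s;\nu)\geq\rho_-(\tau;\nu)$. The class restrictions give $\mathcal E_{\mathrm{com}}\leq\overline e_{\mathrm{com}}(\tau)$ and $p\geq\pi_{\min}$. Since $x\mapsto x_+$ is nondecreasing and the bracket is nonnegative, multiplying by $p\geq\pi_{\min}$ preserves the inequality. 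This yields $\Delta_\nu\geq g_{\mathrm{com}}(\tau)$.
\item In the heterogeneous regime, Theorem~\ref{thm:population-lower-bound}(ii) applies in the same way. Here $\eta_{P,Q}\geq\eta_{\min}$ and $\rho_-\geq0$ give $\eta_{P,Q}\rho_-(s;\nu)\geq\eta_{\min}\rho_-(\tau;\nu)$, and $\mathcal E_{\mathrm{het}}\leq\overline e_{\mathrm{het}}(\tau)$.
\end{itemize}
The main subtle point in this step is domain bookkeeping. The monotone comparison needs both $s$ and $\tau$ to lie in the admissible domain $(R_0(\nu),\infty)$ where $\rho_-$ is defined and nondecreasing. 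The class definition requires $\tau$ admissible and requires the pair to be certified at $s$, so I will state explicitly that both scales lie in the domain of Assumption~\ref{ass:diagram-distortion-floor}. This is exactly the remark already made after Definition~\ref{ass:uniform-structured-alternatives}.

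\textbf{Step 3: conclusion.} Since $g_r(\tau)>0$, Step 2 gives $\Delta_\nu(P,Q)>0$ and
\[
\frac{v^2}{\Delta_\nu(P,Q)^2}\left(\alpha^{-1/2}+\beta^{-1/2}\right)^2\leq\frac{v^2}{g_r(\tau)^2}\left(\alpha^{-1/2}+\beta^{-1/2}\right)^2\leq N_{\mathrm{eff}}.
\]
Proposition~\ref{prop:finite-sample-power} therefore gives $\Pr_{P,Q}\{\phi_{\alpha,v}=1\}\geq1-\beta$. The pair was arbitrary and the threshold of $\phi_{\alpha,v}$ is the same for every pair, so taking the infimum over $\mathfrak A_r(\tau)$ proves the claim.

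If Proposition~\ref{prop:finite-sample-power} could not be used as a black box, I would re-derive it with Chebyshev's inequality. The key facts are $\mathbb E\|\widehat\delta_{m,m'}-\delta\|^2\leq v^2/N_{\mathrm{eff}}$ and the reverse triangle inequality, which give power whenever $\Delta\geq v(\alpha^{-1/2}+\beta^{-1/2})/\sqrt{N_{\mathrm{eff}}}$. None of these steps is a real obstacle; the only care needed is the domain bookkeeping in Step 2.
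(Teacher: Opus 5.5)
Your proposal is correct and follows the paper's own proof. Like the paper, you derive the pairwise bound $\Delta_\nu(P,Q)\geq g_r(\tau)$ from Theorem~\ref{thm:population-lower-bound} and the class restrictions of Definition~\ref{ass:uniform-structured-alternatives}, then apply Proposition~\ref{prop:finite-sample-power} with the pair-independent threshold of $\phi_{\alpha,v}$. Your explicit check that $\rho_-(s;\nu)\geq\rho_-(\tau;\nu)$ requires both scales in the admissible domain, and that $\eta_{P,Q}\geq\eta_{\min}$ combines with $\rho_-\geq0$, is slightly more careful than the paper's proof, which leaves these points to the remark after the definition.
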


These restrictions define the classes used for uniform power. They are not
asserted to hold for unrestricted diagram populations. The prevalence and
response-retention conditions exclude rare-mass and cancellation sequences.
The perturbation bounds control the loss of structural signal, while the
condition $g_r(\tau)>0$ ensures that a positive uniform margin remains. If
the embedding is uniformly bounded by $B_\nu$, the covariance trace
condition holds with $v=B_\nu$.

To make the unrestricted comparison explicit, define
$$
\mathfrak A_{\mathrm{sep}}(\tau) := \left\{ (P,Q): W_{\infty,0}(\overline\mu_P,\overline\mu_Q)\geq\tau \right\}.
$$
This class imposes geometric separation but no lower bound on the amount of
population mass carrying that separation.

\begin{proposition}[No uniform power from geometric separation alone]
\label{prop:no-uniform-power-unrestricted}
Fix the rare-mass construction of
Example~\ref{ex:pop-obstruction}, for which
$$ W_{\infty,0}(\overline\mu_{P_\varepsilon},\overline\mu_Q)=\tau$$
for every $\varepsilon>0$. For every sample size $m$, every test
$\phi_m$ satisfying
$$
\Pr_{Q,Q}(\phi_m=1)\leq\alpha,
$$
and every $\beta<1-\alpha$, there exists $\varepsilon>0$ such that
$$
\Pr_{Q,P_\varepsilon}(\phi_m=1)<1-\beta.
$$
Consequently, $\mathfrak A_{\mathrm{sep}}(\tau)$ has no finite uniform
sample complexity.
\end{proposition}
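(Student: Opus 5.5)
The argument compares product measures in total variation. Take a test $\phi_m$ based on $m$ diagrams from the first population and $m'$ from the second. Under the pair $(Q,Q)$, the joint law of the sample is $Q^{\otimes m}\otimes Q^{\otimes m'}$. Under $(Q,P_\varepsilon)$, it is $Q^{\otimes m}\otimes P_\varepsilon^{\otimes m'}$. For any measurable test, the rejection probabilities under two joint laws differ by at most their total variation distance. Hence
\[
\Pr_{Q,P_\varepsilon}(\phi_m=1)\leq \Pr_{Q,Q}(\phi_m=1)+\|Q^{\otimes m'}-P_\varepsilon^{\otimes m'}\|_{\mathrm{TV}}\leq \alpha+\|Q^{\otimes m'}-P_\varepsilon^{\otimes m'}\|_{\mathrm{TV}},
\]
because the common first-sample factor cancels.

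The key step is to bound this total variation distance. From the definitions, $\|P_\varepsilon-Q\|_{\mathrm{TV}}=\varepsilon$, since the two laws differ only by moving mass $\varepsilon$ from $D_\varnothing$ to $D_z$. Subadditivity of total variation over product measures then gives $\|Q^{\otimes m'}-P_\varepsilon^{\otimes m'}\|_{\mathrm{TV}}\leq m'\varepsilon$. The same bound follows directly: both product laws put probability at least $(1-\varepsilon)^{m'}$ on the all-empty sample, so the distance is at most $1-(1-\varepsilon)^{m'}\leq m'\varepsilon$.

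Because $\beta<1-\alpha$, the gap $\gamma:=1-\beta-\alpha$ is positive. Choosing $\varepsilon<\gamma/m'$, with $\varepsilon\in(0,1)$, gives $\Pr_{Q,P_\varepsilon}(\phi_m=1)<\alpha+\gamma=1-\beta$. This holds for every sample size, including when $m$ indexes both $m$ and $m'$.

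Two facts remain to be checked for the final claim. First, each $P_\varepsilon$ lies in $\mathfrak A_{\mathrm{sep}}(\tau)$. This needs the identity $W_{\infty,0}(\overline\mu_{P_\varepsilon},\overline\mu_Q)=\tau$ from Example~\ref{ex:pop-obstruction}, which I take as given from Appendix~\ref{app:population-transfer-proofs}. Second, $(Q,Q)$ satisfies the null $H_0^{\mathrm{mean}}$, which is trivial.

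The "no finite uniform sample complexity" statement then follows. For any fixed sample sizes, no level-$\alpha$ test attains power $1-\beta$ uniformly over the class, so no finite $N_{\mathrm{eff}}$ suffices. I expect no substantive obstacle. The only point needing care is that the level constraint is imposed only at $(Q,Q)$, which is exactly the comparison point used above.
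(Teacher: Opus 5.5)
Your proof is correct and follows the paper's argument essentially step for step. Both cancel the common $Q^{\otimes m}$ factor, bound the rejection probability by $\alpha+\operatorname{TV}(P_\varepsilon^{\otimes m'},Q^{\otimes m'})\leq\alpha+m'\varepsilon$, and choose $\varepsilon<(1-\beta-\alpha)/m'$; allowing unequal sample sizes and the direct all-empty-sample bound $1-(1-\varepsilon)^{m'}$ are harmless refinements of the same route.
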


\subsection{Minimax bounds through an information modulus}
\label{sec:testing-minimax}

Section~3 and Definition~\ref{ass:uniform-structured-alternatives} provide the
uniform signal margin needed for an upper testing bound. A lower bound requires
a different quantity: the smallest information divergence between a structured
alternative and the mean-embedding null. Defining this quantity explicitly
avoids imposing the existence of a least-favorable parametric submodel as a
standing assumption.

Define the mean-null class
$$
\mathfrak H_0^{\mathrm{mean}}(v)
:=
\left\{
(P_0,Q_0):
\theta_{P_0,\nu}^{\Phi}=\theta_{Q_0,\nu}^{\Phi},
\ \operatorname{tr}(\Sigma_{P_0,\nu}^{\Phi})\leq v^2,
\ \operatorname{tr}(\Sigma_{Q_0,\nu}^{\Phi})\leq v^2
\right\}.
$$
Let $m_{\mathrm{com}}^*(\alpha,\beta,\tau)$ be the smallest equal per-group
sample size for which some test has level at most $\alpha$ uniformly over
$\mathfrak H_0^{\mathrm{mean}}(v)$ and power at least $1-\beta$ uniformly over
$\mathfrak A_{\mathrm{com}}(\tau)$. The corresponding one-observation information modulus is

$$
\mathcal K_{\mathrm{com}}(\tau) :=
\inf_{\substack{(P,Q)\in\mathfrak A_{\mathrm{com}}(\tau)\\
(P_0,Q_0)\in\mathfrak H_0^{\mathrm{mean}}(v)}}
\left\{\operatorname{KL}(P\|P_0)+\operatorname{KL}(Q\|Q_0)\right\},
$$
where a divergence is $+\infty$ when absolute continuity fails.

The upper bound requires only the class-wide signal restriction in
Definition~\ref{ass:uniform-structured-alternatives}. For the minimax lower
bound, the alternative class must also contain a regular hard subfamily. We
state this requirement over the scale set on which the lower bound will be
used.

\begin{assumption}[Common-prevalence minimax richness]
\label{ass:common-prevalence-minimax-richness}
Fix a nonempty scale set
$\mathcal T_{\mathrm{com}}\subseteq\mathcal I_\nu$. There are constants $c_\rho>0$, $\gamma\in(0,1)$, and $e_0>0$,
independent of $\tau$, such that
$$
\rho_-(\tau;\nu)\ge c_\rho\tau, \quad \text{ and } \quad e_0 \le\overline e_{\mathrm{com}}(\tau) \le (1-\gamma)\rho_-(\tau;\nu)
$$
for every $\tau\in\mathcal T_{\mathrm{com}}$.
\end{assumption}

\begin{assumption}[Feature-specific-prevalence minimax richness]
\label{ass:feature-specific-prevalence-minimax-richness}
Fix a nonempty scale set
$\mathcal T_{\mathrm{het}}\subseteq\mathcal I_\nu$. Suppose the following
conditions hold.

\begin{enumerate}[label=(\roman*)]

\item There are an integer $M\leq n$, a compact set $ K_{\mathrm{het}} \subset \{(b,d):0<b<d<L\},$
and a constant $C_0<\infty$ such that the following holds. For every
$\tau\in\mathcal T_{\mathrm{het}}$, there are alternative centers
$z_{P1,\tau},\ldots,z_{PM,\tau},$ and $z_{Q1,\tau},\ldots,z_{QM,\tau},$
common reference centers
$z_{1,\tau}^0,\ldots,z_{M,\tau}^0,$ and prevalence vectors
$p_{P,\tau},p_{Q,\tau},p_\tau^0\in[p_*,1-p_*]^M$, such that
all centers belong to $K_{\mathrm{het}}$. Define
$$
D_{P,\tau}^\star=\{z_{P1,\tau},\ldots,z_{PM,\tau}\},
\qquad
D_{Q,\tau}^\star=\{z_{Q1,\tau},\ldots,z_{QM,\tau}\}.
$$
The alternative templates satisfy
$$
d_B(D_{P,\tau}^\star,D_{Q,\tau}^\star)=\tau \quad
\text{ and } \quad
(D_{P,\tau}^\star,D_{Q,\tau}^\star) \in\mathcal C_{\nu,\tau}.
$$
Moreover,
$$
\begin{aligned}
&\sum_{j=1}^M
\left\{
\|z_{Pj,\tau}-z_{j,\tau}^0\|_2^2
+
\|z_{Qj,\tau}-z_{j,\tau}^0\|_2^2
\right\}
+
\sum_{j=1}^M
\left\{
|p_{Pj,\tau}-p_{j,\tau}^0|^2
+
|p_{Qj,\tau}-p_{j,\tau}^0|^2
\right\}
\leq
C_0\tau^2.
\end{aligned}
$$

\item Let $c_\tau$, $q_\tau$, $V_\tau$, and $\mathcal S_\tau
=\operatorname{span}\{c_\tau,q_\tau\}$
be the prevalence contrast, count contrast, response operator, and restricted
contrast space associated with
$(D_{P,\tau}^\star,D_{Q,\tau}^\star)$. There are constants
$\kappa_*<\infty$ and $c_*>0$, independent of $\tau$, such that
$$
\operatorname{cond}(V_\tau;\mathcal S_\tau)
\leq\kappa_* \quad
\text{ and } \quad
\|c_\tau\|_2 \geq c_*\|q_\tau\|_2.
$$
The class constant satisfies
$$
\eta_{\min}\leq\frac{c_*}{\kappa_*}.
$$

\item There are constants
$c_{\rho,\mathrm{het}}>0$,
$\gamma_{\mathrm{het}}\in(0,1)$, and
$e_{0,\mathrm{het}}>0$, independent of $\tau$, such that
$$
\rho_-(\tau;\nu) \geq c_{\rho,\mathrm{het}}\tau \quad 
\text{ and } \quad
e_{0,\mathrm{het}} \leq \overline e_{\mathrm{het}}(\tau)
\leq (1-\gamma_{\mathrm{het}}) \eta_{\min}\rho_-(\tau;\nu)
$$
for every $\tau\in\mathcal T_{\mathrm{het}}$.

\end{enumerate}
\end{assumption}

\begin{remark}[Scale conditions for PLACE and PALACE]
\label{rem:minimax-scale-conditions}
The linear-floor conditions in
Assumption~\ref{ass:common-prevalence-minimax-richness} and
Assumption~\ref{ass:feature-specific-prevalence-minimax-richness}(iii)
are conditions on the embedding configuration and the selected scale set.
They can be verified directly for PLACE and PALACE.

For the PLACE step certificate, suppose
$\mathcal T_r \subseteq [3R_1,\overline\tau]$, where $r\in\{\mathrm{com},\mathrm{het}\},
$ with $\overline\tau<\infty$ and $w_1>0$. Then
$$
\rho_-^{\mathrm{PLACE},\mathrm{step}}(\tau;\nu) \geq \frac{w_1R_1}{16} \geq
\frac{w_1R_1}{16\overline\tau}\tau.
$$
Thus the required linear comparison holds on every bounded scale interval
above the PLACE step threshold.

For the PLACE affine certificate, suppose the scale set is bounded away from
$R_1$. If $\tau\geq(1+\delta)R_1$
for some fixed $\delta>0$, then
$$
\begin{aligned}
\rho_-^{\mathrm{PLACE},\mathrm{aff}}(\tau;\nu)
&=
\lambda(\nu)(\tau-R_1)
&\geq
\lambda(\nu)\frac{\delta}{1+\delta}\tau.
\end{aligned}
$$
The linear comparison therefore holds on any such restricted scale set.

For PALACE, suppose the embedding configuration is finite and all its weights
are strictly positive. Let $w_{\min}:=\min_{1\leq k\leq J_\nu}w_k>0.$
At every admissible scale,
$$
\begin{aligned}
\rho_-^{\mathrm{PALACE}}(\tau;\nu)
&=
\frac{\tau}{4}
\min_{k:r_k\geq\tau/4}w_k
&\geq
\frac{w_{\min}}{4}\tau.
\end{aligned}
$$
Hence the PALACE floor also satisfies the required linear comparison on its
admissible scale set.

The remaining inequalities involving
$\overline e_{\mathrm{com}}(\tau)$ and
$\overline e_{\mathrm{het}}(\tau)$ define the noise regime considered in the
minimax result. Their upper bounds require the permitted perturbation to remain
a fixed fraction below the certified signal. By contrast, the positive lower bounds
$e_0>0$ and $e_{0,\mathrm{het}}>0$ are technical conditions for the particular
fixed-noise hard families considered here. They are not needed for the
population lower bounds or the uniform-power results.
\end{remark}

\begin{remark}[Interpretation of feature-specific minimax richness]
\label{rem:feature-specific-minimax-richness-interpretation}
Parts~(i) and~(ii) of
Assumption~\ref{ass:feature-specific-prevalence-minimax-richness}
describe a regular local family of feature-specific alternatives. They are
conditions on the parameter class and the fixed embedding configuration. 

Part~(i) requires the model class to contain alternatives obtained by small
changes in feature locations and prevalence probabilities around a common
reference configuration. This condition is natural when the template features
have positive persistence, remain away from the boundary of the persistence
region, and retain a fixed correspondence along the selected path. The
reference features should also remain separated enough to avoid feature
collisions or changes in the relevant bottleneck matching. These are local
regularity conditions on the diagram model. They do not depend on the specific
form of the PLACE or PALACE coordinates.

For PLACE, such local paths can be selected inside the bounded persistence
region. The template pairs must also satisfy the relevant PLACE certification
condition. For PALACE, the reference configuration and its local path must lie
inside the covered support $\mathcal R$. The PALACE configuration must remain
admissible over the selected scale set. Thus part~(i) is compatible with both
constructions, although the PALACE support requirement must be checked for the
chosen configuration.

Part~(ii) is a uniform restricted singular-value condition. It requires the
response operator to remain identifiable and well-conditioned on the
at most two-dimensional space generated by the count and prevalence
contrasts. A convenient sufficient condition is
$$
\inf_{\tau\in\mathcal T_{\mathrm{het}}}
\sigma_{\min}(V_\tau;\mathcal S_\tau)>0,
\qquad
\sup_{\tau\in\mathcal T_{\mathrm{het}}}
\sigma_{\max}(V_\tau;\mathcal S_\tau)<\infty,
$$
together with
$$
\inf_{\tau\in\mathcal T_{\mathrm{het}}}
\frac{\|c_\tau\|_2}{\|q_\tau\|_2}>0.
$$
For a fixed finite-dimensional PLACE or PALACE configuration, these are
finite-matrix conditions. They can be checked along the proposed hard path.
If the path is continuous, the dimension of $\mathcal S_\tau$ is constant,
and the scale set is compact, pointwise injectivity of $V_\tau$ on
$\mathcal S_\tau$ gives a uniform restricted lower gain.

The condition can fail when different template features have nearly identical
landmark responses or when their prevalence contributions cancel. It may also
fail if the path passes through a configuration where the restricted response
matrix loses rank. Boundedness and Lipschitz continuity of PLACE and PALACE do
not rule out these cases. Thus part~(ii) is a substantive identifiability
condition on the interaction between the template path, the prevalence
pattern, and the selected landmarks.

No additional boundedness condition is required. PLACE and PALACE satisfy the
bounded additive-interface assumptions already imposed in
Section~\ref{sec:additive-interface}. The bounded-cardinality restriction on
$\mathcal D_n$ then provides the covariance envelope needed for the structured
alternative classes.
\end{remark}


The preceding conditions identify scale regimes in which the structured
alternative classes contain regular hard families with a nonvanishing
certified margin. We now compare this margin with the information distance
between the alternative class and the mean-null class. 

\begin{lemma}[Information-modulus comparison for structured prevalence regimes]
\label{lem:information-modulus-comparison}
For $r\in\{\mathrm{com},\mathrm{het}\}$, let
$\mathcal T_r$ denote $\mathcal T_{\mathrm{com}}$ when
$r=\mathrm{com}$ and $\mathcal T_{\mathrm{het}}$ when
$r=\mathrm{het}$. Let the structured alternative
classes and margins be as in
Definition~\ref{ass:uniform-structured-alternatives}. Suppose the corresponding
minimax condition holds: Assumption~\ref{ass:common-prevalence-minimax-richness}
when $r=\mathrm{com}$, and
Assumption~\ref{ass:feature-specific-prevalence-minimax-richness} when
$r=\mathrm{het}$. Then there is a constant
$C_{\mathrm{KL},r}<\infty$, independent of
$\tau\in\mathcal T_r$, such that
$$
\mathcal K_r(\tau)
\leq
C_{\mathrm{KL},r}g_r(\tau)^2.
$$
\end{lemma}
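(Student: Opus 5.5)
The plan is to bound $\mathcal K_r(\tau)$ from above by one explicit feasible point of the infimum, and to bound $g_r(\tau)$ from below by a multiple of $\tau$. The lemma then follows by comparing the two.

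\emph{Lower bound on the margin.} In the common regime, Assumption~\ref{ass:common-prevalence-minimax-richness} gives $\overline e_{\mathrm{com}}(\tau)\le(1-\gamma)\rho_-(\tau;\nu)$. Hence
\[
g_{\mathrm{com}}(\tau)\ge \pi_{\min}\gamma\rho_-(\tau;\nu)\ge \pi_{\min}\gamma c_\rho\tau .
\]
In the feature-specific regime, part~(iii) of Assumption~\ref{ass:feature-specific-prevalence-minimax-richness} gives in the same way
\[
g_{\mathrm{het}}(\tau)\ge \gamma_{\mathrm{het}}\eta_{\min}c_{\rho,\mathrm{het}}\tau .
\]
It therefore suffices to show $\mathcal K_r(\tau)\le C\tau^2$ with $C$ independent of $\tau\in\mathcal T_r$.

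\emph{Hard family and null pair.} I will build everything from a single reference law $R_\tau$ and take the null pair $(P_0,Q_0)=(R_\tau,R_\tau)$. This pair lies in $\mathfrak H_0^{\mathrm{mean}}(v)$ trivially, since its mean embeddings coincide and the covariance traces are controlled by the uniform bound $n\kappa_\nu$ from Proposition~\ref{prop:interface-consequences}, with $v$ large enough.

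In the feature-specific case, $R_\tau$ is the template--thinning--perturbation law with centers $z^0_{j,\tau}$ and prevalences $p^0_{j,\tau}$. The alternatives $P,Q$ use centers $z_{Pj,\tau},z_{Qj,\tau}$ and prevalences $p_{Pj,\tau},p_{Qj,\tau}$. In all three laws the indicators $I_j$ and locations $Z_j$ are independent across $j$ and of each other. Each $G_{rj}$ is a fixed smooth, compactly supported location family $g_\sigma(\cdot-z)$ at a common noise level $\sigma$. I take $\sigma$ small enough that every perturbed point stays in $\mathbb H_L$, which is possible because the centers lie in the compact set $K_{\mathrm{het}}$ interior to the region. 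I also choose $\sigma$ so that the resulting penalty $\mathcal E_{\mathrm{het}}$ lies between $e_{0,\mathrm{het}}$ and $\overline e_{\mathrm{het}}(\tau)$; this is exactly what the two-sided noise window in part~(iii) permits.

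The lower bound $e_{0,\mathrm{het}}>0$ is what keeps $\sigma$ bounded away from zero uniformly in $\tau$. That in turn bounds the Fisher information of the location family, so $\operatorname{KL}(g_\sigma(\cdot-z)\|g_\sigma(\cdot-z'))\le C_\sigma\|z-z'\|_2^2$.

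Membership of $(P,Q)$ in $\mathfrak A_{\mathrm{het}}(\tau)$ follows from the remaining hypotheses:
\begin{itemize}
\item $s=\tau$ and the pair is certified, by part~(i);
\item $\eta_{P,Q}\ge c_*/\kappa_*\ge\eta_{\min}$, by part~(ii);
\item the prevalences lie in $[p_*,1-p_*]$, by part~(i).
\end{itemize}
The common case is handled identically with all prevalences equal to one value $p\ge\pi_{\min}$. The hard pair is a certified template pair at scale $\tau$ whose centers move by $O(\tau)$ from a reference configuration, and only location shifts contribute.

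\emph{KL computation.} The observed diagram is a measurable function of the latent vector $\{(I_j,Z_j)\}$, because the counting measure determines the diagram. By the data-processing inequality, $\operatorname{KL}(P\|R_\tau)$ is at most the KL divergence between the latent laws. Independence and the chain rule give
\[
\operatorname{KL}(P\|R_\tau)\le\sum_j\Bigl[\operatorname{kl}(p_{Pj}\|p^0_j)+p_{Pj}\operatorname{KL}\bigl(G_{Pj}\|G^0_j\bigr)\Bigr],
\]
where $\operatorname{kl}$ is the Bernoulli divergence. This step needs care: on $\{I=0\}$ the location $Z$ can be assigned the same law under both models, so it contributes nothing. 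Because the prevalences stay in $[p_*,1-p_*]$,
\[
\operatorname{kl}(p\|p^0)\le \frac{(p-p^0)^2}{p_*(1-p_*)} .
\]
Summing the corresponding bound for $Q$ and applying the displacement budget $\le C_0\tau^2$ from part~(i) gives $\mathcal K_{\mathrm{het}}(\tau)\le C\tau^2$ with
\[
C=\max\bigl\{C_\sigma,\;1/(p_*(1-p_*))\bigr\}\,C_0 .
\]

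\emph{Main obstacle.} I expect the common regime to be the hardest step. Assumption~\ref{ass:common-prevalence-minimax-richness} states no explicit local path of certified templates with $O(\tau)$ center displacement, unlike part~(i) of the feature-specific assumption. I would first try to extract such a path from the construction-specific certificate: for example, two templates that differ by moving one feature a bottleneck distance $\tau$, with the certification condition checked directly. Failing that, I would state this as the implicit richness content of the class. A second delicate point is verifying the sub-Gaussian tail condition of Assumption~\ref{ass:template-perturbation-tails} for the compactly supported $g_\sigma$ at a level compatible with the noise window; compact support makes this immediate with a suitable $A_0$.
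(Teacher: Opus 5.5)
Your overall architecture matches the paper's. You lower-bound $g_r(\tau)$ by a multiple of $\tau$ using the $(1-\gamma)$ window and the linear floor. You take an identical null pair. You use the latent Bernoulli/location KL decomposition followed by data processing. For the common regime, the paper resolves your ``main obstacle'' exactly as in your first suggestion: $p=1$ singletons $a_\tau=(\varepsilon,L-\varepsilon)$ and $b_\tau=(\varepsilon+\tau,L-\varepsilon)$, the null at their midpoint, and certification checked directly for PLACE and PALACE.

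\textbf{The gap: your perturbation family makes every KL term infinite.}
\begin{itemize}
\item A smooth, compactly supported location family $g_\sigma(\cdot-z)$ has support $z+\operatorname{supp}g_\sigma$. So for $z\neq z'$, the law $g_\sigma(\cdot-z)$ puts positive mass on a set where $g_\sigma(\cdot-z')$ vanishes.
\item Absolute continuity therefore fails, and $\operatorname{KL}(g_\sigma(\cdot-z)\|g_\sigma(\cdot-z'))=+\infty$. This holds however smooth $g_\sigma$ is and however small its Fisher information.
\item Finite Fisher information controls the Hellinger distance between translates, not the KL divergence. The step ``bounded Fisher information, hence $\operatorname{KL}\le C_\sigma\|z-z'\|_2^2$'' is false here.
\item Each moved center enters with weight $p_{Pj,\tau}\ge p_*>0$, so every latent KL term is $+\infty$. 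Data processing then only bounds the diagram-level KL by $+\infty$.
\item In the singleton case the map from points to one-point diagrams is a Borel isomorphism, so the diagram KL is itself infinite. Your construction therefore gives no bound on $\mathcal K_r(\tau)$.
\end{itemize}

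\textbf{How the paper avoids this.} It takes $G_{z,\sigma_0}$ to be the law of $z+\sigma_0Z$ conditioned on $z+\sigma_0Z\in\mathbb H_L$.
\begin{itemize}
\item All centers share the carrier $\mathbb H_L$, so points stay in the region without any need for compact support.
\item The family is exponential with natural parameter $z/\sigma_0^2$. Every member lives on a set of diameter $D_L\le\sqrt2L$, so the log-partition Hessian has operator norm at most $D_L^2$.
\item This gives $\operatorname{KL}(G_{z,\sigma_0}\|G_{z',\sigma_0})\le D_L^2\|z-z'\|_2^2/(2\sigma_0^4)$.
\item The tail condition holds as soon as $\inf_{z}\Pr\{z+\sigma_0Z\in\mathbb H_L\}\ge A_0^{-1}$.
\end{itemize}

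\textbf{Two smaller corrections.}
\begin{itemize}
\item The noise window is not two-sided for the penalty. You only need $\mathcal E_r\le\overline e_r(\tau)$. The constant $e_0>0$ is a lower bound on the budget: it lets one fixed $\sigma_0$ with penalty at most $e_0$ work for every $\tau$, which keeps the $\sigma_0^{-4}$ constant independent of $\tau$. Forcing the penalty above $e_0$ is unnecessary and may conflict with the smallness of $\sigma$.
\item ``$v$ large enough'' is an extra hypothesis not in the lemma. In the common regime the paper avoids it by using $p=1$ singletons and shrinking $\sigma_0$ until the covariance trace is at most $2A_0L_\nu^2\sigma_0^2\le v^2$. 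With $p<1$ this fails, because the Bernoulli part of the variance does not shrink with $\sigma$. In the feature-specific regime the paper itself adds $v\ge B_\nu$.
\end{itemize}
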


The proof is given in Appendix~\ref{app:minimax-information-comparison}.
For each regime, it constructs a structured alternative and a nearby
mean-null pair whose KL divergence is of order $\tau^2$. The minimax-richness
conditions convert this bound to order $g_r(\tau)^2$. Together with
Corollary~\ref{cor:structured-uniform-power}, this yields the following
two-sided sample-complexity bound.

\begin{theorem}[Minimax sample complexity for structured prevalence regimes]
\label{thm:structured-prevalence-minimax-rate}
Fix $r\in\{\mathrm{com},\mathrm{het}\}$ and
$0<\alpha<1-\beta<1$. Suppose the conditions of
Lemma~\ref{lem:information-modulus-comparison} hold. Then, for every
$\tau\in\mathcal T_r$, up to integer rounding,
$$
\frac{2(1-\beta-\alpha)^2}
{C_{\mathrm{KL},r}g_r(\tau)^2}
\leq
m_r^*(\alpha,\beta,\tau)
\leq
\frac{2v^2}{g_r(\tau)^2}
\left(
\alpha^{-1/2}+\beta^{-1/2}
\right)^2.
$$
Thus, with the embedding and regime-specific class constants held fixed, the
upper and lower sample-complexity bounds have matching dependence on the
certified margin $g_r(\tau)$ over the corresponding restricted scale regime.
\end{theorem}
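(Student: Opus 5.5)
The two inequalities are proved separately. The upper bound is a direct specialization of Corollary~\ref{cor:structured-uniform-power}; the lower bound is the standard two-point (Le Cam) argument fed by Lemma~\ref{lem:information-modulus-comparison}.

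\emph{Upper bound.} With equal per-group sizes $m=m'$ we have $N_{\mathrm{eff}}=m/2$. The test $\phi_{\alpha,v}$ of Proposition~\ref{prop:finite-sample-power} has level at most $\alpha$ at every pair in $\mathfrak H_0^{\mathrm{mean}}(v)$. This needs only equal mean embeddings and the trace bound $v^2$, so the level statement holds uniformly over the null class. Corollary~\ref{cor:structured-uniform-power}, applicable since $g_r(\tau)>0$ under the richness assumptions, gives power at least $1-\beta$ uniformly over $\mathfrak A_r(\tau)$ once
\[
m/2\geq v^2g_r(\tau)^{-2}(\alpha^{-1/2}+\beta^{-1/2})^2 .
\]
Hence $m_r^*$ is at most the right-hand side, up to rounding. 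The only thing to check is that positivity of $g_r(\tau)$ follows from Assumption~\ref{ass:common-prevalence-minimax-richness} or Assumption~\ref{ass:feature-specific-prevalence-minimax-richness}(iii). It does: $\overline e_r\le(1-\gamma)\rho_-$ (respectively $(1-\gamma_{\mathrm{het}})\eta_{\min}\rho_-$) gives $g_r(\tau)\geq\gamma\pi_{\min}\rho_-(\tau;\nu)>0$, or the analogous bound in the heterogeneous case.

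\emph{Lower bound.} Fix a test $\phi$ based on $m$ observations per group with uniform level $\alpha$ and uniform power $1-\beta$. For any $(P,Q)\in\mathfrak A_r(\tau)$ and $(P_0,Q_0)\in\mathfrak H_0^{\mathrm{mean}}(v)$, combining these gives
\[
1-\beta-\alpha\le \Pr_{P,Q}(\phi=1)-\Pr_{P_0,Q_0}(\phi=1)\le \mathrm{TV}\big(P^{\otimes m}\otimes Q^{\otimes m},\,P_0^{\otimes m}\otimes Q_0^{\otimes m}\big).
\]
Pinsker's inequality and tensorization of KL then give
\[
\mathrm{TV}\le\sqrt{\tfrac12\, m\,\{\operatorname{KL}(P\|P_0)+\operatorname{KL}(Q\|Q_0)\}}.
\]
Taking the infimum over both pairs yields $(1-\beta-\alpha)^2\le m\,\mathcal K_r(\tau)/2$. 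The infimum must be handled with care: because the inequality holds for every pair, one passes to approximating sequences, with no attainment needed. This gives $m\ge 2(1-\beta-\alpha)^2/\mathcal K_r(\tau)$. Lemma~\ref{lem:information-modulus-comparison} supplies $\mathcal K_r(\tau)\le C_{\mathrm{KL},r}g_r(\tau)^2$, which produces the stated lower bound. If $\mathcal K_r(\tau)=0$, the bound reads $m^*=\infty$, which is consistent.

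\emph{Main obstacle.} The substantive content, constructing the hard pairs, is already inside Lemma~\ref{lem:information-modulus-comparison}, so the remaining delicate points are bookkeeping. First, the lemma's nearly-null pair must genuinely lie in $\mathfrak H_0^{\mathrm{mean}}(v)$ with the same $v$ used in the upper bound. Second, the constant $C_{\mathrm{KL},r}$ must be uniform in $\tau\in\mathcal T_r$ so that the two bounds match. I would verify both against the lemma's statement, and state the result with ``up to integer rounding'' to absorb ceiling effects in $m$.
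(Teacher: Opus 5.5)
Your proposal is correct and follows the paper's proof essentially step for step. The upper bound is Corollary~\ref{cor:structured-uniform-power} with $m=m'$ and $N_{\mathrm{eff}}=m/2$, and the lower bound is the same Le Cam two-point argument (near-infimal pairs, Pinsker plus tensorization) combined with Lemma~\ref{lem:information-modulus-comparison}. One small refinement: the case $\mathcal K_r(\tau)=0$ is not merely ``consistent'' but cannot occur alongside your upper bound, and you can avoid dividing by $\mathcal K_r(\tau)$ altogether by chaining $(1-\beta-\alpha)^2\le m\mathcal K_r(\tau)/2\le mC_{\mathrm{KL},r}g_r(\tau)^2/2$.
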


The margin $g_r(\tau)$ enters the two bounds for different reasons. It
lower-bounds the mean-embedding signal uniformly for the upper bound, while
the lower bound uses one hard alternative and null pair with KL divergence of
order $g_r(\tau)^2$. The conclusion applies only to the fixed configurations,
class constants, and scale ranges covered by the stated assumptions. It does
not assert that the diagram-level certificate is sharp for unrestricted
diagram distributions or arbitrary landmark configurations.

\subsection{Relation to STRAND and scope of the test}
\label{sec:testing-strand}

Among recent approaches to two-sample inference for collections of persistence
diagrams, STRAND \citep{STRAND} is particularly close in scope to the present
work. It compares groups through the marginal survival distribution of feature
lifetimes. It pools the persistence values from all
diagrams in each group and applies a log-rank test. The asymptotic calibration
and its power calculation are based on treating the pooled feature lifetimes
as an i.i.d.\ sample within each group. This assumption can be strong because
features from the same diagram may be dependent. It also makes the nominal
sample size depend on the number of features rather than the number of
diagrams. When diagram cardinalities vary, pooling weights diagrams according
to their numbers of features.

STRAND also considers diagram-level permutation calibration, which preserves
within-diagram dependence. Such calibration requires exchangeability of the
complete diagrams under the null. The present framework instead uses each
diagram as one independent observation and estimates covariance across
diagrams. It therefore accommodates dependence among features within a
diagram without a feature-level independence assumption.

The methods also target different aspects of persistence diagrams. STRAND uses
the lifetime $d-b$ and is invariant to translations along the diagonal. It
provides Kaplan--Meier localization and an interpretable hazard ratio when the
proportional-hazards assumption holds. A median-persistence difference is
available otherwise. Our method retains the joint birth--death locations
through the chosen embedding. It can therefore detect differences between
diagrams that have similar lifetime distributions but differ in where their
features occur. A rejection here concerns the mean embedding, whereas a
STRAND rejection concerns the feature-lifetime distribution. Neither test
alone characterizes equality of the full diagram distributions.

The next section uses the same Hilbert-space limit theory to construct
confidence sets and geometric certificates.


\section{Confidence Sets and Geometric Certificates}
\label{sec:confidence-certificates}

The preceding sections use the empirical mean embedding to test equality of
population means. The same limit theory also provides uncertainty sets for a
single mean embedding and for a two-population mean contrast. This section
separates three interpretations of those sets. First, they are ordinary
Hilbert-space confidence balls for estimable population parameters. Second,
for additive embeddings they yield lower confidence bounds and exclusion
certificates in the transport geometry of padded mean measures. Third, when
class centroids are estimated from labeled diagrams, the same confidence balls
certify whether a nearest-centroid decision is insensitive to centroid
estimation and to a prescribed bottleneck perturbation of the query diagram.

Throughout, the embedding configuration is fixed independently of the sample.
The primary confidence target is a population mean embedding, not a mean
diagram. A population average of persistence diagrams need not itself be a
persistence diagram, whereas its Hilbert-space mean and, for additive maps,
its padded mean measure are well defined.

A confidence set and a certificate answer different questions. The confidence
set quantifies sampling uncertainty about an unknown population parameter. A
certificate combines that uncertainty set with a deterministic property of the
embedding to support a conclusion about a candidate mean measure, a
population contrast, or a classification decision. The confidence probability
comes from the statistical set; the geometric interpretation comes from the
embedding interface. Failure to obtain a positive certificate is inconclusive:
it is not evidence for the opposite geometric or decision-theoretic conclusion.

\subsection{Confidence balls for a population mean embedding}
\label{sec:one-sample-confidence}

Let $X_1,\ldots,X_m\overset{\mathrm{i.i.d.}}{\sim}P$, write
$\Psi_i:=\Psi_\nu(X_i)$, and let
$$
\theta:=\theta_{P,\nu}^{\Psi},
\qquad
\overline\Psi_m:=\frac{1}{m}\sum_{i=1}^m\Psi_i,
\qquad
\Sigma:=\Sigma_{P,\nu}^{\Psi}.
$$
Here $m$ is the number of independent diagrams. Persistence points within the
same diagram are not treated as independent observations.

Let $\mathcal G\sim\mathcal N_{\mathcal H_\nu}(0,\Sigma)$ and denote by
$q_{1-\alpha}(\Sigma)$ the $(1-\alpha)$ quantile of
$\|\mathcal G\|_{\mathcal H_\nu}^2$. The oracle confidence ball is
$$
\mathcal C_{m,\mathrm{or}}^{\alpha}
:=
\left\{
 h\in\mathcal H_\nu:
 m\|\overline\Psi_m-h\|_{\mathcal H_\nu}^2
 \leq q_{1-\alpha}(\Sigma)
\right\}.
$$
Although its critical value is a weighted chi-square quantile, this set is a
norm ball rather than a covariance-whitened ellipsoid. The norm ball is aligned
with the target $\|\theta-h\|$ and with the Lipschitz inequalities used below
for geometric interpretation. In a fixed finite-dimensional projection, a
covariance-whitened ellipsoid may be more efficient for inference in selected
directions. In the full Hilbert space, however, such a construction generally
requires truncation or regularization because $\Sigma^{-1}$ need not be a
bounded operator. We therefore use the norm ball, whose radius depends on the
covariance spectrum through $q_{1-\alpha}(\Sigma)$.

For implementation, let $\widehat\Sigma_m$ be the empirical covariance
operator from Section~\ref{sec:covariance-estimation}. Conditional on the data,
let
$$
\mathcal G_m^*\sim
\mathcal N_{\mathcal H_\nu}(0,\widehat\Sigma_m)
$$
and let $\widehat q_{1-\alpha,m}$ be the corresponding conditional quantile of
$\|\mathcal G_m^*\|^2$. Define
$$
\widehat{\mathcal C}_m^{\alpha}
:=
\left\{
 h\in\mathcal H_\nu:
 m\|\overline\Psi_m-h\|^2
 \leq \widehat q_{1-\alpha,m}
\right\},
\qquad
\widehat r_m^{\alpha}
:=
\sqrt{\widehat q_{1-\alpha,m}/m}.
$$

\begin{theorem}[Asymptotic confidence ball]
\label{thm:mean-confidence-ball}
Fix $P$ and a fixed embedding configuration $\nu$. Assume
$\mathbb E_P\|\Psi_\nu(X)\|_{\mathcal H_\nu}^2<\infty$ and let
$0<\alpha<1$. If $\Sigma\neq0$, then
$$
\Pr_P\left\{
\theta_{P,\nu}^{\Psi}
\in
\widehat{\mathcal C}_m^{\alpha}
\right\}
\longrightarrow
1-\alpha.
$$
If $\Sigma=0$, then $\Psi_\nu(X)=\theta$ almost surely and the zero-radius
ball has exact coverage one.
\end{theorem}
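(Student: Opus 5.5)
The coverage event can be rewritten as $\theta\in\widehat{\mathcal C}_m^{\alpha}$ if and only if $m\|\overline\Psi_m-\theta\|^2\leq\widehat q_{1-\alpha,m}$. The plan is to show that the left side converges in law to $\|\mathcal G\|^2$, that $\widehat q_{1-\alpha,m}$ converges in probability to $q_{1-\alpha}(\Sigma)$, and that the limit law is continuous at that quantile. Coverage then follows from Slutsky's theorem.

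\emph{Step 1 (statistic).} Theorem~\ref{thm:hilbert-slln-clt}(ii) gives $\sqrt m(\overline\Psi_m-\theta)\rightsquigarrow\mathcal G\sim\mathcal N_{\mathcal H_\nu}(0,\Sigma)$. By the continuous mapping theorem,
\[
m\|\overline\Psi_m-\theta\|^2\rightsquigarrow\|\mathcal G\|^2\stackrel{d}{=}\sum_j\lambda_jZ_j^2 .
\]

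\emph{Step 2 (continuity of the limit).} If $\Sigma\neq0$, then $\lambda_1>0$. The variable $\sum_j\lambda_jZ_j^2$ is the sum of $\lambda_1Z_1^2$, which has a density, and an independent nonnegative variable, so it has a continuous distribution function $F_\Sigma$. I would also show that $F_\Sigma$ is strictly increasing on $(0,\infty)$: the density of $\lambda_1Z_1^2$ is positive on $(0,\infty)$, and convolving it with any law on $[0,\infty)$ keeps the density positive there. Hence the quantile $q_{1-\alpha}(\Sigma)$ is unique and $F_\Sigma$ is continuous at it.

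\emph{Step 3 (quantile consistency, the main obstacle).} Proposition~\ref{prop:covariance-consistency} gives $\|\widehat\Sigma_m-\Sigma\|_{\mathrm{tr}}\to0$ almost surely. I would show that the map $S\mapsto\mathcal N(0,S)$ is continuous in the weak topology under trace-norm convergence of positive trace-class operators, so that $\mathcal N(0,\widehat\Sigma_m)\rightsquigarrow\mathcal N(0,\Sigma)$ along almost every sample path. A concrete route is to couple the two Gaussians by $S^{1/2}W$ for a common cylindrical $W$ and use
\[
\mathbb E\|(\widehat\Sigma_m^{1/2}-\Sigma^{1/2})W\|^2=\|\widehat\Sigma_m^{1/2}-\Sigma^{1/2}\|_{\mathrm{HS}}^2\leq\|\widehat\Sigma_m-\Sigma\|_{\mathrm{tr}},
\]
where the last inequality is the Powers--Størmer inequality. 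This gives convergence in $L^2$, and therefore weak convergence of $\|\mathcal G_m^*\|^2$ to $\|\mathcal G\|^2$ conditionally on the data. Since $F_\Sigma$ is continuous and strictly increasing at its $(1-\alpha)$ quantile, the standard quantile lemma yields $\widehat q_{1-\alpha,m}\to q_{1-\alpha}(\Sigma)$ almost surely. If Powers--Størmer turns out to be awkward to cite, I would fall back on a subsequence argument: trace-norm convergence implies convergence of the eigenvalues in $\ell_1$, together with eigenprojection control, and then the characteristic functions $\prod_j(1-2it\widehat\gamma_j)^{-1/2}$ converge.

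\emph{Step 4 (conclusion).} By Slutsky's theorem, $m\|\overline\Psi_m-\theta\|^2-\widehat q_{1-\alpha,m}\rightsquigarrow\|\mathcal G\|^2-q_{1-\alpha}(\Sigma)$. The limit law has no atom at $0$ by Step 2, so
\[
\Pr\{\theta\in\widehat{\mathcal C}_m^\alpha\}\to F_\Sigma(q_{1-\alpha}(\Sigma))=1-\alpha .
\]

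For the degenerate case $\Sigma=0$, we have $\mathbb E\|\Psi_\nu(X)-\theta\|^2=\operatorname{tr}\Sigma=0$, so $\Psi_\nu(X)=\theta$ almost surely. Then $\overline\Psi_m=\theta$ and $\widehat\Sigma_m=0$ almost surely, which gives $\widehat q_{1-\alpha,m}=0$. The ball therefore reduces to $\{\theta\}$ and covers $\theta$ with probability one.
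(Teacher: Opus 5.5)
Your proposal is correct and follows the paper's proof essentially step for step: the CLT with continuous mapping, almost-sure trace-norm covariance consistency, the Powers--St{\o}rmer coupling $S^{1/2}W$ (the paper's Lemma on trace-norm continuity of Gaussian laws) to get conditional weak convergence of $\|\mathcal G_m^*\|^2$, quantile consistency, and Slutsky, with the same degenerate-case argument. One small correction in Step 2: convolving the positive density of $\lambda_1Z_1^2$ with an \emph{arbitrary} law on $[0,\infty)$ need not give a density that is positive near $0$ (a point mass at $5$ is a counterexample). The conclusion still holds here because $\sum_{j\geq2}\lambda_jZ_j^2$ assigns positive probability to every interval $[0,x)$. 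That is exactly what the uniqueness of $q_{1-\alpha}(\Sigma)$ requires, and the paper asserts this uniqueness without proof.
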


The coverage statement is pointwise in $P$ for the fixed embedding
configuration. Uniform coverage over a class of diagram distributions would
require additional assumptions and is not claimed here.

There are two equivalent ways to simulate the plug-in critical value. Since
$\widehat\Sigma_m$ has rank at most $m-1$, let
$\widehat\lambda_1,\ldots,\widehat\lambda_{\widehat r_m}$ denote its nonzero
eigenvalues, where
$\widehat r_m:=\operatorname{rank}(\widehat\Sigma_m)\leq m-1$.
A spectral implementation draws
$$
\sum_{j=1}^{\widehat r_m}\widehat\lambda_j Z_j^2,
$$
where the $Z_j$ are independent standard normal variables. A Gaussian
multiplier implementation draws independent $\xi_i\sim\mathcal N(0,1)$ and
computes
$$
\mathcal G_m^*
=
\frac{1}{\sqrt m}
\sum_{i=1}^m
\xi_i(\Psi_i-\overline\Psi_m).
$$
Conditionally on the data, this is exactly Gaussian with covariance
$\widehat\Sigma_m$. Ignoring Monte Carlo error, full spectral simulation and
the Gaussian multiplier procedure therefore target the same plug-in law. The
multiplier implementation is usually the simpler default because it avoids an
explicit eigendecomposition. Spectral simulation is useful when the empirical
eigenvalues are already available or when one wishes to inspect which
covariance directions dominate the radius. Spectral truncation is an additional
numerical approximation and should retain enough covariance trace for the
omitted tail to be negligible.

Theorem~\ref{thm:finite-coordinate-berry-esseen} controls the oracle Gaussian
approximation error for a fixed finite-coordinate projection. It does not by
itself include covariance-estimation, spectral-truncation, or data-adaptive
projection error. Permutation calibration, discussed in Section~\ref{sec:testing},
is a two-sample procedure under exchangeability and is not a one-sample
calibration method for $\theta$.

\paragraph{Finite-sample benchmark.}
A conservative confidence ball is available without estimating a Gaussian
quantile. If $\operatorname{tr}(\Sigma)\leq v^2$, Markov's inequality gives
$$
\Pr_P\left\{
\|\overline\Psi_m-\theta\|
\leq
\frac{v}{\sqrt{m\alpha}}
\right\}
\geq1-\alpha
$$
for every $m\geq1$. Equivalently, the ball centered at $\overline\Psi_m$ with
radius $v/\sqrt{m\alpha}$ has finite-sample coverage at least $1-\alpha$. If
$\|\Psi_\nu(D)\|\leq B_\nu$ uniformly, one may take $v=B_\nu$.
This result is a distribution-free benchmark rather than the recommended
default calibration: the radius can be wide, and a useful deterministic value
of $v$ may be unavailable or conservative. Its main role is to show what can
be guaranteed from a second-moment envelope alone and to make explicit the
diagram-level sampling rate $m^{-1/2}$.

\subsection{Pullback and geometric exclusion for mean measures}
\label{sec:mean-measure-confidence-pullback}

Now suppose the embedding is additive and satisfies
Assumption~\ref{ass:additive-interface}. Let
$\mathfrak M_n$ be the set of nonnegative Borel measures of total mass $n$ on
the padded state space $\mathbb H_L^{\varnothing}$. For any Hilbert-space
confidence set $\mathcal C_m$ for
$\theta_{P,\nu}^{\Phi}=T_\nu[\overline\mu_P]$, define its mean-measure
preimage by
$$
\mathfrak C_m
:=
\left\{
\eta\in\mathfrak M_n:
T_\nu[\eta]\in\mathcal C_m
\right\}.
$$
If $\mathcal C_m$ has coverage at least $1-\alpha$, then so does
$\mathfrak C_m$ for the padded population mean measure $\overline\mu_P$.
This is the exact pullback statement supported by the additive representation.
In general, $\mathfrak C_m$ need not be a transport ball: distinct mean
measures can have the same embedded image, and
Assumption~\ref{ass:diagram-distortion-floor} is stated for certified pairs of
diagrams rather than arbitrary pairs in $\mathfrak M_n$. The preimage is
therefore primarily a set-theoretic coverage statement and may be too implicit
to enumerate. Its operational use is to evaluate a prespecified candidate mean
measure, template family, or fitted model through its embedded image.

The population transport inequality instead gives a useful one-sided geometric
certificate.

\begin{proposition}[Confidence-based transport exclusion]
\label{prop:mean-measure-exclusion-certificate}
Let $\mathcal C_m\subseteq\mathcal H_\nu$ satisfy
$$
\Pr_P\left\{
T_\nu[\overline\mu_P]\in\mathcal C_m
\right\}
\geq1-\alpha.
$$
Then, with probability at least $1-\alpha$, every
$\eta\in\mathfrak M_n$ satisfies
$$
W_{\infty,0}(\eta,\overline\mu_P)
\geq
\frac{\operatorname{dist}\{T_\nu[\eta],\mathcal C_m\}}
{nL_\nu}.
$$
If $\mathcal C_m$ is the ball
$\{h:\|h-\overline\Psi_m\|\leq r_m\}$, the lower bound becomes
$$
W_{\infty,0}(\eta,\overline\mu_P)
\geq
\frac{
\left[
\|T_\nu[\eta]-\overline\Psi_m\|-r_m
\right]_+}
{nL_\nu}.
$$
\end{proposition}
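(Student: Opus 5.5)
The argument works on the coverage event $\mathcal E:=\{T_\nu[\overline\mu_P]\in\mathcal C_m\}$, which has probability at least $1-\alpha$ by hypothesis. On $\mathcal E$ we show that the inequality holds simultaneously for every $\eta\in\mathfrak M_n$; the sampling randomness enters only through $\mathcal E$. The deterministic ingredient is a measure-level Lipschitz bound that extends Proposition~\ref{prop:mean-embedding-transfer} from population mean measures to arbitrary pairs in $\mathfrak M_n$: for all $\eta,\mu\in\mathfrak M_n$,
$$
\|T_\nu[\eta]-T_\nu[\mu]\|_2\leq nL_\nu W_{\infty,0}(\eta,\mu).
$$

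To prove this bound, fix a coupling $\pi\in\Gamma(\eta,\mu)$. By (E1) and the marginal constraints,
$$
T_\nu[\eta]-T_\nu[\mu]=\int\{\varphi_\nu^0(x)-\varphi_\nu^0(y)\}\,d\pi(x,y),
$$
which is a Bochner integral because $\varphi_\nu^0$ is bounded by $\kappa_\nu$ and $\pi$ is finite. Taking norms inside the integral and applying (E2) gives
$$
\|T_\nu[\eta]-T_\nu[\mu]\|_2\leq L_\nu\int d_0\,d\pi\leq L_\nu\,\pi(\text{total})\,\|d_0\|_{L^\infty(\pi)}=nL_\nu\|d_0\|_{L^\infty(\pi)}.
$$
Taking the infimum over $\pi$ yields the claim. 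One technical point needs care: $\Gamma(\eta,\mu)$ must be nonempty, which holds because both measures have total mass $n>0$ and the product measure divided by $n$ is a coupling. If the infimum is not attained, the argument still goes through with $\epsilon$-optimal couplings.

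Now work on $\mathcal E$ and fix any $\eta\in\mathfrak M_n$. Since $T_\nu[\overline\mu_P]\in\mathcal C_m$, the definition of distance to a set gives
$$
\operatorname{dist}\{T_\nu[\eta],\mathcal C_m\}\leq\|T_\nu[\eta]-T_\nu[\overline\mu_P]\|_2\leq nL_\nu W_{\infty,0}(\eta,\overline\mu_P).
$$
Dividing by $nL_\nu>0$ gives the first display. For the ball $\mathcal C_m=\{h:\|h-\overline\Psi_m\|\leq r_m\}$, the distance from a point $g$ to a closed ball in a Hilbert space is $[\|g-\overline\Psi_m\|-r_m]_+$, attained along the segment to the center. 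Substituting $g=T_\nu[\eta]$ gives the second display. The inequality holds for all $\eta$ at once because the only random event used is $\mathcal E$.

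The main potential obstacle is the Lipschitz step, specifically the fact that $d_0$ on the padded space is compatible with (E2) on pairs involving $\varnothing$. This is built into Assumption~\ref{ass:additive-interface}, since (E2) is stated on all of $\mathbb H_L^{\varnothing}$. Beyond that, one must check that $\varphi_\nu^0(x)-\varphi_\nu^0(y)$ is measurable on the product space, which follows from Borel measurability of $\varphi_\nu^0$. Everything else is routine.
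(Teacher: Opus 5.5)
Your proposal is correct and follows the paper's proof essentially step for step. The paper also first proves the measure-level bound $\|T_\nu[\eta]-T_\nu[\zeta]\|\leq nL_\nu W_{\infty,0}(\eta,\zeta)$ for arbitrary $\eta,\zeta\in\mathfrak M_n$ by the same coupling argument (Lemma~\ref{lem:arbitrary-mean-measure-upper-transport}), then works on the coverage event, bounds $\operatorname{dist}\{T_\nu[\eta],\mathcal C_m\}$ by the distance to $T_\nu[\overline\mu_P]$, and applies the closed-ball distance formula $[\|T_\nu[\eta]-\overline\Psi_m\|-r_m]_+$.
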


For a proposed mean measure $\eta_0$ and a confidence ball of radius $r_m$,
the quantity
$$
\frac{
\left[
\|T_\nu[\eta_0]-\overline\Psi_m\|-r_m
\right]_+}{nL_\nu}
$$
is a lower confidence certificate for its transport distance from the true
population mean measure. The proposition does not assert that candidates whose
embeddings lie inside the confidence ball are close in $W_{\infty,0}$. It states
the valid converse: a candidate lying far outside the embedding confidence set
is, with the same confidence level, separated from the true mean measure by at
least the displayed transport distance.

The numerical strength of the certificate depends on the sharpness of the
constant $nL_\nu$. A large cardinality bound or a conservative point-level
Lipschitz constant can make the lower bound small. A zero or small certificate
is inconclusive and does not imply that the candidate is geometrically close.

\subsection{Two-sample confidence sets and certified effect sizes}
\label{sec:two-sample-confidence}

For the samples of Section~\ref{sec:testing}, write
$$
\delta:=\delta_\nu^{\Psi}(P,Q),
\qquad
\widehat\delta_{m,m'}
:=
\overline\Psi_{P,m}-\overline\Psi_{Q,m'},
\qquad
N_{\mathrm{eff}}:=\frac{mm'}{m+m'}.
$$
Let $\widehat\Gamma_{m,m'}$ be the covariance estimator defined in
Section~\ref{sec:testing}. If $\widehat q_{1-\alpha,m,m'}$ is the conditional
$(1-\alpha)$ quantile of the squared norm of a centered Gaussian element with
covariance $\widehat\Gamma_{m,m'}$, put
$$
\widehat{\mathcal D}_{m,m'}^{\alpha}
:=
\left\{
 h\in\mathcal H_\nu:
 N_{\mathrm{eff}}
 \|\widehat\delta_{m,m'}-h\|^2
 \leq
 \widehat q_{1-\alpha,m,m'}
\right\}
$$
and
$$
\widehat r_{m,m'}^{\alpha}
:=
\sqrt{
\widehat q_{1-\alpha,m,m'}/N_{\mathrm{eff}}
}.
$$

\begin{corollary}[Confidence set for the mean contrast]
\label{cor:two-sample-confidence-ball}
Under the moment and sample-balance conditions of
Corollary~\ref{cor:two-sample-hilbert-clt}, suppose first that
$\Gamma_{P,Q,\lambda}\neq0$. Then
$$
\Pr_{P,Q}\left\{
\delta_\nu^{\Psi}(P,Q)
\in
\widehat{\mathcal D}_{m,m'}^{\alpha}
\right\}
\longrightarrow
1-\alpha.
$$
If $\Gamma_{P,Q,\lambda}=0$, then both embedded populations are almost surely
constant. In that case,
$\widehat\delta_{m,m'}=\delta_\nu^\Psi(P,Q)$ and
$\widehat r_{m,m'}^\alpha=0$ almost surely, so the confidence set has exact
coverage one.
Consequently,
$$
\underline\Delta_{m,m'}^{\alpha}
:=
\left[
\|\widehat\delta_{m,m'}\|
-
\widehat r_{m,m'}^{\alpha}
\right]_+,
\qquad
\overline\Delta_{m,m'}^{\alpha}
:=
\|\widehat\delta_{m,m'}\|
+
\widehat r_{m,m'}^{\alpha}
$$
satisfy
$$
\liminf_{m,m'\to\infty}
\Pr_{P,Q}\left\{
\underline\Delta_{m,m'}^{\alpha}
\leq
\Delta_\nu^{\Psi}(P,Q)
\leq
\overline\Delta_{m,m'}^{\alpha}
\right\}
\geq
1-\alpha.
$$
Thus they form an asymptotically valid, potentially conservative,
$(1-\alpha)$ confidence interval for $\Delta_\nu^{\Psi}(P,Q)$.
\end{corollary}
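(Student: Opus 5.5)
The plan is to mirror the proof of Theorem~\ref{thm:mean-confidence-ball}, now using the two-sample limit in Corollary~\ref{cor:two-sample-hilbert-clt}. Put $G_{m,m'}:=\sqrt{N_{\mathrm{eff}}}(\widehat\delta_{m,m'}-\delta)$. Since $N_{\mathrm{eff}}/N=\lambda_N(1-\lambda_N)\to\lambda(1-\lambda)$, Slutsky's theorem turns the corollary's limit into
$$G_{m,m'}\rightsquigarrow\mathcal N_{\mathcal H_\nu}(0,\lambda(1-\lambda)\Sigma_{P,Q,\lambda})=\mathcal N_{\mathcal H_\nu}(0,\Gamma_{P,Q,\lambda}).$$
The continuous mapping theorem then gives $\|G_{m,m'}\|^2\rightsquigarrow\|\mathcal G_{P,Q,\lambda}\|^2\stackrel{d}{=}\sum_j\gamma_jZ_j^2$.

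The event $\delta\in\widehat{\mathcal D}^\alpha_{m,m'}$ is exactly $\|G_{m,m'}\|^2\le\widehat q_{1-\alpha,m,m'}$. It therefore suffices to prove two things:
\begin{enumerate}[(a)]
\item $\widehat q_{1-\alpha,m,m'}\to q_{1-\alpha}(\Gamma_{P,Q,\lambda})$ in probability;
\item the limiting law of $\sum_j\gamma_jZ_j^2$ is continuous at that quantile.
\end{enumerate}
For (a), apply Proposition~\ref{prop:covariance-consistency} to each group, use $m'/N\to1-\lambda$ and $m/N\to\lambda$, and conclude $\|\widehat\Gamma_{m,m'}-\Gamma_{P,Q,\lambda}\|_{\mathrm{tr}}\to0$ almost surely. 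Trace-norm convergence of covariances of centered Gaussians implies weak convergence of the Gaussian laws. One route is to couple $\mathcal N(0,A)$ and $\mathcal N(0,B)$ via $A^{1/2}W$ and $B^{1/2}W$ for a common cylindrical Gaussian $W$. Then $\E\|(A^{1/2}-B^{1/2})W\|^2=\|A^{1/2}-B^{1/2}\|_{\mathrm{HS}}^2\le\|A-B\|_{\mathrm{tr}}$ by the Powers--Størmer inequality. Consequently the conditional laws of the squared norms converge, and so do their quantiles, at every continuity point where the limiting distribution function is strictly increasing.

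I expect step (b) to be the main technical point. When $\Gamma_{P,Q,\lambda}\ne0$, at least one $\gamma_j>0$. Then $\sum_j\gamma_jZ_j^2$ has a continuous distribution: it is the sum of a nondegenerate absolutely continuous variable $\gamma_1Z_1^2$ and an independent variable, hence it has no atoms. Its distribution function is strictly increasing on $(0,\infty)$ because it has a positive density there. So $q_{1-\alpha}$ is a point of continuity with a unique quantile, and a standard Slutsky/Pólya argument yields
$$\Pr\{\|G_{m,m'}\|^2\le\widehat q_{1-\alpha,m,m'}\}\to1-\alpha.$$

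For the degenerate case, $\Gamma_{P,Q,\lambda}=0$ with $\lambda\in(0,1)$ forces $\operatorname{tr}\Sigma_P=\operatorname{tr}\Sigma_Q=0$. Since $\operatorname{tr}\Sigma=\E\|U\|^2$, both embeddings are almost surely constant. Then the sample means equal the population means, so $\widehat\delta_{m,m'}=\delta$; the empirical covariances vanish, so $\widehat r^\alpha_{m,m'}=0$ and coverage is one. Finally, for the interval for $\Delta_\nu^\Psi(P,Q)$: on the event $\delta\in\widehat{\mathcal D}^\alpha_{m,m'}$ we have $\|\widehat\delta_{m,m'}-\delta\|\le\widehat r^\alpha_{m,m'}$. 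The reverse triangle inequality then gives $\big|\|\widehat\delta_{m,m'}\|-\|\delta\|\big|\le\widehat r^\alpha_{m,m'}$. Since $\Delta\ge0$, taking the positive part in the lower bound preserves the inclusion. Taking $\liminf$ of the probability of this superset event gives at least $1-\alpha$, and the interval may be conservative because of the set inclusion.
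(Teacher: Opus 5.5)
Your proposal is correct and follows the paper's proof essentially step by step. Both arguments use the $N_{\mathrm{eff}}$-rescaled two-sample CLT, trace-norm consistency of $\widehat\Gamma_{m,m'}$ combined with the Powers--St{\o}rmer coupling (the paper's Lemma~\ref{lem:gaussian-trace-continuity}), continuity and uniqueness of the weighted chi-square quantile, the positivity/trace argument in the degenerate case, and the reverse triangle inequality for the scalar interval. Your justification that the limiting distribution function is strictly increasing is somewhat more explicit than the paper's.
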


The two endpoints have ordinary effect-size interpretations. The lower endpoint
answers how much mean-embedding separation is supported by the data, whereas
the upper endpoint describes how large the separation could still be after
accounting for sampling uncertainty. The structured model below permits a
further inversion of the second statement: if even the upper endpoint is below
the minimum signal required by a proposed class, that class is incompatible
with the data at the stated pointwise asymptotic level.

For additive embeddings, Proposition~\ref{prop:mean-embedding-transfer}
turns the lower endpoint into a geometric effect-size certificate:
$$
\liminf_{m,m'\to\infty}
\Pr_{P,Q}\left\{
W_{\infty,0}(\overline\mu_P,\overline\mu_Q)
\geq
\frac{\underline\Delta_{m,m'}^{\alpha}}{nL_\nu}
\right\}
\geq
1-\alpha.
$$
This is an asymptotically valid lower confidence bound on separation of the
padded mean measures. It requires only the additive Lipschitz interface rather
than Assumption~\ref{ass:diagram-distortion-floor} or the structured population
model.

The upper endpoint supports a complementary structured exclusion. For
$r\in\{\mathrm{com},\mathrm{het}\}$ and a scale $\tau$ for which
$g_r(\tau)>0$, Definition~\ref{ass:uniform-structured-alternatives} and
Theorem~\ref{thm:population-lower-bound} imply that membership in
$\mathfrak A_r(\tau)$ requires
$\Delta_\nu(P,Q)\geq g_r(\tau)$. Therefore, for every fixed $(P,Q)\in\mathfrak A_r(\tau)$,
$$
\limsup_{m,m'\to\infty}
\Pr_{P,Q}\left\{
\overline\Delta_{m,m'}^{\alpha}<g_r(\tau)
\right\}
\leq\alpha.
$$
Thus an observed upper endpoint below $g_r(\tau)$ supports exclusion of the
specified structured class at pointwise asymptotic level $\alpha$. This is a
compatibility certificate rather than a uniform test over
$\mathfrak A_r(\tau)$. A uniform guarantee would require uniform Gaussian
approximation and covariance-estimation results. The certificate is useful
only when the constants defining $g_r(\tau)$ are scientifically defensible
and numerically available. When they are unavailable, the confidence interval
for $\Delta_\nu^\Psi(P,Q)$ remains the primary inferential output.

\subsection{A secondary certificate for nearest-centroid decisions}
\label{sec:centroid-certificates}

Consider a fixed number $C\geq2$ of labeled populations with means
$\theta_c:=\mathbb E\{\Psi_\nu(X)\mid Y=c\}$ and empirical centroids
$\widehat\theta_c$. Suppose confidence radii $r_c$ are chosen so that
$$
\Pr\left\{
\|\widehat\theta_c-\theta_c\|\leq r_c
\ \text{for every }c
\right\}
\geq1-\alpha.
$$
For example, classwise confidence balls may be combined by Bonferroni
allocation. The probability statement is finite-sample when the classwise
radii come from finite-sample-valid balls. It is asymptotic when plug-in
Gaussian balls are used and the training sample size in every class tends to
infinity. Given a query diagram $D$, set $z:=\Psi_\nu(D)$ and let
$$
\widehat c(D)
:=
\arg\min_{1\leq c\leq C}
\|z-\widehat\theta_c\|.
$$
For $c'\neq\widehat c(D)$, define the empirical pairwise margin
$$
M_{\widehat c,c'}(D)
:=
\|z-\widehat\theta_{c'}\|
-
\|z-\widehat\theta_{\widehat c}\|.
$$
The empirical centroids determine Voronoi cells in the embedding space.
Centroid uncertainty can reduce the observed margin against a competitor by at
most $r_{\widehat c}+r_{c'}$. Under the additive Lipschitz interface, a
bottleneck perturbation of size $\varepsilon$ can reduce the same margin by a
further $2nL_\nu\varepsilon$. The following result records the margin needed to
absorb both effects.

\begin{proposition}[Centroid-uncertainty and perturbation certificate]
\label{prop:centroid-robustness-certificate}
On the simultaneous centroid-coverage event, if
$$
M_{\widehat c,c'}(D)
>
r_{\widehat c}+r_{c'}
$$
for every $c'\neq\widehat c(D)$, then the empirical assignment agrees with
the nearest-centroid assignment based on the population means
$\theta_1,\ldots,\theta_C$.

If, in addition, the classifier uses the additive embedding
$\Psi_\nu=\Phi(\cdot;\nu)$ satisfying
Assumption~\ref{ass:additive-interface}, and
$$
M_{\widehat c,c'}(D)
>
r_{\widehat c}+r_{c'}+2nL_\nu\varepsilon
$$
for every $c'\neq\widehat c(D)$, then every diagram $E$ satisfying $d_B(D,E)\leq\varepsilon$ has population
nearest-centroid label $\widehat c(D)$. Equivalently,
$$
\arg\min_{1\leq j\leq C}
\|\Phi(E;\nu)-\theta_j\|
=
\widehat c(D).
$$
Define
$$
\varepsilon_{\mathrm{cert}}(D)
:=
\frac{1}{2nL_\nu}
\min_{c'\neq\widehat c(D)}
\left[
M_{\widehat c,c'}(D)
-r_{\widehat c}-r_{c'}
\right]_+.
$$
Then every $E$ with
$d_B(D,E)<\varepsilon_{\mathrm{cert}}(D)$ has population nearest-centroid
label $\widehat c(D)$. The strict inequality avoids ambiguity when a
query lies exactly on a Voronoi boundary.
\end{proposition}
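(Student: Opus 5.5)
The proof rests on the reverse triangle inequality in $\mathcal H_\nu$, applied separately to each competitor, followed by a Lipschitz perturbation argument. Throughout, work on the simultaneous coverage event $\|\widehat\theta_c-\theta_c\|\le r_c$ for every $c$. Write $\widehat c=\widehat c(D)$ and fix any $c'\neq\widehat c$. The first step is to compare population distances with empirical ones:
$$
\|z-\theta_{c'}\|\ge\|z-\widehat\theta_{c'}\|-r_{c'},
\qquad
\|z-\theta_{\widehat c}\|\le\|z-\widehat\theta_{\widehat c}\|+r_{\widehat c}.
$$
Subtracting the two inequalities gives
$$
\|z-\theta_{c'}\|-\|z-\theta_{\widehat c}\|\ge M_{\widehat c,c'}(D)-r_{\widehat c}-r_{c'},
$$
which is strictly positive under the first hypothesis. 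Since this holds for every competitor $c'$, $\widehat c$ is the unique population nearest centroid. This proves the first claim.

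For the perturbation claim, let $E$ satisfy $d_B(D,E)\le\varepsilon$ and set $z'=\Phi(E;\nu)$. Proposition~\ref{prop:interface-consequences} gives $\|z'-z\|\le nL_\nu\varepsilon$. Applying the triangle inequality once more to each of the two distances yields
$$
\|z'-\theta_{c'}\|-\|z'-\theta_{\widehat c}\|\ge\|z-\theta_{c'}\|-\|z-\theta_{\widehat c}\|-2nL_\nu\varepsilon .
$$
Combining this with the bound from the first step, the right-hand side is at least
$$
M_{\widehat c,c'}(D)-r_{\widehat c}-r_{c'}-2nL_\nu\varepsilon,
$$
which is strictly positive by hypothesis. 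Hence $\widehat c$ is the unique minimizer of $j\mapsto\|\Phi(E;\nu)-\theta_j\|$, which is the displayed argmin identity.

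For the radius $\varepsilon_{\mathrm{cert}}(D)$, suppose $d_B(D,E)<\varepsilon_{\mathrm{cert}}(D)$. Then $\varepsilon_{\mathrm{cert}}(D)>0$, so every bracket $[M_{\widehat c,c'}(D)-r_{\widehat c}-r_{c'}]_+$ is positive and therefore equals its argument. Take $\varepsilon:=d_B(D,E)$. For every $c'$, the definition of $\varepsilon_{\mathrm{cert}}$ as a minimum gives
$$
2nL_\nu\varepsilon<2nL_\nu\varepsilon_{\mathrm{cert}}(D)\le M_{\widehat c,c'}(D)-r_{\widehat c}-r_{c'},
$$
so the strict-margin condition of the second part holds and the conclusion follows.

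No step here is a real obstacle. The only point needing care is strictness: each displayed margin must be strictly positive so that the population argmin is unique and no Voronoi-boundary ties arise. This is exactly why the certificate is stated with the strict inequality $d_B(D,E)<\varepsilon_{\mathrm{cert}}(D)$.
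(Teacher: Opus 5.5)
Your proof is correct and follows the paper's argument. The same two triangle inequalities on the coverage event give the population margin bound, and the Lipschitz bound from Proposition~\ref{prop:interface-consequences} costs at most $2nL_\nu\varepsilon$. The only difference is that you subtract the perturbation cost from the population margin, whereas the paper subtracts it from the empirical margin; this changes nothing. Your treatment of $\varepsilon_{\mathrm{cert}}(D)$, setting $\varepsilon:=d_B(D,E)$ and using the positivity of every bracket, makes explicit the step the paper leaves as ``solving the strict margin inequality.''
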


A positive $\varepsilon_{\mathrm{cert}}(D)$ certifies local robustness. A zero
value means only that the observed margin and centroid precision are
insufficient to certify the decision; it does not show that the assignment is
incorrect or unstable. The margin calculation is deterministic and
finite-sample conditional on the simultaneous centroid-coverage event. Its
probability guarantee is finite-sample when the radii are obtained from
finite-sample-valid confidence balls and asymptotic when plug-in Gaussian balls
are used.

This certificate is deliberately narrower than a classification-risk theorem.
It controls changes caused by estimating the class centroids and by perturbing
the query diagram. It does not imply that all observations from class $c$ lie
in the Voronoi cell of $\theta_c$, nor does it remove overlap among the
class-conditional distributions. Classification remains secondary here: its
role is to illustrate how uncertainty sets for embedded population means can
be reused for a query-specific decision certificate.

Two distinct scales govern the conclusions in this section. Statistical
uncertainty is controlled by covariance, sample size, and calibration. The
conversion of image-space separation into a transport lower bound is controlled
by the additive Lipschitz constant $nL_\nu$. Assumption~\ref{ass:diagram-distortion-floor}
does not determine ordinary confidence coverage; it enters only through the
structured margin $g_r(\tau)$ used to exclude a specified alternative class.

Section~\ref{sec:finite-approximation} studies how finite-coordinate approximation changes the confidence
radii, effect-size bounds, and geometric certificates above.


\section{Finite Approximation and Inferential Resolution}
\label{sec:finite-approximation}

The preceding sections formulate inference in the Hilbert space generated by a
fixed embedding. Practical implementations retain only finitely many
coordinates. This reduction creates two distinct inferential targets. For a
fixed truncation level $K$, the projected embedding is itself a valid
finite-dimensional representation, and inference for its population mean is
well defined without treating it as an approximation. If the objective is
instead to recover the full-embedding estimand or its geometric certificate,
the omitted coordinate tail must be accounted for. This section quantifies
that distinction and connects the sample size, the retained dimension, and the
geometric scale under study.

Throughout, Assumption~\ref{ass:finite-approximation} holds. Write
$\Psi_{\nu,K}:=\Pi_K\Psi_\nu$, let
$d_K:=\operatorname{rank}(\Pi_K)$ denote the retained coordinate dimension,
and abbreviate $\epsilon_K^\Psi$ by $\epsilon_K$ when the embedding is the
additive map $\Phi$. For the ordered landmark representation,
$\Pi_K$ retains the first $K$ prespecified coordinate blocks. Thus $K$ counts
blocks, whereas $d_K$ is the actual Euclidean dimension used by the
computation. For two populations, define the projected mean-contrast vector by
$$
\delta_{\nu,K}^{\Psi}(P,Q)
:=
\Pi_K\delta_\nu^\Psi(P,Q).
$$
Its norm is
$\Delta_{\nu,K}^\Psi(P,Q)
=
\|\delta_{\nu,K}^{\Psi}(P,Q)\|$. 

\subsection{Truncation error and two inferential targets}
\label{sec:truncation-error-targets}

Suppose the ordered representation has an orthogonal Hilbert direct-sum
decomposition compatible with $\Pi_K$,
$\Psi_\nu(D)=(\Psi_\nu^{[1]}(D),\Psi_\nu^{[2]}(D),\ldots)$, and define
$$
b_j:=\sup_{D\in\mathcal D_n}
\|\Psi_\nu^{[j]}(D)\|.
$$
Whenever the block envelopes are square summable, the approximation error
satisfies the deterministic bound
$$
\epsilon_K^\Psi
\leq
\left(\sum_{j>K}b_j^2\right)^{1/2}.
$$
Construction-specific formulas may provide sharper or more readily evaluated
tail bounds. The statistical theory below requires only a valid deterministic
upper bound on $\epsilon_K^\Psi$.

The decay of the block envelopes determines how quickly finite approximation
becomes useful. For example, if $b_j\lesssim j^{-\beta}$ for some
$\beta>1/2$, then
$$
\epsilon_K^\Psi\lesssim K^{1/2-\beta}.
$$
If instead $b_j\lesssim e^{-\gamma j}$ for some $\gamma>0$, then
$$
\epsilon_K^\Psi\lesssim e^{-\gamma K}.
$$
Thus polynomial block decay can require a substantial retained
representation, whereas exponential decay permits a comparatively small
truncation. These conclusions are conditional on construction-specific,
uniform block envelopes. If the embedding has only $J$ blocks, then
$\epsilon_K^\Psi=0$ for every $K\geq J$, and the full-embedding theory
reduces to ordinary finite-dimensional inference.

Section~\ref{sec:finite-coordinate-approximation} established that the full
and projected population means differ by at most $\epsilon_K^\Psi$, and that
two-population mean separations differ by at most $2\epsilon_K^\Psi$. The
orthogonality of $\Pi_K$ yields a sharper lower certificate.

\begin{proposition}[Signal retained by an orthogonal truncation]
\label{prop:orthogonal-truncation-signal}
Under Assumption~\ref{ass:finite-approximation},
$$
\Delta_{\nu,K}^{\Psi}(P,Q)
\geq
\left[
\{\Delta_\nu^{\Psi}(P,Q)\}^2
-4(\epsilon_K^\Psi)^2
\right]_+^{1/2}.
$$
Moreover, suppose $(D,E)$ satisfies the construction-specific certification
conditions in Assumption~\ref{ass:diagram-distortion-floor} and
$
d_B(D,E)\geq t>R_0(\nu).
$
Then
$$
\|\Psi_{\nu,K}(D)-\Psi_{\nu,K}(E)\|
\geq
\rho_{-,K}(t;\nu),
$$
where
$$
\rho_{-,K}(t;\nu)
:=
\left[
\rho_-(t;\nu)^2
-4(\epsilon_K^\Psi)^2
\right]_+^{1/2}.
$$
Finally, specialize to the additive embedding
$\Psi_\nu=\Phi(\cdot;\nu)$. For
$r\in\{\mathrm{com},\mathrm{het}\}$, the structured alternative class
$\mathfrak A_r(\tau)$ satisfies
$$
\inf_{(P,Q)\in\mathfrak A_r(\tau)}
\Delta_{\nu,K}(P,Q)
\geq
g_{r,K}(\tau),
$$
where
$$
g_{r,K}(\tau)
:=
\left[
g_r(\tau)^2-4\epsilon_K^2
\right]_+^{1/2}.
$$
\end{proposition}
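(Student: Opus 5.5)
The argument rests on the Pythagorean identity for the orthogonal projection $\Pi_K$. For any $u\in\mathcal H_\nu$ we have $\|u\|^2=\|\Pi_K u\|^2+\|(I-\Pi_K)u\|^2$. So each of the three claims reduces to bounding the omitted component $\|(I-\Pi_K)u\|$ by $2\epsilon_K^\Psi$, for a suitable difference vector $u$. Once that bound is in hand, $\|\Pi_K u\|^2\geq\|u\|^2-4(\epsilon_K^\Psi)^2$ follows. Since the left side is nonnegative, we may take the positive part before the square root.

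For the population statement, set $u=\delta_\nu^\Psi(P,Q)=\theta_P-\theta_Q$. Then $(I-\Pi_K)u=\mathbb E_P\{(I-\Pi_K)\Psi_\nu(X)\}-\mathbb E_Q\{(I-\Pi_K)\Psi_\nu(Y)\}$. Here I use that the bounded linear operator $I-\Pi_K$ commutes with Bochner expectation. Jensen's inequality for Bochner integrals bounds each expectation's norm by $\sup_D\|\Psi_\nu(D)-\Pi_K\Psi_\nu(D)\|=\epsilon_K^\Psi$; this is the bound $\|\theta-\theta_K\|\leq\epsilon_K^\Psi$ in Proposition~\ref{prop:finite-approximation-first-moments}. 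The triangle inequality then gives $\|(I-\Pi_K)u\|\leq2\epsilon_K^\Psi$. Because $\Delta_{\nu,K}^\Psi=\|\Pi_K u\|$, the first inequality follows.

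For the diagram-level statement, take $u=\Psi_\nu(D)-\Psi_\nu(E)$. Then $\|(I-\Pi_K)u\|\leq2\epsilon_K^\Psi$ holds directly from the definition of $\epsilon_K^\Psi$. Assumption~\ref{ass:diagram-distortion-floor} gives $\|u\|\geq\rho_-(t;\nu)$. Pythagoras then yields $\|\Pi_K u\|^2\geq\rho_-(t;\nu)^2-4(\epsilon_K^\Psi)^2$. We need $\|u\|^2\geq\rho_-^2$ rather than $\|u\|$ itself, and this is immediate because both quantities are nonnegative.

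For the structured classes, specialize to $\Psi_\nu=\Phi$ and fix $(P,Q)\in\mathfrak A_r(\tau)$. Theorem~\ref{thm:population-lower-bound}, combined with the class restrictions of Definition~\ref{ass:uniform-structured-alternatives}, gives $\Delta_\nu(P,Q)\geq g_r(\tau)\geq0$. To pass from $\Delta_\nu$ to the margin $g_r$, I will use the monotonicity of $\rho_-$, together with $p\geq\pi_{\min}$ or $\eta_{P,Q}\geq\eta_{\min}$ and the perturbation caps. By the first part, $\Delta_{\nu,K}(P,Q)^2\geq\Delta_\nu(P,Q)^2-4\epsilon_K^2\geq g_r(\tau)^2-4\epsilon_K^2$. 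The map $x\mapsto[x]_+^{1/2}$ is monotone, so $\Delta_{\nu,K}(P,Q)\geq g_{r,K}(\tau)$, and taking the infimum over the class finishes the proof.

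I do not expect a serious obstacle. The one point needing care is the squaring step: squaring $\Delta_\nu\geq g_r$ requires $g_r\geq0$, which holds because $g_r$ is defined as a positive part. A second point is justifying that the projection commutes with the Bochner mean. This follows from the finite first moment, which is guaranteed here by uniform boundedness (Proposition~\ref{prop:interface-consequences} in the additive case).
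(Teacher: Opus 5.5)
Your proposal is correct and follows the paper's proof step for step: the Pythagorean split for $\Pi_K$, the bound $\|(I-\Pi_K)u\|\le 2\epsilon_K^\Psi$ (via Jensen for the mean contrast, and directly from the definition of $\epsilon_K^\Psi$ for the diagram pair), and Theorem~\ref{thm:population-lower-bound} with the class restrictions to get $\Delta_\nu(P,Q)\ge g_r(\tau)\ge 0$ before applying the first part. One small remark: for a general $\Psi_\nu$, the existence of the Bochner means is presupposed by the definition of $\Delta_\nu^\Psi$. It is not supplied by Assumption~\ref{ass:uniformly-bounded-embedding}, which is not among the proposition's hypotheses outside the additive case.
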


The proof is given in
Appendix~\ref{app:orthogonal-truncation-signal}. The proposition describes a certificate, not an exact characterization of
what the truncation preserves. If $2\epsilon_K$ exceeds the full lower margin,
the displayed bound becomes zero, but the projected diagrams or population
means may still be separated. A zero truncation certificate is therefore
inconclusive; it does not imply that the omitted coordinates contain all of
the signal.

A fixed $K$ defines the projected estimand $\Delta_{\nu,K}^{\Psi}(P,Q)$. Tests
and confidence sets calibrated for that estimand remain valid even when
$\epsilon_K^\Psi$ is not small. Their natural null is equality of projected
means, which is weaker than equality of the full mean embeddings. Thus a
fixed truncation cannot detect alternatives supported entirely in omitted
coordinates. Approximation error matters when the procedures are interpreted
as statements about the full estimand $\Delta_\nu^{\Psi}(P,Q)$ or about a
full-embedding geometric certificate. A positive value of
$g_{r,K}(\tau)$ rules out complete projected collapse for the stated
structured class.

\subsection{Testing under finite approximation}
\label{sec:truncated-testing}

In this subsection, set $\Psi_\nu=\Phi(\cdot;\nu)$, because the
structured classes $\mathfrak A_r(\tau)$ and margins $g_r(\tau)$ are
defined for the additive embedding. Let
$$
\widehat\delta_{m,m',K}
:=
\Pi_K\widehat\delta_{m,m'},
$$
and define the projected finite-sample benchmark
$$
\phi_{\alpha,v}^{(K)}
:=
\mathbf 1\left\{
\|\widehat\delta_{m,m',K}\|
>
\frac{v}{\sqrt{\alpha N_{\mathrm{eff}}}}
\right\}.
$$
Its natural null is equality of projected mean embeddings. Write
$$
\mathfrak H_{0,K}^{\mathrm{mean}}(v)
:=
\left\{
(P,Q):
\Pi_K\theta_{P,\nu}^{\Phi}
=
\Pi_K\theta_{Q,\nu}^{\Phi},\quad
\operatorname{tr}(\Sigma_{P,K})\leq v^2,\quad
\operatorname{tr}(\Sigma_{Q,K})\leq v^2
\right\},
$$
where $\Sigma_{P,K}=\Pi_K\Sigma_{P,\nu}^{\Phi}\Pi_K$ and similarly
for $Q$. The full mean null is contained in this projected null. Orthogonal
projection cannot increase covariance trace, so a variance envelope valid for
the full embedding is valid for every truncation.

\begin{theorem}[Sample size, truncation, and structured resolution]
\label{thm:finite-approximation-three-way}
Fix $K\geq1$, $r\in\{\mathrm{com},\mathrm{het}\}$,
$0<\alpha<1$, and $0<\beta<1$. Let $\tau\in\mathcal I_\nu$ be an
admissible scale satisfying $g_{r,K}(\tau)>0$. Under the sampling setup of
Section~\ref{sec:testing} and
Definition~\ref{ass:uniform-structured-alternatives}, the test
$\phi_{\alpha,v}^{(K)}$ has level at most $\alpha$ uniformly over
$\mathfrak H_{0,K}^{\mathrm{mean}}(v)$. It has power at least $1-\beta$
uniformly over $\mathfrak A_r(\tau)$ whenever
$$
N_{\mathrm{eff}} \geq \frac{v^2}{g_{r,K}(\tau)^2}\left(\alpha^{-1/2}+\beta^{-1/2}\right)^2.
$$
Equivalently, using the certified truncation margin, it is sufficient that
$$
N_{\mathrm{eff}} \geq
\frac{v^2}{g_r(\tau)^2-4\epsilon_K^2}
\left(\alpha^{-1/2}+\beta^{-1/2}\right)^2,
\qquad
2\epsilon_K<g_r(\tau).
$$
\end{theorem}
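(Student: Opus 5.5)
The plan is to reproduce the Chebyshev/Markov argument behind Proposition~\ref{prop:finite-sample-power}, applied to the projected contrast $\widehat\delta_{m,m',K}=\Pi_K\widehat\delta_{m,m'}$, and to supply the signal lower bound from Proposition~\ref{prop:orthogonal-truncation-signal}. First I record the second-moment identity. By independence of the two samples and of the observations within each sample,
\[
\mathbb E\|\widehat\delta_{m,m',K}-\Pi_K\delta_\nu^\Phi(P,Q)\|^2
=\frac{\operatorname{tr}(\Sigma_{P,K})}{m}+\frac{\operatorname{tr}(\Sigma_{Q,K})}{m'}.
\]
Since $\operatorname{tr}(\Pi_K\Sigma\Pi_K)\le\operatorname{tr}(\Sigma)$ for an orthogonal projection, the right-hand side is at most $v^2(1/m+1/m')=v^2/N_{\mathrm{eff}}$. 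This holds on $\mathfrak H_{0,K}^{\mathrm{mean}}(v)$ by its definition, and on $\mathfrak A_r(\tau)$ because Definition~\ref{ass:uniform-structured-alternatives} bounds the full traces by $v^2$.

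For the level statement, under the projected null the centring term $\Pi_K\delta_\nu^\Phi$ vanishes. Markov's inequality applied to the squared norm then gives
\[
\Pr\bigl\{\|\widehat\delta_{m,m',K}\|>v/\sqrt{\alpha N_{\mathrm{eff}}}\bigr\}\le\alpha,
\]
uniformly over the class.

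For power, fix $(P,Q)\in\mathfrak A_r(\tau)$. Proposition~\ref{prop:orthogonal-truncation-signal} gives $\Delta_{\nu,K}(P,Q)\ge g_{r,K}(\tau)>0$. By the triangle inequality, the test fails to reject only if
\[
\|\widehat\delta_{m,m',K}-\Pi_K\delta_\nu^\Phi\|\ge g_{r,K}(\tau)-v/\sqrt{\alpha N_{\mathrm{eff}}}.
\]
The stated condition on $N_{\mathrm{eff}}$ is equivalent to $g_{r,K}(\tau)\ge (v/\sqrt{N_{\mathrm{eff}}})(\alpha^{-1/2}+\beta^{-1/2})$. Under it, the right-hand side of the display is at least $v/\sqrt{\beta N_{\mathrm{eff}}}$, so Markov's inequality bounds the Type II error by $\beta$. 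The bound is uniform because it depends on the pair only through $v$ and $g_{r,K}(\tau)$.

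The equivalent form follows from $g_{r,K}(\tau)^2=g_r(\tau)^2-4\epsilon_K^2$ when $2\epsilon_K<g_r(\tau)$, since the positive part is inactive there.

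I expect no serious obstacle. The one point needing care is that the retained margin must hold uniformly over the class. That requires the third part of Proposition~\ref{prop:orthogonal-truncation-signal}, which itself combines $\Delta_\nu\ge g_r(\tau)$ from Theorem~\ref{thm:population-lower-bound} with orthogonality. I would also verify that this is the step using $\Pi_K$ orthogonal, via $\|\Pi_K\delta\|^2=\|\delta\|^2-\|(I-\Pi_K)\delta\|^2$ with $\|(I-\Pi_K)\delta\|\le2\epsilon_K$.
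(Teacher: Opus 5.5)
Your proposal is correct and follows the paper's proof essentially step for step: the second-moment bound $\mathbb E\|\widehat\delta_{m,m',K}-\Pi_K\delta_\nu^\Phi(P,Q)\|^2\le v^2/N_{\mathrm{eff}}$ (using that orthogonal projection cannot increase covariance trace), Markov's inequality for the level, and the reverse triangle inequality plus Markov for the power, with the uniform margin $g_{r,K}(\tau)$ supplied by Proposition~\ref{prop:orthogonal-truncation-signal}. The equivalent sample-size form also follows exactly as in the paper, from $g_{r,K}(\tau)^2=g_r(\tau)^2-4\epsilon_K^2$ whenever $2\epsilon_K<g_r(\tau)$.
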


The proof is given in
Appendix~\ref{app:finite-approximation-testing}. The bound separates two sources of signal loss. The term
$4\epsilon_K^2$ accounts for the worst-case effect of truncation, while
$$
\frac{v^2}{N_{\mathrm{eff}}}
\left(
\alpha^{-1/2}+\beta^{-1/2}
\right)^2
$$
accounts for sampling variability. The target scale enters through
$g_r(\tau)$, the truncation level enters through $\epsilon_K$, and the number
of independent diagrams enters through $N_{\mathrm{eff}}$. The retained
dimension $d_K$ affects computation and the accuracy of Gaussian or spectral
calibration, but it does not appear separately in this conservative
second-moment bound.

For fixed $K$ and $N_{\mathrm{eff}}$, define the certified inferential
resolution set
$$ \mathcal R_r(K,N_{\mathrm{eff}};\alpha,\beta)
:=
\left\{
\tau\in\mathcal I_\nu:
g_r(\tau)^2
\geq
4\epsilon_K^2
+
\frac{v^2}{N_{\mathrm{eff}}}
\left(
\alpha^{-1/2}+\beta^{-1/2}
\right)^2
\right\}. $$
A scale $\tau$ belongs to this set when the certified structured signal is
large enough to absorb both approximation loss and sampling variability. This
definition does not require $g_r(\tau)$ to be monotone in $\tau$; it simply
collects the scales that the stated bound can certify at the chosen design.

For the benchmark test above, nested orthogonal projections and the common
threshold imply nested rejection events: $\|\Pi_K\widehat\delta\|$ is
nondecreasing in $K$. Its rejection probability therefore cannot decrease as
coordinates are added. Nonmonotone power can arise for procedures whose
critical values, studentization, or covariance regularization change with
$K$, including Gaussian, spectral, and projected Hotelling-type procedures.

A sharper result is available when there is a constant
$0<v_K<\infty$ such that
$$
\max\left\{
\operatorname{tr}(\Sigma_{P,K}),
\operatorname{tr}(\Sigma_{Q,K})
\right\}
\leq
v_K^2
$$
uniformly over the projected null and alternative classes under
consideration. Define
$$
\phi_{\alpha,v_K}^{(K)}
:=
\mathbf 1\left\{
\|\widehat\delta_{m,m',K}\|
>
\frac{v_K}{\sqrt{\alpha N_{\mathrm{eff}}}}
\right\}.
$$
The preceding argument then gives level at most $\alpha$ over
$\mathfrak H_{0,K}^{\mathrm{mean}}(v_K)$ and power at least $1-\beta$ over
$\mathfrak A_r(\tau)$ whenever
$$
N_{\mathrm{eff}}
\geq
\frac{v_K^2}{g_{r,K}(\tau)^2}
\left(
\alpha^{-1/2}+\beta^{-1/2}
\right)^2.
$$
Thus
$$
\frac{g_{r,K}(\tau)}{v_K}
$$
is the corresponding certified signal-to-noise ratio. The common envelope
$v$ is retained in Theorem~\ref{thm:finite-approximation-three-way} because
it does not require a separate variance analysis for every $K$.

For the nested ordered block projections considered here, $\epsilon_K$ is
nonincreasing in $K$. For $\gamma\in(0,1)$, define the certifying truncation index
$$
K_r^\star(\tau;\gamma)
:=
\inf\left\{
K:2\epsilon_K\leq\gamma g_r(\tau)
\right\},
$$
with the convention that the infimum is infinite if the set is empty. At any
$K\geq K_r^\star(\tau;\gamma)$,
$$
g_{r,K}(\tau)
\geq
\sqrt{1-\gamma^2}\,g_r(\tau),
$$
so the certified sample-size inflation relative to the full-margin benchmark
is at most $(1-\gamma^2)^{-1}$. This formulation allows the analyst to choose
a tolerable approximation loss rather than treating one numerical cutoff as a
universal phase transition.

At the diagram level, the analogous certifiable scale set is
$$
\mathcal I_{\nu,K}^{\Psi,\mathrm{cert}}
:=
\left\{
t\in\mathcal I_\nu^\Psi:
2\epsilon_K^\Psi<\rho_-(t;\nu)
\right\}.
$$
For scales outside this set, the finite representation may still separate the
pair, but Assumption~\ref{ass:diagram-distortion-floor} together with the
available uniform tail bound does not certify it.

\subsection{Confidence sets with approximation bias}
\label{sec:truncated-confidence}

Finite-dimensional confidence procedures can be interpreted in two ways. A
confidence ball constructed in $\operatorname{range}(\Pi_K)$ directly covers
the projected mean. To cover the full mean, its radius must also absorb the
omitted-coordinate bias.

\begin{proposition}[Bias-aware inflation of projected confidence balls]
\label{prop:truncated-confidence-inflation}
Suppose a projected one-sample ball centered at $\overline\Psi_{m,K}$ with
radius $r_{m,K}$ covers $\theta_K$ with probability at least $1-\alpha$.
Then the Hilbert-space ball centered at the same point with radius
$r_{m,K}+\epsilon_K^\Psi$ covers the full mean $\theta$ with probability at
least $1-\alpha$.

For two populations, suppose a projected contrast ball centered at
$\widehat\delta_{m,m',K}$ with radius $r_{m,m',K}$ covers
$\delta_{\nu,K}^{\Psi}(P,Q)$ with probability at least $1-\alpha$. Then the
ball with radius $r_{m,m',K}+2\epsilon_K^\Psi$ covers the full contrast
$\delta_\nu^{\Psi}(P,Q)$ with at least the same probability. Consequently,
$$
\left[
\|\widehat\delta_{m,m',K}\|-r_{m,m',K}-2\epsilon_K^\Psi
\right]_+
\leq
\Delta_\nu^{\Psi}(P,Q)
\leq
\|\widehat\delta_{m,m',K}\|+r_{m,m',K}+2\epsilon_K^\Psi
$$
on the projected coverage event.
\end{proposition}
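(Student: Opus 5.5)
The argument is a triangle-inequality transfer: bound the gap between the full and projected targets by the uniform truncation error, then enlarge the radius by that amount.

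\emph{One-sample statement.} Work on the event
$$E_K:=\{\|\overline\Psi_{m,K}-\theta_K\|\leq r_{m,K}\},$$
which by hypothesis has probability at least $1-\alpha$. Proposition~\ref{prop:finite-approximation-first-moments} gives the deterministic bound $\|\theta-\theta_K\|\leq\epsilon_K^\Psi$. A direct argument also works: Bochner expectation commutes with the bounded linear map $\Pi_K$, so
$$\|\theta-\Pi_K\theta\|=\|\mathbb E\{\Psi_\nu(X)-\Pi_K\Psi_\nu(X)\}\|\leq\epsilon_K^\Psi.$$
On $E_K$ the triangle inequality then gives
$$\|\overline\Psi_{m,K}-\theta\|\leq\|\overline\Psi_{m,K}-\theta_K\|+\|\theta_K-\theta\|\leq r_{m,K}+\epsilon_K^\Psi.$$
So $E_K$ is contained in the coverage event of the inflated ball, and that ball covers $\theta$ with probability at least $1-\alpha$.

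\emph{Two-sample statement.} The same template applies to the contrast. First we need
$$\|\delta_\nu^\Psi(P,Q)-\Pi_K\delta_\nu^\Psi(P,Q)\|\leq 2\epsilon_K^\Psi.$$
To get it, write the contrast as $\theta_P-\theta_Q$ and use linearity of $I-\Pi_K$. This yields
$$(I-\Pi_K)\delta=(I-\Pi_K)\theta_P-(I-\Pi_K)\theta_Q,$$
and each term is bounded by $\epsilon_K^\Psi$ as in the one-sample case. This is also what underlies the separation bound in Proposition~\ref{prop:finite-approximation-first-moments}. On the projected coverage event, the triangle inequality then gives
$$\|\widehat\delta_{m,m',K}-\delta\|\leq r_{m,m',K}+2\epsilon_K^\Psi,$$
which is the claimed coverage with at least the same probability.

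\emph{Consequence for $\Delta_\nu^\Psi$.} On the same event, the reverse triangle inequality gives
$$\big|\|\widehat\delta_{m,m',K}\|-\|\delta\|\big|\leq r_{m,m',K}+2\epsilon_K^\Psi.$$
This yields both displayed inequalities. For the lower one, take the positive part, which is harmless because $\Delta_\nu^\Psi\geq0$.

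There is no real obstacle. The only point needing care is the deterministic bias bound $\|(I-\Pi_K)\theta\|\leq\epsilon_K^\Psi$, i.e.\ that the projection may be passed inside the expectation. This requires the first-moment condition, which holds here because Assumption~\ref{ass:finite-approximation} is stated with a uniform supremum. One should also check that all coverage probabilities refer to the same underlying event, so that no probability is lost in the transfer.
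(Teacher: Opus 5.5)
Your proof is correct and follows the paper's argument essentially step for step. Both use the bias bound $\|\theta-\theta_K\|\leq\epsilon_K^\Psi$, or $2\epsilon_K^\Psi$ for the contrast via linearity of $I-\Pi_K$, then the triangle inequality on the projected coverage event, then the ordinary and reverse triangle inequalities for the interval on $\Delta_\nu^\Psi(P,Q)$. One small correction to your closing remark: the uniform supremum in Assumption~\ref{ass:finite-approximation} only makes the tail $(I-\Pi_K)\Psi_\nu(X)$ bounded, so integrability of $\Psi_\nu(X)$ itself is not a consequence of that assumption; it is needed for $\theta$ to exist and for $\Pi_K$ to pass through the expectation, and it is the standing first-moment hypothesis under which $\theta$ is defined.
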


The proof and the growing-truncation calculation are given in
Appendix~\ref{app:truncated-confidence}. The statistical radius and approximation allowance play different roles. The
first decreases with the number of diagrams and depends on the projected
covariance. The second is deterministic for a prespecified truncation and does
not disappear with additional observations unless $K$ also increases. Thus a
large sample cannot compensate for a fixed approximation bias when the target
is the full embedding.

Let $K=K_{m,m'}$ be a deterministic sequence chosen independently of the
inferential sample. Two approximation requirements should be distinguished.
For the centered sampling fluctuations,
Appendix~\ref{app:truncated-confidence} shows that
$$
\begin{aligned}
&\mathbb E_{P,Q}
\left\|
\sqrt{N_{\mathrm{eff}}}
\left[
(\widehat\delta_{m,m'}-\delta_\nu^\Psi)
-
(\widehat\delta_{m,m',K_{m,m'}}-
 \delta_{\nu,K_{m,m'}}^\Psi)
\right]
\right\|^2 \\
&\qquad\leq
\left(\epsilon_{K_{m,m'}}^\Psi\right)^2.
\end{aligned}
$$
Consequently,
$\epsilon_{K_{m,m'}}^\Psi\to0$ is sufficient for the displayed difference
to converge to zero in $L^2$, and hence in probability. By contrast, if the projected estimator
is centered directly at the full target without an explicit bias correction,
a sufficient condition is
$$
\sqrt{N_{\mathrm{eff}}}\,
\epsilon_{K_{m,m'}}^\Psi
\longrightarrow0,
$$
because $\|\delta_{\nu,K}^\Psi-\delta_\nu^\Psi\|\leq
2\epsilon_K^\Psi$. A dimension-uniform quantitative Gaussian approximation
requires additional control of projected rank, covariance conditioning, and
standardized third moments, as discussed in
Section~\ref{sec:finite-coordinate-gaussian-approximation}.

The distinction can be summarized through three regimes. For fixed $K$,
inference is valid for the projected estimand whether or not the truncation
error is small. If $K$ increases so that $\epsilon_K^\Psi\to0$, the
centered projected sampling fluctuations recover the centered full-embedding
limit. If the stronger condition
$\sqrt{N_{\mathrm{eff}}}\epsilon_K^\Psi\to0$ holds, the projected
estimator may be centered directly at the full target without a first-order
bias allowance. The final regime is stronger because deterministic
approximation error must vanish faster than statistical uncertainty.

\subsection{Practical selection of the truncation level}
\label{sec:truncation-practical-protocol}

A theorem-aligned workflow separates statistical design from coordinate
selection.

\begin{enumerate}[label=\arabic*.]
\item Fix the embedding configuration, coordinate order, and candidate
      truncation levels independently of the inferential sample. If $K$ is
      selected by minimizing a $p$-value or maximizing a studentized
      separation on the same data, the fixed-$K$ calibration is generally
      invalid. Valid alternatives include sample splitting, simultaneous
      calibration over the candidate values, or repeating the complete
      selection step within a resampling procedure.
\item Obtain a construction-specific deterministic upper bound on
      $\epsilon_K^\Psi$ and record the retained dimension $d_K$. If no valid
      tail bound is available, inference remains valid for the projected
      estimand, but it should not be interpreted as inference for the full
      embedding.
\item Decide whether the target is the projected estimand or the full
      embedding. No approximation correction is required for the former.
\item For structured geometric interpretation at scale $\tau$, choose $K$
      so that the relevant certified margin, such as $g_{r,K}(\tau)$, is
      positive and sufficiently large for the desired sample size.
\item Report the statistical uncertainty and deterministic approximation
      allowance separately, for example $r_{m,m',K}$ and
      $2\epsilon_K^\Psi$, rather than reporting only their sum. Empirical
      stabilization across several values of $K$ is useful diagnostically but
      does not replace a valid uniform tail bound when a full-embedding
      certificate is claimed.
\end{enumerate}

Prespecified block truncation should not be conflated with principal-component
reduction. The former has a deterministic approximation error under
Assumption~\ref{ass:finite-approximation}; the latter is learned from the
sample and generally requires an analysis of selection error or an independent
training sample. Computational savings and inferential resolution are therefore
linked, but neither the largest feasible $K$ nor the smallest computational
representation is universally optimal.

A failure to obtain a positive truncation certificate is not evidence that the
finite representation has discarded the relevant signal. It means only that
the available uniform tail bound is too large relative to the certified
full-embedding margin. Empirical separation may remain positive, but its
interpretation as a uniform geometric certificate is then unsupported.



\section{Extensions}
\label{sec:extensions}

The preceding sections focus on population means, two-sample inference, and
geometric certificates. The same fixed embedding can also be used as a
covariate map in regression and classification. These extensions are useful,
but they require a different interpretation. Once a finite representation is
fixed, much of the estimation theory is standard finite-dimensional
regression theory. The embedding interface contributes bounded and stable
covariates, as well as a way to relate sufficiently large differences in
fitted scores back to diagram geometry. It does not, by itself, imply that an
outcome is predictable from topology, that a coefficient direction has a
unique diagram-level meaning, or that a classifier attains vanishing error.

Throughout this section, the embedding configuration and truncation level are
fixed independently of the inferential sample. Let
$$
Z_K(X)=\Psi_{\nu,K}(X)=\Pi_K\Psi_\nu(X)\in\mathcal H_{\nu,K},
$$
where $\mathcal H_{\nu,K}=\operatorname{range}(\Pi_K)$ has finite dimension
$d_K$. After choosing an orthonormal basis, we identify $Z_K(X)$ with a vector
in $\mathbb R^{d_K}$.

\subsection{Regression with a diagram-valued predictor}
\label{sec:extensions-regression}

Let $(X_i,Y_i)_{i=1}^m$ be independent copies of $(X,Y)$, where $X$ is a
persistence diagram and $Y\in\mathbb R$. Define the augmented design vector
$$
\widetilde Z_K(X)=\bigl(1,Z_K(X)^\top\bigr)^\top\in\mathbb R^{d_K+1}.
$$
The population least-squares target is
$$
\vartheta_K^*
=
\arg\min_{\vartheta\in\mathbb R^{d_K+1}}
\mathbb E\left[
\left\{Y-\widetilde Z_K(X)^\top\vartheta\right\}^2
\right].
$$
This definition does not require the conditional mean to be exactly linear.
It identifies the best linear predictor within the retained embedding
coordinates. Let $\widehat\vartheta_K$ be any measurable least-squares minimizer,
$$
\widehat\vartheta_K
\in
\arg\min_{\vartheta\in\mathbb R^{d_K+1}}
\sum_{i=1}^m
\left\{Y_i-\widetilde Z_K(X_i)^\top\vartheta\right\}^2.
$$
When the empirical Gram matrix is invertible, this minimizer is unique. The
positive-definiteness condition below implies that invertibility, and hence
uniqueness, holds eventually almost surely; the estimator may be defined
arbitrarily on the exceptional singular event.

\begin{proposition}[Fixed-truncation least-squares inference]
\label{prop:fixed-k-regression}
Suppose $K$ is fixed, $\mathbb E(Y^2)<\infty$,
$\|Z_K(X)\|\leq B_K$ almost surely for some finite $B_K$, and
$$
Q_K
:=
\mathbb E\left[
\widetilde Z_K(X)\widetilde Z_K(X)^\top
\right]
$$
is positive definite. Let
$$
\varepsilon_K
=
Y-\widetilde Z_K(X)^\top\vartheta_K^*,
\qquad
\Omega_K
=
\mathbb E\left[
\varepsilon_K^2
\widetilde Z_K(X)\widetilde Z_K(X)^\top
\right].
$$
Then
$$
\widehat\vartheta_K\longrightarrow\vartheta_K^*
$$
almost surely, and
$$
\sqrt m\left(\widehat\vartheta_K-\vartheta_K^*\right)
\rightsquigarrow
\mathcal N_{d_K+1}
\left(
0,
Q_K^{-1}\Omega_KQ_K^{-1}
\right).
$$
In particular,
$\|\widehat\vartheta_K-\vartheta_K^*\|=O_{\mathbb P}(m^{-1/2})$.
\end{proposition}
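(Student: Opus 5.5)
This is the standard M-estimation argument for ordinary least squares with random design, applied to i.i.d. pairs $(\widetilde Z_K(X_i),Y_i)$. Because $K$ is fixed, $\widetilde Z_K(X)$ is a bounded random vector in $\mathbb R^{d_K+1}$ with $\|\widetilde Z_K(X)\|^2\le 1+B_K^2$. Measurability follows because $\Psi_\nu$ is Borel and $\Pi_K$ is continuous. We first record the population normal equations. Since $Q_K$ is positive definite and $\mathbb E\|\widetilde Z_K Y\|\le(1+B_K^2)^{1/2}(\mathbb E Y^2)^{1/2}<\infty$, the target is unique,
$$
\vartheta_K^*=Q_K^{-1}\mathbb E[\widetilde Z_K(X)Y].
$$
The residual therefore satisfies the orthogonality condition $\mathbb E[\widetilde Z_K(X)\varepsilon_K]=0$. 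No linearity of $\mathbb E(Y\mid X)$ is needed, since this is just the first-order condition of the convex quadratic objective.

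\emph{Consistency.} Write
$$
\widehat Q_m=m^{-1}\sum_i\widetilde Z_K(X_i)\widetilde Z_K(X_i)^\top,
\qquad
\widehat S_m=m^{-1}\sum_i\widetilde Z_K(X_i)Y_i .
$$
The summands of $\widehat Q_m$ are bounded, and those of $\widehat S_m$ are integrable. The strong law of large numbers, applied entrywise in finite dimensions (or Theorem~\ref{thm:hilbert-slln-clt}(i) with $\mathcal H_\nu$ replaced by a Euclidean space), gives $\widehat Q_m\to Q_K$ and $\widehat S_m\to\mathbb E[\widetilde Z_KY]$ almost surely.

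Since the smallest eigenvalue is continuous and $\lambda_{\min}(Q_K)>0$, $\widehat Q_m$ is invertible for all sufficiently large $m$ almost surely. On that event the minimizer is unique and equals $\widehat\vartheta_K=\widehat Q_m^{-1}\widehat S_m$. Continuity of matrix inversion then gives $\widehat\vartheta_K\to\vartheta_K^*$ almost surely. The arbitrary definition on the singular event does not matter, because that event eventually fails almost surely.

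\emph{Asymptotic normality.} On the invertibility event,
$$
\sqrt m(\widehat\vartheta_K-\vartheta_K^*)=\widehat Q_m^{-1}\,m^{-1/2}\sum_i\widetilde Z_K(X_i)\varepsilon_{K,i}.
$$
The summands $\widetilde Z_K(X_i)\varepsilon_{K,i}$ are i.i.d. with mean zero by the orthogonality above. They have covariance $\Omega_K$, which is finite because
$$
\mathbb E\|\widetilde Z_K\varepsilon_K\|^2\le(1+B_K^2)\,\mathbb E\varepsilon_K^2<\infty,
$$
and $\varepsilon_K$ is square integrable since both $Y$ and $\widetilde Z_K^\top\vartheta_K^*$ are. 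The multivariate Lindeberg--L\'evy CLT therefore gives $\mathcal N(0,\Omega_K)$ for the normalized sum. Slutsky's theorem, using $\widehat Q_m^{-1}\to Q_K^{-1}$ almost surely, yields the sandwich limit $Q_K^{-1}\Omega_KQ_K^{-1}$. The indicator of the singular event converges to zero almost surely, so it does not affect the weak limit. Finally, $O_{\mathbb P}(m^{-1/2})$ follows because weakly convergent sequences are tight.

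The only step needing care is the handling of the singular Gram event, together with confirming that boundedness of $Z_K$ plus $\mathbb E Y^2<\infty$ suffices for $\Omega_K$ to be finite. Both are routine. No diagram-specific structure enters beyond the measurability and boundedness of the fixed truncated embedding.
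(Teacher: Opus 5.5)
Your proposal is correct and follows the paper's proof essentially step for step. Both use the strong law for $\widehat Q_{K,m}$ and the cross-moment vector, then eventual almost-sure invertibility to obtain the closed form and consistency, and finally the multivariate CLT for $\widetilde Z_i\varepsilon_{K,i}$ combined with Slutsky's theorem. The paper's proof also checks almost-sure consistency of the sandwich estimator $\widehat V_{K,m}$, but that claim is made in the surrounding text rather than in the proposition itself.
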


The asymptotic covariance is estimable by the usual heteroskedasticity-robust
sandwich estimator. Define
$$
\widehat Q_{K,m}
=
\frac1m\sum_{i=1}^m
\widetilde Z_K(X_i)\widetilde Z_K(X_i)^\top,
\qquad
\widehat\varepsilon_{K,i}
=
Y_i-\widetilde Z_K(X_i)^\top\widehat\vartheta_K,
$$
$$
\widehat\Omega_{K,m}
=
\frac1m\sum_{i=1}^m
\widehat\varepsilon_{K,i}^2
\widetilde Z_K(X_i)\widetilde Z_K(X_i)^\top,
$$
and, whenever $\widehat Q_{K,m}$ is invertible,
$$
\widehat V_{K,m}
=
\widehat Q_{K,m}^{-1}
\widehat\Omega_{K,m}
\widehat Q_{K,m}^{-1}.
$$
Under the conditions of Proposition~\ref{prop:fixed-k-regression},
$\widehat V_{K,m}$ converges in probability to
$Q_K^{-1}\Omega_KQ_K^{-1}$. Thus ordinary Wald intervals and tests may be
formed for fixed linear contrasts of $\vartheta_K^*$.

The sandwich covariance allows the linear conditional-mean model to be
misspecified. Under a correctly specified homoskedastic model,
$\Omega_K=\sigma^2Q_K$, and the covariance reduces to
$\sigma^2Q_K^{-1}$. The positive-definiteness condition is substantive: if
retained coordinates are linearly redundant under the population law, the
coefficient vector is not identifiable without regularization or a reduced
coordinate system.

Proposition~\ref{prop:fixed-k-regression} is a fixed-dimensional result. If
$d_K$ grows with $m$, or if ridge, sparsity, or functional linear regression
is used, additional assumptions on the design spectrum, regularization, and
the coefficient class are required. Assumption~\ref{ass:finite-approximation}
controls approximation of the embedding, but does not by itself control the
statistical inverse problem involved in estimating an infinite-dimensional
regression coefficient.

\subsection{Prediction stability and geometric interpretation}
\label{sec:extensions-regression-geometry}

The regression result above uses only a finite measurable representation. A
more specific geometric statement is available for the additive embedding
$\Phi(\cdot;\nu)$ satisfying
Assumption~\ref{ass:additive-interface}. For a fixed deterministic direction $\beta\in\mathcal H_\nu$, define
the full and truncated linear scores
$$
s_\beta(D)
=
\beta_0+\langle\beta,\Phi(D;\nu)\rangle,
\qquad
s_{\beta,K}(D)
=
\beta_0+\langle\Pi_K\beta,\Phi_K(D;\nu)\rangle.
$$

\begin{proposition}[Stability and truncation of linear scores]
\label{prop:regression-score-stability}
Under Assumption~\ref{ass:additive-interface}, for all
$D,E\in\mathcal D_n$,
$$
\left|s_{\beta,K}(D)-s_{\beta,K}(E)\right|
\leq
nL_\nu\|\Pi_K\beta\|\,d_B(D,E).
$$
If Assumption~\ref{ass:finite-approximation} holds, then
$$
\sup_{D\in\mathcal D_n}
\left|s_\beta(D)-s_{\beta,K}(D)\right|
\leq
\|(I-\Pi_K)\beta\|\epsilon_K^\Phi
\leq
\|\beta\|\epsilon_K^\Phi.
$$
Consequently, if $\Pi_K\beta\neq0$ and two diagrams satisfy
$$
\left|s_{\beta,K}(D)-s_{\beta,K}(E)\right|\geq a,
$$
then
$$
d_B(D,E)
\geq
\frac{a}{nL_\nu\|\Pi_K\beta\|}.
$$
\end{proposition}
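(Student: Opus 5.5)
The three claims follow from Proposition~\ref{prop:interface-consequences}, the Cauchy--Schwarz inequality, and orthogonality of $\Pi_K$; I would prove them in the order stated.

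\textbf{Stability.} Since $\Pi_K$ is an orthogonal projection, it is self-adjoint and idempotent. Hence
$$
\langle\Pi_K\beta,\Phi_K(D;\nu)\rangle=\langle\Pi_K\beta,\Pi_K\Phi(D;\nu)\rangle=\langle\Pi_K\beta,\Phi(D;\nu)\rangle.
$$
The intercept cancels in the difference, which gives
$$
s_{\beta,K}(D)-s_{\beta,K}(E)=\langle\Pi_K\beta,\Phi_K(D;\nu)-\Phi_K(E;\nu)\rangle.
$$
By Cauchy--Schwarz, this is bounded in absolute value by $\|\Pi_K\beta\|\,\|\Pi_K\{\Phi(D;\nu)-\Phi(E;\nu)\}\|$. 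The projection is a contraction, so this is at most $\|\Pi_K\beta\|\,\|\Phi(D;\nu)-\Phi(E;\nu)\|$. The upper Lipschitz bound $nL_\nu d_B(D,E)$ from Proposition~\ref{prop:interface-consequences} then finishes the first display.

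\textbf{Truncation.} Write $\Phi=\Pi_K\Phi+(I-\Pi_K)\Phi$ and $\beta=\Pi_K\beta+(I-\Pi_K)\beta$. The cross terms $\langle\Pi_K\beta,(I-\Pi_K)\Phi\rangle$ and $\langle(I-\Pi_K)\beta,\Pi_K\Phi\rangle$ vanish by orthogonality of the two ranges. This leaves
$$
s_\beta(D)-s_{\beta,K}(D)=\langle(I-\Pi_K)\beta,(I-\Pi_K)\Phi(D;\nu)\rangle.
$$
Cauchy--Schwarz bounds this by $\|(I-\Pi_K)\beta\|\,\|\Phi(D;\nu)-\Pi_K\Phi(D;\nu)\|$. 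Taking the supremum over $D$ and applying the definition of $\epsilon_K^\Phi$ in Assumption~\ref{ass:finite-approximation} gives the bound $\|(I-\Pi_K)\beta\|\epsilon_K^\Phi$. The last inequality holds because $\|(I-\Pi_K)\beta\|\le\|\beta\|$, as $I-\Pi_K$ is also an orthogonal projection.

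\textbf{Lower bound on $d_B$.} Assume $\Pi_K\beta\neq0$ and $|s_{\beta,K}(D)-s_{\beta,K}(E)|\ge a$. The stability bound then gives $a\le nL_\nu\|\Pi_K\beta\|\,d_B(D,E)$. Dividing by the positive constant $nL_\nu\|\Pi_K\beta\|$, which is positive since $L_\nu>0$ and $\Pi_K\beta\neq0$, yields the claim.

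The only step needing care is the orthogonal cancellation of the cross terms in the truncation bound, which is what produces the factor $\|(I-\Pi_K)\beta\|$ rather than $\|\beta\|$. This uses self-adjointness of $\Pi_K$, which holds because Assumption~\ref{ass:finite-approximation} specifies orthogonal projections. No obstacle is expected beyond this bookkeeping.
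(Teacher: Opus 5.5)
Your proposal is correct and matches the paper's proof essentially step for step. The stability bound uses Cauchy--Schwarz on $\langle\Pi_K\beta,\Pi_K\{\Phi(D;\nu)-\Phi(E;\nu)\}\rangle$, together with contraction of $\Pi_K$ and Proposition~\ref{prop:interface-consequences}. The truncation bound uses the orthogonal identity $s_\beta(D)-s_{\beta,K}(D)=\langle(I-\Pi_K)\beta,(I-\Pi_K)\Phi(D;\nu)\rangle$ followed by Cauchy--Schwarz, and the reverse inequality comes from rearranging the stability bound, exactly as you propose.
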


The first inequality gives local robustness of a fixed linear score: a small
bottleneck perturbation cannot change the score by more than the displayed
amount. The same deterministic inequality holds conditionally for a fitted
coefficient after the training data have been fixed. The final inequality gives the valid reverse interpretation. A large
pairwise difference in fitted scores certifies that the two diagrams cannot be
too close. It does not say that a large score measured relative to zero is far
from a particular diagram, because the Hilbert-space origin need not represent
a scientifically meaningful reference diagram.

For translating bottleneck separation into separation of a regression
score, Assumption~\ref{ass:diagram-distortion-floor} must be supplemented by
an outcome-specific alignment condition. For a certified diagram pair, the
assumption prevents the full embedding difference from collapsing, but a
regression coefficient may be nearly orthogonal to that difference. Thus
bottleneck separation alone does not imply a response contrast. A lower bound
on $|s_\beta(D)-s_\beta(E)|$ requires control of the alignment between
$\beta$ and the relevant embedding differences. Such a condition belongs to
the regression model, not to the embedding interface.

For a generalized linear model with inverse-link function $G$, the conditional
mean takes the form
$$
\mathbb E(Y\mid X=D)
=
G\{s_{\beta,K}(D)\}.
$$
When $G$ is Lipschitz with constant $L_G$, Proposition
\ref{prop:regression-score-stability} gives
$$
\left|
G\{s_{\beta,K}(D)\}-G\{s_{\beta,K}(E)\}
\right|
\leq
L_GnL_\nu\|\Pi_K\beta\|\,d_B(D,E).
$$
For logistic regression, one may take $L_G=1/4$. With fixed $K$, standard
finite-dimensional likelihood theory yields consistency and asymptotic
normality under the usual regularity conditions, including correct
specification, identifiability, an interior finite true parameter, uniqueness
of the population maximizer, existence of a finite sample maximizer with
probability tending to one, the required differentiability and integrability
conditions, and nonsingular Fisher information. Bounded embedding coordinates
facilitate the moment conditions but do not by themselves rule out complete or
quasi-complete separation in finite logistic samples.

\subsection{Classification and scope}
\label{sec:extensions-classification}

Section~\ref{sec:centroid-certificates} already gives the principal
classification consequence used in this paper: a query-specific certificate
for agreement with the population nearest-centroid rule and robustness to
bottleneck perturbations. Other classifiers may be applied to $Z_K(X)$, but
their statistical guarantees depend on their own margin, complexity, and
regularization assumptions. In particular, neither
Assumption~\ref{ass:diagram-distortion-floor} nor separation of population
centroids implies zero Bayes error when the class-conditional distributions
overlap.

For a fixed linear threshold rule
$D\mapsto\mathbf 1\{s_{\beta,K}(D)\geq c\}$ with
$\Pi_K\beta\neq0$, Proposition
\ref{prop:regression-score-stability} gives the deterministic query radius
$$
\frac{|s_{\beta,K}(D)-c|}
{nL_\nu\|\Pi_K\beta\|}.
$$
Perturbations smaller than this radius cannot cross the decision boundary.
The same statement applies conditionally to estimated coefficients
$(\widehat\beta,\widehat c)$ and gives robustness of the fitted rule after
the training data have been fixed. Additional coefficient uncertainty must be
incorporated only when the desired conclusion concerns a population
coefficient or population decision rule; the centroid result in Section
\ref{sec:centroid-certificates} illustrates that distinction. If
$\Pi_K\beta=0$, the score is constant as a function of the diagram, so the
rule either never changes class or lies identically on the threshold.

The extension framework therefore separates three tasks. Standard regression
or classification theory governs estimation after the representation is
fixed. The additive Lipschitz interface governs perturbation stability and
pairwise geometric certificates. For translating certified geometric separation into a downstream score
contrast, the lower-distortion assumption must be combined with alignment of
the fitted direction and the certified embedding differences. Same-sample
adaptive selection of $\nu$, $K$, or the regression model requires adjustment
for the selection step; possible approaches include sample splitting,
selective-inference methods, or resampling procedures that repeat the full
selection algorithm. These extensions describe
associational prediction from diagram summaries; they do not provide causal
interpretation of topological features.

Section~\ref{sec:numerical-illustrations} focuses on the primary population
inference results rather than attempting a separate regression or
classification benchmark.


\section{Simulation Studies}
\label{sec:numerical-illustrations}

This section reports the finite-sample behaviour of the procedures
developed in Sections~\ref{sec:probability}
through~\ref{sec:finite-approximation}. We ask four questions. Is the
Hilbert-space central limit theorem accurate at realistic sample sizes?
Does the two-sample test hold its nominal level, including when the two
covariance operators differ? How does power depend on the mean-embedding
separation $\Delta_\nu(P,Q)$? And do the confidence balls cover at their
stated level? A fifth study compares the mean-embedding test with the
survival-based procedure of \cite{STRAND}.

\subsection{Design}
\label{sec:sim-design}

\paragraph{Embedding configurations.}
We study two additive constructions: the pooled multiscale PLACE map
\citep{PLACE} and the adaptive PALACE map \citep{PALACE}. In every cell
the configuration $\nu$ is estimated once, from an independent pilot
sample of $2000$ diagrams, and then held fixed. This is what
Assumption~\ref{ass:additive-interface} requires. No quantity below uses
a configuration fitted to the sample being tested. The frame is
$L = 0.763$, taken from the pilot, and the reference separation is
$\tau = 0.1L = 0.0763$.

\begin{table}[!ht]
\centering
\begin{tabular}{lrrrl}
\toprule
Arm & raw coordinates & active & $r(\Sigma)$ & $\rho_-(\tau;\nu)$ \\
\midrule
PLACE  & $1005$ & $331$ & $1.02$--$1.06$ & $6.23\times10^{-11}$ \\
PALACE & $50$   & $50$  & $1.01$--$1.05$ & $1.82\times10^{-9}$ \\
\bottomrule
\end{tabular}
\caption{Embedding dimensions and certificates. ``Active'' counts the
coordinates that are not identically zero across the pooled pilot
sample. That screen is frozen along with the configuration. The
effective rank $r(\Sigma) = \operatorname{tr}\Sigma /
\norm{\Sigma}_{\mathrm{op}}$ is given as a range over the scenarios of
Section~\ref{sec:sim-clt}. The two certificates differ by a factor of
$29$, but they are not comparable across arms, since each embedding
carries its own scale. Only dimensionless ratios such as
$\rho_-(\tau;\nu)/L_\nu$ are invariant under rescaling.}
\label{tab:sim-arms}
\end{table}

\paragraph{Generators.}
Three point-cloud scenarios drive the central limit study. Each produces
degree-one diagrams from planar clouds of $200$ points. PC1 is a noisy
circle with Gaussian radial perturbation, PC2 the same circle with a
skewed radial perturbation, and PC3 a mixture of two noise levels.
Together they exercise variable cardinality and filtration variability,
so that the sampling unit is a genuinely random diagram rather than a
perturbed fixed one.

The level and power studies use controlled diagram-level families
instead. There each diagram is drawn from a finite mixture of fixed
atoms. Because $T_\nu$ is linear on counting measures, the population
mean embedding is then available in closed form, and $\Delta_\nu(P,Q)$
is known exactly rather than estimated.

\paragraph{Test statistics and their tuning.}
Table~\ref{tab:sim-level} reports five statistics, and each needs its
parameters stated. Write $Z_1,\dots,Z_N$ for the $N = m + m'$ embedded
diagrams pooled across groups, $\widehat\delta_{m,m'}$ for the mean
difference, and $\widehat S$ for the pooled sample covariance of the
$Z_i$. Let $(\widehat\lambda_j,\widehat u_j)$ be the eigenpairs of
$\widehat S$ in decreasing order, and $\widehat\sigma_a^2$ the pooled
variance of coordinate $a$.

Before any statistic is formed we drop coordinates whose pooled variance
falls below $10^{-14}$. This is a numerical guard against division by
zero in the studentised forms, not a modelling step. It is distinct from
the structural screen of Table~\ref{tab:sim-arms}, which is frozen on
the independent pilot. We record the count of coordinates it removes.
The five statistics are then
\begin{align*}
T_{\mathrm{norm}}
 = \norm{\widehat\delta_{m,m'}}^2, 
\qquad &T_{\mathrm{PC}}
 = \sum_{j\le k}
    \langle\widehat\delta_{m,m'},\widehat u_j\rangle^2 /
    \widehat\lambda_j, \\
T_{\mathrm{ridge}}
 = \norm{(\widehat S+\lambda I)^{-1/2}\widehat\delta_{m,m'}}^2, \qquad
&T_{\max}
 = \max_a
    \frac{|\widehat\delta_{m,m',a}|}
         {\widehat\sigma_a\sqrt{1/m+1/m'}},
\end{align*}
together with the studentised statistic described below.

The number of principal components $k$ is
the smallest value for which the leading $k$ eigenvalues of $\widehat S$
carry $90\%$ of $\operatorname{tr}\widehat S$, capped at $N-2$ so that
$\widehat\lambda_k$ stays estimable. The ridge parameter is
$\lambda = 0.01\times\operatorname{tr}(\widehat S)/\operatorname{rank}$,
that is, one percent of the mean eigenvalue rather than an absolute
constant. Relative scaling is necessary here. The embedding normalisation
puts the eigenvalues near $10^{-16}$, so a fixed $\lambda$ would swamp
the entire spectrum, the whitening would collapse to a multiple of the
identity, and $T_{\mathrm{ridge}}$ would silently reduce to a rescaling
of $T_{\mathrm{norm}}$. Singular directions are handled by the variance
screen, by the cap on $k$, and by the ridge term itself, so no
pseudo-inverse is required.

The studentised statistic is the degenerate two-sample $U$-statistic
\[
U = \frac{1}{m(m-1)}\sum_{i\neq j\in A}\langle Z_i,Z_j\rangle
  + \frac{1}{m'(m'-1)}\sum_{i\neq j\in B}\langle Z_i,Z_j\rangle
  - \frac{2}{mm'}\sum_{i\in A,\,j\in B}\langle Z_i,Z_j\rangle,
\]
whose null mean is exactly zero whatever the two covariances are,
divided by the plug-in standard deviation
$\{2\operatorname{tr}\widehat S_A^2/(m(m-1))
+ 2\operatorname{tr}\widehat S_B^2/(m'(m'-1))
+ 4\operatorname{tr}(\widehat S_A\widehat S_B)/(mm')\}^{1/2}$.

\paragraph{What is recomputed inside the permutation loop.}
The screen, the eigenbasis, the value of $k$, the ridge whitener and the
coordinate scales are all computed from the pooled sample, which does
not depend on the labels. They are therefore identical for every
relabelling, and recomputing them inside the loop would change nothing.
That is what preserves exchangeability, and it is why no nested
resampling is needed for these four statistics.

The studentised statistic is the exception, and deliberately so. Its
denominator uses the group-specific covariances $\widehat S_A$ and
$\widehat S_B$, so it changes with the labels, and it is recomputed for
every one of the $B+1$ relabellings. Studentising by any label-free
quantity would be a monotone transformation of $U$ within a replicate
and would leave the permutation $p$-value untouched. Only a
label-dependent scale makes the statistic asymptotically pivotal, which
is what restores permutation validity under unequal covariances and
unbalanced designs. That is precisely the weak-null cell this statistic
exists to handle.

\paragraph{Monte Carlo budgets.}
Level uses $R = 1250$ replications with $B = 999$ permutations, giving a
Monte Carlo standard error of $0.006$ at $\alpha = 0.05$. Power uses
$R = 500$, with standard error at most $0.022$. The central limit study
uses $R = 5000$, and coverage uses $R = 5000$ with standard error
$0.003$. Sample sizes appear as $m/m'$. We include unbalanced designs
because the factors $\lambda^{-1}$ and $(1-\lambda)^{-1}$ of
Corollary~\ref{cor:two-sample-hilbert-clt} are the main route by which
imbalance enters the theory.

\paragraph{A structural check.}
Before turning to subtle alternatives, we confirm that the pipeline
detects an obvious topological difference. The alternative is one loop
against two, which changes the number of degree-one features rather than
their location. At $\alpha = 0.05$ the permutation test on
$\norm{\widehat\delta_{m,m'}}$ rejects with probability $0.446$ at
$m = m' = 20$ and $0.828$ at $m = m' = 50$. Power is substantial, though
not overwhelming. That is worth recording as a calibration of
expectations. Even a qualitative change in homology becomes a moderate
effect once it passes through finite sampling of the point cloud and
then through the embedding.

\subsection{Study 1: Gaussian approximation for the two-sample statistic}
\label{sec:sim-clt}

Theorem~\ref{thm:hilbert-slln-clt}(ii) gives
$\sqrt m(\overline\Psi_m - \theta) \rightsquigarrow \mathcal G$, and
hence $m\norm{\overline\Psi_m-\theta}^2 \rightsquigarrow
\sum_j\lambda_jZ_j^2$. We assess the corresponding weighted chi-square approximation for the symmetrized two-sample statistic.

\paragraph{Reference-free centring.}
Centring at an estimated $\theta$ introduces a fixed bias, and $\sqrt m$
amplifies it. A study built that way eventually measures the bias rather
than the theorem. We therefore avoid centring altogether. Draw two independent batches
of size $m$ from the same law and set
\begin{equation}
\label{eq:two-batch-centring}
D_m := \sqrt{m/2}\,\bigl(\overline\Psi_m^{(1)}-\overline\Psi_m^{(2)}\bigr).
\end{equation}
This statistic is exactly centred for every $m$ and has the same
Gaussian limit as $\sqrt{m}(\overline\Psi_m-\theta)$. It is also
symmetric by construction, so its finite-sample distribution need not
match that of the one-sample statistic. We therefore interpret this
study as an assessment of the Gaussian approximation for the
strict-null two-sample statistic, rather than as a general assessment
of one-sample convergence.

\paragraph{Reference band.}
A Kolmogorov--Smirnov distance computed from $R$ draws has a floor set by
Monte Carlo noise, even under a perfect approximation. We therefore
simulate the null distribution of the KS statistic at the same $R$ and
report its $95$th percentile as a band. A value inside the band is
indistinguishable from exact.

\begin{table}[!ht]
\centering
\begin{tabular}{llrrrrr}
\toprule
Arm & Scenario & $m{=}5$ & $m{=}10$ & $m{=}20$ & $m{=}40$ & $m{=}80$ \\
\midrule
PLACE  & PC1 circle, Gaussian & $0.008$ & $0.011$ & $0.026$ & $0.011$ & $0.022$ \\
PLACE  & PC2 circle, skewed   & $0.007$ & $0.014$ & $0.011$ & $0.013$ & $0.008$ \\
PLACE  & PC3 noise mixture    & $0.020$ & $0.010$ & $0.013$ & $0.014$ & $0.018$ \\
PALACE & PC1 circle, Gaussian & $0.011$ & $0.018$ & $0.029$ & $0.016$ & $0.015$ \\
PALACE & PC2 circle, skewed   & $0.022$ & $0.022$ & $0.022$ & $0.022$ & $0.022$ \\
PALACE & PC3 noise mixture    & $0.030$ & $0.026$ & $0.029$ & $0.038$ & $0.028$ \\
\bottomrule
\end{tabular}
\caption{Kolmogorov--Smirnov distance between the sampling distribution
of $\norm{D_m}^2$ and its weighted chi-square limit, from $R = 5000$
replications. The $95\%$ Monte Carlo bands are $0.019$, $0.019$ and
$0.018$ for the three PLACE rows, and $0.018$, $0.030$ and $0.033$ for
the PALACE rows. Values inside the band cannot be distinguished from
exact at this replication count.}
\label{tab:sim-clt}
\end{table}

For the two-sample statistic considered here, the approximation is
accurate even at the smallest sample size examined.
Fourteen of the eighteen cells fall inside the Monte Carlo band, and the
four exceptions miss it by at most $0.008$. What matters more is that no
cell shows a trend in $m$. Deviations at $m = 80$ have the same
magnitude as those at $m = 5$. That pattern is what one expects when the
approximation error already sits below the resolution of the study,
rather than decaying through it.

A consequence follows. This design cannot estimate a Berry--Esseen
\emph{rate}. Theorem~\ref{thm:finite-coordinate-berry-esseen} supplies
an $O(m^{-1/2})$ upper bound whose constant depends on projected rank
and standardised third moments. Here that bound is plainly far from
tight. Resolving the exponent would need a much larger $R$, or a
scenario engineered to be badly non-Gaussian. We report the accuracy and
decline to fit a rate to numbers that lie inside their own noise band.

One structural fact explains why so little changes as $m$ grows. The
estimated covariance has effective rank between $1.01$ and $1.06$ in
every cell of Table~\ref{tab:sim-clt}. The embedded diagrams vary
overwhelmingly along a single direction, which on these generators is
total persistence mass. So the weighted chi-square limit sits close to a
scaled $\chi^2_1$, and a one-dimensional limit is reached quickly. This
also anticipates a finding of Section~\ref{sec:sim-level}. When
$r(\Sigma)\approx1$, quadratic statistics that differ substantially in
general become near-monotone transformations of one another.

\subsection{Study 2: level}
\label{sec:sim-level}

We turn next to the size of the two-sample test. Two nulls need to be
distinguished, following Section~\ref{sec:testing}. Under the
\emph{strict} null $P = Q$, the pooled diagrams are exchangeable, and
permutation calibration is exact in finite samples. Under the
\emph{weak} null the mean embeddings agree while the covariance
operators differ. That second null is the one $H_0^{\mathrm{mean}}$
actually specifies, and permutation calibration is not exact there.

\begin{table}[!ht]
\centering
\small
\begin{tabular}{llrrrrr}
\toprule
Arm & Statistic & $20/20$ & $50/50$ & $100/100$ & $50/100$ & $100/200$ \\
\midrule
\multicolumn{7}{l}{\emph{PC1, strict null}}\\
PLACE  & $\norm{\widehat\delta}$ & $0.046$ & $0.046$ & $0.046$ & $0.046$ & $0.047$ \\
PLACE  & studentised             & $0.046$ & $0.046$ & $0.046$ & $0.046$ & $0.049$ \\
PLACE  & PC Hotelling            & $0.047$ & $0.045$ & $0.045$ & $0.046$ & $0.047$ \\
PLACE  & ridge                   & $0.054$ & $0.050$ & $0.058$ & $0.056$ & $0.046$ \\
PLACE  & max coordinate          & $0.047$ & $0.043$ & $0.050$ & $0.043$ & $0.042$ \\
PALACE & $\norm{\widehat\delta}$ & $0.044$ & $0.047$ & $0.047$ & $0.044$ & $0.045$ \\
\midrule
\multicolumn{7}{l}{\emph{PC2 and PC3, strict null,} $\norm{\widehat\delta}$}\\
PLACE  & PC2 & $0.046$ & $0.057$ & $0.056$ & $0.049$ & $0.056$ \\
PLACE  & PC3 & $0.039$ & $0.048$ & $0.043$ & $0.049$ & $0.049$ \\
PALACE & PC2 & $0.045$ & $0.055$ & $0.051$ & $0.054$ & $0.055$ \\
PALACE & PC3 & $0.040$ & $0.050$ & $0.045$ & $0.050$ & $0.049$ \\
\midrule
\multicolumn{7}{l}{\emph{Tuned near-null design with unequal covariance},
$\norm{\widehat\delta}$}\\
PLACE  & & $0.057$ & $0.047$ & $0.054$ & $0.063$ & $0.059$ \\
PLACE  & studentised & $0.057$ & $0.047$ & $0.054$ & $0.063$ & $0.050$ \\
PALACE & & $0.051$ & $0.059$ & $0.050$ & $0.057$ & $0.068$ \\
PALACE & studentised & $0.052$ & $0.059$ & $0.050$ & $0.050$ & $0.059$ \\
\bottomrule
\end{tabular}
\caption{Empirical size at $\alpha = 0.05$ under permutation
calibration, from $R = 1250$ replications with $B = 999$ permutations.
The Monte Carlo standard error is $0.006$. Columns are $m/m'$.}
\label{tab:sim-level}
\end{table}

Under the strict null, every entry sits within three Monte Carlo
standard errors of $0.05$. That includes the unbalanced designs, and
there is no drift with sample size. None of this is surprising, since
permutation calibration is exact under exchangeability. Its role is to
confirm that the implementation realises that exactness.

\paragraph{A near-null covariance stress test.}
Building a weak null turns out to be harder than it looks. The natural
construction puts $P$ on the symmetric pair $\{A_0 - s,\ A_0 + s\}$ and
$Q$ on $\{A_0\}$, so that the two shifts cancel in the mean. They cancel
only if $\Phi(\cdot;\nu)$ is linear in the shift. It is not. The
landmark coordinates are hinge responses, convex with kinks, so
$\tfrac12\Phi(A_0-s;\nu)+\tfrac12\Phi(A_0+s;\nu) \neq \Phi(A_0;\nu)$ by
a Jensen gap of second order in $s$. At $s = 0.05$ the residual reaches
$1.7\%$ of $\norm{\E_P\Phi}$ for PLACE and $77\%$ for PALACE. The
disparity has a cause: PALACE places its landmarks adaptively on the
pilot atoms, which puts the kinks exactly where the shift moves mass. A
single shared design point is therefore negligible for one arm and a
strong alternative for the other.

We tune $s$ separately for each arm by bisection. The rule is to take
the largest $s$ for which the exact residual $\Delta_\nu(P,Q)$ is at
most $10\%$ of the sampling noise floor
$\{\operatorname{tr}\Sigma_P/m + \operatorname{tr}\Sigma_Q/m'\}^{1/2}$ at
the design in use. That residual is available in closed form on the
jitter-free atoms. Taking the largest admissible $s$ is deliberate. Both
the residual and the covariance contrast grow with $s$, so the residual
is the binding constraint, and we want as much contrast as it allows.

The resulting design points are $s = 0.00223$ for PLACE and
$s = 0.00213$ for PALACE. Residuals are $1.05\times10^{-9}$ and
$1.50\times10^{-10}$, against noise floors of $1.05\times10^{-8}$ and
$1.50\times10^{-9}$. The retained covariance contrasts,
$\norm{\Sigma_P-\Sigma_Q}_{\mathrm{F}} /
\max(\norm{\Sigma_P}_{\mathrm{F}},\norm{\Sigma_Q}_{\mathrm{F}})$, are
$0.19$ and $0.22$.

At these design points the residual mean separation is one tenth of the
corresponding sampling-noise scale, while the covariance contrast
remains appreciable. This is therefore a near-null stress test rather
than an exact level experiment. The largest rejection frequency is
$0.068$. In the unbalanced settings, studentisation generally reduces
the rejection frequency relative to the unstudentised statistic.
These results suggest that the procedure is not highly sensitive to the
covariance contrast considered here. They do not establish validity
under the exact weak null.

\paragraph{The spectral calibration is conservative.}
Table~\ref{tab:sim-level} uses permutation calibration throughout.We also examined a pilot-spectrum calibration in which the eigenvalues
of the weighted chi-square reference distribution were estimated from
the independent pilot sample. This differs from the plug-in procedure
of Theorem~\ref{thm:mean-distance-null}, which estimates the covariance
from the analysis groups. At $m = m' = 100$ its size is
$0.032$ on PC1 and $0.000$ on both PC2 and PC3 for PLACE. For PALACE the
corresponding values are $0.003$, $0.000$ and $0.000$. The nominal level
is $0.05$. On the tuned weak null, by contrast, the same calibration is
accurate, at $0.050$ for PLACE and $0.046$ for PALACE.

The pilot-spectrum calibration is conservative in the point-cloud
settings and more accurate in the controlled diagram-level setting.
Several factors may contribute, including estimation of small
eigenvalues from a finite pilot and differences between the pilot and
analysis covariance structures. The experiment does not separate these
sources of error. It shows that calibration based on an external pilot
spectrum can be sensitive to covariance estimation.
For practice, the spectral calibration should not be the default. 
Permutation calibration is what we recommend under the strict null, with
the finite-sample bound of Proposition~\ref{prop:finite-sample-power} as
the fallback when only a second-moment envelope is available.

\subsection{Study 2: power and the detection boundary}
\label{sec:sim-power}

Power is studied on the controlled diagram-level family, where
$\Delta_\nu(P,Q)$ is exact. The design parameter $d$ indexes a family of
alternatives with $\Delta_\nu$ proportional to $d$.

\begin{table}[!ht]
\centering
\begin{tabular}{lrrrrrr}
\toprule
$d$ & $0.0$ & $0.1$ & $0.2$ & $0.3$ & $0.5$ & $0.7$ \\
\midrule
PLACE:\ \ $\Delta_\nu\ (\times10^{-8})$
 & $0$ & $0.59$ & $1.19$ & $1.78$ & $2.97$ & $4.16$ \\
\quad power        & $0.056$ & $0.170$ & $0.578$ & $0.928$ & $1.000$ & $1.000$ \\
\quad size-adjusted& $0.040$ & $0.134$ & $0.532$ & $0.892$ & $1.000$ & $1.000$ \\
\midrule
PALACE:\ $\Delta_\nu\ (\times10^{-9})$
 & $0$ & $0.72$ & $1.44$ & $2.16$ & $3.61$ & $5.05$ \\
\quad power        & $0.058$ & $0.176$ & $0.570$ & $0.916$ & $1.000$ & $1.000$ \\
\quad size-adjusted& $0.040$ & $0.134$ & $0.526$ & $0.890$ & $1.000$ & $1.000$ \\
\bottomrule
\end{tabular}
\caption{Power of the permutation test on
$\norm{\widehat\delta_{m,m'}}$ at $\alpha = 0.05$, with $m = m' = 50$ and
$R = 500$. The Monte Carlo standard error is at most $0.022$.
Size-adjusted power uses the empirical null quantile.}
\label{tab:sim-power}
\end{table}

The transition from low to high power occurs between $d=0.1$ and
$d=0.3$. Power increases from approximately $0.17$ to above $0.90$
across this range. This sharp transition is consistent with the
dependence on signal strength in
Proposition~\ref{prop:finite-sample-power}. Because the experiment holds
the sample size fixed, it does not by itself estimate the
$\Delta_\nu^{-2}$ sample-size rate.

The two arms have the same power curve, to within Monte Carlo error, at
every design point. They have it despite $\rho_-(\tau;\nu)$ differing by
a factor of $29$ and $\Delta_\nu$ by roughly a factor of $8$. We state
this explicitly because the certificates in Table~\ref{tab:sim-arms}
invite a misreading. Power depends on the signal-to-noise ratio
$\Delta_\nu/v$, and both $\Delta_\nu$ and the covariance envelope $v$
carry the same embedding scale, so the scale cancels. A larger raw certificate does not by itself imply greater power, since
both signal and noise depend on the scale of the embedding. The
certificate instead controls how diagram-space separation is translated
into embedding-space separation. Comparisons across embeddings should
therefore use scale-free certificate and signal-to-noise quantities.

Finally, the nominal and size-adjusted curves differ by about $0.04$
throughout, which matches the mild conservatism visible at $d = 0$. They
converge as power approaches one.

\subsection{Coverage of the confidence balls}
\label{sec:sim-coverage}

Theorem~\ref{thm:mean-confidence-ball} asserts asymptotic coverage for
the plug-in ball of radius $\widehat r_m^\alpha =
\{\widehat q_{1-\alpha,m}/m\}^{1/2}$. Coverage is the property on which
both the effect-size interval of
Corollary~\ref{cor:two-sample-confidence-ball} and the transport
certificate of Proposition~\ref{prop:mean-measure-exclusion-certificate}
rest. We therefore check it against a population mean that is known
exactly.

The construction uses a mixture of $K = 24$ fixed atoms with known
weights. Linearity of $T_\nu$ gives $\E_P\Phi(X;\nu) =
\sum_k w_k\Phi(A_k;\nu)$ exactly, so the target carries no Monte Carlo
error. Two atoms would not serve. The embedded data would then take only
two values, and $\Sigma$ would have rank one whatever the ambient
dimension. That would validate the ball precisely where its spectrum is
trivial and the weighted chi-square limit degenerates to a single
$\chi^2_1$. With $24$ atoms, $\operatorname{rank}\Sigma = 23$; in this
construction its numerical rank is $23$.

\begin{table}[!ht]
\centering
\begin{tabular}{lrrrrr}
\toprule
$m$ & $25$ & $50$ & $100$ & $200$ & $400$ \\
\midrule
PLACE, plug-in $\widehat\Sigma_m$ & $0.942$ & $0.946$ & $0.949$ & $0.948$ & $0.950$ \\
PLACE, oracle $\Sigma$            & $0.949$ & $0.951$ & $0.951$ & $0.947$ & $0.952$ \\
PALACE, plug-in $\widehat\Sigma_m$& $0.976$ & $0.963$ & $0.957$ & $0.951$ & $0.948$ \\
PALACE, oracle $\Sigma$           & $0.954$ & $0.949$ & $0.949$ & $0.948$ & $0.946$ \\
\bottomrule
\end{tabular}
\caption{Empirical coverage of the nominal $95\%$ ball for
$\theta_{P,\nu}^{\Phi}$, from $R = 5000$ replications with Monte Carlo
standard error $0.003$. The plug-in rows estimate $\Sigma$ from the same
$m$ diagrams, which is the procedure used in practice. The oracle rows
use the exact $\Sigma$, and so isolate the Gaussian approximation from
covariance estimation error.}
\label{tab:sim-coverage}
\end{table}

Both oracle rows sit at nominal for every $m$, including $m = 25$. The
Gaussian approximation behind the ball is therefore not the binding
constraint, which agrees with Study 1.

Where the arms differ is in how covariance estimation enters, and the
direction of the difference is informative. PLACE is at nominal
throughout. PALACE \emph{over}-covers at small $m$, reaching $0.976$ at
$m = 25$ before settling to $0.948$ by $m = 400$. Its intervals are thus
conservative rather than anti-conservative in the regime where the two
could be confused. The mechanism is straightforward. At $m=25$, the centred empirical covariance has rank at most $m-1=24$. The estimated
spectrum is dispersed relative to the truth, and that inflates the upper
quantile of $\sum_j\widehat\lambda_jZ_j^2$. PLACE is not immune to the
same effect in principle. Its covariance is far closer to rank one,
though, so the estimated and true quantiles agree more closely.

For the data analysis of Section~\ref{sec:abide}, where $m = 201$ and
$m' = 235$, both arms lie in the regime where the plug-in radius is
accurate.

\subsection{What the test can and cannot detect}
\label{sec:sim-discriminate}

Section~\ref{sec:testing-strand} argues that the mean-embedding test and
the survival-based test of \cite{STRAND} target different alternatives.
The claim ought to be demonstrated rather than asserted. A natural
demonstration uses two alternatives that are exact complements.

\begin{description}[nosep,itemsep=4pt]
\item[Birth shift.] $(b,d)\mapsto(b+\tau,\ d+\tau)$. Every persistence
  value $d - b$ is unchanged. The two groups therefore have identical
  pooled persistence multisets, and a test built on lifetimes has no
  signal available to it. This is also the perturbation for which
  $\db(A,\mathrm{shift}(A,\tau)) = \tau$ exactly.
\item[Persistence shift.] $(b,d)\mapsto(b,\ d+2\delta)$. Births are
  unchanged and every feature lives longer. This is the alternative a
  lifetime-based test is built for.
\end{description}

\begin{table}[!ht]
\centering
\begin{tabular}{lrrr}
\toprule
Test & level & birth shift & persistence shift \\
\midrule
PLACE                            & $0.050$ & $0.886$          & $0.960$ \\
PALACE                           & $0.046$ & $\mathbf{1.000}$ & $0.174$ \\
\textsc{Strand}, asymptotic      & $0.006$ & $0.008$          & $0.720$ \\
\textsc{Strand}, permutation     & $0.044$ & $\mathbf{0.052}$ & $0.910$ \\
\bottomrule
\end{tabular}
\caption{Rejection rates at $\alpha = 0.05$ with $m = m' = 60$ and
$R = 500$, giving a Monte Carlo standard error of at most $0.022$. In
the level column the two groups are identically distributed. The last
row calibrates the log-rank statistic by permuting diagram labels rather
than referring it to $\chi^2_1$.}
\label{tab:sim-discriminate}
\end{table}

Under the birth shift, \textsc{Strand} rejects at $0.052$. That is
exactly its level, as it must be, since the persistence multisets are
identical. PALACE rejects at $1.000$ and PLACE at $0.886$. Here is the
concrete justification for carrying a joint birth--death representation
rather than a lifetime summary. No amount of tuning recovers it, because
the information is absent from the log-rank statistic by construction.
Under the persistence shift the ordering reverses, with \textsc{Strand}
at $0.910$ and PALACE at $0.174$. Neither approach dominates. The
comparison means something only because each method is shown against the
alternative the other was built for.

The two embedding arms separate here as well, which
Table~\ref{tab:sim-power} did not reveal. PALACE is the stronger against
a pure location shift and much the weaker against a pure lifetime shift.
PLACE is competitive against both. The adaptive placement of PALACE
landmarks explains the pattern, since it concentrates resolution where
the pilot diagrams lie.

\paragraph{Calibration of the comparator.}
The $\chi^2_1$ limit for the log-rank statistic assumes that the pooled
event times are independent. Persistence values pooled within a group
are not independent, because features from one diagram share a subject
and a filtration. This is a clustered-data problem, and it matters. To
see how much, draw both groups from the same distribution, so that every
rejection is a false positive, and vary only the between-subject
dispersion of the persistence scale.

\begin{table}[!ht]
\centering
\begin{tabular}{lrr}
\toprule
Between-subject SD & asymptotic $\chi^2_1$ & diagram permutation \\
\midrule
$0.00$ & $0.058$          & $0.046$ \\
$0.20$ & $\mathbf{0.548}$ & $0.040$ \\
$0.35$ & $\mathbf{0.714}$ & $0.044$ \\
\bottomrule
\end{tabular}
\caption{Type I error of the log-rank test on pooled persistence values
under a true null, from $R = 500$ replications with $40$ diagrams per
group and about $160$ features each. The Monte Carlo standard error is
$0.010$, and the nominal level is $0.05$ in every cell.}
\label{tab:sim-strand-calibration}
\end{table}

With homogeneous subjects the asymptotic calibration is correct. Raise
the between-subject dispersion to a level still well below that of a
multisite imaging consortium, and it rejects a true null in more than
half of all replications. The permutation version shuffles diagram
labels, so that each subject's features move together, and it holds its
level throughout. Every \textsc{Strand} result in this paper uses the
permutation calibration. Section~\ref{sec:abide-strand} returns to the
point on real data.

\subsection{Scope of the simulation evidence}
\label{sec:sim-limitations}

The studies cover the two additive constructions. The whole-diagram map
of \cite{Mitra2024} falls under the general theory of
Section~\ref{sec:probability}, but it is not included here.

The alternatives examined are a birth shift, a lifetime shift, a
mixture-weight contrast, and a structural change in the number of
features. An alternative that changes diagram cardinality while leaving
both the lifetime marginal and the birth locations unchanged remains
untested.

The point-cloud and level experiments have $r(\Sigma)$ near unity. The study therefore does not probe the regime in which the choice among the five statistics of
Section~\ref{sec:testing-statistic} matters. Their near-identical
behaviour in Table~\ref{tab:sim-level} follows from that fact, and
should not be read as evidence that the choice is immaterial in general.

No cell varies the truncation level $K$, so the three-way relationship
of Theorem~\ref{thm:finite-approximation-three-way} is not examined
empirically. A study of it needs one fixed configuration truncated to
its first $K$ coordinate blocks, with $\epsK$ evaluated on that same
map.


\section{Data Analysis: Resting-State Connectivity in ABIDE}
\label{sec:abide}

We now apply the procedures to resting-state functional connectivity
from the Autism Brain Imaging Data Exchange \citep{di2014autism}. The aim is to exercise the machinery end to end on data of the kind
the theory was designed for. Such data have four features: diagrams of
variable cardinality, an embedding dimension in the hundreds, a
covariance estimated from a few hundred subjects, and a nuisance factor
that has to be conditioned on. We also want to show what the confidence
sets say when the test does not reject. This is an illustration of the methodology. It is not a
scientific claim about autism.

\subsection{Data and construction of the diagrams}
\label{sec:abide-data}

We use the ABIDE-I release distributed by the Preprocessed Connectomes
Project \citep{Craddock2013pcp}, retrieved through \texttt{nilearn}
\citep{Abraham2014nilearn}. Preprocessing follows the Configurable
Pipeline for the Analysis of Connectomes \citep{Craddock2013cpac}. We
take the variant without band-pass filtering and without global signal
regression, and keep only subjects that passed manual quality control.
Regional time series come from the CC200 parcellation of
\cite{Craddock2012}, which divides the brain into $200$ functionally
homogeneous regions by spatially constrained spectral clustering.

Of the subjects returned, $436$ enter the analysis. Of these, $201$
carry a diagnosis of autism spectrum disorder and $235$ are typically
developing controls. They are spread across $20$ imaging sites, with at
least one subject per group at each site. The remaining subjects form
the pilot half on which the embedding configuration is fitted.

For each subject we form the correlation matrix of the regional time
series. We then convert it to a dissimilarity,
\begin{equation}
\label{eq:abide-metric}
d(i,j) = 1 - \lvert\mathrm{corr}(i,j)\rvert,
\end{equation}
so that strongly coupled regions are close, whether the coupling is
positive or negative. Taking the absolute value is a modelling choice,
and not a neutral one. It asserts that anticorrelation is a form of
coupling rather than a form of distance. We use this symmetric dissimilarity as the edge-weight function of a complete graph and construct the associated clique filtration. Its points record independent cycles in the
connectivity graph, together with the range of thresholds over which
they survive.

The resulting diagrams have mean cardinality $165.2$ and median $164$,
with range $[71,294]$. None are empty. Variable cardinality is what
makes this a real test of the framework rather than a formality. The
padded counting measure of Section~\ref{sec:padded-mean-measures}
accommodates it without requiring feature correspondence across
subjects, and the sampling unit stays the diagram.

\paragraph{The configuration is fitted on an independent half.}
Landmark positions are chosen on the pilot half and then held fixed. The
analysis half is embedded against those positions and never influences
them. This is required rather than merely prudent. The embedding is a
data-dependent dimension reduction, and choosing it on the subjects who
are subsequently tested would invalidate both the covariance and the
confidence radius. The frame is $L = 0.866$, taken from the pilot.

\begin{table}[!ht]
\centering
\begin{tabular}{lrrr}
\toprule
Arm & raw & active & $r(\Sigma)$ \\
\midrule
PLACE  & $910$ & $354$ & $1.36$ \\
PALACE & $50$  & $50$  & $1.12$ \\
MV ($n$-fold) & $64$ & $31$ & $1.50$ \\
\bottomrule
\end{tabular}
\caption{Embedding dimensions on ABIDE. All three effective ranks are
close to one. The covariance of the embedded diagrams is dominated by a
single direction, which here is overall persistence mass.}
\label{tab:abide-arms}
\end{table}

\subsection{Two-sample tests}
\label{sec:abide-tests}

For each fixed embedding configuration $\nu$, the null hypothesis is
$$H_0: \theta_{P,\nu}^{\Psi} = \theta_{Q,\nu}^{\Psi},$$
where the equality is in the embedding space $\mathcal H_\nu$, or in
$\mathbb R^{d_\nu}$ for a finite-dimensional implementation. Reference distributions come from permutation with $B = 999$.
Where site is conditioned on, labels are permuted within site. The norm statistic is the primary analysis because it corresponds
directly to the mean-embedding hypothesis developed above. The remaining
statistics and embedding constructions are reported as sensitivity
analyses. These choices were not preregistered, so the collection of
$p$-values should not be interpreted as a multiplicity-adjusted
confirmatory analysis.

\begin{table}[!ht]
\centering
\small
\begin{tabular}{llrrrrrr}
\toprule
Arm & Scope & $\norm{\widehat\delta}$ & stud. & Hotelling & ridge
 & max coord. & spectral \\
\midrule
PLACE  & pooled          & $0.153$ & $0.159$ & $0.260$ & $0.472$ & $0.409$ & $0.157$ \\
PLACE  & site-stratified & $0.109$ & n/a     & $0.250$ & $0.441$ & $0.395$ & n/a     \\
PALACE & pooled          & $0.162$ & $0.164$ & $0.342$ & $0.761$ & $0.246$ & $0.160$ \\
PALACE & site-stratified & $0.131$ & n/a     & $0.356$ & $0.750$ & $0.224$ & n/a     \\
MV     & pooled          & $0.455$ & $0.450$ & $0.746$ & $0.574$ & $0.864$ & $0.444$ \\
MV     & site-stratified & $0.430$ & n/a     & $0.729$ & $0.518$ & $0.837$ & n/a     \\
\bottomrule
\end{tabular}
\caption{Permutation $p$-values across all $436$ subjects. The spectral
column gives the asymptotic weighted chi-square $p$-value for
$\norm{\widehat\delta_{m,m'}}$. It is not defined under stratification,
and those cells are marked n/a.}
\label{tab:abide-pvalues}
\end{table}

No test rejects. Three features of Table~\ref{tab:abide-pvalues} are
worth drawing out.

\paragraph{The spectral and permutation $p$-values agree closely.}
For PLACE the pair is $0.157$ and $0.153$. For PALACE it is $0.160$ and
$0.162$, and for MV, $0.444$ and $0.455$. Read through the diagnostic
proposed in Section~\ref{sec:sim-level}, this says the covariance
spectrum is well estimated at $m = 201$ and $m' = 235$ with an embedding
dimension in the hundreds. The simulation study does not reach that
regime, and on the point-cloud generators the spectral calibration was
conservative there.

\paragraph{Results after conditioning permutations on site.}
Restricting permutations to occur within imaging sites produces
somewhat smaller $p$-values for PLACE and PALACE, although none is below
$0.05$. This analysis preserves the observed site-specific group counts
and addresses a conditional null based on within-site label
exchangeability. It should not be interpreted as showing that site
heterogeneity suppresses, or does not suppress, a diagnosis effect.
The pooled and site-conditioned analyses lead to the same inferential
conclusion.

\paragraph{Dependence across embedding constructions.}
Across permutation draws, the PLACE and PALACE norm statistics are
strongly correlated, with rank correlation $0.936$. Their correlations
with the MV statistic are much weaker. Thus, the three analyses do not
provide three independent pieces of evidence. The correlations describe
the similarity of the resulting test statistics, but they do not remove
the multiplicity created by examining several analyses.

\subsection{Confidence sets and the resolution of the non-rejection}
\label{sec:abide-confidence}

A non-rejection carries little information unless it comes with a
statement of what could have been detected. The confidence sets of
Section~\ref{sec:confidence-certificates} supply one.

\begin{table}[!ht]
\centering
\begin{tabular}{lrrrr}
\toprule
Arm & $\widehat r$ pooled & $\widehat r$ ASD & $\widehat r$ TD
 & separation ratio \\
\midrule
PLACE  & $3.91\times10^{-8}$ & $5.76\times10^{-8}$ & $5.32\times10^{-8}$ & $0.55$ \\
PALACE & $6.65\times10^{-9}$ & $9.79\times10^{-9}$ & $9.05\times10^{-9}$ & $0.53$ \\
MV     & $1.64\times10^{-9}$ & $2.41\times10^{-9}$ & $2.23\times10^{-9}$ & $0.34$ \\
\bottomrule
\end{tabular}
\caption{Radii of the $95\%$ balls for the group mean embeddings, at
$m = 201$ and $m' = 235$. The separation ratio is
$\norm{\widehat\delta_{m,m'}} / (\widehat r_{\mathrm{ASD}} +
\widehat r_{\mathrm{TD}})$. The two balls are disjoint, and the means
provably distinct, exactly when that ratio exceeds $1$. Radii are not
comparable across arms, since each embedding carries its own scale.}
\label{tab:abide-balls}
\end{table}

All three ratios fall below $1$, which agrees with
Table~\ref{tab:abide-pvalues}. What would it take to resolve the question? Declaring the means
distinct requires the two group balls to be disjoint,
\begin{equation}
\label{eq:abide-separation-criterion}
\widehat r(m) + \widehat r(m') < \Delta_\nu(P,Q),
\qquad \widehat r(k) = \{q_{0.95}/k\}^{1/2}.
\end{equation}
Writing $m = pM$ and $m' = (1-p)M$ for a total of $M$ subjects gives
\begin{equation}
\label{eq:abide-mstar}
M^{*} = \frac{q_{0.95}\{p^{-1/2}+(1-p)^{-1/2}\}^{2}}
{\Delta_\nu(P,Q)^{2}}.
\end{equation}
We substitute the bias-corrected estimate
$\widehat\Delta^2 = \norm{\widehat\delta_{m,m'}}^2
- \operatorname{tr}\widehat\Sigma_P/m
- \operatorname{tr}\widehat\Sigma_Q/m'$. That correction is essential.
The raw squared distance between sample means is inflated by exactly the
sampling noise the confidence set is measuring, so using it would make
the separation look attainable at any sample size.

At the observed allocation the inflation factor
$\{p^{-1/2}+(1-p)^{-1/2}\}^2$ equals $8.04$, and we obtain the
following.

\begin{center}
\begin{tabular}{lrr}
\toprule
Arm & $\widehat\Delta$ (bias-corrected) & $M^{*}$ \\
\midrule
PLACE  & $3.99\times10^{-8}$ & $3{,}365$ \\
PALACE & $6.74\times10^{-9}$ & $3{,}402$ \\
MV     & $0$ (at the boundary) & n/a \\
\bottomrule
\end{tabular}
\end{center}

The honest summary of the PLACE and PALACE results is not that there is
no difference. It is that if the difference is as large as the point
estimate suggests, roughly eight times the present cohort would be
needed to certify it. For MV the bias correction drives
$\widehat\Delta$ to zero, meaning the observed separation is smaller
than its own sampling noise, and no finite $M^{*}$ can be quoted. Note
that $M^*$ is nearly the same for PLACE and PALACE despite an eightfold
difference in $\widehat\Delta$. That is the scale cancellation
identified in Section~\ref{sec:sim-power}.

Proposition~\ref{prop:mean-embedding-transfer} also converts the lower
endpoint $\underline\Delta_{m,m'}^{\alpha}$ of
Corollary~\ref{cor:two-sample-confidence-ball} into an asymptotically
valid lower confidence bound,
$W_{\infty,0}(\overline\mu_P,\overline\mu_Q) \geq
\underline\Delta_{m,m'}^{\alpha}/(nL_\nu)$, on transport separation of
the padded mean measures. Here $\underline\Delta_{m,m'}^{\alpha} = 0$
for every arm, so the certificate is vacuous on this data. That is the
correct behaviour, and it illustrates a general point: the geometric
certificate inherits whatever the statistical interval delivers, and no
more.

\subsection{Comparison with the survival-based test}
\label{sec:abide-strand}

We also apply \textsc{Strand} \citep{STRAND} to the same $436$ subjects,
pooling $33{,}740$ persistence values from the autism group and
$38{,}478$ from the controls.

\begin{table}[!ht]
\centering
\begin{tabular}{lr}
\toprule
Quantity & Value \\
\midrule
Log-rank $\chi^2_1$, pooled          & $2.00$, $p = 0.157$ \\
Log-rank $\chi^2_1$, site-stratified & $7.75$, $p = \mathbf{0.005}$ \\
Diagram permutation, pooled          & $p = 0.552$ \\
Diagram permutation, site-stratified & $p = 0.561$ \\
\midrule
Hazard ratio & $0.990$, 95\% CI $[0.955,\,1.024]$ \\
Schoenfeld $\rho$ & $-0.445$ (proportional hazards fails) \\
Median lifetime difference & $1.9\times10^{-4}$,
 95\% CI $[-7.9,\,12.0]\times10^{-4}$ \\
\bottomrule
\end{tabular}
\caption{\textsc{Strand} applied to the ABIDE cohort. Permutation
$p$-values use $B = 999$ shuffles of diagram labels. The bootstrap
interval for the hazard ratio resamples diagrams rather than features.}
\label{tab:abide-strand}
\end{table}

The site-stratified asymptotic $p$-value of $0.005$ is, in our reading,
an artefact rather than a finding. On the same data the permutation
$p$-value is $0.561$. The discrepancy runs to two orders of magnitude,
in the direction and of the size that
Table~\ref{tab:sim-strand-calibration} predicts. Stratification does not
cure the pseudo-replication. It partitions $72{,}000$ dependent
observations into $20$ strata of dependent observations, then sums
twenty inflated contributions. The proportional-hazards diagnostic also
fails decisively, so the hazard ratio is not a meaningful single-number
summary here. The median lifetime difference is the effect size to read,
and its interval covers zero comfortably.

Neither the embedding test nor the diagram-level permutation version of
the survival test provides evidence of a group difference under its
respective estimand. The two procedures summarize different aspects of
the diagrams, so their non-rejections should not be treated as
independent evidence of equivalence. They indicate only that neither
summary detects a difference at the resolution available in this
sample.

\subsection{Scope and limitations}
\label{sec:abide-limitations}

We use one parcellation, one metric \eqref{eq:abide-metric}, and one
homological degree. Each is a defensible convention, and none is
neutral. Degree zero is untried.

The analysis does not adjust for age, sex, head motion, or other
subject-level covariates. It therefore describes marginal differences
between the observed diagnostic groups and does not isolate an effect
of diagnosis from these potential sources of confounding.

The analysis uses all quality-controlled subjects assigned to the
analysis half; the remaining subjects are used only to select the
embedding configurations.

The MV arm should be read as a single-scale summary here. Six scales
were constructed, yet roughly half of its coordinates are identically
zero, because the frame is inflated relative to these diagrams.

Most importantly, this is an illustration of the inferential machinery
and not a study of autism. The published record sets a low ceiling on
what any method should expect to find here. Working with the same $871$
quality-checked subjects, \cite{Abraham2017} reach $68\%$
classification accuracy under leave-one-site-out validation.
\cite{Nielsen2013} report $60\%$ for whole-brain classification, and
note that multisite accuracy falls short of what single sites achieve.
A null result is therefore close to what that literature would lead one
to expect. Nothing here should be read as a finding about the disorder.

\section{Discussion}
\label{sec:discussion}

This paper develops a framework for statistical inference on persistence
diagrams through fixed Hilbert-space embeddings. Once the embedding
configuration is fixed, each diagram becomes one Hilbert-valued observation.
This supports limit theory, covariance estimation, two-sample tests, and
confidence sets without treating persistence points from the same diagram as
independent replicates. For additive embeddings, the population mean also has
an interpretation as an embedded mean measure.

The statistical theory and the geometric guarantees serve different roles.
The limit theory requires a fixed measurable embedding and suitable moment
conditions. A lower-distortion certificate is needed only when an
embedding-space conclusion is translated back to diagram geometry. At the
population level, this translation also requires the mass and response
conditions introduced in Section~\ref{sec:embedding}. Without such structure,
geometric differences can cancel after population averaging. The proposed
test is therefore a test of equality of mean embeddings, rather than an
omnibus test of equality in distribution.

The numerical studies illustrate the resulting procedures in finite samples.
The Gaussian approximation is accurate for the symmetrized two-sample
statistic in the settings examined, and permutation calibration has the
expected behavior under the strict null. The comparison with the
survival-based procedure shows that lifetime summaries and joint
birth--death embeddings respond to different alternatives. It also shows why
calibration should preserve the diagram as the sampling unit. In the ABIDE
analysis, none of the embedding-based procedures detects a difference between
the diagnostic groups under its mean-embedding estimand. The site-conditioned
analysis and the diagram-level permutation version of the survival test lead
to the same conclusion. These non-rejections do not establish equality. They
indicate that the selected summaries do not detect a difference at the
resolution available in this sample.

Several directions follow from the present framework. One is inference under
the weak mean null when the two covariance operators differ. Studentized,
bootstrap, or other permutation-based procedures could provide more reliable
calibration in that setting. A second direction is data-adaptive selection of
the embedding configuration and truncation level without requiring a separate
pilot sample. Cross-fitting or repeated sample splitting may allow landmarks
to be learned while preserving valid inference. It would also be useful to
select the configuration by balancing truncation error, covariance
estimation, and certified geometric resolution, rather than by optimizing
representation quality alone.

A further extension is to move beyond population means. Covariance-based or
distributional procedures could detect alternatives that cancel at the mean
level, although retaining an explicit geometric interpretation would require
new assumptions. Covariate-adjusted, multisite, and longitudinal models would
also broaden the range of applications. These developments can build on the
same separation used throughout this paper: Hilbert-space methods provide the
statistical machinery, while the embedding structure determines what those
results imply about persistence-diagram geometry.

\bibliographystyle{plainnat}
\bibliography{main}

\begin{thebibliography}{27}
\providecommand{\natexlab}[1]{#1}
\providecommand{\url}[1]{\texttt{#1}}
\expandafter\ifx\csname urlstyle\endcsname\relax
  \providecommand{\doi}[1]{doi: #1}\else
  \providecommand{\doi}{doi: \begingroup \urlstyle{rm}\Url}\fi

\bibitem[Abraham et~al.(2014)Abraham, Pedregosa, Eickenberg, Gervais, Mueller,
  Kossaifi, Gramfort, Thirion, and Varoquaux]{Abraham2014nilearn}
Alexandre Abraham, Fabian Pedregosa, Michael Eickenberg, Philippe Gervais,
  Andreas Mueller, Jean Kossaifi, Alexandre Gramfort, Bertrand Thirion, and
  Ga{\"e}l Varoquaux.
\newblock Machine learning for neuroimaging with scikit-learn.
\newblock \emph{Frontiers in Neuroinformatics}, 8:\penalty0 14, 2014.
\newblock \doi{10.3389/fninf.2014.00014}.

\bibitem[Abraham et~al.(2017)Abraham, Milham, Di~Martino, Craddock, Samaras,
  Thirion, and Varoquaux]{Abraham2017}
Alexandre Abraham, Michael~P. Milham, Adriana Di~Martino, R.~Cameron Craddock,
  Dimitris Samaras, Bertrand Thirion, and Gael Varoquaux.
\newblock Deriving reproducible biomarkers from multi-site resting-state data:
  An {A}utism-based example.
\newblock \emph{NeuroImage}, 147:\penalty0 736--745, 2017.
\newblock \doi{10.1016/j.neuroimage.2016.10.045}.

\bibitem[Adams et~al.(2017)Adams, Emerson, Kirby, Neville, Peterson, Shipman,
  Chepushtanova, Hanson, Motta, and Ziegelmeier]{Adams2017}
Henry Adams, Tegan Emerson, Michael Kirby, Rachel Neville, Chris Peterson,
  Patrick Shipman, Sofya Chepushtanova, Eric Hanson, Francis Motta, and Lori
  Ziegelmeier.
\newblock Persistence images: {A} stable vector representation of persistent
  homology.
\newblock \emph{Journal of Machine Learning Research}, 18\penalty0
  (8):\penalty0 1--35, 2017.

\bibitem[Ali et~al.(2023)Ali, Asaad, Jimenez, Nanda, Paluzo-Hidalgo, and
  Soriano-Trigueros]{Ali2023-ht}
Dashti Ali, Aras Asaad, Maria-Jose Jimenez, Vidit Nanda, Eduardo
  Paluzo-Hidalgo, and Manuel Soriano-Trigueros.
\newblock A survey of vectorization methods in topological data analysis.
\newblock \emph{IEEE Transactions on Pattern Analysis and Machine
  Intelligence}, 45\penalty0 (12):\penalty0 14069--14080, 2023.
\newblock \doi{10.1109/TPAMI.2023.3308391}.

\bibitem[Bentkus(2003)]{Bentkus2003}
V.~Bentkus.
\newblock On the dependence of the {B}erry--{E}sseen bound on dimension.
\newblock \emph{Journal of Statistical Planning and Inference}, 113\penalty0
  (2):\penalty0 385--402, 2003.

\bibitem[Bubenik(2015)]{Bubenik15}
Peter Bubenik.
\newblock Statistical topological data analysis using persistence landscapes.
\newblock \emph{Journal of Machine Learning Research}, 16\penalty0
  (1):\penalty0 77--102, 2015.

\bibitem[Bubenik and Wagner(2020)]{BubenikWagner2020}
Peter Bubenik and Alexander Wagner.
\newblock Embeddings of persistence diagrams into {H}ilbert spaces.
\newblock \emph{Journal of Applied and Computational Topology}, 4\penalty0
  (3):\penalty0 339--351, 2020.
\newblock \doi{10.1007/s41468-020-00056-w}.

\bibitem[Chazal et~al.(2013)Chazal, Fasy, Lecci, Rinaldo, Singh, and
  Wasserman]{Chazal2014}
Fr{\'e}d{\'e}ric Chazal, Brittany~Terese Fasy, Fabrizio Lecci, Alessandro
  Rinaldo, Aarti Singh, and Larry Wasserman.
\newblock On the bootstrap for persistence diagrams and landscapes.
\newblock \emph{Modeling and Analysis of Information Systems}, 20\penalty0
  (6):\penalty0 111--120, 2013.

\bibitem[Chazal et~al.(2017)Chazal, Fasy, Lecci, Michel, Rinaldo, and
  Wasserman]{Chazal2017DTM}
Fr{\'e}d{\'e}ric Chazal, Brittany~Terese Fasy, Fabrizio Lecci, Bertrand Michel,
  Alessandro Rinaldo, and Larry Wasserman.
\newblock Robust topological inference: Distance to a measure and kernel
  distance.
\newblock \emph{Journal of Machine Learning Research}, 18\penalty0
  (159):\penalty0 1--40, 2017.

\bibitem[Cohen-Steiner et~al.(2007)Cohen-Steiner, Edelsbrunner, and
  Harer]{Cohen-Steiner2007}
David Cohen-Steiner, Herbert Edelsbrunner, and John Harer.
\newblock Stability of persistence diagrams.
\newblock \emph{Discrete \& Computational Geometry}, 37\penalty0 (1):\penalty0
  103--120, 2007.

\bibitem[Craddock et~al.(2013{\natexlab{a}})Craddock, Benhajali, Chu,
  Chouinard, Evans, Jakab, Khundrakpam, Lewis, Li, Milham, Yan, and
  Bellec]{Craddock2013pcp}
Cameron Craddock, Yassine Benhajali, Carlton Chu, Francois Chouinard, Alan
  Evans, Andr{\'a}s Jakab, Budhachandra~Singh Khundrakpam, John~David Lewis,
  Qingyang Li, Michael Milham, Chaogan Yan, and Pierre Bellec.
\newblock The neuro bureau preprocessing initiative: Open sharing of
  preprocessed neuroimaging data and derivatives.
\newblock \emph{Frontiers in Neuroinformatics}, 7, 2013{\natexlab{a}}.
\newblock \doi{10.3389/conf.fninf.2013.09.00041}.

\bibitem[Craddock et~al.(2013{\natexlab{b}})Craddock, Sikka, Cheung, Khanuja,
  Ghosh, Yan, Li, Lurie, Vogelstein, Burns, Colcombe, Mennes, Kelly,
  Di~Martino, Castellanos, and Milham]{Craddock2013cpac}
Cameron Craddock, Sharad Sikka, Brian Cheung, Ranjeet Khanuja, Satrajit~S.
  Ghosh, Chaogan Yan, Qingyang Li, Daniel Lurie, Joshua Vogelstein, Randal
  Burns, Stanley Colcombe, Maarten Mennes, Clare Kelly, Adriana Di~Martino,
  Francisco~Xavier Castellanos, and Michael Milham.
\newblock Towards automated analysis of connectomes: The {C}onfigurable
  {P}ipeline for the {A}nalysis of {C}onnectomes ({C-PAC}).
\newblock \emph{Frontiers in Neuroinformatics}, 7, 2013{\natexlab{b}}.
\newblock \doi{10.3389/conf.fninf.2013.09.00042}.

\bibitem[Craddock et~al.(2012)Craddock, James, Holtzheimer, Hu, and
  Mayberg]{Craddock2012}
R.~Cameron Craddock, G.~Andrew James, Paul~E. Holtzheimer, Xiaoping~P. Hu, and
  Helen~S. Mayberg.
\newblock A whole brain {fMRI} atlas generated via spatially constrained
  spectral clustering.
\newblock \emph{Human Brain Mapping}, 33\penalty0 (8):\penalty0 1914--1928,
  2012.
\newblock \doi{10.1002/hbm.21333}.

\bibitem[Di~Martino et~al.(2014)Di~Martino, Yan, Li, Denio, Castellanos,
  Alaerts, Anderson, Assaf, Bookheimer, Dapretto, et~al.]{di2014autism}
Adriana Di~Martino, Chao-Gan Yan, Qingyang Li, Erin Denio, Francisco~X
  Castellanos, Kaat Alaerts, Jeffrey~S Anderson, Michal Assaf, Susan~Y
  Bookheimer, Mirella Dapretto, et~al.
\newblock The autism brain imaging data exchange: towards a large-scale
  evaluation of the intrinsic brain architecture in autism.
\newblock \emph{Molecular psychiatry}, 19\penalty0 (6):\penalty0 659--667,
  2014.

\bibitem[Fasy et~al.(2014)Fasy, Lecci, Rinaldo, Wasserman, Balakrishnan, and
  Singh]{Fasy2017}
Brittany~Terese Fasy, Fabrizio Lecci, Alessandro Rinaldo, Larry Wasserman,
  Sivaraman Balakrishnan, and Aarti Singh.
\newblock Confidence sets for persistence diagrams.
\newblock \emph{Annals of Statistics}, 42\penalty0 (6):\penalty0 2301--2339,
  2014.
\newblock \doi{10.1214/14-AOS1252}.

\bibitem[Kusano et~al.(2016)Kusano, Hiraoka, and Fukumizu]{Kusano2016}
Genki Kusano, Yasuaki Hiraoka, and Kenji Fukumizu.
\newblock Persistence weighted {G}aussian kernel for topological data analysis.
\newblock In \emph{Proceedings of the 33rd International Conference on Machine
  Learning (ICML)}, pages 2004--2013, 2016.

\bibitem[Ledoux and Talagrand(2011)]{LedouxTalagrand2011}
Michel Ledoux and Michel Talagrand.
\newblock \emph{Probability in {B}anach Spaces: Isoperimetry and Processes}.
\newblock Classics in Mathematics. Springer, Berlin, 2011.
\newblock Reprint of the 1991 edition.

\bibitem[Majhi et~al.(2026{\natexlab{a}})Majhi, Mitra, Virk, and
  Bagchi]{PALACE}
Sushovan Majhi, Atish Mitra, {\v Z}iga Virk, and Pramita Bagchi.
\newblock A closed-form adaptive-landmark kernel for certified point-cloud and
  graph classification, 2026{\natexlab{a}}.
\newblock URL \url{https://arxiv.org/abs/2605.04046}.

\bibitem[Majhi et~al.(2026{\natexlab{b}})Majhi, Mitra, Virk, and Bagchi]{PLACE}
Sushovan Majhi, Atish Mitra, {\v Z}iga Virk, and Pramita Bagchi.
\newblock A closed-form persistence-landmark pipeline for certified point-cloud
  and graph classification.
\newblock \emph{Transactions on Machine Learning Research}, 2026{\natexlab{b}}.
\newblock ISSN 2835-8856.
\newblock URL \url{https://openreview.net/forum?id=4kZxNlE5Ve}.
\newblock arXiv:2605.02836.

\bibitem[Mileyko et~al.(2011)Mileyko, Mukherjee, and Harer]{Mileyko2011}
Yuriy Mileyko, Sayan Mukherjee, and John Harer.
\newblock Probability measures on the space of persistence diagrams.
\newblock \emph{Inverse Problems}, 27\penalty0 (12):\penalty0 124007, 2011.
\newblock \doi{10.1088/0266-5611/27/12/124007}.

\bibitem[Mitra and Virk(2021)]{Mitra2021}
Atish Mitra and {\v Z}iga Virk.
\newblock The space of persistence diagrams on $n$ points coarsely embeds into
  {H}ilbert space.
\newblock \emph{Proceedings of the American Mathematical Society}, 149\penalty0
  (6):\penalty0 2693--2703, 2021.
\newblock \doi{10.1090/proc/15363}.

\bibitem[Mitra and Virk(2024)]{Mitra2024}
Atish Mitra and {\v Z}iga Virk.
\newblock Geometric embeddings of spaces of persistence diagrams with explicit
  distortions.
\newblock arXiv:2401.05298, 2024.
\newblock URL \url{https://arxiv.org/abs/2401.05298}.

\bibitem[Murris et~al.(2026)Murris, Stolz, and Borgwardt]{STRAND}
Juliette Murris, Bernadette Stolz, and Karsten Borgwardt.
\newblock From persistence to survival: Hypothesis testing, effect sizes and
  vectorisation for topological features, 2026.
\newblock URL \url{https://arxiv.org/abs/2606.11911}.

\bibitem[Nielsen et~al.(2013)Nielsen, Zielinski, Fletcher, Alexander, Lange,
  Bigler, Lainhart, and Anderson]{Nielsen2013}
Jared~A. Nielsen, Brandon~A. Zielinski, P.~Thomas Fletcher, Andrew~L.
  Alexander, Nicholas Lange, Erin~D. Bigler, Janet~E. Lainhart, and Jeffrey~S.
  Anderson.
\newblock Multisite functional connectivity {MRI} classification of autism:
  {ABIDE} results.
\newblock \emph{Frontiers in Human Neuroscience}, 7:\penalty0 599, 2013.
\newblock \doi{10.3389/fnhum.2013.00599}.

\bibitem[Reininghaus et~al.(2015)Reininghaus, Huber, Bauer, and
  Kwitt]{Reininghaus2015}
Jan Reininghaus, Stefan Huber, Ulrich Bauer, and Roland Kwitt.
\newblock A stable multi-scale kernel for topological machine learning.
\newblock In \emph{Proceedings of the IEEE Conference on Computer Vision and
  Pattern Recognition (CVPR)}, pages 4741--4748, 2015.
\newblock \doi{10.1109/CVPR.2015.7299106}.

\bibitem[Turner et~al.(2014)Turner, Mileyko, Mukherjee, and Harer]{Turner2014}
Katharine Turner, Yuriy Mileyko, Sayan Mukherjee, and John Harer.
\newblock Fr{\'e}chet means for distributions of persistence diagrams.
\newblock \emph{Discrete \& Computational Geometry}, 52\penalty0 (1):\penalty0
  44--70, 2014.
\newblock \doi{10.1007/s00454-014-9604-7}.

\bibitem[Wasserman(2018)]{Wasserman2018}
Larry Wasserman.
\newblock Topological data analysis.
\newblock \emph{Annual Review of Statistics and Its Application}, 5:\penalty0
  501--532, 2018.
\newblock \doi{10.1146/annurev-statistics-031017-100045}.

\end{thebibliography}

\appendix

\section*{Appendix: Technical Details}
\addcontentsline{toc}{section}{Appendix: Proofs}

\section{Technical Details for Landmark Embeddings}
\label{app:embedding-technical}

This appendix separates general fixed-Hilbert inference from the additive
theory for PLACE and PALACE. It gives the padded transport construction, verifies the
construction-specific certificate domains, proves the additive population
results, and develops a separate whole-law population-transfer theorem for the
original Mitra--Virk map.

\subsection{Proof of Lemma~\ref{lem:Dn-polish}}
\label{app:Dn-polish}

Let
$$
C_L:=\{(b,d)\in[0,L]^2:b\leq d\}.
$$
Define $F:C_L^n\to\mathcal D_n$ by retaining the off-diagonal
coordinates as a multiset and discarding the diagonal coordinates.
The map is onto, since any diagram with fewer than $n$ points can
be represented by adding diagonal coordinates.

Matching corresponding off-diagonal coordinates and deleting the
remaining points gives
$$
d_B\{F(x_1,\ldots,x_n),F(y_1,\ldots,y_n)\}
\leq
\max_{1\leq i\leq n}\|x_i-y_i\|_\infty.
$$
Thus $F$ is continuous. Since $C_L^n$ is compact, its image
$\mathcal D_n$ is compact.

\subsection{Padded transport geometry}
\label{app:padded-transport}

Recall
$\mathbb H_L^{\varnothing}=\mathbb H_L\cup\{\varnothing\}$, where
$\varnothing$ represents the entire diagonal. For $x\in\mathbb H_L$, write
$\delta_{\mathsf{Diag}}(x)=\inf_{z\in\mathsf{Diag}}\|x-z\|_\infty$.
Define
\begin{equation}
\label{eq:absence-state-metric}
d_0(x,y)
:=
\begin{cases}
\displaystyle
\min\!\left\{
\|x-y\|_\infty,
\max\!\left\{
\delta_{\mathsf{Diag}}(x),
\delta_{\mathsf{Diag}}(y)
\right\}
\right\},
&x,y\in\mathbb H_L,\\[1.2ex]
\delta_{\mathsf{Diag}}(x),
&x\in\mathbb H_L,\ y=\varnothing,\\
\delta_{\mathsf{Diag}}(y),
&x=\varnothing,\ y\in\mathbb H_L,\\
0,&x=y=\varnothing.
\end{cases}
\end{equation}

\begin{lemma}[One-point bottleneck quotient]
\label{lem:one-point-quotient}
The function $d_0$ is the metric induced by the bottleneck distance on the
space consisting of the empty diagram and diagrams with one off-diagonal
point, after identifying all diagonal representatives with $\varnothing$.
Consequently, $d_0$ is a metric on
$\mathbb H_L^{\varnothing}$.
\end{lemma}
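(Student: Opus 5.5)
The plan is to compute the bottleneck distance directly on the subspace $\mathcal D_{\le 1}\subset\mathcal D_n$ consisting of the empty diagram $D_\varnothing$ and the one-point diagrams $D_x=\{x\}$, $x\in\mathbb H_L$. We identify $D_\varnothing$ with $\varnothing$ and $D_x$ with $x$. Points $(t,t)$ on the diagonal are not off-diagonal features, so every diagonal representative yields the empty diagram; this is the identification in the statement. Once we show $d_B(D_x,D_y)=d_0(x,y)$ for all $x,y\in\mathbb H_L^{\varnothing}$, the second claim follows immediately. The correspondence $x\mapsto D_x$ is a bijection onto $\mathcal D_{\le1}$, and $d_B$ is a metric on $\mathcal D_n$. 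A metric pulled back along an injective map is again a metric, so we never have to check the triangle inequality for $d_0$ by hand.

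The computation splits into cases according to Definition~\ref{def:bottleneck-distance}, with the empty maximum taken to be zero.

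\emph{Case $x=y=\varnothing$.} The only partial matching is empty and all three maxima are over empty sets, so $d_B=0$.

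\emph{Case $x\in\mathbb H_L$, $y=\varnothing$.} Again the only partial matching is empty, and $x$ is unmatched, so $d_B=\delta_{\mathsf{Diag}}(x)$. The case $x=\varnothing$, $y\in\mathbb H_L$ is symmetric.

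\emph{Case $x,y\in\mathbb H_L$.} There are exactly two partial matchings. The matching $\{(x,y)\}$ has cost $\|x-y\|_\infty$. The empty matching leaves both points unmatched and has cost $\max\{\delta_{\mathsf{Diag}}(x),\delta_{\mathsf{Diag}}(y)\}$. The infimum over these two is their minimum, which is exactly the first line of \eqref{eq:absence-state-metric}.

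Multiplicity is not an issue here, since each of these diagrams has at most one point.

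There is one subtle point: the paper calls $d_0$ the metric \emph{induced} on the quotient. One might worry that a quotient pseudometric would instead be defined by an infimum over chains. No chain infimum is needed, for two reasons. First, $\mathcal D_{\le1}$ is already a set of honest diagrams and $d_B$ restricted to it is a genuine metric. Second, the diagonal identification is not an extra quotient of $\mathcal D_n$; it only reflects that $\mathsf{Diag}$ contributes no off-diagonal points. Accordingly, the step I would write most carefully is the final deduction that $d_0$ is a metric on $\mathbb H_L^{\varnothing}$. Nonnegativity, symmetry and the triangle inequality all transfer from $d_B$.

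Positivity needs a separate check, because it is the only place where faithfulness of the correspondence enters. If $d_0(x,y)=0$ with $x,y\in\mathbb H_L$, then both terms in the minimum cannot be positive. Since $\delta_{\mathsf{Diag}}>0$ on $\mathbb H_L$ (because $b<d$), the diagonal term is positive, so we must have $\|x-y\|_\infty=0$, i.e.\ $x=y$. If instead $y=\varnothing$, then $d_0(x,\varnothing)=\delta_{\mathsf{Diag}}(x)$, and this is zero only when $x=\varnothing$.
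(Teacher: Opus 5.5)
Your proposal is correct and follows the paper's proof. Both compute the bottleneck distance on the empty and one-point diagrams case by case: a direct match versus sending both points to the diagonal, and the deletion cost against the empty diagram. Both then inherit the metric axioms from $d_B$. Your separate positivity check is redundant, since $x\mapsto D_x$ is injective and $d_B$ is a metric on $\mathcal D_n$, but it is harmless.
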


\begin{proof}
For two one-point diagrams, a bottleneck matching either pairs the two points
directly, at cost $\|x-y\|_\infty$, or sends both to the diagonal, at cost
$\max\{\delta_{\mathsf{Diag}}(x),
\delta_{\mathsf{Diag}}(y)\}$. Matching a one-point diagram to the empty
diagram costs its distance to the diagonal. These are exactly the cases in
\eqref{eq:absence-state-metric}. The metric properties follow from those of
the bottleneck distance after quotienting its zero-distance diagonal class.
\end{proof}

For finite nonnegative measures $\alpha$ and $\gamma$ of the same mass, define
\begin{equation}
\label{eq:diagonal-aware-transport}
W_{\infty,0}(\alpha,\gamma)
:=
\inf_{\pi\in\Gamma(\alpha,\gamma)}
\operatorname*{ess\,sup}_{(x,y)\sim\pi}d_0(x,y).
\end{equation}
Scaling both measures by the same positive constant leaves this distance
unchanged.

\begin{proposition}[Padded transport representation]
\label{prop:padded-transport-representation}
For all $D,E\in\mathcal D_n$,
\[
d_B(D,E)
=
W_{\infty,0}(\widetilde N_D,\widetilde N_E).
\]
\end{proposition}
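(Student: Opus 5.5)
We prove the two inequalities $W_{\infty,0}(\widetilde N_D,\widetilde N_E)\le d_B(D,E)$ and $d_B(D,E)\le W_{\infty,0}(\widetilde N_D,\widetilde N_E)$ separately, using throughout that both padded measures are integer-valued with total mass $n$. For the upper bound, take any partial matching $M$ between $D$ and $E$ and build a coupling $\pi_M$ of $\widetilde N_D$ and $\widetilde N_E$. Each matched pair $(x,y)\in M$ receives mass one at $(x,y)$. Pair the unmatched points of $D$ and of $E$ arbitrarily with one another; any points left over are paired with absence slots. If $|D\setminus\operatorname{dom}M|=a$ and $|E\setminus\operatorname{ran}M|=b$, there are $n-|D|=n-|M|-a$ absence slots on the $D$ side and $n-|M|-b$ on the $E$ side. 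The counts balance, and the remaining absence slots are paired with each other at cost $0$.

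Under this coupling, a matched pair costs $d_0(x,y)\le\|x-y\|_\infty$. An unmatched–unmatched pair costs $d_0(x,y)\le\max\{\delta_{\mathsf{Diag}}(x),\delta_{\mathsf{Diag}}(y)\}$, and an unmatched–absence pair costs exactly $\delta_{\mathsf{Diag}}$. So the essential supremum of $d_0$ under $\pi_M$ is at most the cost of $M$ in \eqref{eq:bottleneck-partial-matching}. Taking the infimum over $M$ gives the upper bound.

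For the reverse inequality, first reduce to integral couplings. Both marginals are finite sums of atoms with integer weights, so $\Gamma(\widetilde N_D,\widetilde N_E)$ is a transportation polytope on the finite supports. The essential supremum of $d_0$ depends only on the support of $\pi$. By Birkhoff–von Neumann (integrality of transportation polytopes), any coupling $\pi$ has a positive-weight integral coupling $\pi'$ with $\operatorname{supp}\pi'\subseteq\operatorname{supp}\pi$. One way to see this is to expand each atom of mass $k$ into $k$ unit slots: $\pi$ becomes $n$ times a doubly stochastic matrix, which is a convex combination of permutation matrices, and any permutation in that decomposition has support inside $\operatorname{supp}\pi$. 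The infimum defining $W_{\infty,0}$ is therefore attained, or approached, by bijections $\sigma$ between the $n$ slots of $D$ (points plus absences) and the $n$ slots of $E$.

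Given such a bijection $\sigma$ with $\max d_0\le c$, define the partial matching $M_\sigma$ to consist of those pairs $(x,\sigma(x))$ of off-diagonal points with $d_0(x,\sigma(x))=\|x-\sigma(x)\|_\infty$. Every other point is left unmatched. It is then either paired with an absence slot, at cost $\delta_{\mathsf{Diag}}(x)\le c$, or paired with a point $y$ where the minimum in $d_0$ is the deletion branch, so $\max\{\delta_{\mathsf{Diag}}(x),\delta_{\mathsf{Diag}}(y)\}\le c$. In every case the bottleneck cost of $M_\sigma$ is at most $c$, hence $d_B(D,E)\le W_{\infty,0}$.

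The main obstacle is the integrality step, since $W_{\infty,0}$ is defined over all couplings, including fractional ones. Because the cost is an $L^\infty$ (bottleneck) cost, only the support of $\pi$ matters, and the support-preserving Birkhoff argument above disposes of the fractional case cleanly. The remaining details are bookkeeping of absence-slot counts and the case split in $d_0$, which Lemma~\ref{lem:one-point-quotient} already organizes.
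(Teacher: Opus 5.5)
Your proof is correct and follows the paper's argument essentially step for step: you extend a partial matching to a coupling by pairing deleted points with each other or with absence slots, and for the converse you use the unit-slot expansion and the support-preserving Birkhoff--von Neumann decomposition to extract a bijection, keeping a pair as a direct match exactly when the direct branch of $d_0$ realizes its cost. (One harmless slip: once each atom-pair mass is split evenly over its slot pairs, the slot-level coupling has unit row and column sums, so it is itself doubly stochastic rather than $n$ times a doubly stochastic matrix; this does not matter, because only its support is used.)
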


\begin{proof}
A partial bottleneck matching of cost at most $r$ extends to a coupling of the
padded counting measures by retaining directly matched pairs, pairing points
sent to the diagonal either with each other or with copies of $\varnothing$,
and pairing unused copies of $\varnothing$ together. Equation
\eqref{eq:absence-state-metric} shows that every transport pair has cost at
most $r$. Hence
$W_{\infty,0}(\widetilde N_D,\widetilde N_E)\le d_B(D,E)$.

For the reverse inequality, label the $n$ unit-mass slots of each padded
counting measure, including repeated atoms and copies of $\varnothing$. A
coupling is then represented by an $n\times n$ doubly stochastic matrix after
normalizing by the unit slot masses. If its essential maximum cost is at most
$r$, every positive matrix entry joins slots at $d_0$-distance at most $r$.
By the Birkhoff--von Neumann theorem, the matrix is a convex combination of
permutation matrices, and the standard support-preserving decomposition uses
only positive entries of the original matrix. Choose any permutation in that
decomposition. It gives a bijection of the labeled slots with maximum
$d_0$-cost at most $r$. Retain a paired off-diagonal pair as a direct diagram
match when the direct branch of \eqref{eq:absence-state-metric} realizes its
cost; otherwise send both points to the diagonal. A pair involving
$\varnothing$ sends its off-diagonal member to the diagonal. The resulting
partial diagram matching has bottleneck cost at most $r$. Taking infima proves
the reverse inequality.
\end{proof}

The maps $D\mapsto\widetilde N_D$ and, under (E1),
$D\mapsto\Phi(D;\nu)$ are Borel measurable. The latter follows because the
point response is Borel and each diagram contains at most $n$ atoms.

\subsection{General fixed-Hilbert inference scope and additive-interface proofs}
\label{app:additive-interface-proofs}

Assumption~\ref{ass:fixed-hilbert-embedding} is deliberately modular. Borel
measurability and first moments give Bochner means; second moments give the
usual Hilbert-space central limit framework; and higher-moment or boundedness
conditions should be invoked only by results that use them. Finite-coordinate
and truncation results separately invoke
Assumption~\ref{ass:finite-approximation}. The embedding configuration $\nu$
is conditioned upon throughout. If it is fitted
on an independent reference sample, all statements apply conditionally on
that sample with the realized constants. Same-sample embedding-configuration selection is a
separate selection problem.

\begin{proof}[Proof of Proposition~\ref{prop:interface-consequences}]
By (E1), (E2), and
$\widetilde N_D(\mathbb H_L^{\varnothing})=n$,
\[
\|\Phi(D;\nu)\|_2
\le
\int\|\varphi_\nu^0(x)\|_2\,d\widetilde N_D(x)
\le n\kappa_\nu.
\]
For any coupling $\pi$ of $\widetilde N_D$ and $\widetilde N_E$,
\[
\Phi(D;\nu)-\Phi(E;\nu)
=
\int\{\varphi_\nu^0(x)-\varphi_\nu^0(y)\}\,d\pi(x,y),
\]
and hence
\[
\|\Phi(D;\nu)-\Phi(E;\nu)\|_2
\le
nL_\nu
\operatorname*{ess\,sup}_{(x,y)\sim\pi}d_0(x,y).
\]
Take the infimum over $\pi$ and use
Proposition~\ref{prop:padded-transport-representation}.
\end{proof}

\begin{proof}[Proof of Proposition~\ref{prop:mean-embedding-transfer}]
The bound $\|\Phi(X;\nu)\|_2\le n\kappa_\nu$ gives Bochner integrability.
Linearity and Fubini's theorem for the finite random measure yield
\[
\mathbb E_P\{\Phi(X;\nu)\}
=
\int\varphi_\nu^0(x)\,d\overline\mu_P(x)
=T_\nu[\overline\mu_P],
\]
and similarly for $Q$. Applying the preceding coupling argument to the two
mean measures, each of mass $n$, gives
\eqref{eq:population-upper-transfer}.
\end{proof}

\subsection{Construction-specific verification and certified scale sets}
\label{app:construction-verification}

The constructions do not share a single numerical floor, scale domain, or
certified pair class. Assumption~\ref{ass:diagram-distortion-floor} therefore
uses $\mathcal I_\nu^\Psi$, $\mathcal C_{\nu,t}^{\Psi}$, and the valid
threshold floor $\rho_{-,\nu}^{\Psi}(t)$. The statements below distinguish a
raw actual-separation function from the threshold floor required by Assumption \ref{ass:diagram-distortion-floor}.

\paragraph{Mitra--Virk: infinite-dimensional maps.}
Fix maximal diagram cardinality $n$. In
\cite[Theorem~4.3]{Mitra2024}, let $R_k\uparrow\infty$,
$\widetilde R_k\downarrow0$, and let $(w_k)_{k\geq1}$ be a positive unit vector
with $w_kR_k\to\infty$. With
$c_n=(3\,2^{n+3})^{-1}$, the theorem constructs a normalized coarse map
$\Psi_{\nu}^{\mathrm c}$ and a normalized uniform map
$\Psi_{\nu}^{\mathrm u}$, each $1$-Lipschitz, with displayed raw step floors
\begin{align}
\rho_{\mathrm{raw},\nu}^{\mathrm c}(u)
&=
 c_n\sum_{k\geq1}w_kR_k\,
 \mathbf 1_{[R_k,R_{k+1})}(u),
\label{eq:mv-raw-coarse-floor}\\
\rho_{\mathrm{raw},\nu}^{\mathrm u}(u)
&=
 c_n\sum_{k\geq1}w_k\widetilde R_k\,
 \mathbf 1_{[\widetilde R_k,\widetilde R_{k-1})}(u),
\qquad \widetilde R_0=\infty.
\label{eq:mv-raw-uniform-floor}
\end{align}
The normalized direct sum
$\Psi_{\nu}^{\mathrm{cu}}=2^{-1/2}
(\Psi_{\nu}^{\mathrm c},\Psi_{\nu}^{\mathrm u})$ is also $1$-Lipschitz and has
raw floor
\[
\rho_{\mathrm{raw},\nu}^{\mathrm{cu}}(u)
=
2^{-1/2}
\left(
(\rho_{\mathrm{raw},\nu}^{\mathrm c}(u))^2
+
(\rho_{\mathrm{raw},\nu}^{\mathrm u}(u))^2
\right)^{1/2}.
\]
The paper calls these distortion functions, but the displayed step heights
need not be monotone for an arbitrary admissible weight sequence. For Assumption \ref{ass:diagram-distortion-floor} we
therefore use
\begin{equation}
\label{eq:mv-threshold-floor}
\rho_{-,\nu}^{\mathrm{MV},\mathrm{cu}}(t)
:=
\inf_{u\geq t}
\rho_{\mathrm{raw},\nu}^{\mathrm{cu}}(u),
\qquad t>0.
\end{equation}
The hypotheses $w_kR_k\to\infty$ and positivity of the uniform step heights
make this infimum positive for each $t>0$. Thus
$\mathcal I_\nu^{\mathrm{MV},\mathrm{cu}}=(0,\infty)$ and
$\mathcal C_{\nu,t}^{\mathrm{MV},\mathrm{cu}}
=\mathcal D_n\times\mathcal D_n$. No PLACE/PALACE-style pair coherence is
required. If a chosen sequence makes the displayed raw floor nondecreasing,
it may be used directly.

The $1$-Lipschitz constant applies to each normalized component and to the
normalized direct sum in Theorem~4.3. An unnormalized direct sum
$(\Psi_1,\ldots,\Psi_J)$ with component constants $L_1,\ldots,L_J$ has
Lipschitz constant at most $(\sum_{j=1}^JL_j^2)^{1/2}$ under the Hilbert direct
sum norm; the constant is not inherited componentwise without this
normalization.

\paragraph{Mitra--Virk: finite-dimensional bounded-domain map.}
On $\mathcal D_n\cap[0,L]^2$, 
\cite[Theorem~5.1]{Mitra2024} fixes scales
$0<R_1<\cdots<R_N\leq L$ and a unit weight vector and constructs a
$1$-Lipschitz map into
$\mathbb R^{\nu_1+\cdots+\nu_N}$. Its step guarantee is: if
$d_B(D,E)\geq R_i$, then
\[
\|\Psi_\nu^{\mathrm{MV},\mathrm{fin}}(D)
-
\Psi_\nu^{\mathrm{MV},\mathrm{fin}}(E)\|_2
\geq
c_n
\left(\sum_{k=1}^{i}w_k^2R_k^2\right)^{1/2}.
\]
The same theorem gives the nondecreasing affine threshold floor
\begin{equation}
\label{eq:mv-finite-affine-floor}
\rho_{-,\nu}^{\mathrm{MV},\mathrm{fin}}(t)
=
\lambda_\nu^{\mathrm{MV}}(t-R_1),
\qquad R_1<t\leq L,
\end{equation}
where
\begin{align*}
\lambda_\nu^{\mathrm{MV}}
:=c_n\min\Bigg\{
&\min_{2\leq i\leq N}
\frac{(\sum_{k=1}^{i-1}w_k^2R_k^2)^{1/2}}{R_i-R_1},\\
&\frac{(\sum_{k=1}^{N-1}w_k^2R_k^2+w_N^2L^2)^{1/2}}{L-R_1}
\Bigg\}.
\end{align*}
It is zero on $[0,R_1]$. Hence
$\mathcal I_\nu^{\mathrm{MV},\mathrm{fin}}=(R_1,L]$ and the certified pair
class is the full bounded-domain pair space. Lemma~3.12 gives cover
multiplicity $4^n$, and Example~5.2 gives an explicit map with $N4^n$
coordinates. These are construction dimensions, not statistical sample sizes.

\paragraph{PLACE: step certificate.}
For the normalized multiscale pooled map of \cite{PLACE},
Proposition~2.1(a) gives
\[
\|\Phi_\nu^{\mathrm{PLACE}}(D)
-
\Phi_\nu^{\mathrm{PLACE}}(E)\|_2
\leq n\,d_B(D,E)
\]
on diagrams with at most $n$ retained points. Definition~2.1 calls scale $k$
active for a pair when $3R_k\leq d_B(D,E)$ and defines $\nu$-coherence by the
per-block condition
$\|\Phi_{R_k}(D)-\Phi_{R_k}(E)\|_2^2\geq R_k^2/32$ at every active scale.
Proposition~2.1(b) states that, when $d_B(D,E)\geq3R_1$ and the pair is
$\nu$-coherent,
\[
\|\Phi_\nu^{\mathrm{PLACE}}(D)
-
\Phi_\nu^{\mathrm{PLACE}}(E)\|_2
\geq
\frac1{16}
\left(
\sum_{k:3R_k\leq d_B(D,E)}w_k^2R_k^2
\right)^{1/2}.
\]
The right side is nondecreasing in the actual separation. Therefore the Assumption \ref{ass:diagram-distortion-floor}
threshold floor for the step statement is
\begin{equation}
\label{eq:place-step-threshold-floor}
\rho_{-,\nu}^{\mathrm{PLACE},\mathrm{step}}(t)
:=
\frac1{16}
\left(
\sum_{k:3R_k\leq t}w_k^2R_k^2
\right)^{1/2},
\qquad 3R_1\leq t\leq L,
\end{equation}
with $\mathcal C_{\nu,t}^{\mathrm{PLACE},\mathrm{step}}$ consisting of pairs
that are $\nu$-coherent at every scale active for that pair. This step
statement has its own lower endpoint $3R_1$.

\paragraph{PLACE: affine certificate.}
Corollary~2.1 of \cite{PLACE} is a separate statement. For
$R_1\leq d_B(D,E)\leq L$ and pairs $\nu$-coherent at every active scale, it
gives
\begin{equation}
\label{eq:place-affine-threshold-floor}
\|\Phi_\nu^{\mathrm{PLACE}}(D)-
\Phi_\nu^{\mathrm{PLACE}}(E)\|_2
\geq
\lambda(\nu)\{d_B(D,E)-R_1\},
\end{equation}
where
\[
\lambda(\nu)
=
\frac1{48}
\min\left\{
\min_{2\leq i\leq N}
\frac{(\sum_{k=1}^{i-1}w_k^2R_k^2)^{1/2}}{R_i-R_1},
\frac{(\sum_{k=1}^{N}w_k^2R_k^2)^{1/2}}{L-R_1}
\right\}.
\]
Thus the affine Assumption \ref{ass:diagram-distortion-floor} scale set is $(R_1,L]$ and its nondecreasing threshold floor
is
$\rho_{-,\nu}^{\mathrm{PLACE},\mathrm{aff}}(t)
=\lambda(\nu)(t-R_1)$. The condition $d_B\geq3R_1$ belongs to
Proposition~2.1(b), not to the affine corollary; the corollary's proof handles
$[R_1,3R_1)$ through the bounded-domain Mitra--Virk argument.

The PLACE coordinates are sums of point responses, so the raw map is additive.
Under the paper's block normalization, the singleton map is $1$-Lipschitz and
the diagram map has constant $n$. Its codomain has
$\sum_{k=1}^N|G_{R_k}^+|$ coordinates and is finite-dimensional.

\paragraph{PALACE.}
Let $\nu=\{(p_k,r_k,w_k)\}_{k=1}^{J_\nu}$ be the fixed embedding configuration of
\cite{PALACE}, with $\sum_kw_k^2=1$, and let
$\mathcal R\subseteq\mathcal D_1\cap[0,L]^2$ be the covered data support.
Definition~2.2 defines the realized Lebesgue number
\[
\lambda_0(\nu)
=
\inf_{x\in\mathcal R}
\max_k\varphi_{p_k,r_k}(x)
\]
and calls the configuration $\tau$-admissible when the landmark balls cover
$\mathcal R$,
$\lambda_0(\nu)\geq\tau/4$, and
$\max_kr_k\leq\{\tau+\lambda_0(\nu)\}/2$.
Definition~2.3 requires either a one-point diagram or an optimal matching
$\sigma$ for which
\[
\min_{i\neq j}d_B(a_i,b_{\sigma(j)})
>
3d_B(D,E).
\]
Theorem~2.1 then states that, if all points of $D$ and $E$ lie in
$\mathcal R$, $d_B(D,E)\geq\tau$, and non-interference holds,
\begin{equation}
\label{eq:palace-threshold-floor}
\|\Phi_\nu^{\mathrm{PALACE}}(D)-
\Phi_\nu^{\mathrm{PALACE}}(E)\|_2
\geq
\rho_{-,\nu}^{\mathrm{PALACE}}(\tau)
:=
\frac{\tau}{4}
\min_{k:r_k\geq\tau/4}w_k.
\end{equation}
Thus equality in \eqref{eq:palace-threshold-floor} defines the theorem's
stated certificate; it is not an assertion that the observed embedding
distance equals that value. The proof gives the stronger realized witness
$\lambda_0(\nu)\min_{k:r_k\geq\tau/4}w_k$, and then uses
$\lambda_0(\nu)\geq\tau/4$ to obtain the displayed certificate.

Accordingly,
\[
\mathcal I_\nu^{\mathrm{PALACE}}
=
\{\tau>0:\nu\text{ is $\tau$-admissible}\},
\]
and $\mathcal C_{\nu,\tau}^{\mathrm{PALACE}}$ contains pairs supported in
$\mathcal R$ that satisfy Definition~2.3. Theorem~2.1 is already a threshold
statement at the target $\tau$; no monotonicity of
$\tau\mapsto\rho_{-,\nu}^{\mathrm{PALACE}}(\tau)$ is needed to use it in Assumption \ref{ass:diagram-distortion-floor}.
Definition~2.1 and equation~(2.5) give a finite-dimensional additive map into
$\mathbb R^{J_\nu}$ with diagram-level upper constant $n$ (and singleton constant
$1$) under the paper's weight normalization.

\begin{remark}[Rescaling]
Multiplying an embedding by a positive scalar multiplies its upper Lipschitz
constant, threshold floor, and finite-response signal by the same scalar.
Consequently, ratios such as
$\rho_{-,\nu}^{\Psi}(t)/L_\nu^\Psi$ are invariant under global rescaling.
\end{remark}

\subsection{Rare-mass obstruction}
\label{app:population-transfer-proofs}

\begin{proof}[Verification of Example~\ref{ex:pop-obstruction}]
The padded mean measures are
\[
\overline\mu_{P_\varepsilon}
=\varepsilon\delta_z+(n-\varepsilon)\delta_\varnothing,
\qquad
\overline\mu_Q=n\delta_\varnothing.
\]
Their common mass couples at zero cost, and the remaining $\varepsilon$ units
move between $z$ and $\varnothing$ at cost
$d_0(z,\varnothing)=\tau$. Thus
$W_{\infty,0}(\overline\mu_{P_\varepsilon},\overline\mu_Q)=\tau$.
Linearity gives
\[
\mathbb E_{P_\varepsilon}\Phi(X;\nu)
-
\mathbb E_Q\Phi(Y;\nu)
=
\varepsilon\{\Phi(D_z;\nu)-\Phi(D_\varnothing;\nu)\},
\]
which proves the claimed convergence of $\Delta_\nu$.
\end{proof}

\subsection{Template--thinning--perturbation proofs and secondary corollaries}
\label{app:template-transfer-proofs}

\paragraph{Mean-measure derivation for the template--thinning--perturbation model.}
Under Assumption~\ref{def:template-thinning-perturbation}, let $Z_{rj}$ be defined
arbitrarily on $\{I_{rj}=0\}$, with
$\Pr(I_{rj}=1)=p_{rj}$ and
$\mathcal L(Z_{rj}\mid I_{rj}=1)=G_{rj}$. No product structure is assumed for
$\{(I_{rj},Z_{rj})\}_{j=1}^{m_r}$. For every bounded Borel function $f$,
\begin{align*}
\mathbb E\int f\,dN_{X_r}
&=
\sum_{j=1}^{m_r}\mathbb E\{I_{rj}f(Z_{rj})\}\\
&=
\sum_{j=1}^{m_r}p_{rj}
\int f(x)\,dG_{rj}(x).
\end{align*}
Hence, as finite measures,
\[
\mathbb E N_{X_r}
=
\sum_{j=1}^{m_r}p_{rj}G_{rj}.
\]
Since
$\widetilde N_{X_r}=N_{X_r}+(n-\sum_jI_{rj})\delta_\varnothing$,
linearity also gives
\[
\overline\mu_r
=
\sum_{j=1}^{m_r}p_{rj}G_{rj}
+
\left(n-\sum_{j=1}^{m_r}p_{rj}\right)\delta_\varnothing.
\]
This derivation uses only the featurewise marginals. Repeated latent template
locations remain distinct labels in the generating model and are aggregated
only when the union-support masses in \eqref{eq:union-masses} are formed.

\begin{lemma}[Tail-to-first-moment conversion]
\label{lem:tail-mean}
Under Assumption~\ref{ass:template-perturbation-tails},
$e_{rj}\le A_0\sigma_{rj}\sqrt{\pi/2}$.
\end{lemma}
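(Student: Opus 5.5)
The argument is the layer-cake formula for the nonnegative random variable $W:=d_0(X,z_{rj})$ with $X\sim G_{rj}$, followed by the sub-Gaussian tail bound of Assumption~\ref{ass:template-perturbation-tails} and a Gaussian integral. First I check that $W$ is a well-defined, nonnegative, bounded Borel random variable. By Lemma~\ref{lem:one-point-quotient}, $d_0$ is a metric on $\mathbb H_L^{\varnothing}$, so $x\mapsto d_0(x,z_{rj})$ is continuous on $\mathbb H_L$. It is also bounded by $\delta_{\mathsf{Diag}}(z_{rj})\le L/2$, using the deletion branch together with $\delta_{\mathsf{Diag}}\le L/2$ on $\mathbb H_L$. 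Hence $e_{rj}=\mathbb E W$ is finite and Tonelli's theorem applies. Then
\[
e_{rj}=\int_0^\infty G_{rj}\{x:d_0(x,z_{rj})>t\}\,dt .
\]

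Next I bound the integrand pointwise by $A_0\exp\{-t^2/(2\sigma_{rj}^2)\}$ and integrate:
\[
\int_0^\infty e^{-t^2/(2\sigma^2)}\,dt=\sigma\sqrt{\pi/2}.
\]
This gives $e_{rj}\le A_0\sigma_{rj}\sqrt{\pi/2}$. The condition $A_0>1$ is not used. A sharper bound $\int_0^\infty\min\{1,A_0e^{-t^2/2\sigma^2}\}\,dt$ is available, but the stated inequality only needs the cruder one.

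The only point that needs care is the degenerate case $\sigma_{rj}=0$, if it is allowed. There I read the tail condition with the convention $\exp(-t^2/0)=0$ for $t>0$. Then $W=0$ almost surely, $e_{rj}=0$, and the bound holds trivially. Otherwise the proof is routine, and the main step is justifying the layer-cake identity via Tonelli for the bounded measurable $W$.
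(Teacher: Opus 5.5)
Your argument is correct and is essentially the paper's proof: the layer-cake identity for $d_0(X,z_{rj})$ with $X\sim G_{rj}$, the pointwise sub-Gaussian tail bound, and the Gaussian integral $\int_0^\infty e^{-t^2/(2\sigma^2)}\,dt=\sigma\sqrt{\pi/2}$. One harmless slip: the deletion branch gives $d_0(x,z_{rj})\le\max\{\delta_{\mathsf{Diag}}(x),\delta_{\mathsf{Diag}}(z_{rj})\}\le L/2$, not $d_0(x,z_{rj})\le\delta_{\mathsf{Diag}}(z_{rj})$. Boundedness is not needed anyway, since the layer-cake formula holds via Tonelli for any nonnegative measurable variable.
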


\begin{proof}
For $Z=d_0(X,z_{rj})\ge0$ with $X\sim G_{rj}$, the layer-cake identity gives
\[
e_{rj}
=
\int_0^\infty\Pr(Z>t)\,dt
\le
A_0\int_0^\infty
\exp\!\left(-\frac{t^2}{2\sigma_{rj}^2}\right)dt
=
A_0\sigma_{rj}\sqrt{\frac\pi2}.
\]
\end{proof}

\begin{proof}[Proof of Lemma~\ref{lem:center-approximation}]
Set
$S_r=\sum_{j=1}^{m_r}p_{rj}\varphi_\nu^0(z_{rj})$. Then
\[
T_\nu[\overline\mu_r]-S_r
=
\sum_{j=1}^{m_r}p_{rj}
\int\{\varphi_\nu^0(x)-\varphi_\nu^0(z_{rj})\}\,dG_{rj}(x),
\]
so (E2) implies
\[
\|T_\nu[\overline\mu_r]-S_r\|_2
\le
L_\nu\sum_{j=1}^{m_r}p_{rj}e_{rj}.
\]
Add and subtract $S_P-S_Q$ and apply the reverse triangle inequality to obtain
\eqref{eq:shared-perturbation-bound-general}. Lemma~\ref{lem:tail-mean}
gives \eqref{eq:shared-perturbation-bound}.
\end{proof}

\paragraph{Unweighted template-count identity.}
By the definition of $q$ and aggregation over multiplicities,
\begin{align*}
V_{P,Q}q
&=
\sum_{i=1}^M
\left\{
\#\{j:z_{Pj}=x_i\}
-
\#\{\ell:z_{Q\ell}=x_i\}
\right\}
\varphi_\nu^0(x_i)\\
&=
\sum_{j=1}^{m_P}\varphi_\nu^0(z_{Pj})
-
\sum_{\ell=1}^{m_Q}\varphi_\nu^0(z_{Q\ell})\\
&=
\Phi(D_P^\star;\nu)-\Phi(D_Q^\star;\nu).
\end{align*}
The final equality uses additivity and
$\varphi_\nu^0(\varnothing)=0$, so different template cardinalities require
no additional absence coordinate. This proves
\eqref{eq:unweighted-template-response}.

\begin{proposition}[Direct heterogeneous response bound]
\label{prop:direct-heterogeneous-response-bound}
Under Assumption~\ref{ass:additive-interface}, the model in
Assumption~\ref{def:template-thinning-perturbation}, and
Assumption~\ref{ass:template-perturbation-tails}, let $c$ and $V_{P,Q}$ be
defined by \eqref{eq:union-masses} and \eqref{eq:response-matrix}. Then,
without any nonzero or Assumption \ref{ass:diagram-distortion-floor} condition,
\begin{align}
\Delta_\nu(P,Q)
\geq
\Bigg[
&\|V_{P,Q}c\|_2\notag\\
&-
A_0L_\nu\sqrt{\frac\pi2}
\left\{
\sum_jp_{Pj}\sigma_{Pj}
+
\sum_\ell p_{Q\ell}\sigma_{Q\ell}
\right\}
\Bigg]_+.
\label{eq:heterogeneous-direct-response-bound}
\end{align}
\end{proposition}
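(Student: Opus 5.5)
The bound should follow from Lemma~\ref{lem:center-approximation} once the weighted center response is rewritten through the union-support response operator. The first display of that lemma, \eqref{eq:shared-perturbation-bound-general}, holds under (E1)--(E2) and the model in Assumption~\ref{def:template-thinning-perturbation} alone. Its second display, \eqref{eq:shared-perturbation-bound}, already contains the perturbation penalty $A_0L_\nu\sqrt{\pi/2}\{\sum_jp_{Pj}\sigma_{Pj}+\sum_\ell p_{Q\ell}\sigma_{Q\ell}\}$ that appears in \eqref{eq:heterogeneous-direct-response-bound}; it is obtained from the first display via Lemma~\ref{lem:tail-mean}. So the only remaining task is to identify the structural term
\[
\Bigl\|\sum_{j}p_{Pj}\varphi_\nu^0(z_{Pj})-\sum_{\ell}p_{Q\ell}\varphi_\nu^0(z_{Q\ell})\Bigr\|_2
\]
with $\|V_{P,Q}c\|_2$.

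For this identification I will enumerate the distinct locations $\{x_1,\ldots,x_M\}$ of $D_P^\star\cup D_Q^\star$ as in \eqref{eq:union-support}. Each latent index $j$ of population $P$ satisfies $z_{Pj}=x_i$ for exactly one $i$, and the same holds for population $Q$. Grouping the finite sums by location gives
\[
\sum_jp_{Pj}\varphi_\nu^0(z_{Pj})-\sum_\ell p_{Q\ell}\varphi_\nu^0(z_{Q\ell})
=\sum_{i=1}^M\Bigl(\sum_{j:z_{Pj}=x_i}p_{Pj}-\sum_{\ell:z_{Q\ell}=x_i}p_{Q\ell}\Bigr)\varphi_\nu^0(x_i)=V_{P,Q}c.
\]
This uses only \eqref{eq:union-masses} and \eqref{eq:response-matrix}, and it is exactly the computation behind \eqref{eq:heterogeneous-noiseless-response}. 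The argument mirrors the unweighted template-count identity proved just above, with the counts replaced by prevalence masses.

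Substituting this identity into \eqref{eq:shared-perturbation-bound} gives \eqref{eq:heterogeneous-direct-response-bound}. The positive part is inherited from the lemma.

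The statement says the bound holds ``without any nonzero or Assumption~\ref{ass:diagram-distortion-floor} condition.'' I will check this explicitly. None of the steps above uses $c\neq0$, $q\neq0$, restricted gains, or certification. If $V_{P,Q}c=0$, the bound reduces to the trivial statement $\Delta_\nu\ge0$.

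I expect no real obstacle here. The only care point is bookkeeping for repeated template locations within one population, and for locations shared across the two populations. The regrouping must aggregate masses over all latent labels mapping to the same $x_i$; the definition of $c_i$ does exactly this, so the identity holds with multiplicities retained.
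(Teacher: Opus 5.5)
Your proposal is correct and follows the paper's own proof: regroup the weighted template sums over the distinct union support to obtain $S_P-S_Q=V_{P,Q}c$, then substitute this into the tail form \eqref{eq:shared-perturbation-bound} of Lemma~\ref{lem:center-approximation}. The paper makes the same observation that nothing in the argument requires $c\neq0$, $V_{P,Q}c\neq0$, or the lower-distortion certificate.
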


\begin{proof}
Aggregation over the distinct union support gives
$S_P-S_Q=V_{P,Q}c$ by \eqref{eq:heterogeneous-noiseless-response}.
Substitution in \eqref{eq:shared-perturbation-bound} proves the result. The
argument remains valid when $c=0$ or $V_{P,Q}c=0$ and does not invoke Assumption \ref{ass:diagram-distortion-floor}.
\end{proof}

\begin{lemma}[Transfer of the Assumption \ref{ass:diagram-distortion-floor} floor to the weighted response]
\label{lem:heterogeneous-e4-transfer}
Under Assumption~\ref{ass:prevalence-identifiability-regimes}(ii),
\begin{equation}
\label{eq:heterogeneous-e4-signal-transfer}
\|V_{P,Q}c\|_2
\geq
\eta_{P,Q}\rho_-(s;\nu).
\end{equation}
\end{lemma}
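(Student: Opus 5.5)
The plan is to compare $V_{P,Q}c$ and $V_{P,Q}q$ inside the restricted subspace $\mathcal S_{P,Q}=\operatorname{span}\{c,q\}$. Both vectors lie in $\mathcal S_{P,Q}$, so the restricted gains of Assumption~\ref{ass:prevalence-identifiability-regimes}(ii) apply to each of them.

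First, since $c\in\mathcal S_{P,Q}$ and $c\neq0$, applying \eqref{eq:pair-restricted-min-gain} to the unit vector $c/\|c\|_2$ gives
\[
\|V_{P,Q}c\|_2\geq\sigma_{\min}(V_{P,Q};\mathcal S_{P,Q})\,\|c\|_2 .
\]
Second, since $q\in\mathcal S_{P,Q}$ and $q\neq0$, applying \eqref{eq:pair-restricted-max-gain} to $q/\|q\|_2$ gives
\[
\|V_{P,Q}q\|_2\leq\sigma_{\max}(V_{P,Q};\mathcal S_{P,Q})\,\|q\|_2 .
\]
Third, identity \eqref{eq:unweighted-template-response}, proved above, gives $V_{P,Q}q=\Phi(D_P^\star;\nu)-\Phi(D_Q^\star;\nu)$. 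The pair is certified, $(D_P^\star,D_Q^\star)\in\mathcal C_{\nu,s}$ with $s\in\mathcal I_\nu$, and $d_B(D_P^\star,D_Q^\star)=s$. The additive instance of Assumption~\ref{ass:diagram-distortion-floor}, applied with $r=s$, therefore gives $\|V_{P,Q}q\|_2\geq\rho_-(s;\nu)$.

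Combining the three steps, and using $\sigma_{\max}\geq\sigma_{\min}>0$ so that division is legitimate,
\[
\|V_{P,Q}c\|_2\geq\sigma_{\min}\|c\|_2
=\frac{\|c\|_2}{\operatorname{cond}(V_{P,Q};\mathcal S_{P,Q})}\cdot\frac{\sigma_{\max}\|q\|_2}{\|q\|_2}
\geq\frac{\|c\|_2}{\operatorname{cond}(V_{P,Q};\mathcal S_{P,Q})\|q\|_2}\,\|V_{P,Q}q\|_2 ,
\]
which equals $\eta_{P,Q}\|V_{P,Q}q\|_2$ and is at least $\eta_{P,Q}\rho_-(s;\nu)$. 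Here $\sigma_{\min}$ and $\sigma_{\max}$ abbreviate the restricted gains on $\mathcal S_{P,Q}$.

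The only delicate step is the third: checking that the certificate is invoked at exactly the template pair and scale $s$. The definition of $s$ as the bottleneck distance between the templates makes the threshold hypothesis $d_B\geq s$ hold with equality. One also needs $s>R_0(\nu)$, which is part of the admissibility condition $s\in\mathcal I_\nu$. The case $\mathcal S_{P,Q}$ one-dimensional ($c\parallel q$) needs no separate treatment, since the argument never uses the dimension of $\mathcal S_{P,Q}$.
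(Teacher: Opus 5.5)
Your proposal is correct and follows essentially the same route as the paper: both bound $\|V_{P,Q}c\|_2$ below by $\sigma_{\min}\|c\|_2$, write $\sigma_{\min}=\sigma_{\max}/\operatorname{cond}(V_{P,Q};\mathcal S_{P,Q})$, and use $\|V_{P,Q}q\|_2\le\sigma_{\max}\|q\|_2$. Both then finish with identity \eqref{eq:unweighted-template-response} and the certificate at scale $s$ to get $\|V_{P,Q}q\|_2\ge\rho_-(s;\nu)$. Your explicit check that the certificate applies at the templates' own scale $s$ is a small addition that the paper leaves implicit.
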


\begin{proof}
Because $c,q\in\mathcal S_{P,Q}$ and the restricted minimum gain is positive,
\begin{align*}
\|V_{P,Q}c\|_2
&\geq
\sigma_{\min}(V_{P,Q};\mathcal S_{P,Q})\|c\|_2\\
&=
\frac{\sigma_{\max}(V_{P,Q};\mathcal S_{P,Q})}
{\operatorname{cond}(V_{P,Q};\mathcal S_{P,Q})}\|c\|_2\\
&\geq
\frac{\|V_{P,Q}q\|_2}
{\operatorname{cond}(V_{P,Q};\mathcal S_{P,Q})\|q\|_2}\|c\|_2.
\end{align*}
The third line uses
$\|V_{P,Q}q\|_2\leq
\sigma_{\max}(V_{P,Q};\mathcal S_{P,Q})\|q\|_2$.
By \eqref{eq:unweighted-template-response} and the Assumption \ref{ass:diagram-distortion-floor} certification in
Assumption~\ref{ass:prevalence-identifiability-regimes}(ii),
$\|V_{P,Q}q\|_2\geq\rho_-(s;\nu)$. Substitution of
\eqref{eq:geometric-retention-factor} proves
\eqref{eq:heterogeneous-e4-signal-transfer}.
\end{proof}

\begin{corollary}[Assumption \ref{ass:diagram-distortion-floor} transfer under heterogeneous prevalence]
\label{cor:heterogeneous-e4-transfer}
Under the assumptions of
Theorem~\ref{thm:population-lower-bound}(ii),
\begin{align}
\Delta_\nu(P,Q)
\geq
\Bigg[
&\eta_{P,Q}\rho_-(s;\nu)\notag\\
&-
A_0L_\nu\sqrt{\frac\pi2}
\left\{
\sum_jp_{Pj}\sigma_{Pj}
+
\sum_\ell p_{Q\ell}\sigma_{Q\ell}
\right\}
\Bigg]_+.
\label{eq:heterogeneous-e4-transfer-bound}
\end{align}
Moreover,
$\eta_{P,Q}\rho_-(s;\nu)\leq\|V_{P,Q}c\|_2$, so
Proposition~\ref{prop:direct-heterogeneous-response-bound} is at least as
strong numerically.
\end{corollary}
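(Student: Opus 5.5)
The corollary follows by chaining two results already in hand: Proposition~\ref{prop:direct-heterogeneous-response-bound} and Lemma~\ref{lem:heterogeneous-e4-transfer}. I would first recall that Theorem~\ref{thm:population-lower-bound}(ii) assumes Assumption~\ref{ass:additive-interface}, the template model, Assumption~\ref{ass:template-perturbation-tails}, and Assumption~\ref{ass:prevalence-identifiability-regimes}(ii). The first three are exactly the hypotheses of Proposition~\ref{prop:direct-heterogeneous-response-bound}. That proposition therefore gives
\[
\Delta_\nu(P,Q)\geq\bigl[\|V_{P,Q}c\|_2-\mathcal E_{\mathrm{het}}(P,Q)\bigr]_+ ,
\]
where the penalty is the displayed sum $A_0L_\nu\sqrt{\pi/2}\{\sum_jp_{Pj}\sigma_{Pj}+\sum_\ell p_{Q\ell}\sigma_{Q\ell}\}$.

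Next, I would invoke Lemma~\ref{lem:heterogeneous-e4-transfer}, which uses the fourth hypothesis, Assumption~\ref{ass:prevalence-identifiability-regimes}(ii). It yields $\|V_{P,Q}c\|_2\geq\eta_{P,Q}\rho_-(s;\nu)$. This lemma already contains the only substantive step: sandwiching $\|V_{P,Q}c\|_2$ between $\sigma_{\min}\|c\|_2$ and $\|V_{P,Q}q\|_2/(\operatorname{cond}\,\|q\|_2)$, and then applying the certificate of Assumption~\ref{ass:diagram-distortion-floor} to the certified template pair through the identity \eqref{eq:unweighted-template-response}.

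The map $a\mapsto[a-\mathcal E]_+$ is nondecreasing in $a$. Replacing $\|V_{P,Q}c\|_2$ by the smaller quantity $\eta_{P,Q}\rho_-(s;\nu)$ therefore preserves the lower bound, which proves \eqref{eq:heterogeneous-e4-transfer-bound}. The "moreover" statement is just the inequality of Lemma~\ref{lem:heterogeneous-e4-transfer} restated. By the same monotonicity, the bound of Proposition~\ref{prop:direct-heterogeneous-response-bound} is pointwise at least as large as \eqref{eq:heterogeneous-e4-transfer-bound}.

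No step is a real obstacle, and nothing new needs to be constructed. The only point needing care is checking that the hypotheses of the two results are met inside Theorem~\ref{thm:population-lower-bound}(ii). In particular, the lemma needs $s\in\mathcal I_\nu$ and $(D_P^\star,D_Q^\star)\in\mathcal C_{\nu,s}$, so that $\|V_{P,Q}q\|_2\geq\rho_-(s;\nu)$ applies at scale $s$ itself. Both conditions are built into Assumption~\ref{ass:prevalence-identifiability-regimes}(ii).
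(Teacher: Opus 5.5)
Your proposal is correct and matches the paper's proof, which combines Proposition~\ref{prop:direct-heterogeneous-response-bound} with Lemma~\ref{lem:heterogeneous-e4-transfer}; your explicit use of the monotonicity of $a\mapsto[a-\mathcal E]_+$ makes precise the step the paper leaves implicit. One small addition: the theorem's hypotheses also include the additive instance of Assumption~\ref{ass:diagram-distortion-floor}, which the lemma needs in order to conclude $\|V_{P,Q}q\|_2\geq\rho_-(s;\nu)$.
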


\begin{proof}
Combine Proposition~\ref{prop:direct-heterogeneous-response-bound} with
Lemma~\ref{lem:heterogeneous-e4-transfer}.
\end{proof}

\begin{proof}[Proof of Theorem~\ref{thm:population-lower-bound}]
\emph{Part (i).}
Under Assumption~\ref{ass:prevalence-identifiability-regimes}(i),
$S_r=p\Phi(D_r^\star;\nu)$ by additivity and the zero response of
$\varnothing$. The certified scale and pair conditions in that assumption,
together with Assumption \ref{ass:diagram-distortion-floor}, give
$\|S_P-S_Q\|_2\ge p\rho_-(s;\nu)$. The perturbation term in
\eqref{eq:shared-perturbation-bound} also factors by $p$, which proves
\eqref{eq:common-prevalence-bound}.

\emph{Part (ii).}
Corollary~\ref{cor:heterogeneous-e4-transfer} gives exactly
\eqref{eq:heterogeneous-directional-bound}. Its proof invokes Assumption \ref{ass:diagram-distortion-floor} through
\eqref{eq:unweighted-template-response} and
Lemma~\ref{lem:heterogeneous-e4-transfer}.
\end{proof}

\begin{corollary}[Uniform bound on feature perturbation scales]
\label{cor:bounded-perturbation-scales}
If $\sigma_{rj}\leq\overline\sigma$, then the common-prevalence bound becomes
\begin{equation}
\label{eq:common-global-scale-bound}
\Delta_\nu(P,Q)
\ge
p\left[
\rho_-(s;\nu)
-
2A_0nL_\nu\overline\sigma\sqrt{\frac\pi2}
\right]_+,
\qquad s\in\mathcal I_\nu,
\end{equation}
and the Assumption \ref{ass:diagram-distortion-floor}-based heterogeneous-prevalence bound becomes
\begin{equation}
\label{eq:heterogeneous-global-directional-bound}
\Delta_\nu(P,Q)
\ge
\left[
\eta_{P,Q}\rho_-(s;\nu)
-
2A_0nL_\nu\overline\sigma\sqrt{\frac\pi2}
\right]_+.
\end{equation}
The stronger direct variant replaces
$\eta_{P,Q}\rho_-(s;\nu)$ by $\|V_{P,Q}c\|_2$.
\end{corollary}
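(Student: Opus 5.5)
The statement to prove is Corollary~\ref{cor:bounded-perturbation-scales}. The plan is to substitute the uniform bound $\sigma_{rj}\le\overline\sigma$ into the perturbation penalties of Theorem~\ref{thm:population-lower-bound} and Proposition~\ref{prop:direct-heterogeneous-response-bound}. Then we use monotonicity of $a\mapsto[a-b]_+$ in $b$ to pass to the displayed weaker bounds. No new probabilistic or geometric input is needed; the content is a counting argument on the number of template features.

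For the common-prevalence bound \eqref{eq:common-global-scale-bound}, we start from \eqref{eq:common-prevalence-bound}. The penalty is $A_0L_\nu\sqrt{\pi/2}\{\sum_{j\le m_P}\sigma_{Pj}+\sum_{\ell\le m_Q}\sigma_{Q\ell}\}$. Since $m_P,m_Q\le n$ by \eqref{eq:template-diagram}, each sum is at most $n\overline\sigma$, so the penalty is at most $2A_0nL_\nu\overline\sigma\sqrt{\pi/2}$. Because $b\mapsto p[\rho-b]_+$ is nonincreasing for $p>0$, replacing the penalty by this larger quantity preserves the lower bound. The condition $s\in\mathcal I_\nu$ is inherited from Assumption~\ref{ass:prevalence-identifiability-regimes}(i).

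For the heterogeneous bound \eqref{eq:heterogeneous-global-directional-bound}, we start from \eqref{eq:heterogeneous-directional-bound}. Here the penalty carries weights $p_{rj}\in(0,1]$. We bound $p_{rj}\sigma_{rj}\le\overline\sigma$ and again use $m_P,m_Q\le n$ to get the same upper bound $2A_0nL_\nu\overline\sigma\sqrt{\pi/2}$. Monotonicity of the positive part then finishes this case. The final sentence, about the direct variant, follows identically from Proposition~\ref{prop:direct-heterogeneous-response-bound}, whose penalty is the same weighted sum. Corollary~\ref{cor:heterogeneous-e4-transfer} confirms that this variant is at least as strong.

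The only point needing care is the counting. The template cardinalities $m_r$ (not the sample sizes) must be bounded by $n$, which holds because the templates are elements of $\mathcal D_n$. Some bound of this kind is necessary: with $m_r$ unrestricted the factor $2n$ would fail.
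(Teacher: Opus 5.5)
Your proposal is correct and follows the paper's own proof: substitute $\sigma_{rj}\le\overline\sigma$, use $m_P,m_Q\le n$ and $p_{rj}\le1$ in Theorem~\ref{thm:population-lower-bound}, and obtain the direct variant from Proposition~\ref{prop:direct-heterogeneous-response-bound}. Your explicit appeal to the monotonicity of $b\mapsto[\rho-b]_+$ is the one step the paper leaves implicit.
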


\begin{proof}
Use $m_P,m_Q\le n$, $p_{rj}\le1$, and substitute in
Theorem~\ref{thm:population-lower-bound}; the direct variant follows from
Proposition~\ref{prop:direct-heterogeneous-response-bound}.
\end{proof}

For a linear subspace $\mathcal C\subseteq\mathbb R^M$, define
\begin{equation}
\label{eq:restricted-response-gain}
\sigma_{\min}(V_{P,Q};\mathcal C)
:=
\inf_{a\in\mathcal C\setminus\{0\}}
\frac{\|V_{P,Q}a\|_2}{\|a\|_2}.
\end{equation}

\begin{corollary}[Restricted response gain]
\label{cor:restricted-response-gain}
If $c\in\mathcal C$, then
\begin{align}
\Delta_\nu(P,Q)
\ge
\Bigg[
&\sigma_{\min}(V_{P,Q};\mathcal C)\|c\|_2\notag\\
&-
A_0L_\nu\sqrt{\frac\pi2}
\left\{
\sum_jp_{Pj}\sigma_{Pj}
+
\sum_\ell p_{Q\ell}\sigma_{Q\ell}
\right\}
\Bigg]_+.
\label{eq:restricted-response-bound}
\end{align}
\end{corollary}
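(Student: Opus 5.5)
The plan is to reuse the decomposition behind Lemma~\ref{lem:center-approximation} and Proposition~\ref{prop:direct-heterogeneous-response-bound}. We then lower-bound the center response $\|V_{P,Q}c\|_2$ directly through the restricted gain on $\mathcal C$, rather than through the diagram-level certificate.

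\textbf{Step 1: center response.} Write $S_r=\sum_{j=1}^{m_r}p_{rj}\varphi_\nu^0(z_{rj})$. Aggregating prevalence masses over the distinct union support $\{x_1,\ldots,x_M\}$ gives the identity \eqref{eq:heterogeneous-noiseless-response}:
$$
S_P-S_Q=V_{P,Q}c .
$$
This step uses only the definitions \eqref{eq:union-masses} and \eqref{eq:response-matrix}. It requires neither $c\neq0$ nor Assumption~\ref{ass:diagram-distortion-floor}.

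\textbf{Step 2: perturbation bound.} Invoke \eqref{eq:shared-perturbation-bound} from Lemma~\ref{lem:center-approximation}, which rests on (E2) and Lemma~\ref{lem:tail-mean}. Equivalently, invoke Proposition~\ref{prop:direct-heterogeneous-response-bound} directly. This gives
$$
\Delta_\nu(P,Q)\geq\Bigl[\|V_{P,Q}c\|_2-A_0L_\nu\sqrt{\pi/2}\,\bigl\{\textstyle\sum_jp_{Pj}\sigma_{Pj}+\sum_\ell p_{Q\ell}\sigma_{Q\ell}\bigr\}\Bigr]_+ .
$$

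\textbf{Step 3: restricted gain.} Since $c\in\mathcal C$, the definition \eqref{eq:restricted-response-gain} gives
$$
\|V_{P,Q}c\|_2\geq\sigma_{\min}(V_{P,Q};\mathcal C)\|c\|_2 .
$$
For $c\neq0$ this is immediate from taking $a=c$ in the infimum. For $c=0$ both sides vanish, so the inequality holds trivially; the infimum over $\mathcal C\setminus\{0\}$ is finite whenever $\mathcal C\neq\{0\}$. Finally, the map $u\mapsto[u-E]_+$ is nondecreasing, so substituting this lower bound into Step~2 yields \eqref{eq:restricted-response-bound}.

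\textbf{Main obstacle.} The only delicate point is a degenerate convention. If $\mathcal C=\{0\}$, then $c=0$ and the infimum is over the empty set, which equals $+\infty$, so the product $\sigma_{\min}\|c\|_2$ is of the form $\infty\cdot 0$. We adopt the convention $\infty\cdot0=0$, equivalently restrict to nonzero subspaces, in which case the claimed bound reduces to the trivial bound $\Delta_\nu\geq0$. Everything else is monotonicity of the positive part combined with results already proved.
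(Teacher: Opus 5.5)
Your proposal is correct and follows the paper's proof: the paper likewise reads off $\|V_{P,Q}c\|_2\ge\sigma_{\min}(V_{P,Q};\mathcal C)\|c\|_2$ from the definition and substitutes it into the direct heterogeneous response bound of Proposition~\ref{prop:direct-heterogeneous-response-bound}. Your separate treatment of the degenerate cases $c=0$ and $\mathcal C=\{0\}$ is a harmless refinement that the paper leaves implicit.
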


\begin{proof}
The definition gives
$\|V_{P,Q}c\|_2\ge
\sigma_{\min}(V_{P,Q};\mathcal C)\|c\|_2$; substitute into
\eqref{eq:heterogeneous-direct-response-bound}.
\end{proof}

For a fixed nonzero $c$, taking $\mathcal C=\operatorname{span}\{c\}$ merely
rewrites the directional gain as
$\|V_{P,Q}c\|_2/\|c\|_2$. A restricted singular value becomes structurally
useful when the same subspace controls a collection of contrasts. Full column
rank corresponds to $\mathcal C=\mathbb R^M$ and is impossible when $M$
exceeds the finite descriptor dimension. The direct response proposition remains valid in that case. The Assumption \ref{ass:diagram-distortion-floor}-based theorem uses only
$\mathcal S_{P,Q}=\operatorname{span}\{c,q\}$ and therefore does not require full
column rank on $\mathbb R^M$.

\begin{proposition}[Realization-level certified separation]
\label{prop:realization-threshold}
Suppose, separately from the thinning model, that
\[
d_B(X_P,D_P^\star)\le\varepsilon_P,
\qquad
d_B(X_Q,D_Q^\star)\le\varepsilon_Q
\]
almost surely. Put $t=s-\varepsilon_P-\varepsilon_Q$. If
$t\in\mathcal I_\nu$ and the realized pair belongs to
$\mathcal C_{\nu,t}$ almost surely, then Assumption \ref{ass:diagram-distortion-floor} applies directly at threshold $t$.
If a particular construction has
$\mathcal I_\nu=(R_0(\nu),U_\nu]$, the scale condition is implied by
$s>R_0(\nu)+\varepsilon_P+\varepsilon_Q$ together with $t\leq U_\nu$; for a
common radius the lower-end condition is $s>R_0(\nu)+2\varepsilon$.
\end{proposition}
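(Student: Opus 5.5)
The argument is deterministic and pathwise: the probabilistic content enters only through the two almost-sure bottleneck bounds. We first lower-bound the realized separation by the triangle inequality, then apply Assumption~\ref{ass:diagram-distortion-floor} at the single fixed threshold $t$. The only care needed is in how the threshold implication is quantified.

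First, fix an outcome $\omega$ in the probability-one event on which $d_B(X_P,D_P^\star)\le\varepsilon_P$ and $d_B(X_Q,D_Q^\star)\le\varepsilon_Q$. Intersect this with the probability-one event on which $(X_P,X_Q)\in\mathcal C_{\nu,t}$. Since $d_B$ is a metric on $\mathcal D_n$ (Lemma~\ref{lem:Dn-polish}), the reverse triangle inequality gives
\[
d_B(X_P,X_Q)\ \ge\ d_B(D_P^\star,D_Q^\star)-d_B(X_P,D_P^\star)-d_B(X_Q,D_Q^\star)\ \ge\ s-\varepsilon_P-\varepsilon_Q=t .
\]
Hence $d_B(X_P,X_Q)\ge t$ on this event. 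Together with $t\in\mathcal I_\nu$, so that in particular $t>R_0(\nu)$, the premise of \eqref{eq:diagram-distortion-floor} holds with $r=t$. That implication is stated for every $r$ with $d_B(D,E)\ge r>R_0(\nu)$ and every certified pair, so we may apply it at $r=t$ rather than at the actual separation. This yields $\|\Psi_\nu(X_P)-\Psi_\nu(X_Q)\|\ge\rho_-(t;\nu)$ almost surely.

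This is exactly the sense in which the threshold form of the assumption, as opposed to an actual-separation floor, is needed. If the realized distance exceeds $t$ and lands outside the admissible domain, we still use the threshold at $t$. This is legitimate because the threshold floor was defined as an infimum over $u\ge t$ in the discussion after Assumption~\ref{ass:diagram-distortion-floor}.

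For the final sentence, suppose $\mathcal I_\nu=(R_0(\nu),U_\nu]$. Then $t\in\mathcal I_\nu$ is equivalent to $R_0(\nu)<s-\varepsilon_P-\varepsilon_Q\le U_\nu$. The lower inequality rearranges to $s>R_0(\nu)+\varepsilon_P+\varepsilon_Q$, and with $\varepsilon_P=\varepsilon_Q=\varepsilon$ it becomes $s>R_0(\nu)+2\varepsilon$. The upper inequality is the stated condition $t\le U_\nu$.

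The main point requiring care is measurability and the quantifier order. We need that the events involved are measurable, so that "almost surely" is meaningful. The bottleneck distance is continuous on the compact space $\mathcal D_n$, so $\omega\mapsto d_B(X_P,D_P^\star)$ is measurable. Membership in $\mathcal C_{\nu,t}$ is assumed to hold almost surely, so we only need to intersect finitely many probability-one events. No coupling of $X_P$ and $X_Q$ beyond their joint definition on the ambient space is required. If they are defined on separate spaces, we work on the product space, and the argument is unchanged.
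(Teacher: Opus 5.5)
Your argument is correct and matches the paper's proof. The reverse triangle inequality gives $d_B(X_P,X_Q)\ge s-\varepsilon_P-\varepsilon_Q=t$ almost surely, the threshold form of Assumption~\ref{ass:diagram-distortion-floor} is then applied at $r=t$ to the certified realized pair, and the interval case is a rearrangement of $R_0(\nu)<t\le U_\nu$.

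One small correction to your aside. Your license to use the floor at $t$ when the realized separation is larger comes from the threshold implication \eqref{eq:diagram-distortion-floor} itself. It does not come from the minorant $\inf_{u\in\mathcal I_\nu^\Psi,\,u\ge t}\rho_{\mathrm{raw},\nu}^\Psi(u)$, which ranges only over admissible $u$ and so says nothing about realized distances outside $\mathcal I_\nu$.
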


\begin{proof}
The reverse triangle inequality yields
$d_B(X_P,X_Q)\ge s-\varepsilon_P-\varepsilon_Q$.
The stated conclusion follows from Assumption \ref{ass:diagram-distortion-floor}.
\end{proof}

This result is natural in the complete-feature case or when whole-diagram
closeness is imposed directly. With thinning, bounded conditional movement of
appearing features alone does not imply whole-diagram closeness because a
template feature may be absent. Under sub-Gaussian perturbations there is no
deterministic radius; a high-probability version requires a chosen tail level.

\begin{remark}[Bounded-support alternative]
If
$G_{rj}\{x:d_0(x,z_{rj})\le\varepsilon_{rj}\}=1$, then
$e_{rj}\le\varepsilon_{rj}$, and every additive template result remains valid
after replacing $A_0\sigma_{rj}\sqrt{\pi/2}$ by $\varepsilon_{rj}$.
\end{remark}

\subsection{Whole-law population transfer for the Mitra--Virk map}
\label{app:mv-population-transfer}

Let
$\Psi_\nu^{\mathrm{MV}}:\mathcal D_n\to\mathcal H_\nu^{\mathrm{MV}}$
be a fixed admissible whole-diagram Mitra--Virk map. For the selected map,
write $\mathcal I_\nu^{\mathrm{MV}}$,
$\mathcal C_{\nu,t}^{\mathrm{MV}}$, and
$\rho_{-,\nu}^{\mathrm{MV}}(t)$ for its admissible scales, certified pair
class, and valid threshold floor. Its population estimand is
\begin{equation}
\label{eq:mv-population-signal}
\Delta_\nu^{\mathrm{MV}}(P,Q)
:=
\left\|
\mathbb E_P\{\Psi_\nu^{\mathrm{MV}}(X)\}
-
\mathbb E_Q\{\Psi_\nu^{\mathrm{MV}}(Y)\}
\right\|_2.
\end{equation}
It is not equal in general to an embedding of a padded mean counting measure.

Let the Jordan decomposition of the signed law difference be
$P-Q=\xi^+-\xi^-$. We use the probability-distance convention
\begin{equation}
\label{eq:mv-residual-mass}
b(P,Q)
:=
\xi^+(\mathcal D_n)
=
\xi^-(\mathcal D_n)
=
\sup_{A\in\mathcal B(\mathcal D_n)}|P(A)-Q(A)|.
\end{equation}
If $\|P-Q\|_{\mathrm{TV,norm}}:=|P-Q|(\mathcal D_n)$ denotes the doubled
signed-measure norm convention, then
$b(P,Q)=\tfrac12\|P-Q\|_{\mathrm{TV,norm}}$. We use $b(P,Q)$ throughout and
do not call both quantities the total-variation distance. When $b(P,Q)>0$,
define the normalized residual laws
\begin{equation}
\label{eq:mv-residual-laws}
P^\circ:=\frac{\xi^+}{b(P,Q)},
\qquad
Q^\circ:=\frac{\xi^-}{b(P,Q)}.
\end{equation}
They are probability measures. Moreover, $\xi^+\leq P$ and $\xi^-\leq Q$,
so integrability of $\Psi_\nu^{\mathrm{MV}}$ under $P$ and $Q$ implies
integrability under $P^\circ$ and $Q^\circ$ whenever $b(P,Q)>0$.

\begin{proposition}[Whole-law residual identity]
\label{prop:mv-residual-identity}
If the displayed expectations exist, then
\begin{align}
&\mathbb E_P\{\Psi_\nu^{\mathrm{MV}}(X)\}
-
\mathbb E_Q\{\Psi_\nu^{\mathrm{MV}}(Y)\}
\notag\\
&\hspace{2cm}=
 b(P,Q)
\left[
\mathbb E_{P^\circ}\{\Psi_\nu^{\mathrm{MV}}(X)\}
-
\mathbb E_{Q^\circ}\{\Psi_\nu^{\mathrm{MV}}(Y)\}
\right].
\label{eq:mv-residual-identity}
\end{align}
If $b(P,Q)=0$, then $P=Q$ and both sides are zero.
\end{proposition}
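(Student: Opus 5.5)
The identity will follow from the Jordan decomposition together with linearity of the Bochner integral with respect to the integrating measure.

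\emph{Step 1: rewrite the difference of means.} Write $P-Q=\xi^+-\xi^-$, where $\xi^+\perp\xi^-$, $\xi^+\le P$ and $\xi^-\le Q$. With $\lambda:=P\wedge Q=P-\xi^+=Q-\xi^-$, a nonnegative finite measure, we have $P=\lambda+\xi^+$ and $Q=\lambda+\xi^-$. Integrability of $\Psi_\nu^{\mathrm{MV}}$ under $P$ and $Q$ therefore gives integrability under $\lambda$, $\xi^+$ and $\xi^-$, because each is dominated by $P$ or $Q$. Additivity of the Bochner integral in the measure then yields
\[
\int\Psi\,dP-\int\Psi\,dQ=\int\Psi\,d\xi^+-\int\Psi\,d\xi^-.
\]
I would check this additivity on simple functions and pass to the limit by dominated convergence, or test against every $h\in\mathcal H_\nu^{\mathrm{MV}}$ using the scalar version.

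\emph{Step 2: normalize.} Taking $A=\mathcal D_n$ in the decomposition gives $\xi^+(\mathcal D_n)-\xi^-(\mathcal D_n)=P(\mathcal D_n)-Q(\mathcal D_n)=0$. Let $A^+$ be the Hahn positive set. Then $|P(A)-Q(A)|\le\max\{\xi^+(\mathcal D_n),\xi^-(\mathcal D_n)\}$ for every Borel $A$, with equality at $A=A^+$. Hence $b(P,Q)=\xi^+(\mathcal D_n)=\xi^-(\mathcal D_n)$, as in \eqref{eq:mv-residual-mass}. When $b>0$, the measures $P^\circ$ and $Q^\circ$ are probability measures and $\int\Psi\,d\xi^\pm=b\,\mathbb E_{P^\circ/Q^\circ}\Psi$ by homogeneity of the integral. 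Substituting into Step 1 gives \eqref{eq:mv-residual-identity}.

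\emph{Step 3: the degenerate case.} If $b=0$, then $\xi^\pm=0$, so $P=Q$ and both sides vanish. The right side is read as zero in this case, since $P^\circ$ and $Q^\circ$ are undefined.

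No step is a genuine obstacle. The only point needing care is the first one: justifying Bochner integrability of $\Psi$ against the sub-measures and the additivity of the integral in the measure. Domination by $P$ or $Q$ handles both.
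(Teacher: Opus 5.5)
Your proposal is correct and follows the paper's argument: integrate against $P-Q=\xi^+-\xi^-$ (your common component $P\wedge Q$ is exactly what the paper says cancels), then substitute $\xi^+=bP^\circ$ and $\xi^-=bQ^\circ$. Your extra care with integrability under the dominated sub-measures and the check that $b(P,Q)=\xi^\pm(\mathcal D_n)$ only fills in details the paper leaves implicit.
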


\begin{proof}
Integration against $P-Q=\xi^+-\xi^-$ gives the first difference. For
$b(P,Q)>0$, substitute $\xi^+=bP^\circ$ and $\xi^-=bQ^\circ$. The common
component of the two laws has canceled before this normalization.
\end{proof}

The diagram-level certificate alone does not lower-bound the difference of
population means. The residual mass $b(P,Q)$ can tend to zero, and even with
nonvanishing residual mass, vectors
$\Psi_\nu^{\mathrm{MV}}(D)$ may cancel after averaging. Both effects are
population-level obstructions absent from a statement about one diagram pair.

\begin{theorem}[Pair-specific Mitra--Virk template bound]
\label{thm:mv-template-bound}
Assume $b(P,Q)>0$ and put
$s=d_B(D_P^\star,D_Q^\star)$. Suppose
$s\in\mathcal I_\nu^{\mathrm{MV}}$,
$(D_P^\star,D_Q^\star)\in\mathcal C_{\nu,s}^{\mathrm{MV}}$, and
\[
\|\Psi_\nu^{\mathrm{MV}}(D)-
\Psi_\nu^{\mathrm{MV}}(E)\|_2
\le
L_\nu^{\mathrm{MV}}d_B(D,E).
\]
Define
\[
r_P:=\mathbb E_{P^\circ}d_B(X,D_P^\star),
\qquad
r_Q:=\mathbb E_{Q^\circ}d_B(Y,D_Q^\star).
\]
Then
\begin{equation}
\label{eq:mv-template-bound}
\Delta_\nu^{\mathrm{MV}}(P,Q)
\ge
b(P,Q)
\left[
\rho_{-,\nu}^{\mathrm{MV}}(s)
-
L_\nu^{\mathrm{MV}}(r_P+r_Q)
\right]_+.
\end{equation}
Here $L_\nu^{\mathrm{MV}}$ is the Lipschitz constant of the actual map used.
It equals $1$ for each normalized component of Theorem~4.3, for the theorem's
$2^{-1/2}$-normalized coarse--uniform direct sum, and for the
finite-dimensional bounded-domain map of Theorem~5.1. An unnormalized direct
sum must instead use the Hilbert-sum constant described in
Appendix~\ref{app:construction-verification}. For the normalized
coarse--uniform map, use the threshold floor
\eqref{eq:mv-threshold-floor} on all $s>0$; for the finite-dimensional
bounded-domain map, use \eqref{eq:mv-finite-affine-floor} on $(R_1,L]$.
\end{theorem}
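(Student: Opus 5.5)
The plan is to start from the whole-law residual identity of Proposition~\ref{prop:mv-residual-identity}. Since $b(P,Q)>0$, it gives
\[
\Delta_\nu^{\mathrm{MV}}(P,Q)
=
b(P,Q)\,
\bigl\|\mathbb E_{P^\circ}\Psi_\nu^{\mathrm{MV}}(X)-\mathbb E_{Q^\circ}\Psi_\nu^{\mathrm{MV}}(Y)\bigr\|_2 .
\]
It therefore suffices to show that the normalized residual contrast is at least $[\rho_{-,\nu}^{\mathrm{MV}}(s)-L_\nu^{\mathrm{MV}}(r_P+r_Q)]_+$. The positive part is automatic because a norm is nonnegative, so only the unclipped inequality needs proof.

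Next I compare each residual mean with the embedding of its template. Write $\Psi=\Psi_\nu^{\mathrm{MV}}$. Integrability under $P^\circ$ and $Q^\circ$ holds, as noted before the proposition, because $\xi^+\le P$ and $\xi^-\le Q$. Bochner's inequality and the assumed Lipschitz bound then give
\[
\bigl\|\mathbb E_{P^\circ}\Psi(X)-\Psi(D_P^\star)\bigr\|_2
\le
\mathbb E_{P^\circ}\|\Psi(X)-\Psi(D_P^\star)\|_2
\le
L_\nu^{\mathrm{MV}}\,\mathbb E_{P^\circ}d_B(X,D_P^\star)
=
L_\nu^{\mathrm{MV}}r_P .
\]
The same argument bounds the $Q$ side by $L_\nu^{\mathrm{MV}}r_Q$. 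If $r_P$ or $r_Q$ is infinite, the bracket is $-\infty$ and the claimed bound is trivial, so I may assume both are finite.

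The last step applies the reverse triangle inequality after adding and subtracting $\Psi(D_P^\star)-\Psi(D_Q^\star)$:
\[
\bigl\|\mathbb E_{P^\circ}\Psi-\mathbb E_{Q^\circ}\Psi\bigr\|_2
\ge
\|\Psi(D_P^\star)-\Psi(D_Q^\star)\|_2-L_\nu^{\mathrm{MV}}(r_P+r_Q).
\]
By hypothesis $s\in\mathcal I_\nu^{\mathrm{MV}}$ and $(D_P^\star,D_Q^\star)\in\mathcal C_{\nu,s}^{\mathrm{MV}}$. Assumption~\ref{ass:diagram-distortion-floor}, applied at threshold $r=s=d_B(D_P^\star,D_Q^\star)$, then gives $\|\Psi(D_P^\star)-\Psi(D_Q^\star)\|_2\ge\rho_{-,\nu}^{\mathrm{MV}}(s)$. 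Multiplying through by $b(P,Q)$ proves \eqref{eq:mv-template-bound}.

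The main obstacle is not this chain of inequalities. It is confirming that the threshold floor being used is valid for the specific map. For the normalized coarse--uniform map, the infimum in \eqref{eq:mv-threshold-floor} is a monotone minorant of the raw floor, and Appendix~\ref{app:construction-verification} shows it is valid on all $s>0$ with no restriction on the pair. For the finite-dimensional bounded-domain map, the affine floor \eqref{eq:mv-finite-affine-floor} applies only on $(R_1,L]$. The stated Lipschitz constants are those of the normalized maps, and for an unnormalized direct sum I would use the Hilbert-sum constant instead. I would record these points by citing the verification appendix rather than reproving them.
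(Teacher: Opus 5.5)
Your proposal is correct and follows the paper's proof essentially step for step. You reduce to the normalized residual contrast via Proposition~\ref{prop:mv-residual-identity}, add and subtract the template responses, bound the two deviation terms by $L_\nu^{\mathrm{MV}}r_P$ and $L_\nu^{\mathrm{MV}}r_Q$ using the Bochner--Lipschitz inequality, invoke the certificate at threshold $s$, and multiply by $b(P,Q)$. Your side remarks agree with the paper, and the case $r_P=\infty$ can never occur, since $d_B\le L/2$ on $\mathcal D_n$.
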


\begin{proof}
By Proposition~\ref{prop:mv-residual-identity}, it suffices to lower-bound the
residual mean difference. Add and subtract the two template responses and use
the reverse triangle inequality:
\begin{align*}
&\left\|
\mathbb E_{P^\circ}\Psi_\nu^{\mathrm{MV}}(X)
-
\mathbb E_{Q^\circ}\Psi_\nu^{\mathrm{MV}}(Y)
\right\|_2\\
&\quad\ge
\|\Psi_\nu^{\mathrm{MV}}(D_P^\star)
-
\Psi_\nu^{\mathrm{MV}}(D_Q^\star)\|_2\\
&\qquad-
\mathbb E_{P^\circ}
\|\Psi_\nu^{\mathrm{MV}}(X)-
\Psi_\nu^{\mathrm{MV}}(D_P^\star)\|_2
-
\mathbb E_{Q^\circ}
\|\Psi_\nu^{\mathrm{MV}}(Y)-
\Psi_\nu^{\mathrm{MV}}(D_Q^\star)\|_2\\
&\quad\ge
\rho_{-,\nu}^{\mathrm{MV}}(s)
-
L_\nu^{\mathrm{MV}}(r_P+r_Q).
\end{align*}
Multiply by $b(P,Q)$ and take the positive part.
\end{proof}

\begin{corollary}[Bounded and tail-controlled residual laws]
\label{cor:mv-residual-concentration}
If $P^\circ$ and $Q^\circ$ are supported in bottleneck balls of radii
$\varepsilon_P$ and $\varepsilon_Q$ around their templates, then
\[
\Delta_\nu^{\mathrm{MV}}(P,Q)
\ge
b(P,Q)
\left[
\rho_{-,\nu}^{\mathrm{MV}}(s)
-
L_\nu^{\mathrm{MV}}(\varepsilon_P+\varepsilon_Q)
\right]_+.
\]
If instead their bottleneck tails satisfy
$\Pr\{d_B(X,D_P^\star)>t\}\le
A_0e^{-t^2/(2\sigma_P^2)}$ and analogously for $Q$, then the same statement
holds with
$\varepsilon_P+\varepsilon_Q$ replaced by
$A_0(\sigma_P+\sigma_Q)\sqrt{\pi/2}$.
\end{corollary}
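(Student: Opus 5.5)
The plan is to reduce both claims to Theorem~\ref{thm:mv-template-bound}. That theorem only uses the residual radii $r_P=\mathbb E_{P^\circ}d_B(X,D_P^\star)$ and $r_Q=\mathbb E_{Q^\circ}d_B(Y,D_Q^\star)$. Since the floor $\rho_{-,\nu}^{\mathrm{MV}}(s)$ does not depend on the residual laws, the lower bound $b(P,Q)[\rho_{-,\nu}^{\mathrm{MV}}(s)-L_\nu^{\mathrm{MV}}(r_P+r_Q)]_+$ is nonincreasing in $r_P+r_Q$. It therefore suffices to replace $r_P$ and $r_Q$ by any valid upper bounds. Monotonicity of $u\mapsto b[\rho-L u]_+$ is immediate because $L_\nu^{\mathrm{MV}}>0$ and $b>0$. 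The standing hypotheses of Theorem~\ref{thm:mv-template-bound} are assumed throughout: $b(P,Q)>0$, $s\in\mathcal I_\nu^{\mathrm{MV}}$, certified template pair, and the Lipschitz bound. When $b(P,Q)=0$, both sides vanish by Proposition~\ref{prop:mv-residual-identity}.

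For the bounded-support case, $P^\circ$ is supported in $\{X:d_B(X,D_P^\star)\le\varepsilon_P\}$. Hence $r_P\le\varepsilon_P$, and likewise $r_Q\le\varepsilon_Q$. Substituting these into \eqref{eq:mv-template-bound} gives the first display.

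For the tail case, I would repeat the layer-cake computation of Lemma~\ref{lem:tail-mean} with the nonnegative variable $Z=d_B(X,D_P^\star)$ under $P^\circ$:
\[
r_P=\int_0^\infty P^\circ\{d_B(X,D_P^\star)>t\}\,dt\le A_0\int_0^\infty e^{-t^2/(2\sigma_P^2)}\,dt=A_0\sigma_P\sqrt{\pi/2}.
\]
The same computation gives the analogous bound for $r_Q$. Adding the two bounds and substituting gives the stated replacement $A_0(\sigma_P+\sigma_Q)\sqrt{\pi/2}$.

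The only point needing care is that the tail hypothesis must be read under the residual laws $P^\circ,Q^\circ$, as the statement says, and not under $P,Q$. Integrability of $d_B(X,D_P^\star)$ under $P^\circ$ is automatic, since $\mathcal D_n$ is compact (Lemma~\ref{lem:Dn-polish}) and so $d_B$ is bounded. Beyond that, no real obstacle is expected.
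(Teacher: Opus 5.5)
Your proposal is correct and follows the paper's proof: you bound $r_P\le\varepsilon_P$ and $r_Q\le\varepsilon_Q$ from the bounded support, bound them in the tail case by the layer-cake computation of Lemma~\ref{lem:tail-mean} applied under $P^\circ$ and $Q^\circ$, and substitute into Theorem~\ref{thm:mv-template-bound}. The monotonicity and integrability remarks you add are implicit in the paper's two-line argument.
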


\begin{proof}
The bounded-support claim gives $r_P\le\varepsilon_P$ and
$r_Q\le\varepsilon_Q$. The tail claim follows from the layer-cake argument in
Lemma~\ref{lem:tail-mean}.
\end{proof}

The general Hilbert-space mean, CLT, covariance, and two-sample procedures
apply directly to a fixed Mitra--Virk map whenever their stated moment
conditions hold; finite-coordinate or truncation results additionally require
Assumption~\ref{ass:finite-approximation}. The theorem above adds a sufficient
structured lower bound for a fixed pair
and does not assert that the embedded mean is characteristic for diagram laws. It is law-level and should
not be mixed with the additive template-mass decomposition for PLACE/PALACE.
Uniform Mitra--Virk power or minimax results would require class-wide lower
bounds on $b(P,Q)$ and the residual response margin, together with uniform
control of $r_P+r_Q$.


\section{Technical Proofs for Section~\ref{sec:probability}}
\label{app:probability-technical}

Throughout this appendix, $\mathcal H$ denotes the fixed separable Hilbert
space $\mathcal H_\nu$, $\Psi:=\Psi_\nu(X)$,
$\theta:=\mathbb E\Psi$, and $U:=\Psi-\theta$. The embedding configuration is
fixed independently of the sample.

\subsection{Mean and covariance facts}
\label{app:hilbert-mean-covariance-facts}

Because $\Psi$ is Borel measurable with values in a separable Hilbert space,
$\mathbb E\|\Psi\|<\infty$ implies Bochner integrability. The defining
property of the Bochner integral gives
$\langle\theta,h\rangle=\mathbb E\langle\Psi,h\rangle$ for every
$h\in\mathcal H$. If another element has the same inner products against all
$h$, the Riesz representation theorem implies equality.

Assume now that $\mathbb E\|\Psi\|^2<\infty$. The map
$u\mapsto u\otimes u$ is continuous from $\mathcal H$ to the trace-class
operators and $\|u\otimes u\|_{\mathrm{tr}}=\|u\|^2$. Thus
$U\otimes U$ is Bochner integrable in the trace-class norm and
$\Sigma=\mathbb E(U\otimes U)$ is trace class. Positivity and
self-adjointness follow from
$$
\langle\Sigma h,h\rangle
=
\mathbb E\langle U,h\rangle^2
\geq0.
$$
For an orthonormal basis $(e_k)$ of $\mathcal H$, Tonelli's theorem yields
$$
\operatorname{tr}(\Sigma)
=
\sum_k\mathbb E\langle U,e_k\rangle^2
=
\mathbb E\sum_k\langle U,e_k\rangle^2
=
\mathbb E\|U\|^2.
$$
A positive trace-class operator satisfies
$\|\Sigma\|_{\mathrm{op}}\leq\operatorname{tr}(\Sigma)$. Finally,
$\mathbb E\|U\|^2=\mathbb E\|\Psi\|^2-\|\theta\|^2\leq B_\nu^2$ under the
uniform envelope.

\subsection{Proof of Theorem~\ref{thm:hilbert-slln-clt}}
\label{app:proof-hilbert-slln-clt}

\begin{proof}
For part~(i), the centered variables $U_i:=\Psi_i-\theta$ are i.i.d.,
Bochner integrable, and mean zero in the separable Banach space $\mathcal H$.
The strong law for Banach-valued random variables gives
$m^{-1}\sum_{i=1}^mU_i\to0$ almost surely in $\mathcal H$; see, for example,
Corollary~7.10 of \cite{LedouxTalagrand2011}.

For part~(ii), $\mathbb E\|U\|^2<\infty$. A separable Hilbert space is a
Banach space of type~2, so the classical i.i.d. Hilbert-space central limit
theorem applies; see Theorem~10.5 of \cite{LedouxTalagrand2011}. Hence
$m^{-1/2}\sum_{i=1}^mU_i$ converges weakly to the centered Gaussian random
element with covariance $\Sigma$. The covariance facts established above show
that $\Sigma$ is trace class, which ensures that this Gaussian law is a Radon
probability measure on $\mathcal H$.
\end{proof}

\subsection{Norm-squared consequence of the central limit theorem}
\label{app:norm-squared-clt-consequence}

Let $(e_j)$ be an orthonormal eigenbasis for the closure of the range of
$\Sigma$, with corresponding nonzero eigenvalues $(\lambda_j)$. A centered
Gaussian element with covariance $\Sigma$ has the representation
$$
\mathcal G
\stackrel{d}{=}
\sum_{j\geq1}\lambda_j^{1/2}Z_je_j,
$$
where the $Z_j$ are independent standard normal variables. Since
$\sum_j\lambda_j=\operatorname{tr}(\Sigma)<\infty$, the series converges in
$L^2(\mathcal H)$ and almost surely. Therefore
$$
\|\mathcal G\|^2
\stackrel{d}{=}
\sum_{j\geq1}\lambda_jZ_j^2.
$$
The norm-squared limit in the main text follows from
Theorem~\ref{thm:hilbert-slln-clt}(ii) and the continuous mapping theorem.

\subsection{Proof of Corollary~\ref{cor:two-sample-hilbert-clt}}
\label{app:proof-two-sample-hilbert-clt}

\begin{proof}
Apply Theorem~\ref{thm:hilbert-slln-clt}(ii) separately to the two independent
samples. The joint convergence is to independent Gaussian elements
$\mathcal G_P\sim\mathcal N(0,\Sigma_P)$ and
$\mathcal G_Q\sim\mathcal N(0,\Sigma_Q)$. Since
$N/m\to\lambda^{-1}$ and $N/m'\to(1-\lambda)^{-1}$,
$$
\begin{aligned}
&\sqrt N
\left[
(\overline\Psi_{P,m}-\overline\Psi_{Q,m'})
-(\theta_P-\theta_Q)
\right]
\\
&\qquad=
\sqrt{\frac Nm}\,\sqrt m(\overline\Psi_{P,m}-\theta_P)
-
\sqrt{\frac N{m'}}\,\sqrt{m'}(\overline\Psi_{Q,m'}-\theta_Q).
\end{aligned}
$$
Slutsky's theorem and continuity of addition in $\mathcal H$ give a centered
Gaussian limit with covariance
$\lambda^{-1}\Sigma_P+(1-\lambda)^{-1}\Sigma_Q$.
\end{proof}

\subsection{Proof of Proposition~\ref{prop:covariance-consistency}}
\label{app:proof-covariance-consistency}

\begin{proof}
Let $U_i:=\Psi_i-\theta$, $\overline U_m:=m^{-1}\sum_iU_i$, and define the
oracle covariance estimator
$$
\widetilde\Sigma_m
:=
\frac1m\sum_{i=1}^mU_i\otimes U_i.
$$
Direct expansion gives
$$
\widehat\Sigma_m
=
\widetilde\Sigma_m-\overline U_m\otimes\overline U_m.
$$
The trace-class operators form a separable Banach space when $\mathcal H$ is
separable. Moreover,
$\mathbb E\|U\otimes U\|_{\mathrm{tr}}=\mathbb E\|U\|^2<\infty$.
The Banach-space strong law therefore yields
$\|\widetilde\Sigma_m-\Sigma\|_{\mathrm{tr}}\to0$ almost surely. By
Theorem~\ref{thm:hilbert-slln-clt}(i),
$\|\overline U_m\|\to0$ almost surely, and
$\|\overline U_m\otimes\overline U_m\|_{\mathrm{tr}}
=\|\overline U_m\|^2$. This proves trace-norm consistency. The inequalities
$\|A\|_{\mathrm{op}}\leq\|A\|_{\mathrm{HS}}\leq\|A\|_{\mathrm{tr}}$ give the
remaining almost-sure conclusions.

For the mean-square rate, assume $M_4:=\mathbb E\|U\|^4<\infty$. Since the
Hilbert--Schmidt operators form a Hilbert space and
$\|U\otimes U\|_{\mathrm{HS}}=\|U\|^2$,
$$
\mathbb E\|\widetilde\Sigma_m-\Sigma\|_{\mathrm{HS}}^2
=
\frac1m\mathbb E\|U\otimes U-\Sigma\|_{\mathrm{HS}}^2
\leq
\frac{M_4}{m}.
$$
We next bound the centering term. For $S_m:=\sum_{i=1}^mU_i$, independence
and $\mathbb EU_i=0$ imply
$$
\begin{aligned}
\mathbb E\|S_m\|^4
&=
\mathbb E\left(
\sum_i\|U_i\|^2
+2\sum_{i<j}\langle U_i,U_j\rangle
\right)^2
\\
&\leq
mM_4+3m(m-1)(\mathbb E\|U\|^2)^2
\leq4m^2M_4.
\end{aligned}
$$
Hence
$$
\mathbb E\|\overline U_m\otimes\overline U_m\|_{\mathrm{HS}}^2
=
\mathbb E\|\overline U_m\|^4
\leq
\frac{4M_4}{m^2}.
$$
Combining these bounds with $(a+b)^2\leq2a^2+2b^2$ gives
$$
\mathbb E\|\widehat\Sigma_m-\Sigma\|_{\mathrm{HS}}^2
\leq
\frac{2M_4}{m}+\frac{8M_4}{m^2}
\leq
\frac{10M_4}{m}.
$$
The operator-norm rate follows from
$\|A\|_{\mathrm{op}}\leq\|A\|_{\mathrm{HS}}$.
\end{proof}

\subsection{Proof of Theorem~\ref{thm:finite-coordinate-berry-esseen}}
\label{app:proof-finite-coordinate-berry-esseen}

\begin{proof}
The finite-dimensional space $\mathcal R_\Pi$ can be identified isometrically
with $\mathbb R^{r_\Pi}$. The random vector
$$
\xi:=\Sigma_\Pi^{\dagger/2}U_\Pi
$$
has mean zero and identity covariance on $\mathcal R_\Pi$. Directions in the
kernel of $\Sigma_\Pi$ have zero second moment and therefore vanish almost
surely, so this standardization loses no random component. Applying
Theorem~1.1 of \cite{Bentkus2003} to i.i.d. copies of $\xi$ gives
$$
\sup_{C\in\mathfrak C(\mathcal R_\Pi)}
\left|
\Pr\left\{m^{-1/2}\sum_{i=1}^m\xi_i\in C\right\}
-\Pr(Z_{r_\Pi}\in C)
\right|
\leq
\frac{C_{\mathrm{BE}}r_\Pi^{1/4}\mathbb E\|\xi\|^3}{\sqrt m},
$$
where $Z_{r_\Pi}$ is standard Gaussian on $\mathcal R_\Pi$. Restricting to
the covariance range and applying the inverse standardization gives the
stated inequality. Centered norm balls are convex, so they form a subclass of
the sets over which the supremum is taken.
\end{proof}

\begin{remark}[One-dimensional projections]
\label{rem:scalar-berry-esseen}
For a fixed $h\in\mathcal H$ with
$\sigma_h^2:=\mathbb E\langle U,h\rangle^2>0$, the same argument with
$r_\Pi=1$ gives the scalar Berry--Esseen bound
$$
\sup_{t\in\mathbb R}
\left|
\Pr\left\{
\frac{\sqrt m\langle\overline\Psi_m-\theta,h\rangle}{\sigma_h}
\leq t
\right\}
-\Phi_0(t)
\right|
\leq
\frac{C\,\mathbb E|\langle U,h\rangle|^3}{\sigma_h^3\sqrt m},
$$
where $\Phi_0$ is the standard normal distribution function.
\end{remark}

\subsection{Proof of Proposition~\ref{prop:finite-approximation-first-moments}}
\label{app:proof-finite-approximation-first-moments}

\begin{proof}
Assumption~\ref{ass:finite-approximation} gives
$\|\Psi_\nu(D)-\Pi_K\Psi_\nu(D)\|\leq\epsilon_K^\Psi$ for every diagram.
Taking expectations and using Jensen's inequality yields
$$
\|\theta-\theta_K\|
=
\|\mathbb E(\Psi_\nu(X)-\Pi_K\Psi_\nu(X))\|
\leq\epsilon_K^\Psi.
$$
Averaging the same deterministic bound over the observed diagrams gives
$\|\overline\Psi_m-\overline\Psi_{m,K}\|\leq\epsilon_K^\Psi$ almost surely.
The centered bound follows from the triangle inequality.

For two populations,
$$
\|\delta_\nu^\Psi(P,Q)-\Pi_K\delta_\nu^\Psi(P,Q)\|
\leq
\|\theta_P-\Pi_K\theta_P\|
+
\|\theta_Q-\Pi_K\theta_Q\|
\leq2\epsilon_K^\Psi.
$$
The reverse triangle inequality gives the stated bound for
$|\Delta_\nu^\Psi(P,Q)-\Delta_{\nu,K}^\Psi(P,Q)|$. Finally,
$$
\sqrt m\,
\|(\overline\Psi_m-\theta)
-(\overline\Psi_{m,K_m}-\theta_{K_m})\|
\leq
2\sqrt m\,\epsilon_{K_m}^\Psi
\longrightarrow0,
$$
which proves the central-limit-scale statement.
\end{proof}

\subsection{Assumption dependencies}
\label{app:probability-dependencies}

The mean and strong law require only
Assumption~\ref{ass:fixed-hilbert-embedding} and a finite first moment. The
Hilbert-space central limit theorem and trace-norm covariance consistency
require a finite second moment. The covariance mean-square rate requires a
finite fourth moment. The finite-coordinate Berry--Esseen bound requires a
fixed finite-rank projection and a finite standardized third moment on the
covariance range. Proposition~\ref{prop:finite-approximation-first-moments}
uses Assumption~\ref{ass:finite-approximation}. Additivity and Assumption \ref{ass:diagram-distortion-floor} are not used
in these probability arguments. For an additive map, additivity supplies the
mean-measure interpretation from
Proposition~\ref{prop:mean-embedding-transfer}; Assumption \ref{ass:diagram-distortion-floor} supplies geometric
interpretation only through the transfer results of
Section~\ref{sec:embedding}.


\section{Technical Proofs for Section~\ref{sec:testing}}
\label{app:testing-proofs}

This appendix proves the testing results that follow from the fixed-Hilbert
conditions and the structured alternative classes. The minimax lower bound is
expressed through an information modulus defined by those classes. A separate subsection states the construction-specific information-comparison
lemma and records the proof steps that remain to be verified for a selected
landmark construction.

\subsection{Null limit and consistency of the canonical statistic}

\begin{proof}[Proof of Theorem~\ref{thm:mean-distance-null}]
Corollary~\ref{cor:two-sample-hilbert-clt} gives
$$
\sqrt N\,\widehat\delta_{m,m'}
\ \rightsquigarrow\
\mathcal N_{\mathcal H_\nu}(0,\Sigma_{P,Q,\lambda})
$$
under $H_0^{\mathrm{mean}}$, where
$$
\Sigma_{P,Q,\lambda}
=
\lambda^{-1}\Sigma_{P,\nu}^{\Psi}
+
(1-\lambda)^{-1}\Sigma_{Q,\nu}^{\Psi}.
$$
Since
$$
\frac{N_{\mathrm{eff}}}{N}
=
\frac{mm'}{N^2}
\longrightarrow
\lambda(1-\lambda),
$$
Slutsky's theorem yields
$$
\sqrt{N_{\mathrm{eff}}}\,\widehat\delta_{m,m'}
\ \rightsquigarrow\
\mathcal G_{P,Q,\lambda},
$$
with covariance
$$
\lambda(1-\lambda)\Sigma_{P,Q,\lambda}
=
(1-\lambda)\Sigma_{P,\nu}^{\Psi}
+
\lambda\Sigma_{Q,\nu}^{\Psi}
=
\Gamma_{P,Q,\lambda}.
$$
The norm is continuous on $\mathcal H_\nu$, so the continuous mapping theorem
gives
$$
T_{m,m'}
=
\|\sqrt{N_{\mathrm{eff}}}\,\widehat\delta_{m,m'}\|^2
\ \rightsquigarrow\
\|\mathcal G_{P,Q,\lambda}\|^2.
$$
Because $\Gamma_{P,Q,\lambda}$ is positive and trace class, the spectral
representation of a Hilbert-space Gaussian gives
$$
\|\mathcal G_{P,Q,\lambda}\|^2
\stackrel{d}{=}
\sum_{j\geq1}\gamma_jZ_j^2.
$$
If $P=Q$, then $\Sigma_{P,\nu}^{\Psi}=\Sigma_{Q,\nu}^{\Psi}$ and the stated
simplification follows.
\end{proof}

\begin{proof}[Proof of Corollary~\ref{cor:mean-distance-consistency}]
The strong law in Theorem~\ref{thm:hilbert-slln-clt} gives
$$
\widehat\delta_{m,m'}
\longrightarrow
\delta_\nu^{\Psi}(P,Q)
$$
almost surely. Continuity of the squared norm gives
$$
\|\widehat\delta_{m,m'}\|^2
\longrightarrow
\{\Delta_\nu^{\Psi}(P,Q)\}^2.
$$
The right-hand side is positive and $N_{\mathrm{eff}}\to\infty$, proving the
claim.
\end{proof}

\subsection{Finite-sample power bound}

\begin{proof}[Proof of Proposition~\ref{prop:finite-sample-power}]
Write
$$
R_{m,m'}
:=
\widehat\delta_{m,m'}-\delta_\nu^{\Psi}(P,Q).
$$
Independence of the two samples and the covariance trace bound imply
$$
\mathbb E\|R_{m,m'}\|^2
=
\frac{\operatorname{tr}(\Sigma_{P,\nu}^{\Psi})}{m}
+
\frac{\operatorname{tr}(\Sigma_{Q,\nu}^{\Psi})}{m'}
\leq
v^2\left(\frac{1}{m}+\frac{1}{m'}\right)
=
\frac{v^2}{N_{\mathrm{eff}}}.
$$
Under $H_0^{\mathrm{mean}}$, $R_{m,m'}=\widehat\delta_{m,m'}$. Markov's
inequality applied to the squared norm therefore gives
$$
\Pr\left\{
\|\widehat\delta_{m,m'}\|
>
\frac{v}{\sqrt{\alpha N_{\mathrm{eff}}}}
\right\}
\leq\alpha.
$$
At an alternative, failure to reject implies by the reverse triangle inequality
that
$$
\|R_{m,m'}\|
\geq
\Delta_\nu^{\Psi}(P,Q)
-
\frac{v}{\sqrt{\alpha N_{\mathrm{eff}}}}.
$$
When the right-hand side is positive, another application of Markov's
inequality gives
$$
\Pr(\phi_{\alpha,v}=0)
\leq
\frac{v^2/N_{\mathrm{eff}}}
{\left\{
\Delta_\nu^{\Psi}(P,Q)
-v/(\sqrt{\alpha N_{\mathrm{eff}}})
\right\}^2}.
$$
The displayed sample-size condition makes the denominator at least
$v^2/(\beta N_{\mathrm{eff}})$, proving that the Type II error is at most
$\beta$.
\end{proof}

\subsection{Structured signal margins}

\begin{proof}[Proof of Corollary~\ref{cor:structured-uniform-power}]
Consider first the common-prevalence regime. For every
$(P,Q)\in\mathfrak A_{\mathrm{com}}(\tau)$,
Theorem~\ref{thm:population-lower-bound}(i) and the uniform class restrictions
give
$$
\Delta_\nu(P,Q)
\geq
p\left[
\rho_-(s;\nu)-\mathcal E_{\mathrm{com}}(P,Q)
\right]_+
\geq
g_{\mathrm{com}}(\tau).
$$
The same argument using Theorem~\ref{thm:population-lower-bound}(ii) gives
$$
\Delta_\nu(P,Q)
\geq
g_{\mathrm{het}}(\tau)
$$
for every $(P,Q)\in\mathfrak A_{\mathrm{het}}(\tau)$. Apply
Proposition~\ref{prop:finite-sample-power} with the corresponding uniform
signal lower bound.
\end{proof}

\begin{proof}[Proof of Proposition~\ref{prop:no-uniform-power-unrestricted}]
Under the null pair $(Q,Q)$, the joint law of the two samples is
$Q^{\otimes m}\otimes Q^{\otimes m}$. Under the alternative pair
$(Q,P_\varepsilon)$, it is
$Q^{\otimes m}\otimes P_\varepsilon^{\otimes m}$. The common first factor
cancels in total variation, so for any level-$\alpha$ test $\phi_m$,
$$
\Pr_{Q,P_\varepsilon}(\phi_m=1)
\leq
\alpha
+
\operatorname{TV}
\left(
P_\varepsilon^{\otimes m},Q^{\otimes m}
\right).
$$
The one-observation total variation distance is
$\operatorname{TV}(P_\varepsilon,Q)=\varepsilon$. Tensorization gives the
simple bound
$$
\operatorname{TV}
\left(
P_\varepsilon^{\otimes m},Q^{\otimes m}
\right)
\leq
m\varepsilon.
$$
For any target power $1-\beta>\alpha$, choose
$\varepsilon<(1-\beta-\alpha)/m$. The geometric mean-measure separation
remains $\tau$ by Example~\ref{ex:pop-obstruction}, but the power is below
$1-\beta$. Thus no fixed sample size controls power uniformly over the
unrestricted separated class.
\end{proof}

\subsection{Information-modulus comparison and minimax lower bound}
\label{app:minimax-information-comparison}

\subsubsection{Auxiliary information bounds}

\begin{lemma}[Information contraction under diagram formation]
\label{lem:diagram-information-contraction}
Let $\widetilde P$ and $\widetilde Q$ be laws on a labeled latent-feature
space, and let $\mathcal T$ map a labeled feature configuration to its
unlabeled persistence diagram. If
$P=\widetilde P\circ\mathcal T^{-1}$ and
$Q=\widetilde Q\circ\mathcal T^{-1}$, then
$$
\operatorname{KL}(P\|Q)
\leq
\operatorname{KL}(\widetilde P\|\widetilde Q).
$$
\end{lemma}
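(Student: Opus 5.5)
This is the data-processing inequality for Kullback--Leibler divergence, applied to a fixed measurable map. The plan is to reduce it to the variational (Donsker--Varadhan) characterization of KL, or equivalently to the chain rule for KL under a disintegration. First we verify that the diagram-formation map $\mathcal T$ from the labeled latent-feature space into $(\mathcal D_n,d_B)$ is Borel. The map $F$ in the proof of Lemma~\ref{lem:Dn-polish} is continuous. A labeled configuration $\{(I_j,Z_j)\}_{j\le m}$ is sent to the multiset of $Z_j$ with $I_j=1$, which can be written as $F$ applied to the coordinates $I_jZ_j+(1-I_j)w$, where $w$ is a fixed diagonal point. This is a composition of measurable maps, so $P$ and $Q$ are well-defined Borel laws on $\mathcal D_n$.

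Next we dispose of the trivial case. If $\operatorname{KL}(\widetilde P\|\widetilde Q)=\infty$ there is nothing to prove. Otherwise $\widetilde P\ll\widetilde Q$, and then $P\ll Q$, since $Q(B)=0$ forces $\widetilde Q(\mathcal T^{-1}B)=0$, hence $\widetilde P(\mathcal T^{-1}B)=0$ and $P(B)=0$.

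For the main inequality we use Donsker--Varadhan:
\[
\operatorname{KL}(P\|Q)=\sup_f\Bigl\{\int f\,dP-\log\int e^f\,dQ\Bigr\},
\]
with the supremum over bounded Borel $f$ on $\mathcal D_n$. For each such $f$, the function $f\circ\mathcal T$ is bounded and Borel on the latent space. By the change-of-variables formula,
\[
\int f\,dP-\log\int e^f\,dQ=\int f\circ\mathcal T\,d\widetilde P-\log\int e^{f\circ\mathcal T}\,d\widetilde Q\le\operatorname{KL}(\widetilde P\|\widetilde Q).
\]
Taking the supremum over $f$ gives the claim.

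An alternative route writes the density as $dP/dQ=\mathbb E_{\widetilde Q}[\,d\widetilde P/d\widetilde Q\mid\mathcal T\,]$ and applies conditional Jensen to the convex function $u\mapsto u\log u$. That route needs a regular conditional expectation, which is available because both spaces are Polish.

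The only genuine obstacle is measurability and the standard-Borel setting needed to invoke either characterization. Compactness of $\mathcal D_n$ (Lemma~\ref{lem:Dn-polish}) and the continuity argument above handle this, so the rest is routine.
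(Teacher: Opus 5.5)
Your proposal is correct and follows the paper, whose proof simply invokes the data-processing inequality for KL divergence; your Donsker--Varadhan argument is a standard self-contained proof of that inequality. The measurability check via the continuous map $F$ from Lemma~\ref{lem:Dn-polish} is a sensible addition, but neither the variational formula nor the conditional-expectation route actually needs a Polish or standard-Borel structure, so measurability of $\mathcal T$ is the only thing you need to verify.
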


\begin{proof}
This is the data-processing inequality for Kullback--Leibler divergence.
\end{proof}

\subsubsection{Common-prevalence hard subfamily}
\label{app:common-prevalence-hard-subfamily}

We construct the null and alternative diagram laws used in the
common-prevalence information-modulus comparison. The construction uses the
certified singleton path from Assumption~\ref{ass:common-prevalence-minimax-richness}. Its realization for PLACE and PALACE is discussed in Remark~\ref{rem:minimax-scale-conditions}.

\begin{lemma}[Common-prevalence hard pair]
\label{lem:common-prevalence-hard-pair}
Suppose Definition~\ref{ass:uniform-structured-alternatives} and
Assumption~\ref{ass:common-prevalence-minimax-richness} hold. Then there is a
constant $C_{\mathrm{hard}}<\infty$, independent of
$\tau\in\mathcal T_{\mathrm{com}}$, such that, for every
$\tau\in\mathcal T_{\mathrm{com}}$, there exist $ (P_\tau,Q_\tau) \in \mathfrak A_{\mathrm{com}}(\tau)$ and $(P_{0,\tau},Q_{0,\tau}) \in \mathfrak H_0^{\mathrm{mean}}(v)$ satisfying
$$
\operatorname{KL}(P_\tau\|P_{0,\tau}) + \operatorname{KL}(Q_\tau\|Q_{0,\tau})
\le C_{\mathrm{hard}} g_{\mathrm{com}}(\tau)^2.
$$
\end{lemma}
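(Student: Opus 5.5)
The plan is to build, for each $\tau\in\mathcal T_{\mathrm{com}}$, a single-feature ($m_P=m_Q=1$) common-prevalence alternative together with a null pair that shares all randomness except the location of the one template point. I will then bound the KL divergence by a quadratic in the location shift, which is of order $\tau^2$, and convert $\tau^2$ into $g_{\mathrm{com}}(\tau)^2$ using Assumption~\ref{ass:common-prevalence-minimax-richness}.

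\emph{Construction.} I fix a prevalence $p\in[\pi_{\min},1]$ and a reference center $z^0_\tau$ in a fixed compact set $K$ strictly inside $\{(b,d):0<b<d<L\}$.

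1. Choose one-point templates $D_{P,\tau}^\star=\{z_{P,\tau}\}$ and $D_{Q,\tau}^\star=\{z_{Q,\tau}\}$ in $K$ with the following properties:
   (a) $d_B(D_{P,\tau}^\star,D_{Q,\tau}^\star)=\tau$, for instance by displacing $z_{P,\tau}$ and $z_{Q,\tau}$ from $z^0_\tau$ in opposite directions by $\ell_\infty$-distance $\tau/2$, with $\tau$ small relative to the persistence on $K$ so that the direct matching is optimal;
   (b) the pair lies in $\mathcal C_{\nu,\tau}$. One-point pairs are the easiest certified pairs: they are trivially non-interfering for PALACE and coherent for PLACE, as used in Remark~\ref{rem:minimax-scale-conditions}.
2. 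For the perturbation, take a smooth location family $G_z$ on a fixed compact set $K'\supset K$, for example a Gaussian with fixed scale $\sigma_0$ truncated to $K'$ and renormalized, with center $z$.
   (a) Its density is strictly positive and $C^2$ in $z$ on the fixed support, so all members are mutually absolutely continuous.
   (b) Because $d_0\le L/2$ is bounded, the tail condition \eqref{eq:template-tail-condition} holds with the given $A_0$ for a suitable $\sigma_{rj}\equiv\sigma_0$.
   (c) I pick $\sigma_0$ independent of $\tau$ so that $\mathcal E_{\mathrm{com}}=2A_0L_\nu\sqrt{\pi/2}\,\sigma_0\le e_0\le\overline e_{\mathrm{com}}(\tau)$. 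This is where the lower bound $e_0>0$ is used: it allows a fixed, non-degenerate noise level.
3. Define the laws.
   (a) $P_\tau$ is the law of $I\delta_{Z}$ with $I\sim\mathrm{Bernoulli}(p)$ and $Z\mid\{I=1\}\sim G_{z_{P,\tau}}$; $Q_\tau$ is defined in the same way with $z_{Q,\tau}$.
   (b) Set $P_{0,\tau}=Q_{0,\tau}$ equal to the same model with center $z^0_\tau$. The mean embeddings then coincide trivially.
4. Check the trace conditions. All embeddings are bounded by $n\kappa_\nu$ (Proposition~\ref{prop:interface-consequences}), so every covariance trace is at most $n^2\kappa_\nu^2$. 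I take this as $\le v^2$ by the class convention, since the remark after Definition~\ref{ass:uniform-structured-alternatives} sets $v=B_\nu$. Hence $(P_\tau,Q_\tau)\in\mathfrak A_{\mathrm{com}}(\tau)$ and $(P_{0,\tau},Q_{0,\tau})\in\mathfrak H_0^{\mathrm{mean}}(v)$.

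\emph{Information bound.} By Lemma~\ref{lem:diagram-information-contraction} it suffices to bound the KL divergence of the labeled pair $(I,Z)$. The Bernoulli factor is identical under both laws and cancels, which gives
\[
\operatorname{KL}(P_\tau\|P_{0,\tau})\le p\,\operatorname{KL}(G_{z_{P,\tau}}\|G_{z^0_\tau}).
\]
The map $z\mapsto\operatorname{KL}(G_z\|G_{z'})$ is $C^2$ on the compact set $K\times K$, vanishes at $z=z'$, and has zero gradient there. Its Hessian is therefore uniformly bounded by some $C_I$, a Fisher-information bound depending only on $\sigma_0$, $K$ and $K'$. This yields $\operatorname{KL}\le C_I\|z-z'\|_2^2\le C_I'\tau^2$, and the same bound holds for $Q_\tau$.

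\emph{Conversion to the margin.} By Assumption~\ref{ass:common-prevalence-minimax-richness},
\[
g_{\mathrm{com}}(\tau)\ge \pi_{\min}\bigl(\rho_-(\tau;\nu)-(1-\gamma)\rho_-(\tau;\nu)\bigr)=\pi_{\min}\gamma\rho_-(\tau;\nu)\ge \pi_{\min}\gamma c_\rho\tau.
\]
Hence
\[
\operatorname{KL}(P_\tau\|P_{0,\tau})+\operatorname{KL}(Q_\tau\|Q_{0,\tau})\le 2C_I'\,\tau^2\le C_{\mathrm{hard}}\,g_{\mathrm{com}}(\tau)^2
\]
with $C_{\mathrm{hard}}=2C_I'/(\pi_{\min}\gamma c_\rho)^2$.

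The main obstacle is step 1: producing a certified one-point pair at exact bottleneck separation $\tau$ inside a $\tau$-independent compact set $K$, uniformly over $\mathcal T_{\mathrm{com}}$. This is needed so that the curvature constant $C_I$ does not depend on $\tau$. If $\mathcal T_{\mathrm{com}}$ contains scales too large for both points to stay in $K$, I would instead let $K$ depend only on the bounded range of $\mathcal T_{\mathrm{com}}$, which is bounded because $\tau\le L$. A secondary, construction-specific obstacle is verifying that such pairs lie in $\mathcal C_{\nu,\tau}$ (coverage of $\mathcal R$ for PALACE).
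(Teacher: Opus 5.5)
Your route is the paper's: a one-feature alternative pair at bottleneck distance $\tau$, a common null centered at the midpoint, and truncated-Gaussian location noise at a $\tau$-independent scale $\sigma_0$ chosen using $e_0>0$. It also uses a curvature bound $\operatorname{KL}(G_z\|G_{z'})\lesssim\sigma_0^{-4}\|z-z'\|_2^2$, which the paper obtains from the exponential-family form of the truncated Gaussian, and the conversion $g_{\mathrm{com}}(\tau)\ge\pi_{\min}\gamma c_\rho\tau$.

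The gap is in step~4. The lemma must hold for the arbitrary envelope $0<v<\infty$ fixed in Definition~\ref{ass:uniform-structured-alternatives}. The remark you cite only says that $v=B_\nu$ is one admissible choice, and nothing in the hypotheses gives $v\ge n\kappa_\nu$. The feature-specific analogue, Lemma~\ref{lem:heterogeneous-prevalence-hard-pair}, has to add $v\ge B_\nu$ as an explicit extra assumption. With $p<1$ you cannot repair this by shrinking the noise. Since $\Phi(I\delta_Z;\nu)=I\varphi_\nu^0(Z)$, one has $\operatorname{tr}(\Sigma)\ge p(1-p)\|\mathbb E[\varphi_\nu^0(Z)\mid I=1]\|_2^2$. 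This presence/absence variance is not controlled by the location scale, so membership in $\mathfrak A_{\mathrm{com}}(\tau)$ and $\mathfrak H_0^{\mathrm{mean}}(v)$ can fail for small $v$.

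The paper avoids this by taking $p=1$. Each diagram is then a singleton $\{U\}$ with $\Phi(\{U\};\nu)=\varphi_\nu^0(U)$, and
\[
\operatorname{tr}(\Sigma)\le\mathbb E\|\varphi_\nu^0(U)-\varphi_\nu^0(z)\|_2^2\le L_\nu^2\,\mathbb E\{d_0(U,z)^2\}\le2A_0L_\nu^2\sigma_0^2 .
\]
Adding $2A_0L_\nu^2\sigma_0^2\le v^2$ to the conditions on $\sigma_0$ handles every $v>0$, for the alternative and the null alike. The only cost is a larger, still $\tau$-free, KL constant.

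Two smaller points. First, for one-point diagrams
\[
d_B(\{x\},\{y\})=d_0(x,y)\le\max\{\delta_{\mathsf{Diag}}(x),\delta_{\mathsf{Diag}}(y)\}\le L/2,
\]
with strict inequality on a compact subset of $\{0<b<d<L\}$. So your fallback ``$\tau\le L$'' does not cover scales in $[L/2,L]$. The construction needs $\sup\mathcal T_{\mathrm{com}}<L/2$, which the paper's proof takes as given ($\overline\tau<L/2$) before placing $a_\tau=(\varepsilon,L-\varepsilon)$ and $b_\tau=(\varepsilon+\tau,L-\varepsilon)$.

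Second, the tail condition with parameter $\sigma_0$ does not follow from boundedness of $d_0$. That argument only gives it for a large parameter, which contradicts step~2(c). It follows instead from $d_0(x,z)\le\|x-z\|_2$ together with a retained Gaussian mass of at least $A_0^{-1}$. That mass bound holds once $\sigma_0$ is small relative to the distance from $K$ to the complement of $K'\subseteq\mathbb H_L$, so all constraints on $\sigma_0$ point the same way.
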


\begin{proof}[Proof of Lemma~\ref{lem:common-prevalence-hard-pair}]
Because $\overline\tau<L/2$, choose $\varepsilon>0$ such that
$$
\overline\tau
<
\frac{L-2\varepsilon}{2}.
$$
For $\tau\in\mathcal T_{\mathrm{com}}$, define
$$
a_\tau=(\varepsilon,L-\varepsilon),
\qquad
b_\tau=(\varepsilon+\tau,L-\varepsilon),
\qquad
c_\tau=\frac{a_\tau+b_\tau}{2}.
$$
The direct matching cost between the two singleton diagrams is
$$
\|a_\tau-b_\tau\|_\infty=\tau.
$$
The cost of sending both points to the diagonal is
$$
\max\left\{
\delta_{\mathsf{Diag}_L}(a_\tau),
\delta_{\mathsf{Diag}_L}(b_\tau)
\right\}
=
\frac{L-2\varepsilon}{2}
>
\tau.
$$
The one-point bottleneck formula therefore gives
$$
d_B(\{a_\tau\},\{b_\tau\})=\tau.
$$
Moreover, $a_\tau$, $b_\tau$, and $c_\tau$ belong to the fixed compact set
$$
K
=
\left\{
(x,L-\varepsilon):
\varepsilon\leq x\leq\varepsilon+\overline\tau
\right\}
\subset
\{(b,d):0<b<d<L\}.
$$
The next paragraph verifies certification for the embedding under consideration.

\noindent \textbf{For PLACE}, every singleton pair is coherent at each active scale. Hence
$$
(\{a_\tau\},\{b_\tau\})
\in
\mathcal C_{\nu,\tau}^{\mathrm{PLACE}}
$$
for every $\tau$ in the applicable PLACE scale set.

\noindent \textbf{For PALACE}, non-interference is automatic for singleton diagrams. On a scale
set for which the configuration is $\tau$-admissible and
$a_\tau,b_\tau\in\mathcal R$, we have
$$
(\{a_\tau\},\{b_\tau\})
\in
\mathcal C_{\nu,\tau}^{\mathrm{PALACE}}.
$$

\smallskip

\noindent We then construct a common family of perturbation laws. Since
$K\subset\operatorname{int}(\mathbb H_L)$ is compact,
$$ r_K := \operatorname{dist}_2\left(K,\mathbb R^2\setminus\mathbb H_L\right)>0.
$$
Let $Z\sim N_2(0,I_2)$. For $z\in K$ and $\sigma>0$, define
$G_{z,\sigma}$ as the conditional law
$$
G_{z,\sigma} := \mathcal L\left(z+\sigma Z\mathrel{\Big|}z+\sigma Z\in\mathbb H_L\right).
$$
The corresponding densities are positive on the same set $\mathbb H_L$. Put
$$
q_\sigma:=\inf_{z\in K}\Pr\{z+\sigma Z\in\mathbb H_L\}.
$$
For every $z\in K$,
$$
\Pr\{z+\sigma Z\in\mathbb H_L\} \ge \Pr\{\|\sigma Z\|_2<r_K\}.
$$
Consequently,
$$
q_\sigma\longrightarrow 1, \qquad \text{as} ~~  \sigma\downarrow 0
$$
By parts~(ii) and~(iii) of
Assumption~\ref{ass:common-prevalence-minimax-richness}, we have
$e_0>0$, $A_0>1$, and $v>0$. We may therefore choose a fixed
$\sigma_0>0$ small enough that
$$
q_{\sigma_0}\ge A_0^{-1}, \qquad
2A_0L_\nu\sigma_0\sqrt{\frac{\pi}{2}} \le e_0, \qquad \text{and}  \qquad 2A_0L_\nu^2\sigma_0^2 \le v^2.
$$
This choice is independent of $\tau\in\mathcal T_{\mathrm{com}}$.

Now let $U\sim G_{z,\sigma_0}$. Since
$$
d_0(U,z) \le \|U-z\|_\infty \le \|U-z\|_2,
$$
the two-dimensional Gaussian tail formula gives, for every $t\ge0$,
$$
\begin{aligned}
G_{z,\sigma_0}\{x:d_0(x,z)>t\}
&\le
\frac{
\Pr\{\|\sigma_0Z\|_2>t\}
}{
\Pr\{z+\sigma_0Z\in\mathbb H_L\}
}
\\
&\le
A_0
\exp\left(
-\frac{t^2}{2\sigma_0^2}
\right).
\end{aligned}
$$
Thus the perturbation-tail condition holds uniformly over $z\in K$ with
scale parameter $\sigma_0$.

For $z\in K$, let $\mathsf R_z$ denote the law of the one-point diagram
${U}$ when $U\sim G_{z,\sigma_0}$. Define
$$
P_\tau:=\mathsf R_{a_\tau},
\qquad
Q_\tau:=\mathsf R_{b_\tau}.
$$
This is a template--thinning--perturbation model with one feature in each
group and common prevalence $p=1$. Hence $p\ge\pi_{\min}$. The template
separation is $\tau$, and part~(i) of
Assumption~\ref{ass:common-prevalence-minimax-richness} gives the required
certification. The perturbation penalty satisfies
$$
\begin{aligned}
\mathcal E_{\mathrm{com}}(P_\tau,Q_\tau)
&=
2A_0L_\nu\sigma_0\sqrt{\frac{\pi}{2}}
\\
&\le
e_0
\\
&\le
\overline e_{\mathrm{com}}(\tau).
\end{aligned}
$$

We next verify the covariance restriction. The Hilbert-space variance identity
gives
$$
\begin{aligned}
\operatorname{tr}
\left(
\Sigma_{\mathsf R_z,\nu}^{\Phi}
\right)
&=
\mathbb E
\left\|
\Phi(\{U\};\nu)
-
\mathbb E\Phi(\{U\};\nu)
\right\|_2^2
\\
&\le
\mathbb E
\left\|
\Phi(\{U\};\nu)-\Phi(\{z\};\nu)
\right\|_2^2.
\end{aligned}
$$
For a one-point diagram, the additive representation and the zero response of
the absence state give
$$
\Phi(\{U\};\nu)=\varphi_\nu^0(U),
\qquad
\Phi(\{z\};\nu)=\varphi_\nu^0(z).
$$
Point-level Lipschitz continuity therefore implies
$$
\operatorname{tr}
\left(
\Sigma_{\mathsf R_z,\nu}^{\Phi}
\right)
\le
L_\nu^2
\mathbb E\{d_0(U,z)^2\}.
$$
Integrating the tail bound yields
$$
\begin{aligned}
\mathbb E\{d_0(U,z)^2\}
&=
\int_0^\infty
2t\,
G_{z,\sigma_0}\{x:d_0(x,z)>t\}
\,dt
\\
&\le
2A_0\sigma_0^2.
\end{aligned}
$$
Hence
$$
\operatorname{tr}
\left(
\Sigma_{\mathsf R_z,\nu}^{\Phi}
\right)
\le
2A_0L_\nu^2\sigma_0^2
\le
v^2.
$$

Thus we have proved that
$$
(P_\tau,Q_\tau)
\in
\mathfrak A_{\mathrm{com}}(\tau).
$$
For the null pair, set
$$
P_{0,\tau}
=
Q_{0,\tau}
:=
\mathsf R_{c_\tau}.
$$
The two laws are identical. Their mean embeddings are therefore equal. The
covariance calculation above applies with $z=c_\tau$, so
$$
(P_{0,\tau},Q_{0,\tau})
\in
\mathfrak H_0^{\mathrm{mean}}(v).
$$
It remains to bound the information divergence. Put
$$
D_L := \operatorname{diam}_2(\mathbb H_L) \le \sqrt{2}L.
$$
The truncated Gaussian laws form an exponential family on the common carrier
$\mathbb H_L$. For $\eta\in\mathbb R^2$, define
$$
A(\eta) := \log \int_{\mathbb H_L} \exp\left\{\eta^\top x- \frac{\|x\|_2^2}{2\sigma_0^2}
\right\}\,dx.
$$
The natural parameter associated with the center $z$ is
$$
\eta_z:=\frac{z}{\sigma_0^2}.
$$
The log-partition function satisfies
$$
\nabla A(\eta) = \mathbb E_\eta(X), \quad \text{and } \quad \nabla^2A(\eta)
=\operatorname{Cov}_\eta(X).
$$
Since every distribution in the family is carried by $\mathbb H_L$,
$$
\|\nabla^2A(\eta)\|_{\mathrm{op}} \le D_L^2.
$$
For $z,c\in K$, the exponential-family KL identity and Taylor's theorem give
$$
\begin{aligned}
\operatorname{KL}
\left(
G_{z,\sigma_0}\|G_{c,\sigma_0}
\right)
&=
A(\eta_c)-A(\eta_z)
-
\nabla A(\eta_z)^\top(\eta_c-\eta_z)
\\
&\le
\frac{D_L^2}{2}
\|\eta_c-\eta_z\|_2^2
\\
&=
\frac{D_L^2}{2\sigma_0^4}
\|z-c\|_2^2.
\end{aligned}
$$
The map $x\mapsto{x}$ is a Borel isomorphism from $\mathbb H_L$ onto the
subspace of one-point diagrams. Passing from the point laws to the induced
diagram laws therefore preserves KL divergence. It follows that
$$
\begin{aligned}
&
\operatorname{KL}(P_\tau\|P_{0,\tau})
+
\operatorname{KL}(Q_\tau\|Q_{0,\tau})
\\
&\qquad\le
\frac{D_L^2}{2\sigma_0^4}
\left\{
\|a_\tau-c_\tau\|_2^2
+
\|b_\tau-c_\tau\|_2^2
\right\}.
\end{aligned}
$$
Since $c_\tau=(a_\tau+b_\tau)/2$,
$$
\begin{aligned}
\|a_\tau-c_\tau\|_2^2
+
\|b_\tau-c_\tau\|_2^2
&=
\frac{1}{2}
\|a_\tau-b_\tau\|_2^2
\\
&\le
\|a_\tau-b_\tau\|_\infty^2
\\
&=
\tau^2.
\end{aligned}
$$
Therefore,
$$
\operatorname{KL}(P_\tau\|P_{0,\tau})
+
\operatorname{KL}(Q_\tau\|Q_{0,\tau})
\le
\frac{D_L^2}{2\sigma_0^4}\tau^2.
$$
Part~(ii) of
Assumption~\ref{ass:common-prevalence-minimax-richness} gives
$$
\overline e_{\mathrm{com}}(\tau)
\le
(1-\gamma)\rho_-(\tau;\nu).
$$
The quantity inside the positive part defining
$g_{\mathrm{com}}(\tau)$ is therefore positive, and
$$
\begin{aligned}
g_{\mathrm{com}}(\tau)
&=
\pi_{\min}
\left\{
\rho_-(\tau;\nu)
-
\overline e_{\mathrm{com}}(\tau)
\right\}
\\
&\ge
\pi_{\min}\gamma\rho_-(\tau;\nu)
\\
&\ge
\pi_{\min}\gamma c_\rho\tau.
\end{aligned}
$$
Consequently,
$$
\tau^2
\le
\frac{
g_{\mathrm{com}}(\tau)^2
}{
\pi_{\min}^2\gamma^2c_\rho^2
}.
$$
Combining the preceding bounds gives
$$
\operatorname{KL}(P_\tau\|P_{0,\tau})
+
\operatorname{KL}(Q_\tau\|Q_{0,\tau})
\le
C_{\mathrm{KL}}
g_{\mathrm{com}}(\tau)^2,
$$
where
$$
C_{\mathrm{KL}}
:=
\frac{D_L^2}
{2\sigma_0^4
\pi_{\min}^2
\gamma^2
c_\rho^2}.
$$
The constant is finite and does not depend on
$\tau\in\mathcal T_{\mathrm{com}}$. Taking the infimum in the definition of
$\mathcal K_{\mathrm{com}}(\tau)$ proves the claim.
\end{proof}

\subsubsection{Feature-specific-prevalence hard subfamily}
\label{app:feature-specific-prevalence-hard-subfamily}

We construct the hard family by perturbing a common finite template in its
birth, death, and prevalence parameters. The perturbations are defined in
diagram space rather than in the embedding space.

\begin{lemma}[Feature-specific-prevalence hard pair]
\label{lem:heterogeneous-prevalence-hard-pair}
Suppose Assumptions~\ref{ass:additive-interface},
\ref{ass:uniformly-bounded-embedding}, and
\ref{ass:feature-specific-prevalence-minimax-richness} hold. Suppose also that
the covariance envelope satisfies $v\geq B_\nu$. Then there is a constant
$C_{\mathrm{hard,het}}<\infty$, independent of
$\tau\in\mathcal T_{\mathrm{het}}$, such that, for every
$\tau\in\mathcal T_{\mathrm{het}}$, there exist $(P_\tau,Q_\tau)\in\mathfrak A_{\mathrm{het}}(\tau)$ and $(P_{0,\tau},Q_{0,\tau}) \in\mathfrak H_0^{\mathrm{mean}}(v)
$ satisfying
$$
\operatorname{KL}(P_\tau\|P_{0,\tau}) + \operatorname{KL}(Q_\tau\|Q_{0,\tau})
\leq C_{\mathrm{hard,het}}g_{\mathrm{het}}(\tau)^2.
$$
\end{lemma}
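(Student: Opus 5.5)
Following the common-prevalence construction in Lemma~\ref{lem:common-prevalence-hard-pair}, I would use the path supplied by Assumption~\ref{ass:feature-specific-prevalence-minimax-richness}(i). For each $\tau\in\mathcal T_{\mathrm{het}}$, the alternative $P_\tau$ is the template--thinning--perturbation law with centers $z_{Pj,\tau}$ and prevalences $p_{Pj,\tau}$, and $Q_\tau$ is defined analogously. Both use independent Bernoulli indicators and independent truncated Gaussian perturbations $G_{z,\sigma_0}$ on $\mathbb H_L$. The null pair is $P_{0,\tau}=Q_{0,\tau}$, the law built the same way from the reference centers $z^0_{j,\tau}$ and prevalences $p^0_\tau$. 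Because the two null laws coincide, their mean embeddings are equal.

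The first task is to place the null pair in $\mathfrak H_0^{\mathrm{mean}}(v)$. The trace of each covariance is at most $\mathbb E\|\Phi\|_2^2\le B_\nu^2\le v^2$ by Assumption~\ref{ass:uniformly-bounded-embedding} and $v\ge B_\nu$, and the same bound applies to the alternatives.

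Next I would check that $(P_\tau,Q_\tau)\in\mathfrak A_{\mathrm{het}}(\tau)$.
\begin{itemize}
\item The template separation is $s=\tau$, and the pair is certified by part~(i).
\item The prevalences lie in $[p_*,1-p_*]$.
\item We have $\eta_{P,Q}\ge c_*/\kappa_*\ge\eta_{\min}$ by part~(ii), which also gives $c\neq0$, $q\neq0$ and positive restricted gain.
\item For the tails, choose a $\tau$-independent $\sigma_0$ with $q_{\sigma_0}\ge A_0^{-1}$, exactly as in the common case, using $\operatorname{dist}(K_{\mathrm{het}},\mathbb R^2\setminus\mathbb H_L)>0$. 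Also require $2MA_0L_\nu\sigma_0\sqrt{\pi/2}\le e_{0,\mathrm{het}}$. Since $p_{rj}\le1$, this gives $\mathcal E_{\mathrm{het}}\le e_{0,\mathrm{het}}\le\overline e_{\mathrm{het}}(\tau)$.
\end{itemize}

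The KL bound comes from Lemma~\ref{lem:diagram-information-contraction}. We pass to the labeled latent space, where the laws are products over $j$ of (Bernoulli $\times$ truncated Gaussian on $\{I=1\}$). The coordinate on $\{I=0\}$ can be given a fixed common law, so it contributes nothing. By the chain rule, the per-feature divergence equals $\operatorname{KL}(\mathrm{Ber}(p)\|\mathrm{Ber}(p^0))+p\,\operatorname{KL}(G_{z,\sigma_0}\|G_{z^0,\sigma_0})$.

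\begin{itemize}
\item The Bernoulli term is bounded by the $\chi^2$ divergence, $|p-p^0|^2/\{p^0(1-p^0)\}\le|p-p^0|^2/(p_*(1-p_*))$.
\item The Gaussian term is at most $D_L^2\|z-z^0\|_2^2/(2\sigma_0^4)$, by the exponential-family Taylor bound already proved.
\end{itemize}

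Summing over $j$ and both groups and using part~(i) gives $\operatorname{KL}(P_\tau\|P_{0,\tau})+\operatorname{KL}(Q_\tau\|Q_{0,\tau})\le C'C_0\tau^2$. Here $C'=\max\{1/(p_*(1-p_*)),D_L^2/(2\sigma_0^4)\}$, which does not depend on $\tau$.

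Finally, part~(iii) gives
\[
g_{\mathrm{het}}(\tau)\ge\gamma_{\mathrm{het}}\eta_{\min}\rho_-(\tau;\nu)\ge\gamma_{\mathrm{het}}\eta_{\min}c_{\rho,\mathrm{het}}\tau,
\]
so $\tau^2\lesssim g_{\mathrm{het}}(\tau)^2$. Combining this with the KL bound yields the claim with $C_{\mathrm{hard,het}}=C'C_0/(\gamma_{\mathrm{het}}\eta_{\min}c_{\rho,\mathrm{het}})^2$.

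The main obstacle is the contraction step. The model places no mass on the diagonal, but distinct labeled configurations can collide, and features may be absent. I would handle this by keeping the latent space fully labeled, including the dummy coordinate on absence. Data processing then only needs measurability of the forgetful map $\mathcal T$, which holds because it is continuous into $(\mathcal D_n,d_B)$. I would also confirm that $M\le n$ keeps all diagrams in $\mathcal D_n$.
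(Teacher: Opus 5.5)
Your proposal is correct and follows essentially the same route as the paper. Both arguments use independent thinning with truncated Gaussian perturbations at a fixed, $\tau$-independent $\sigma_0$, and both take the identical reference-parameter law as the mean-null pair. The KL bound is obtained the same way: a per-feature chain-rule split into a Bernoulli term plus $p$ times the exponential-family location bound, then product additivity and data processing. Part~(iii) then converts $\tau^2$ into $g_{\mathrm{het}}(\tau)^2$.

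The differences are cosmetic:
\begin{itemize}
\item Your choice of $\sigma_0$ uses $M$ where the paper uses $n$, which is slightly sharper since $M\le n$.
\item You encode absence with a fixed dummy law where the paper uses an absence symbol $\partial$.
\item You justify the Bernoulli bound via the $\chi^2$ divergence and $p^0(1-p^0)\ge p_*(1-p_*)$, whereas the paper states it without comment.
\end{itemize}
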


\begin{proof}
Fix $\tau\in\mathcal T_{\mathrm{het}}$. Let $M$, the alternative parameters
$$
\{(z_{Pj,\tau},p_{Pj,\tau})\}_{j=1}^M,
\qquad
\{(z_{Qj,\tau},p_{Qj,\tau})\}_{j=1}^M,
$$
and the common reference parameters
$$
\{(z_{j,\tau}^0,p_{j,\tau}^0)\}_{j=1}^M
$$
be supplied by
Assumption~\ref{ass:feature-specific-prevalence-minimax-richness}.
All centers belong to the fixed compact set
$$
K_{\mathrm{het}}
\subset
\{(b,d):0<b<d<L\},
$$
and all prevalence probabilities belong to $[p_*,1-p_*]$. We first select a common perturbation scale. Let $Z$ be a standard Gaussian
vector in $\mathbb R^2$. For $z\in K_{\mathrm{het}}$ and $\sigma>0$, define
$$
G_{z,\sigma} :=
\mathcal L
\left(
z+\sigma Z
\mathrel{\Big|}
z+\sigma Z\in\mathbb H_L
\right).
$$
Because $K_{\mathrm{het}}$ is a compact subset of the interior of
$\mathbb H_L$,
$$
\inf_{z\in K_{\mathrm{het}}}
\Pr\{z+\sigma Z\in\mathbb H_L\}
\longrightarrow1
$$
as $\sigma\downarrow0$. Since $A_0>1$ and
$e_{0,\mathrm{het}}>0$, we may choose a fixed
$\sigma_0>0$ such that
$$
\inf_{z\in K_{\mathrm{het}}}
\Pr\{z+\sigma_0Z\in\mathbb H_L\}
\geq A_0^{-1}
$$
and
$$
2A_0nL_\nu\sigma_0\sqrt{\frac{\pi}{2}}
\leq e_{0,\mathrm{het}}.
$$
If $U\sim G_{z,\sigma_0}$, then
$d_0(U,z)\leq|U-z|_2$. Therefore, for every $t\geq0$,
$$
\begin{aligned}
G_{z,\sigma_0}\{x:d_0(x,z)>t\}
&\leq
\frac{
\Pr\{\|\sigma_0Z\|_2>t\}
}{
\Pr\{z+\sigma_0Z\in\mathbb H_L\}
}\\
&\leq
A_0
\exp\left(
-\frac{t^2}{2\sigma_0^2}
\right).
\end{aligned}
$$
Thus the required perturbation-tail condition holds uniformly over all
centers in $K_{\mathrm{het}}$.

For each group $r\in{P,Q}$ and feature $j$, let
$I_{rj,\tau}$ be independent Bernoulli variables with
$$
\Pr(I_{rj,\tau}=1)=p_{rj,\tau}.
$$
Conditional on $I_{rj,\tau}=1$, let the feature location have law
$G_{z_{rj,\tau},\sigma_0}$. Take all thinning indicators and location
perturbations to be independent. Let $P_\tau$ and $Q_\tau$ denote the
resulting diagram laws. This independent construction is permitted by the
template-thinning-perturbation model, which does not require independence but
allows it.

The template diagrams are
$$
D_{P,\tau}^\star
=
\{z_{P1,\tau},\ldots,z_{PM,\tau}\},
\qquad
D_{Q,\tau}^\star
=
\{z_{Q1,\tau},\ldots,z_{QM,\tau}\}.
$$
Assumption~\ref{ass:feature-specific-prevalence-minimax-richness} gives
$$
d_B(D_{P,\tau}^\star,D_{Q,\tau}^\star)=\tau
$$
and
$$
(D_{P,\tau}^\star,D_{Q,\tau}^\star)
\in\mathcal C_{\nu,\tau}.
$$
Write
$$
V_\tau:=V_{P_\tau,Q_\tau},
\qquad
\mathcal S_\tau
:=
\operatorname{span}\{c_\tau,q_\tau\}.
$$
The same assumption gives
$$
\operatorname{cond}(V_\tau;\mathcal S_\tau)
\leq\kappa_*
$$
and
$$
\|c_\tau\|_2\geq c_*\|q_\tau\|_2.
$$
Since $d_B(D_{P,\tau}^\star,D_{Q,\tau}^\star)>0$, we have
$q_\tau\neq0$. Hence $c_\tau\neq0$, and
$$
\begin{aligned}
\eta_{P_\tau,Q_\tau}
&=
\frac{\|c_\tau\|_2}
{
\operatorname{cond}(V_\tau;\mathcal S_\tau)
\|q_\tau\|_2
}\\
&\geq
\frac{c_*}{\kappa_*}
\geq
\eta_{\min}.
\end{aligned}
$$
The perturbation penalty satisfies
$$
\begin{aligned}
\mathcal E_{\mathrm{het}}(P_\tau,Q_\tau)
&=
A_0L_\nu\sqrt{\frac{\pi}{2}}
\left\{
\sum_{j=1}^M p_{Pj,\tau}\sigma_0
+
\sum_{j=1}^M p_{Qj,\tau}\sigma_0
\right\}\\
&\leq
2A_0nL_\nu\sigma_0\sqrt{\frac{\pi}{2}}\\
&\leq
e_{0,\mathrm{het}}\\
&\leq
\overline e_{\mathrm{het}}(\tau).
\end{aligned}
$$
Every law constructed above is supported on $\mathcal D_n$. Uniform
boundedness of the embedding therefore gives
$$
\operatorname{tr}(\Sigma_{P_\tau,\nu}^{\Phi})
\leq B_\nu^2
\leq v^2
$$
and
$$
\operatorname{tr}(\Sigma_{Q_\tau,\nu}^{\Phi})
\leq B_\nu^2
\leq v^2.
$$
We have proved that
$$
(P_\tau,Q_\tau)
\in
\mathfrak A_{\mathrm{het}}(\tau).
$$
We next construct the null pair. For each $j$, let
$I_{j,\tau}^0$ be Bernoulli with success probability $p_{j,\tau}^0$.
Conditional on $I_{j,\tau}^0=1$, let the corresponding feature location have
law $G_{z_{j,\tau}^0,\sigma_0}$. Take these variables to be independent, and
let $R_\tau^0$ be the resulting diagram law. Define
$$
P_{0,\tau}=Q_{0,\tau}=R_\tau^0.
$$
The two null laws are identical, so
$$
\theta_{P_{0,\tau},\nu}^{\Phi}
=
\theta_{Q_{0,\tau},\nu}^{\Phi}.
$$
They are supported on $\mathcal D_n$, and hence
$$
\operatorname{tr}(\Sigma_{P_{0,\tau},\nu}^{\Phi})
=
\operatorname{tr}(\Sigma_{Q_{0,\tau},\nu}^{\Phi})
\leq
B_\nu^2
\leq v^2.
$$
It follows that
$$
(P_{0,\tau},Q_{0,\tau})
\in
\mathfrak H_0^{\mathrm{mean}}(v).
$$
It remains to bound the information divergence. Introduce an absence symbol
$\partial$ and define the one-slot latent law
$$
H_{p,z}
:=
(1-p)\delta_{\partial}
+
pG_{z,\sigma_0}
$$
on ${\partial}\cup\mathbb H_L$. Since
$p,p^0\in[p_*,1-p_*]$, the absence and presence components have positive
probability under both laws. The Gaussian components also have common support
$\mathbb H_L$. Direct decomposition over the absence and presence events gives
$$
\operatorname{KL}(H_{p,z}\|H_{p^0,z^0})
=
\operatorname{kl}(p\|p^0)
+
p\operatorname{KL}
\left(
G_{z,\sigma_0}\|G_{z^0,\sigma_0}
\right),
$$
where $\operatorname{kl}$ denotes Bernoulli KL divergence. Moreover,
$$
\operatorname{kl}(p\|p^0)
\leq
\frac{(p-p^0)^2}{p_*(1-p_*)}.
$$
We next control the location contribution. Let
$$
D_L:=\operatorname{diam}_2(\mathbb H_L)\leq\sqrt{2}L.
$$
The truncated Gaussian family is an exponential family on the common support
$\mathbb H_L$. Define
$$
A(\eta)
:=
\log
\int_{\mathbb H_L}
\exp\left\{
\eta^\top x-\frac{\|x\|_2^2}{2\sigma_0^2}
\right\}\,dx.
$$
The natural parameter corresponding to center $z$ is
$$
\eta_z=\frac{z}{\sigma_0^2},
$$
and
$$
\nabla^2A(\eta)=\operatorname{Cov}_\eta(X).
$$
Every distribution in this family is supported on a set of Euclidean diameter
$D_L$. Therefore,
$$
\|\nabla^2A(\eta)\|_{\mathrm{op}}
\leq D_L^2.
$$
Taylor's theorem gives
$$
\begin{aligned}
\operatorname{KL}
\left(
G_{z,\sigma_0}\|G_{z^0,\sigma_0}
\right)
&\leq
\frac{D_L^2}{2}
\|\eta_z-\eta_{z^0}\|_2^2\\
&=
\frac{D_L^2}{2\sigma_0^4}
\|z-z^0\|_2^2.
\end{aligned}
$$
Set
$$
C_{\mathrm{loc}}
:=
\max\left\{
\frac{1}{p_*(1-p_*)},
\frac{D_L^2}{2\sigma_0^4}
\right\}.
$$
Combining the Bernoulli and location bounds yields
$$
\operatorname{KL}(H_{p,z}\|H_{p^0,z^0})
\leq
C_{\mathrm{loc}}
\left\{
|p-p^0|^2+\|z-z^0\|_2^2
\right\}.
$$
Let $\widetilde P_\tau$, $\widetilde Q_\tau$, and
$\widetilde R_\tau^0$ denote the corresponding product laws of the labeled
latent features. Additivity of KL divergence for product laws gives
$$
\begin{aligned}
&
\operatorname{KL}(\widetilde P_\tau\|\widetilde R_\tau^0)
+
\operatorname{KL}(\widetilde Q_\tau\|\widetilde R_\tau^0)\\
&\quad\leq
C_{\mathrm{loc}}
\sum_{j=1}^M
\Bigl\{
|p_{Pj,\tau}-p_{j,\tau}^0|^2
+
|p_{Qj,\tau}-p_{j,\tau}^0|^2\\
&\hspace{46mm}
+
\|z_{Pj,\tau}-z_{j,\tau}^0\|_2^2
+
\|z_{Qj,\tau}-z_{j,\tau}^0\|_2^2
\Bigr\}\\
&\quad\leq
C_{\mathrm{loc}}C_0\tau^2.
\end{aligned}
$$
The observed diagram is a measurable function of the labeled latent features.
The data-processing inequality therefore gives
$$
\operatorname{KL}(P_\tau\|P_{0,\tau})
+
\operatorname{KL}(Q_\tau\|Q_{0,\tau})
\leq
C_{\mathrm{loc}}C_0\tau^2.
$$
Finally, the margin condition in
Assumption~\ref{ass:feature-specific-prevalence-minimax-richness} gives
$$
\begin{aligned}
g_{\mathrm{het}}(\tau)
&=
\eta_{\min}\rho_-(\tau;\nu)
-
\overline e_{\mathrm{het}}(\tau)\\
&\geq
\gamma_{\mathrm{het}}
\eta_{\min}\rho_-(\tau;\nu)\\
&\geq
\gamma_{\mathrm{het}}
\eta_{\min}
c_{\rho,\mathrm{het}}\tau.
\end{aligned}
$$
The positive part may be omitted here because the displayed lower bound is
strictly positive. Consequently,
$$
\tau^2
\leq
\frac{
g_{\mathrm{het}}(\tau)^2
}{
\gamma_{\mathrm{het}}^2
\eta_{\min}^2
c_{\rho,\mathrm{het}}^2
}.
$$
Combining the last two bounds proves
$$
\operatorname{KL}(P_\tau\|P_{0,\tau})
+
\operatorname{KL}(Q_\tau\|Q_{0,\tau})
\leq
C_{\mathrm{hard,het}}
g_{\mathrm{het}}(\tau)^2,
$$
where
$$
C_{\mathrm{hard,het}}
:=
\frac{
C_{\mathrm{loc}}C_0
}{
\gamma_{\mathrm{het}}^2
\eta_{\min}^2
c_{\rho,\mathrm{het}}^2
}.
$$
This constant does not depend on
$\tau\in\mathcal T_{\mathrm{het}}$.
\end{proof}

\subsubsection{Proof of Lemma~\ref{lem:information-modulus-comparison}}

\begin{proof}
Fix $\tau\in\mathcal T_{\mathrm{com}}$. By
Lemma~\ref{lem:common-prevalence-hard-pair}, there exist
$$
(P_\tau,Q_\tau) \in \mathfrak A_{\mathrm{com}}(\tau)
$$
and
$$
(P_{0,\tau},Q_{0,\tau}) \in \mathfrak H_0^{\mathrm{mean}}(v)
$$
such that
$$
\operatorname{KL}(P_\tau\|P_{0,\tau}) + \operatorname{KL}(Q_\tau\|Q_{0,\tau})
\le
C_{\mathrm{hard}} g_{\mathrm{com}}(\tau)^2.
$$
By the definition of the information modulus,
$$
\begin{aligned}
\mathcal K_{\mathrm{com}}(\tau)
&\le
\operatorname{KL}(P_\tau\|P_{0,\tau})
+
\operatorname{KL}(Q_\tau\|Q_{0,\tau})
\\
&\le
C_{\mathrm{hard}}
g_{\mathrm{com}}(\tau)^2.
\end{aligned}
$$
The result follows with $C_{\mathrm{KL}}=C_{\mathrm{hard}}$.
\end{proof}

\subsubsection{Proof of the minimax theorem}

\begin{proof}[Proof of Theorem~\ref{thm:structured-prevalence-minimax-rate}]
For the upper bound, take equal group sizes $m=m'$. Then
$N_{\mathrm{eff}}=m/2$. Corollary~\ref{cor:structured-uniform-power} shows
that the test $\phi_{\alpha,v}$ has uniform power at least $1-\beta$ when
$$
\frac{m}{2}
\geq
\frac{v^2}{g_r(\tau)^2}
\left(
\alpha^{-1/2}+\beta^{-1/2}
\right)^2.
$$
This proves the upper bound.

For the lower bound, first suppose $0<\mathcal K_r(\tau)<\infty$. For every
$\varepsilon>0$, the definition of the infimum provides
$(P_\varepsilon,Q_\varepsilon)\in\mathfrak A_r(\tau)$ and
$(P_{0,\varepsilon},Q_{0,\varepsilon})\in
\mathfrak H_0^{\mathrm{mean}}(v)$ such that
$$
\operatorname{KL}(P_\varepsilon\|P_{0,\varepsilon})
+
\operatorname{KL}(Q_\varepsilon\|Q_{0,\varepsilon})
\leq
\mathcal K_r(\tau)+\varepsilon.
$$
For equal group sizes $m$, compare the null and alternative joint laws
$$
\mathbb P_{0,\varepsilon}^{(m)}
:=
P_{0,\varepsilon}^{\otimes m}
\otimes
Q_{0,\varepsilon}^{\otimes m},
\qquad
\mathbb P_{1,\varepsilon}^{(m)}
:=
P_\varepsilon^{\otimes m}
\otimes
Q_\varepsilon^{\otimes m}.
$$
For any test $\phi_m$ with level at most $\alpha$ uniformly over the null
class,
$$
\mathbb P_{1,\varepsilon}^{(m)}(\phi_m=1)
\leq
\alpha
+
\operatorname{TV}
\left(
\mathbb P_{1,\varepsilon}^{(m)},
\mathbb P_{0,\varepsilon}^{(m)}
\right).
$$
Pinsker's inequality and tensorization give
$$
\operatorname{TV}
\left(
\mathbb P_{1,\varepsilon}^{(m)},
\mathbb P_{0,\varepsilon}^{(m)}
\right)
\leq
\sqrt{
\frac{m}{2}
\left
\{
\mathcal K_r(\tau)+\varepsilon
\right\}
}.
$$
Hence every uniformly level-$\alpha$ test has power below $1-\beta$ at some
alternative whenever
$$
m
<
\frac{2(1-\beta-\alpha)^2}{\mathcal K_r(\tau)}.
$$
Lemma~\ref{lem:information-modulus-comparison} gives
$\mathcal K_r(\tau)\leq C_{\mathrm{KL}}g_r(\tau)^2$, and therefore
$$
m
<
\frac{2(1-\beta-\alpha)^2}
{C_{\mathrm{KL}}g_r(\tau)^2}
$$
is sufficient for the same conclusion. This proves the lower bound. The cases
$\mathcal K_r(\tau)=0$ and $\mathcal K_r(\tau)=+\infty$ follow by the usual
limiting conventions; under the lemma and $g_r(\tau)>0$, the latter case does
not occur.
\end{proof}


\section{Technical Proofs for Section~\ref{sec:confidence-certificates}}
\label{app:confidence-certificates}

\subsection{Gaussian plug-in calibration}

\begin{lemma}[Continuity of centered Gaussian laws in trace norm]
\label{lem:gaussian-trace-continuity}
Let $A_m$ and $A$ be positive trace-class operators on a separable Hilbert
space and suppose $\|A_m-A\|_1\to0$. If
$G_m\sim\mathcal N(0,A_m)$ and $G\sim\mathcal N(0,A)$, then
$G_m\rightsquigarrow G$. Consequently,
$\|G_m\|^2\rightsquigarrow\|G\|^2$.
\end{lemma}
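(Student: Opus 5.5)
I will prove weak convergence of the Gaussian laws through characteristic functionals together with tightness. For a centered Gaussian $G\sim\mathcal N(0,A)$ on a separable Hilbert space $\mathcal H$, the characteristic functional is
$$\mathbb E\exp(i\langle h,G\rangle)=\exp\bigl(-\tfrac12\langle Ah,h\rangle\bigr).$$
Trace-norm convergence implies operator-norm convergence, so $\langle A_mh,h\rangle\to\langle Ah,h\rangle$ for every $h$. The characteristic functionals therefore converge pointwise, and with them all finite-dimensional distributions: every finite collection $(\langle h_1,G_m\rangle,\ldots,\langle h_k,G_m\rangle)$ is a centered Gaussian vector whose covariance matrix converges.

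In infinite dimensions, finite-dimensional convergence does not by itself give weak convergence, so the main step is a uniform tail or flat-concentration condition. Fix an orthonormal basis $(e_j)$ and let $P_k$ denote the orthogonal projection onto $\operatorname{span}\{e_1,\ldots,e_k\}$. Then
$$\mathbb E\|(I-P_k)G_m\|^2=\operatorname{tr}\{(I-P_k)A_m(I-P_k)\}\le \operatorname{tr}\{(I-P_k)A(I-P_k)\}+\|A_m-A\|_1 .$$
Given $\eta>0$, I will choose $k$ so that the first term is below $\eta$, and take $m$ large enough that the second term is below $\eta$. The finitely many remaining indices $m$ are handled by enlarging $k$, since each $A_m$ is trace class. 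This gives
$$\lim_{k\to\infty}\sup_m\mathbb E\|(I-P_k)G_m\|^2=0.$$
Tightness follows from the standard criterion for Hilbert spaces: bounded second moments $\sup_m\operatorname{tr}A_m<\infty$, together with uniformly vanishing tails, imply tightness by Chebyshev's inequality and the compactness of sets of the form $\{x:\|x\|\le R,\ \|(I-P_{k_j})x\|\le\varepsilon_j\ \forall j\}$. Tightness combined with convergence of finite-dimensional laws, which determine Borel measures on separable $\mathcal H$, yields $G_m\rightsquigarrow G$.

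An alternative route avoids tightness altogether. On one probability space, take independent standard normals $Z_j$ and spectral representations, and couple $G_m=A_m^{1/2}W$ and $G=A^{1/2}W$ through a common cylindrical Gaussian $W$. Then
$$\mathbb E\|G_m-G\|^2=\|A_m^{1/2}-A^{1/2}\|_{\mathrm{HS}}^2\le\|A_m-A\|_1$$
by the Powers--Størmer inequality, so $G_m\to G$ in $L^2$ and hence in law. I would use the tightness argument, which is self-contained, and mention this coupling as a check. Once weak convergence is established, the last claim is immediate from the continuous mapping theorem applied to the continuous map $x\mapsto\|x\|^2$. The main obstacle is the uniform tail control, and the key fact there is that it is exactly the trace norm, not the operator norm, that controls $\operatorname{tr}\{(I-P_k)A_m(I-P_k)\}$ uniformly in $m$.
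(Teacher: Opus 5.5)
Your proposal is correct, but your primary argument takes a different route from the paper; the coupling you list as a check is exactly the paper's proof. The paper proceeds in three steps. It invokes the Powers--St{\o}rmer inequality $\|A_m^{1/2}-A^{1/2}\|_{\mathrm{HS}}^2\le\|A_m-A\|_1$. It then realizes $G_m=A_m^{1/2}W$ and $G=A^{1/2}W$ from one isonormal cylindrical Gaussian $W$. Finally it concludes $L^2$ convergence, hence convergence in law, and applies continuous mapping for $x\mapsto\|x\|^2$.

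Your main route instead combines two ingredients. The first is pointwise convergence of characteristic functionals, which uses only weak operator convergence of $A_m$. The second is tightness, obtained from the uniform tail-trace bound $\operatorname{tr}\{(I-P_k)A_m(I-P_k)\}\le\operatorname{tr}\{(I-P_k)A(I-P_k)\}+\|A_m-A\|_1$, from $\sup_m\operatorname{tr}A_m<\infty$, and from the standard compact sets in Hilbert space. You then close the argument with Prokhorov's theorem and the fact that finite-dimensional laws determine Borel measures on a separable Hilbert space. Every step checks out, including handling the finitely many early indices by enlarging $k$.

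Your route is self-contained: it needs no operator square roots, no Powers--St{\o}rmer inequality, and no construction of $A^{1/2}W$ from a cylindrical process. It also shows transparently why trace norm, rather than operator norm, is the right mode of convergence: it is exactly what makes the tail traces uniformly small.

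The paper's route buys brevity and a quantitative conclusion. The coupling shows that the $2$-Wasserstein distance between $\mathcal N(0,A_m)$ and $\mathcal N(0,A)$ is at most $\|A_m-A\|_1^{1/2}$. Your tightness argument is purely qualitative and does not give that rate.
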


\begin{proof}
For positive trace-class operators, the Powers--St{\o}rmer inequality implies
Hilbert--Schmidt convergence of their square roots:
$$
\|A_m^{1/2}-A^{1/2}\|_{\mathrm{HS}}^2
\leq
\|A_m-A\|_1
\longrightarrow0.
$$
Let $W$ be an isonormal cylindrical Gaussian element on the Hilbert space.
The random elements $A_m^{1/2}W$ and $A^{1/2}W$ are well defined because the
square roots are Hilbert--Schmidt, and they have laws
$\mathcal N(0,A_m)$ and $\mathcal N(0,A)$, respectively. Moreover,
$$
\mathbb E
\|A_m^{1/2}W-A^{1/2}W\|^2
=
\|A_m^{1/2}-A^{1/2}\|_{\mathrm{HS}}^2
\longrightarrow0.
$$
Thus the coupled Gaussian elements converge in $L^2$, hence in probability
and distribution. The squared norm is continuous, so the final claim follows.
\end{proof}

\begin{proof}[Proof of Theorem~\ref{thm:mean-confidence-ball}]
The Hilbert-space central limit theorem gives
$$
\sqrt m(\overline\Psi_m-\theta)
\rightsquigarrow
\mathcal G\sim\mathcal N(0,\Sigma).
$$
Proposition~\ref{prop:covariance-consistency} gives
$\|\widehat\Sigma_m-\Sigma\|_1\to0$ almost surely. Conditional on the data,
$\mathcal G_m^*$ is centered Gaussian with covariance
$\widehat\Sigma_m$. Lemma~\ref{lem:gaussian-trace-continuity} therefore
implies that its conditional squared-norm distribution converges weakly,
almost surely, to the distribution of $\|\mathcal G\|^2$.

When $\Sigma\neq0$, the weighted chi-square law of
$\|\mathcal G\|^2$ is continuous and has a unique $(1-\alpha)$ quantile.
The corresponding conditional quantile therefore satisfies
$$
\widehat q_{1-\alpha,m}
\longrightarrow
q_{1-\alpha}(\Sigma)
$$
in probability. Slutsky's theorem then yields
$$
\Pr\left\{
 m\|\overline\Psi_m-\theta\|^2
 \leq
 \widehat q_{1-\alpha,m}
\right\}
\longrightarrow
1-\alpha.
$$
If $\Sigma=0$, then
$\mathbb E\|\Psi_\nu(X)-\theta\|^2=\operatorname{tr}(\Sigma)=0$, so
$\Psi_\nu(X)=\theta$ almost surely.
\end{proof}

\paragraph{Verification of the finite-sample benchmark.}
Independence and centering give
$$
\mathbb E
\|\overline\Psi_m-\theta\|^2
=
\frac{1}{m}\operatorname{tr}(\Sigma)
\leq
\frac{v^2}{m}.
$$
Markov's inequality applied to the nonnegative random variable
$\|\overline\Psi_m-\theta\|^2$ yields
$$
\Pr\left\{
\|\overline\Psi_m-\theta\|
>
\frac{v}{\sqrt{m\alpha}}
\right\}
\leq\alpha.
$$
The bounded-embedding statement follows from
$\operatorname{tr}(\Sigma)=\mathbb E\|\Psi_\nu(X)-\theta\|^2\leq B_\nu^2$,
as recorded in Section~\ref{sec:probability}.

\subsection{Mean-measure pullback and exclusion}

\begin{lemma}[Upper transport bound for arbitrary padded mean measures]
\label{lem:arbitrary-mean-measure-upper-transport}
Under Assumption~\ref{ass:additive-interface}, for every
$\eta,\zeta\in\mathfrak M_n$,
$$
\|T_\nu[\eta]-T_\nu[\zeta]\|
\leq
nL_\nu W_{\infty,0}(\eta,\zeta).
$$
\end{lemma}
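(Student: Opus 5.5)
The argument repeats the coupling argument from the proof of Proposition~\ref{prop:interface-consequences}, now for arbitrary measures in $\mathfrak M_n$ instead of padded counting measures. The first step is well-definedness. By (E2) and boundedness of $\mathbb H_L^{\varnothing}$ we have $\kappa_\nu<\infty$, so $\varphi_\nu^0$ is bounded and Borel. Hence $T_\nu[\eta]=\int\varphi_\nu^0\,d\eta$ exists as an $\ell_2$-valued Bochner integral, with norm at most $n\kappa_\nu$. The set $\Gamma(\eta,\zeta)$ is nonempty because it contains the product coupling $\eta\otimes\zeta/n$. If $W_{\infty,0}(\eta,\zeta)=\infty$ there is nothing to prove, although here $d_0$ is bounded, so this case does not arise.

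Next, fix any coupling $\pi\in\Gamma(\eta,\zeta)$. Because the marginals of $\pi$ are $\eta$ and $\zeta$, and $\varphi_\nu^0$ is bounded, linearity of the Bochner integral gives
\[
T_\nu[\eta]-T_\nu[\zeta]=\int\{\varphi_\nu^0(x)-\varphi_\nu^0(y)\}\,d\pi(x,y).
\]
Each marginal integral can be rewritten as an integral against $\pi$ by integrating first against coordinate functionals $\langle\cdot,e_k\rangle$ and then invoking uniqueness of the Bochner integral. Then the norm inequality for Bochner integrals and (E2) give
\[
\|T_\nu[\eta]-T_\nu[\zeta]\|_2\le\int L_\nu d_0(x,y)\,d\pi(x,y)\le L_\nu\,\pi(\mathbb H_L^{\varnothing}\times\mathbb H_L^{\varnothing})\,\|d_0\|_{L^\infty(\pi)}=nL_\nu\|d_0\|_{L^\infty(\pi)}.
\]
This uses that the total mass of $\pi$ equals the common mass $n$ of $\eta$ and $\zeta$.

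The last step takes the infimum over $\pi\in\Gamma(\eta,\zeta)$, which yields the claim. No step is really hard. The only point needing care is measurability: the map $(x,y)\mapsto\varphi_\nu^0(x)-\varphi_\nu^0(y)$ must be Borel on the product space, which holds because it is a difference of Borel maps composed with the coordinate projections. The essential supremum must also be taken with respect to $\pi$ rather than as a pointwise supremum. That is enough, because the integral bound changes only on $\pi$-null sets.
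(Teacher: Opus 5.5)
Your proposal is correct and matches the paper's proof step for step: you fix a coupling $\pi$ and write $T_\nu[\eta]-T_\nu[\zeta]$ as the $\pi$-integral of $\varphi_\nu^0(x)-\varphi_\nu^0(y)$. You then bound this by $L_\nu\int d_0\,d\pi\le nL_\nu\|d_0\|_{L^\infty(\pi)}$ using the total mass $n$, and take the infimum over couplings. Your remarks on nonemptiness of $\Gamma(\eta,\zeta)$ and on measurability are points the paper leaves implicit.
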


\begin{proof}
Fix a coupling $\pi$ of $\eta$ and $\zeta$. By the marginal identities and
the additive representation,
$$
T_\nu[\eta]-T_\nu[\zeta]
=
\int
\{\varphi_\nu^0(x)-\varphi_\nu^0(y)\}
\,d\pi(x,y).
$$
The point-level Lipschitz condition gives
$$
\|T_\nu[\eta]-T_\nu[\zeta]\|
\leq
L_\nu\int d_0(x,y)\,d\pi(x,y).
$$
The coupling has total mass $n$, and its transport cost is bounded
$\pi$-almost everywhere by its essential supremum. Hence
$$
\|T_\nu[\eta]-T_\nu[\zeta]\|
\leq
nL_\nu
\operatorname*{ess\,sup}_{(x,y)\sim\pi}d_0(x,y).
$$
Taking the infimum over couplings proves the claim.
\end{proof}

\begin{proof}[Proof of Proposition~\ref{prop:mean-measure-exclusion-certificate}]
Work on the event
$T_\nu[\overline\mu_P]\in\mathcal C_m$. For any candidate
$\eta\in\mathfrak M_n$,
$$
\|T_\nu[\eta]-T_\nu[\overline\mu_P]\|
\geq
\operatorname{dist}\{T_\nu[\eta],\mathcal C_m\}.
$$
Lemma~\ref{lem:arbitrary-mean-measure-upper-transport} gives
$$
W_{\infty,0}(\eta,\overline\mu_P)
\geq
\frac{
\|T_\nu[\eta]-T_\nu[\overline\mu_P]\|
}{nL_\nu},
$$
and the first displayed conclusion follows. If $\mathcal C_m$ is a closed
ball with center $\overline\Psi_m$ and radius $r_m$, then
$$
\operatorname{dist}\{T_\nu[\eta],\mathcal C_m\}
=
\left[
\|T_\nu[\eta]-\overline\Psi_m\|-r_m
\right]_+.
$$
The coverage probability of the event completes the proof.
\end{proof}

\subsection{Two-sample confidence set and geometric consequences}

\begin{proof}[Proof of Corollary~\ref{cor:two-sample-confidence-ball}]
The proof is the two-sample analogue of
Theorem~\ref{thm:mean-confidence-ball}. By
Corollary~\ref{cor:two-sample-hilbert-clt} and the effective-sample-size
rescaling used in Theorem~\ref{thm:mean-distance-null},
$$
\sqrt{N_{\mathrm{eff}}}
\{\widehat\delta_{m,m'}-\delta\}
\rightsquigarrow
\mathcal N(0,\Gamma_{P,Q,\lambda}).
$$
Trace-norm consistency of the two group covariance estimators and convergence
of the sample proportions imply
$$
\|\widehat\Gamma_{m,m'}-\Gamma_{P,Q,\lambda}\|_1
\longrightarrow0
$$
in probability. If $\Gamma_{P,Q,\lambda}\neq0$, the distribution of the
squared norm of the limiting Gaussian element is continuous.
Lemma~\ref{lem:gaussian-trace-continuity} and quantile continuity therefore
give the asserted asymptotic coverage.

If $\Gamma_{P,Q,\lambda}=0$, positivity of the covariance operators and
$\lambda\in(0,1)$ imply
$$
\Sigma_{P,\nu}^{\Psi}
=
\Sigma_{Q,\nu}^{\Psi}
=
0.
$$
Thus both embedded populations are almost surely constant,
$\widehat\delta_{m,m'}=\delta$ almost surely, and
$\widehat\Gamma_{m,m'}=0$. The plug-in confidence set is then the singleton
$\{\delta\}$ and has exact coverage one.

On the coverage event,
$$
\|\widehat\delta_{m,m'}\|-\widehat r_{m,m'}^{\alpha}
\leq
\|\delta\|
\leq
\|\widehat\delta_{m,m'}\|+\widehat r_{m,m'}^{\alpha}
$$
by the reverse and ordinary triangle inequalities. Truncating the lower
endpoint at zero gives the stated interval. The confidence-ball coverage event
is contained in the event that this scalar interval covers $\|\delta\|$.
Therefore,
$$
\liminf_{m,m'\to\infty}
\Pr_{P,Q}\left\{
\underline\Delta_{m,m'}^{\alpha}
\leq
\Delta_\nu^{\Psi}(P,Q)
\leq
\overline\Delta_{m,m'}^{\alpha}
\right\}
\geq
1-\alpha.
$$
The inequality may be strict because the scalar interval can cover even when
the Hilbert-space confidence ball does not contain $\delta$.

For an additive embedding,
Proposition~\ref{prop:mean-embedding-transfer} gives
$$
W_{\infty,0}(\overline\mu_P,\overline\mu_Q)
\geq
\frac{\Delta_\nu(P,Q)}{nL_\nu}
\geq
\frac{\underline\Delta_{m,m'}^{\alpha}}{nL_\nu}
$$
on the same event. Finally, if $(P,Q)\in\mathfrak A_r(\tau)$ is fixed, then the
uniform structured lower bound gives
$\Delta_\nu(P,Q)\geq g_r(\tau)$. Hence the event
$\overline\Delta_{m,m'}^{\alpha}<g_r(\tau)$ is incompatible with membership
in that class whenever the confidence interval covers the true effect size.
Combining this implication with the pointwise coverage statement proves the
pointwise asymptotic exclusion bound. No uniform Gaussian approximation over
$\mathfrak A_r(\tau)$ is asserted.
\end{proof}

\subsection{Nearest-centroid certificate}

\begin{proof}[Proof of Proposition~\ref{prop:centroid-robustness-certificate}]
Let $c=\widehat c(D)$ and work on the simultaneous event
$\|\widehat\theta_j-\theta_j\|\leq r_j$ for every class $j$. For any
$c'\neq c$, the triangle inequality gives
$$
\|z-\theta_{c'}\|
\geq
\|z-\widehat\theta_{c'}\|-r_{c'},
$$
and
$$
\|z-\theta_c\|
\leq
\|z-\widehat\theta_c\|+r_c.
$$
Therefore,
$$
\|z-\theta_{c'}\|-\|z-\theta_c\|
\geq
M_{c,c'}(D)-r_c-r_{c'}.
$$
The first margin condition makes this quantity positive for every competitor,
so the population nearest-centroid label is $c$.

For the perturbation statement, suppose
$\Psi_\nu=\Phi(\cdot;\nu)$, so that $z=\Phi(D;\nu)$. Now let $E$ satisfy
$d_B(D,E)\leq\varepsilon$ and write $z_E:=\Phi(E;\nu)$.
Proposition~\ref{prop:interface-consequences} gives
$$
\|z_E-z\|
\leq
nL_\nu\varepsilon.
$$
For either centroid $a$,
$$
\big|
\|z_E-a\|-\|z-a\|
\big|
\leq
\|z_E-z\|.
$$
Consequently, the pairwise empirical margin can decrease by at most
$2nL_\nu\varepsilon$ when $D$ is replaced by $E$. Combining this fact with
the centroid-error inequality above proves the perturbation statement. Solving the strict margin inequality for $\varepsilon$ shows that the
population nearest-centroid label of every
$E$ with $d_B(D,E)<\varepsilon_{\mathrm{cert}}(D)$ is
$c=\widehat c(D)$.
\end{proof}


\section{Technical Proofs for Section~\ref{sec:finite-approximation}}
\label{app:finite-approximation}

\subsection{Signal retained by orthogonal truncation}
\label{app:orthogonal-truncation-signal}

\begin{proof}[Proof of Proposition~\ref{prop:orthogonal-truncation-signal}]
Write
$$
R_K:=I-\Pi_K.
$$
Since $\Pi_K$ is an orthogonal projection, every $u\in\mathcal H_\nu$
satisfies
$$
\|u\|^2
=
\|\Pi_Ku\|^2+\|R_Ku\|^2.
$$
Assumption~\ref{ass:finite-approximation} gives
$$
\|R_K\Psi_\nu(D)\|
\leq
\epsilon_K^\Psi
$$
for every $D\in\mathcal D_n$.

Let
$$
\theta_P:=\theta_{P,\nu}^{\Psi},
\qquad
\theta_Q:=\theta_{Q,\nu}^{\Psi},
\qquad
u:=\theta_P-\theta_Q.
$$
Because $R_K$ is bounded and linear,
$$
R_K\theta_P
=
\mathbb E_P\{R_K\Psi_\nu(X)\}.
$$
Jensen's inequality therefore gives
$$
\|R_K\theta_P\|
\leq
\mathbb E_P\|R_K\Psi_\nu(X)\|
\leq
\epsilon_K^\Psi.
$$
The same argument applies to $\theta_Q$, so
$$
\|R_Ku\|
\leq
\|R_K\theta_P\|+\|R_K\theta_Q\|
\leq
2\epsilon_K^\Psi.
$$
Orthogonality now yields
$$
\begin{aligned}
\{\Delta_\nu^\Psi(P,Q)\}^2
&=
\|u\|^2 \\
&=
\|\Pi_Ku\|^2+\|R_Ku\|^2 \\
&\leq
\{\Delta_{\nu,K}^\Psi(P,Q)\}^2
+
4(\epsilon_K^\Psi)^2.
\end{aligned}
$$
Since $\Delta_{\nu,K}^\Psi(P,Q)\geq0$, it follows that
$$
\Delta_{\nu,K}^{\Psi}(P,Q)
\geq
\left[
\{\Delta_\nu^{\Psi}(P,Q)\}^2
-
4(\epsilon_K^\Psi)^2
\right]_+^{1/2}.
$$

For the diagram-level statement, let
$$
v:=\Psi_\nu(D)-\Psi_\nu(E).
$$
The uniform approximation bound gives
$$
\begin{aligned}
\|R_Kv\|
&\leq
\|R_K\Psi_\nu(D)\|
+
\|R_K\Psi_\nu(E)\| \\
&\leq
2\epsilon_K^\Psi.
\end{aligned}
$$
If $(D,E)$ satisfies the construction-specific certification conditions and
$d_B(D,E)\geq t>R_0(\nu)$, then
Assumption~\ref{ass:diagram-distortion-floor} gives
$$
\|v\|
\geq
\rho_-(t;\nu).
$$
Using orthogonality once more,
$$
\begin{aligned}
\|\Psi_{\nu,K}(D)-\Psi_{\nu,K}(E)\|^2
&=
\|\Pi_Kv\|^2 \\
&=
\|v\|^2-\|R_Kv\|^2 \\
&\geq
\rho_-(t;\nu)^2-4(\epsilon_K^\Psi)^2.
\end{aligned}
$$
Taking the nonnegative part and then the square root proves
$$
\|\Psi_{\nu,K}(D)-\Psi_{\nu,K}(E)\|
\geq
\rho_{-,K}(t;\nu).
$$

Finally, specialize to the additive embedding
$\Psi_\nu=\Phi(\cdot;\nu)$. For every
$(P,Q)\in\mathfrak A_r(\tau)$,
Theorem~\ref{thm:population-lower-bound} gives
$$
\Delta_\nu(P,Q)\geq g_r(\tau).
$$
Applying the first part of the proposition yields
$$
\begin{aligned}
\Delta_{\nu,K}(P,Q)
&\geq
\left[
\Delta_\nu(P,Q)^2-4\epsilon_K^2
\right]_+^{1/2} \\
&\geq
\left[
g_r(\tau)^2-4\epsilon_K^2
\right]_+^{1/2} \\
&=
g_{r,K}(\tau).
\end{aligned}
$$
Taking the infimum over
$(P,Q)\in\mathfrak A_r(\tau)$ completes the proof.
\end{proof}

\subsection{Testing under finite approximation}
\label{app:finite-approximation-testing}

\begin{proof}[Proof of Theorem~\ref{thm:finite-approximation-three-way}]
Write
$$
\delta_K
:=
\Pi_K\delta_\nu^\Phi(P,Q),
\qquad
Z_K
:=
\widehat\delta_{m,m',K}-\delta_K.
$$
Independence of the two samples gives
$$
\mathbb E_{P,Q}\|Z_K\|^2
=
\frac{\operatorname{tr}(\Sigma_{P,K})}{m}
+
\frac{\operatorname{tr}(\Sigma_{Q,K})}{m'}.
$$
Since
$$
\frac{N_{\mathrm{eff}}}{m}
=
\frac{m'}{m+m'},
\qquad
\frac{N_{\mathrm{eff}}}{m'}
=
\frac{m}{m+m'},
$$
the covariance bounds imply
\begin{equation*}
\mathbb E_{P,Q}\|Z_K\|^2
\leq
\frac{v^2}{N_{\mathrm{eff}}}.
\tag{A}
\end{equation*}

Suppose first that
$(P,Q)\in\mathfrak H_{0,K}^{\mathrm{mean}}(v)$. Then
$\delta_K=0$, so Markov's inequality and (A) give
$$
\begin{aligned}
\Pr_{P,Q}\{\phi_{\alpha,v}^{(K)}=1\}
&=
\Pr_{P,Q}\left\{
\|Z_K\|
>
\frac{v}{\sqrt{\alpha N_{\mathrm{eff}}}}
\right\} \\
&\leq
\frac{
\mathbb E_{P,Q}\|Z_K\|^2
}{
v^2/(\alpha N_{\mathrm{eff}})
} \\
&\leq
\alpha.
\end{aligned}
$$
The bound holds for every pair in
$\mathfrak H_{0,K}^{\mathrm{mean}}(v)$, proving uniform level control.

Now suppose that $(P,Q)\in\mathfrak A_r(\tau)$. Orthogonal projection cannot
increase covariance trace. Hence the covariance restrictions in
Definition~\ref{ass:uniform-structured-alternatives} again imply (A).
Proposition~\ref{prop:orthogonal-truncation-signal} also gives
$$
\|\delta_K\|
=
\Delta_{\nu,K}(P,Q)
\geq
g_{r,K}(\tau).
$$
Put
$$
s_N:=\frac{v}{\sqrt{N_{\mathrm{eff}}}}.
$$
The sample-size condition is equivalent to
$$
s_N
\leq
\frac{g_{r,K}(\tau)}
{\alpha^{-1/2}+\beta^{-1/2}}.
$$
It follows that
\begin{equation*}
g_{r,K}(\tau)-\frac{s_N}{\sqrt{\alpha}}
\geq
\frac{s_N}{\sqrt{\beta}}
>0.
\tag{B}
\end{equation*}

On the event $\{\phi_{\alpha,v}^{(K)}=0\}$,
$$
\|\widehat\delta_{m,m',K}\|
\leq
\frac{s_N}{\sqrt{\alpha}}.
$$
The reverse triangle inequality and (B) therefore give
$$
\begin{aligned}
\|Z_K\|
&\geq
\|\delta_K\|-\|\widehat\delta_{m,m',K}\| \\
&\geq
g_{r,K}(\tau)-\frac{s_N}{\sqrt{\alpha}} \\
&\geq
\frac{s_N}{\sqrt{\beta}}.
\end{aligned}
$$
Using Markov's inequality and (A),
$$
\begin{aligned}
\Pr_{P,Q}\{\phi_{\alpha,v}^{(K)}=0\}
&\leq
\Pr_{P,Q}\left\{
\|Z_K\|
\geq
\frac{s_N}{\sqrt{\beta}}
\right\} \\
&\leq
\frac{
\mathbb E_{P,Q}\|Z_K\|^2
}{
s_N^2/\beta
} \\
&\leq
\beta.
\end{aligned}
$$
This bound holds uniformly over $\mathfrak A_r(\tau)$, proving the stated
power guarantee.

Finally, $g_{r,K}(\tau)>0$ implies
$$
g_{r,K}(\tau)^2
=
g_r(\tau)^2-4\epsilon_K^2
$$
and $2\epsilon_K<g_r(\tau)$. Substitution into the first sample-size
condition gives the equivalent formulation.
\end{proof}

\subsection{Confidence sets under finite approximation}
\label{app:truncated-confidence}

\begin{proof}[Proof of Proposition~\ref{prop:truncated-confidence-inflation}]
For the one-sample statement, work on the projected coverage event
$$
\|\overline\Psi_{m,K}-\theta_K\|
\leq
r_{m,K}.
$$
By Assumption~\ref{ass:finite-approximation},
$$
\|\theta-\theta_K\|
\leq
\epsilon_K^\Psi.
$$
The triangle inequality therefore gives
$$
\begin{aligned}
\|\overline\Psi_{m,K}-\theta\|
&\leq
\|\overline\Psi_{m,K}-\theta_K\|
+
\|\theta_K-\theta\| \\
&\leq
r_{m,K}+\epsilon_K^\Psi.
\end{aligned}
$$
Thus the inflated Hilbert-space ball contains $\theta$ whenever the projected
ball contains $\theta_K$. Its coverage probability is therefore at least
$1-\alpha$.

For the two-sample statement, write
$$
\delta_K
:=
\delta_{\nu,K}^\Psi(P,Q)
=
\Pi_K\delta_\nu^\Psi(P,Q).
$$
Assumption~\ref{ass:finite-approximation} gives
$$
\begin{aligned}
\|\delta_\nu^\Psi(P,Q)-\delta_K\|
&=
\left\|
(I-\Pi_K)
\left\{
\theta_{P,\nu}^\Psi-\theta_{Q,\nu}^\Psi
\right\}
\right\| \\
&\leq
\|(I-\Pi_K)\theta_{P,\nu}^\Psi\|
+
\|(I-\Pi_K)\theta_{Q,\nu}^\Psi\| \\
&\leq
2\epsilon_K^\Psi.
\end{aligned}
$$
On the projected coverage event,
$$
\|\widehat\delta_{m,m',K}-\delta_K\|
\leq
r_{m,m',K}.
$$
Hence
$$
\begin{aligned}
\|\widehat\delta_{m,m',K}-\delta_\nu^\Psi(P,Q)\|
&\leq
\|\widehat\delta_{m,m',K}-\delta_K\|
+
\|\delta_K-\delta_\nu^\Psi(P,Q)\| \\
&\leq
r_{m,m',K}+2\epsilon_K^\Psi.
\end{aligned}
$$
The inflated ball therefore covers the full contrast with at least the
coverage probability of the projected ball.

Finally, the ordinary and reverse triangle inequalities give
$$
\begin{aligned}
\|\delta_\nu^\Psi(P,Q)\|
&\leq
\|\widehat\delta_{m,m',K}\|
+
r_{m,m',K}
+
2\epsilon_K^\Psi, \\
\|\delta_\nu^\Psi(P,Q)\|
&\geq
\|\widehat\delta_{m,m',K}\|
-
r_{m,m',K}
-
2\epsilon_K^\Psi.
\end{aligned}
$$
Taking the nonnegative part of the second bound proves the stated confidence
interval.
\end{proof}

\paragraph{Centered fluctuations under growing truncation.}
Let $K=K_{m,m'}$ be a deterministic sequence. Put
$$
R_K:=I-\Pi_K
$$
and define the centered tail variables
$$
U_{P,K}
:=
R_K
\left\{
\Psi_\nu(X)-\theta_{P,\nu}^\Psi
\right\},
\qquad
U_{Q,K}
:=
R_K
\left\{
\Psi_\nu(Y)-\theta_{Q,\nu}^\Psi
\right\}.
$$
They have mean zero. Moreover,
$$
\begin{aligned}
\mathbb E_P\|U_{P,K}\|^2
&=
\mathbb E_P
\left\|
R_K\Psi_\nu(X)
-
\mathbb E_P\{R_K\Psi_\nu(X)\}
\right\|^2 \\
&\leq
\mathbb E_P\|R_K\Psi_\nu(X)\|^2 \\
&\leq
(\epsilon_K^\Psi)^2.
\end{aligned}
$$
The same bound holds under $Q$. Independence and centering therefore yield
$$
\begin{aligned}
&\mathbb E_{P,Q}
\left\|
\sqrt{N_{\mathrm{eff}}}
\left[
(\widehat\delta_{m,m'}-\delta_\nu^\Psi)
-
(\widehat\delta_{m,m',K}-\delta_{\nu,K}^\Psi)
\right]
\right\|^2 \\
&\qquad=
\frac{N_{\mathrm{eff}}}{m}
\mathbb E_P\|U_{P,K}\|^2
+
\frac{N_{\mathrm{eff}}}{m'}
\mathbb E_Q\|U_{Q,K}\|^2 \\
&\qquad\leq
\left\{
\frac{m'}{m+m'}+\frac{m}{m+m'}
\right\}
(\epsilon_K^\Psi)^2 \\
&\qquad=
(\epsilon_K^\Psi)^2.
\end{aligned}
$$
Consequently, $\epsilon_{K_{m,m'}}^\Psi\to0$ implies convergence of the
centered truncation error to zero in $L^2$.

For comparison, the deterministic centering error satisfies
$$
\sqrt{N_{\mathrm{eff}}}
\|\delta_{\nu,K}^\Psi-\delta_\nu^\Psi\|
\leq
2\sqrt{N_{\mathrm{eff}}}\epsilon_K^\Psi.
$$
Thus centering the projected estimator directly at the full target requires
the stronger condition
$$
\sqrt{N_{\mathrm{eff}}}\epsilon_K^\Psi\longrightarrow0.
$$


\section{Proofs for Section~\ref{sec:extensions}}
\label{app:extensions-proofs}

\subsection{Proof of Proposition~\ref{prop:fixed-k-regression}}

\begin{proof}
Write $\widetilde Z_i=\widetilde Z_K(X_i)$ and define
$$
\widehat Q_{K,m}
=
\frac1m\sum_{i=1}^m\widetilde Z_i\widetilde Z_i^\top,
\qquad
\widehat b_{K,m}
=
\frac1m\sum_{i=1}^m\widetilde Z_iY_i.
$$
Because $K$ is fixed and $Z_K(X)$ is bounded by the assumption of
Proposition~\ref{prop:fixed-k-regression}, the entries of
$\widetilde Z_K(X)\widetilde Z_K(X)^\top$ are integrable. Moreover,
$\mathbb E(Y^2)<\infty$ and boundedness of $\widetilde Z_K(X)$ imply
integrability of $\widetilde Z_K(X)Y$. The strong law therefore gives
$$
\widehat Q_{K,m}\longrightarrow Q_K,
\qquad
\widehat b_{K,m}
\longrightarrow
\mathbb E\{\widetilde Z_K(X)Y\}
$$
almost surely.

Since $Q_K$ is positive definite, $\widehat Q_{K,m}$ is invertible with
probability tending to one and eventually almost surely along the strong-law
event. On that eventual event, the measurable least-squares minimizer is
unique and equals $\widehat Q_{K,m}^{-1}\widehat b_{K,m}$. The population normal equations give
$$
\vartheta_K^*
=
Q_K^{-1}\mathbb E\{\widetilde Z_K(X)Y\}.
$$
Hence
$$
\widehat\vartheta_K
=
\widehat Q_{K,m}^{-1}\widehat b_{K,m}
\longrightarrow
\vartheta_K^*
$$
almost surely.

Let
$$
\varepsilon_{K,i}
=
Y_i-\widetilde Z_i^\top\vartheta_K^*.
$$
The population normal equations imply
$\mathbb E(\widetilde Z_K\varepsilon_K)=0$. Boundedness of
$\widetilde Z_K$ and $\mathbb E(Y^2)<\infty$ imply
$\mathbb E\|\widetilde Z_K\varepsilon_K\|^2<\infty$. Therefore, the
multivariate central limit theorem yields
$$
\frac1{\sqrt m}
\sum_{i=1}^m
\widetilde Z_i\varepsilon_{K,i}
\rightsquigarrow
\mathcal N_{d_K+1}(0,\Omega_K).
$$
The least-squares identity gives
$$
\sqrt m
\left(\widehat\vartheta_K-\vartheta_K^*\right)
=
\widehat Q_{K,m}^{-1}
\frac1{\sqrt m}
\sum_{i=1}^m
\widetilde Z_i\varepsilon_{K,i}.
$$
Slutsky's theorem proves the stated Gaussian limit. The
$O_{\mathbb P}(m^{-1/2})$ rate follows immediately.

It remains to verify the sandwich estimator. Let
$\widehat\varepsilon_{K,i}=Y_i-\widetilde Z_i^\top\widehat\vartheta_K$.
Because $\widetilde Z_i$ is uniformly bounded and
$\widehat\vartheta_K\to\vartheta_K^*$ almost surely,
$$
\frac1m\sum_{i=1}^m
\left(\widehat\varepsilon_{K,i}^2-\varepsilon_{K,i}^2\right)
\widetilde Z_i\widetilde Z_i^\top
\longrightarrow0
$$
almost surely, entrywise. Indeed, the absolute value of each entry is bounded
by a constant times
$$
\|\widehat\vartheta_K-\vartheta_K^*\|
\left(\frac1m\sum_{i=1}^m|\varepsilon_{K,i}|
+
\|\widehat\vartheta_K-\vartheta_K^*\|
\right),
$$
which converges to zero by the strong law and
$\mathbb E|\varepsilon_K|<\infty$. Also, the strong law gives
$$
\frac1m\sum_{i=1}^m
\varepsilon_{K,i}^2\widetilde Z_i\widetilde Z_i^\top
\longrightarrow\Omega_K
$$
almost surely. Hence $\widehat\Omega_{K,m}\to\Omega_K$ almost surely, and
continuity of matrix inversion on the positive-definite cone yields
$\widehat V_{K,m}\to Q_K^{-1}\Omega_KQ_K^{-1}$ almost surely, and therefore
in probability.
\end{proof}

\subsection{Proof of Proposition~\ref{prop:regression-score-stability}}

\begin{proof}
Because $\Pi_K$ is an orthogonal projection,
$$
\begin{aligned}
\left|s_{\beta,K}(D)-s_{\beta,K}(E)\right|
&=
\left|
\left\langle
\Pi_K\beta,
\Pi_K\{\Phi(D;\nu)-\Phi(E;\nu)\}
\right\rangle
\right|\\
&\leq
\|\Pi_K\beta\|
\|\Phi(D;\nu)-\Phi(E;\nu)\|.
\end{aligned}
$$
Proposition~\ref{prop:interface-consequences} then gives
$$
\left|s_{\beta,K}(D)-s_{\beta,K}(E)\right|
\leq
nL_\nu\|\Pi_K\beta\|d_B(D,E).
$$

For the truncation bound, orthogonality gives
$$
\begin{aligned}
s_\beta(D)-s_{\beta,K}(D)
&=
\langle\beta,\Phi(D;\nu)\rangle
-
\langle\Pi_K\beta,\Pi_K\Phi(D;\nu)\rangle\\
&=
\left\langle
(I-\Pi_K)\beta,
(I-\Pi_K)\Phi(D;\nu)
\right\rangle.
\end{aligned}
$$
Consequently,
$$
\left|s_\beta(D)-s_{\beta,K}(D)\right|
\leq
\|(I-\Pi_K)\beta\|
\|(I-\Pi_K)\Phi(D;\nu)\|
\leq
\|(I-\Pi_K)\beta\|\epsilon_K^\Phi
\leq
\|\beta\|\epsilon_K^\Phi.
$$
Taking the supremum over $D$ proves the second claim. The final geometric
inequality follows by rearranging the first bound whenever
$\|\Pi_K\beta\|>0$.
\end{proof}

\end{document}